\documentclass[10pt]{article}
\usepackage{amsfonts,amssymb,amsmath,amsthm,mathtools}
\usepackage{array}
\usepackage[margin=2cm]{geometry}
\usepackage{cite}
\usepackage[english]{babel}
\usepackage[T1]{fontenc}
\usepackage{microtype}
\usepackage[hidelinks]{hyperref}
\hypersetup{
   colorlinks=true,
  linkcolor=blue,
  citecolor=blue,
  urlcolor=blue,
  pdftitle={Quantitative mean-field limits for repulsive Coulomb flows at bounded density and Riesz weak--strong stability},
  pdfauthor={Ning Jiang, Zhengyang Qiao, Juntao Wu, Jiangwei Zhang},
  pdfsubject={Bounded-density Coulomb mean-field limits and Riesz weak--strong stability},
  pdfkeywords={mean-field limit, propagation of chaos, bounded density, modulated energy, Coulomb interaction, Riesz interaction, Wasserstein distance, dissipation}
}

\theoremstyle{plain}
\newtheorem{theorem}{Theorem}[section]
\newtheorem{corollary}[theorem]{Corollary}
\newtheorem{proposition}[theorem]{Proposition}
\newtheorem{lemma}[theorem]{Lemma}
\theoremstyle{definition}
\newtheorem{definition}[theorem]{Definition}
\theoremstyle{remark}
\newtheorem{remark}[theorem]{Remark}

\numberwithin{equation}{section}

\def\R{\mathbb{R}}
\newcommand{\dd}{\,\mathrm{d}}
\newcommand{\eps}{\varepsilon}

\begin{document}

\title{\Large\bfseries Quantitative mean-field limits for repulsive Coulomb flows at bounded density and Riesz weak--strong stability}
\author{Ning Jiang$^\mathrm{a}$,\quad Zhengyang Qiao$^\mathrm{b}$,\quad Juntao Wu$^\mathrm{a}$\thanks{Corresponding author. Email: 00036371@whu.edu.cn},\quad Jiangwei Zhang$^\mathrm{c}$\\
  {\small\itshape $^\mathrm{a}$ School of Mathematics and Statistics, Wuhan University, Wuhan, Hubei 430072, P. R. China}\\
  {\small\itshape $^\mathrm{b}$ College of Intelligence Science and Technology, National University of Defense Technology,}\\
  {\small\itshape Changsha 410073, P. R. China}\\
  {\small\itshape $^\mathrm{c}$ Institute of Applied Physics and Computational Mathematics, Beijing 100088, P. R. China}}
\date{}
\maketitle

\begin{abstract}
We establish quantitative mean-field convergence and propagation of chaos for
repulsive Coulomb gradient flows at the bounded-density regularity of the
limiting equation.  The argument combines the dissipative modulated energy
identity with the normalized quadratic transport cost of the full
$N$-particle law.  The coupled differential identities retain a negative term given by the mean-square
force error, which enters an exact completion of squares after mollification of
the reference field.  The resulting identity controls the commutator of the
non-Lipschitz remainder by this negative quadratic term and lower-order errors,
while the sharp first-order commutator estimate is applied to the mollified
Lipschitz field.
For the Coulomb equation, the sharp $L^\infty$ decay yields the deterministic
envelope
\[
 m(t)=\frac{\|\rho_0\|_{L^\infty}}{1+t\|\rho_0\|_{L^\infty}},
 \qquad m'=-m^2.
\]
Weighting the transport cost by this envelope and mollifying at the associated
density-dependent scale $m(t)^{-1/d}$ lead to an Osgood comparison with coefficient proportional
to $m(t)$.  Consequently, for every $d\ge2$ and
$\rho_0\in\mathcal P_2(\mathbb R^d)\cap L^\infty(\mathbb R^d)$, we obtain a
quantitative comparison with the global bounded-density Coulomb solution on
every prescribed finite interval.  For tensorized initial data, the normalized
squared Wasserstein distance of the full $N$-particle law, the expected
modulated energy with its finite-$N$ correction, and the time integral of the
mean-square force error are bounded by $N^{-2\gamma_{T,d}/d}$ for $d\ge3$ and
by $((1+\log N)/N)^{\gamma_{T,2}}$ for $d=2$, where
\[
 \gamma_{T,d}=\bigl(1+T\|\rho_0\|_{L^\infty}\bigr)^{-c_d}.
\]
For $d-2<s<d$, we also prove Riesz weak--strong stability for prescribed
reference solutions in $L^\infty(0,T;B^{s-d+2}_{\infty,q})$.  The cases
$q=1$, $1<q<\infty$, and $q=\infty$ yield Gronwall, Bihari, and Osgood
comparisons, respectively, together with uniqueness in the stated Besov
class.  Finally, an outlier construction on $\mathbb R^d$ separates
modulated energy convergence and Kac chaos from normalized Wasserstein
convergence of the full $N$-particle law.
\end{abstract}

\medskip
\noindent\emph{Keywords.} mean-field limit; propagation of chaos; bounded density; modulated energy; Coulomb interaction; Riesz interaction; Wasserstein distance; dissipation; weak--strong stability; Besov regularity.

\medskip
\noindent\emph{2020 Mathematics Subject Classification.} 35Q70; 35B40; 35A23; 60B10; 82C22.

\section{Introduction}

\subsection{Model and motivation}
The derivation of effective continuum equations from large systems of
interacting particles is a central problem in kinetic theory, statistical
mechanics, and the analysis of collective systems.  In the mean-field regime,
each particle interacts weakly with all the others, while the cumulative
interaction remains of order one.  The limiting description replaces the
empirical field generated by finitely many particles with the self-consistent
field generated by a one-particle density.  This mean-field description goes
back to Kac and McKean and underlies the classical works of Braun--Hepp,
Dobrushin, and Sznitman; see
\cite{Kac1956,McKean1967,Braun,Dobrushin1,Sznitman1991} and the surveys
\cite{Jabin2014,Golse2016,ChaintronDiezI2022,ChaintronDiezII2022,CarrilloChoi2021}.

In this paper, we consider $N$ indistinguishable particles in $\R^d$
interacting through an even repulsive potential $g$,
\begin{equation}\label{eq:particle-system}
 \dot x_i(t)=-\frac1N\sum_{j\ne i}\nabla g(x_i(t)-x_j(t)),
 \qquad 1\le i\le N,
\end{equation}
and the corresponding mean-field equation
\begin{equation}\label{eq:mean-field-equation}
 \partial_t\rho=\nabla\!\cdot(\rho u),
 \qquad u=\nabla g*\rho.
\end{equation}
The characteristic velocity in \eqref{eq:mean-field-equation} is $-u$.
Formally, both equations are gradient flows of the same interaction energy at
the discrete and continuum levels.  This variational structure places the
problem within the broader theory of Wasserstein gradient flows and nonlocal
aggregation equations \cite{AmbrosioAGS,Serfaty111,CarrilloChoi2021}.
For Coulomb and Riesz interactions, the limiting equation is a singular
nonlocal Wasserstein gradient flow.  In several regimes it is closely related
to nonlocal and fractional porous medium type equations
\cite{Serfaty111,ChoiJeong}.

The principal examples are the Newtonian and logarithmic Coulomb kernels
\[
 g(x)=\begin{cases}
 -(2\pi)^{-1}\log|x|,&d=2,\\
 [(d-2)|\mathbb S^{d-1}|]^{-1}|x|^{2-d},&d\ge3,
 \end{cases}
 \qquad -\Delta g=\delta_0,
\]
and the Riesz family
\[
 g_s(x)=\frac1s|x|^{-s},\qquad 0<s<d.
\]
The singularity of $\nabla g$ at the origin is the central difficulty.  For
globally Lipschitz forces, one can compare characteristics directly and close
a Gronwall estimate in Wasserstein distance \cite{Dobrushin1}.  Mildly
singular forces can still be treated by refined trajectory and Wasserstein
arguments
\cite{HaurayJabin2007,HaurayJabin2015,Hauray2009,BermanOnnheim2019}.
At the Coulomb scale and beyond, however, the derivative of the force is too
singular for a direct Dobrushin estimate at the natural regularity of the
limiting density.  Moreover, the empirical measure is atomic, so the
interaction energy must be renormalized by removing the diagonal
\cite{Duerinckx2016,Serfaty1}.  These two features explain why Coulomb and
super-Coulomb mean-field limits require methods adapted to the energetic
structure of the kernel.

\subsection{Mean-field convergence and propagation of chaos}
For a configuration $X_N=(x_1,\ldots,x_N)$, let
\[
 \mu_N=\frac1N\sum_{i=1}^N\delta_{x_i}
\]
be its empirical measure.  A deterministic mean-field result concerns the
convergence of $\mu_N(t)$ to $\rho_t$ for a sequence of configurations satisfying the corresponding quantitative
initial bounds.  Propagation of chaos is the corresponding statement for
symmetric $N$-particle laws.  Following Kac, a sequence $(\rho_N(t))_{N\ge1}$
is called $\rho_t$-chaotic if, for every fixed $k$,
\[
 \rho_{N,k}(t)\longrightarrow \rho_t^{\otimes k}
 \qquad\text{as }N\to\infty.
\]
Relations among convergence of fixed marginals, empirical measure convergence,
entropic chaos, and Wasserstein formulations are discussed in
\cite{Sznitman1991,HaurayMischler2014,ChaintronDiezI2022}.

The estimates below are formulated at the level of the $N$-particle law.  They
control the normalized squared Wasserstein distance to $\rho_t^{\otimes N}$
together with the expected modulated energy.  Tensorized initial data yield
quantitative propagation of chaos, while correlated initial laws are covered
under corresponding quantitative bounds on these two quantities.  Their distinct
roles are described in Subsection~\ref{subsec:wasserstein-modulated}.

\subsection{Related work}
The classical mean-field theory for regular interactions is based on stability
of characteristics and Wasserstein distances
\cite{Braun,Dobrushin1,Sznitman1991}.  For singular forces, trajectory methods
remain effective below the Coulomb threshold
\cite{HaurayJabin2007,Hauray2009,HaurayJabin2015}; recent quantitative
$W_p$ estimates for non-attractive first-order systems in this range were
obtained in \cite{HoferSchubert2025}.

For Coulomb and Riesz systems, Duerinckx developed modulated energy arguments
for dissipative Riesz flows, and Serfaty established the method for Coulomb and
super-Coulomb interactions in arbitrary dimension
\cite{Duerinckx2016,Serfaty2017,Serfaty1}.  Nguyen, Rosenzweig, and Serfaty
extended the framework to the full potential Riesz range and more general
Riesz-type interactions \cite{NguyenRosenzweigSerfaty2022}.  The global
bounded-density theory and the sharp $L^\infty$ decay for the limiting Coulomb
equation are due to Serfaty and V\'azquez \cite{Serfaty111}.  At this
regularity the Coulomb field is Zygmund and log-Lipschitz, whereas the standard
multiplicative commutator estimate is naturally formulated for Lipschitz
comparison fields.  The Coulomb argument developed below combines quadratic
transport with the dissipative modulated energy identity to bridge this
regularity gap.

Sharp finite-$N$ commutator estimates were developed further by Rosenzweig and
Serfaty and by Hess-Childs, Rosenzweig, and Serfaty
\cite{Rosenzweig11222,HessChildsRosenzweigSerfaty2025}.  Almost-Lipschitz
transport fields under scaling-critical Sobolev assumptions were subsequently
studied in \cite{HessChildsRosenzweigSerfaty2026}.  In the present argument the
sharp first-order estimate is applied to a mollified Lipschitz field, while the
non-Lipschitz remainder is treated together with the remaining negative
quadratic term involving the force error.  The finite-$N$ scales in the main estimates are
inherited from the sharp lower-bound and first-order commutator estimates; the
time-dependent loss is generated by the
Bihari or Osgood comparison.

At scaling-critical regularity, Rosenzweig obtained quantitative convergence
for two-dimensional point vortices with bounded vorticity and for
higher-dimensional conservative Coulomb flows with bounded density
\cite{Rosenzweig2022Vortices,Rosenzweig2022Coulomb}.  The latter convergence
result is formulated on a short time interval.  For the repulsive dissipative
Coulomb flow considered here, the density decay and the remaining force
dissipation yield quantitative comparison on every prescribed finite interval.
Uniform-in-time estimates are available in other settings, including diffusive
Riesz flows and periodic singular Riesz flows
\cite{RosenzweigSerfaty2023,ChodronRosenzweigSerfaty2025}; the attractive
logarithmic gas was treated in \cite{ChodronRosenzweigSerfatyLog2025}.
Nguyen and Serfaty introduced a multiscale mollification metric for a broader
class of singular first-order systems, including interactions that need not
arise from a potential \cite{NguyenSerfaty2026}.  Their results cover the
maximal smooth lifespan in the sub-Coulomb regime and low-dimensional Coulomb
cases, with shorter time scales in higher-dimensional Coulomb and
super-Coulomb regimes.  The present Coulomb theorem instead exploits the
repulsive gradient-flow sign and the bounded-density decay to obtain, on arbitrary
prescribed finite intervals, quantitative estimates for the Wasserstein distance
on the full $N$-particle law, the modulated energy, and the time-integrated
mean-square force error.

Ben-Porat, Carrillo, and Jabin studied singular mean-field dynamics with
time-dependent particle weights \cite{BenPoratCarrilloJabin2026}, where the
weight evolution leads to an additional source term and a corresponding
functional inequality.  Relative entropy and modulated free energy methods
provide a complementary route to quantitative propagation of chaos.  Jabin and
Wang developed such estimates for Vlasov and stochastic particle systems with
rough interactions, and subsequent works extended the approach to broader
classes of singular kernels, including attractive interactions and
Patlak--Keller--Segel type models
\cite{JabinWang2016,JabinWang2018,BreschJabinWang2019,BreschJabinWang2020,
BreschJabinWang2023}.  For diffusive systems, entropy and hierarchy methods
have led to quantitative, and in some regimes uniform-in-time, propagation of
chaos
\cite{FournierHaurayMischler2014,Lacker2023,LackerLeFlem2023,
BreschJabinSoler2025}; related relative-entropy techniques have also been
adapted to Landau particle approximations
\cite{CarrilloFengGuoJabinWang2026}.  A complementary line of work quantifies
higher-order correlations more directly through tools such as Glauber calculus,
correlation hierarchies, and duality
\cite{Duerinckx2021,BejarLopezBlausteinJabinSoler2026,KhouryJabin2026}.  In the
second-order kinetic setting, Duerinckx and Jabin recently derived the two-dimensional
Vlasov--Poisson limit for classical Coulomb particles without a microscopic
cutoff by a dual hierarchy and refined BBGKY correlation analysis
\cite{DuerinckxJabin2026}.  Wasserstein methods for gradient flows under
convexity assumptions are discussed in \cite{BermanOnnheim2019}.

\subsection{Main contributions}
The main results consist of a bounded-density Coulomb theorem, a Riesz
weak--strong stability theorem, and a counterexample concerning the
Wasserstein distance of the full $N$-particle law.
\begin{enumerate}
\item \emph{Coulomb flows at bounded density.}
For every $d\ge2$ and $\rho_0\in\mathcal P_2(\mathbb R^d)\cap L^\infty$, we
obtain a quantitative comparison between the $N$-particle law and the global
Coulomb solution at the natural bounded-density regularity on every prescribed
finite interval.  The estimate simultaneously controls the normalized
Wasserstein distance of the full $N$-particle law, the expected modulated
energy with its finite-$N$ correction, and the time integral of the mean-square
force error.  The proof retains a negative term given by the mean-square force error in the combined
transport--energy inequality, uses it through an exact completion of squares,
and combines the result with a time-weighted quadratic transport estimate based
on the Riccati decay of $m(t)$ and mollification at the scale $m(t)^{-1/d}$.  For
tensorized data the convergence rates are stated in
Theorem~\ref{thm:bounded-density-coulomb-stability}; the time exponent may be
chosen as $(1+T\|\rho_0\|_\infty)^{-c_d}$ with $c_d$ depending only on the
dimension.

\item \emph{A super-Coulomb Riesz extension.}
For $d-2<s<d$, the same combination of quadratic transport and
modulated energy dissipation yields weak--strong stability for prescribed reference solutions in
$B^{s-d+2}_{\infty,q}$, $1\le q\le\infty$.  The cases $q=1$,
$1<q<\infty$, and $q=\infty$ lead to Gronwall, Bihari, and Osgood comparisons,
respectively.  The particle approximation also gives uniqueness within the
stated Besov class.

\item \emph{The Wasserstein distance for the full $N$-particle law.}
On $\mathbb R^d$ we construct symmetric Kac-chaotic laws for which the modulated
energy with its finite-$N$ correction tends to zero while the normalized
$N$-particle Wasserstein distance from $\rho^{\otimes N}$ stays bounded away
from zero.
\end{enumerate}

The two main results rely on the coupled estimates described next.

\subsection{Coupling the quadratic transport cost and the modulated energy}
\label{subsec:dissipative-transport-energy-closure}
\paragraph{Combined transport--energy estimate.}
The quadratic transport estimate and the dissipative modulated energy identity
along the same trajectories take the form
\begin{equation}\label{eq:intro-dissipative-closure-pair}
 Q_N'\le r_N+\omega(Q_N),\qquad F_N'=-2r_N-C_N[u].
\end{equation}
Their sum retains the negative contribution $-r_N$.  Writing
$u=u_\eps+w_\eps$, with $u_\eps$ a mollification of the reference field, the
exact identity
\begin{equation}\label{eq:intro-dissipative-closure-square}
 -r_N-C_N[w_\eps]
 =-r_N[u_\eps]+\|w_\eps\|_{L^2(\mu_N)}^2
 +2\int w_\eps\cdot(K_N-u)\rho
\end{equation}
incorporates the commutator of the non-Lipschitz remainder into the remaining
negative quadratic term involving the force error.  The sharp first-order commutator estimate is then applied to
$u_\eps$, while the last two terms are controlled by the mollification bounds
and estimates of the continuous pairing term.  Together these estimates close
the coupled differential inequality used below.

\paragraph{Weighted quadratic transport and choice of mollification scale.}
For the Coulomb theorem the sharp $L^\infty$ decay estimate supplies a deterministic envelope with $m'=-m^2$.  The
weighted quadratic transport cost $\widehat mQ_N$ cancels the quadratic
contribution generated by Young's inequality.  The same density bound determines
the scale $R=m^{-1/d}$ and the mollification radius
$\eps_\vartheta=R\vartheta$.  After the inequalities at a fixed scale are
averaged, a deterministic dyadic parameter is chosen from the averaged error.
The resulting Osgood term has coefficient $C_d m(t)$, whereas
$m(t)^{2-2/d}$ appears only in a linear term handled by an integrating factor.

\subsection{Wasserstein distance and modulated energy}\label{subsec:wasserstein-modulated}
The quadratic transport cost and the modulated energy measure different aspects
of convergence on $\mathbb R^d$.  For a configuration $X_N$ with empirical
measure $\mu_N$, the modulated energy is
\[
 F_N(X_N,\rho)
 = {\iint}_{(\R^d)^2\setminus\Delta_2}
 g(x-y)\,\dd(\mu_N-\rho)(x)\dd(\mu_N-\rho)(y).
\]
It is adapted to the singular interaction after removal of the atomic
self-interaction.  The normalized Wasserstein distance additionally detects
the quadratic cost of transporting small amounts of mass far from the bulk:

\[
 \frac1N W_2^2(\rho_N(t),\rho_t^{\otimes N})
\]
is the mean quadratic transport cost per particle.  Symmetry gives
\begin{equation}\label{eq:intro-W2-marginal-control}
 \frac1kW_2^2(\rho_{N,k}(t),\rho_t^{\otimes k})
 \le \frac1N W_2^2(\rho_N(t),\rho_t^{\otimes N}),
 \qquad 1\le k\le N,
\end{equation}
so a bound for the full $N$-particle law immediately controls every fixed
marginal.

The distinction is genuine on $\mathbb R^d$.
Proposition~\ref{prop:whole-space-modulated energy-obstruction} constructs
symmetric Kac-chaotic laws for which
\[
 \int(F_N+\eta_N)\,\dd\rho_N\longrightarrow0,
 \qquad
 \liminf_{N\to\infty}\frac1N
 W_2^2(\rho_N,\rho^{\otimes N})>0.
\]
The example places mass $1/N$ at distance of order $\sqrt N$.  This mass is
invisible in every fixed marginal in the limit and contributes negligibly to
the modulated energy, but it produces a contribution of order one to the
normalized quadratic transport cost.  The logarithmic analogue and the
compact case are discussed in Remark~\ref{rem:whole-space-vs-compact-W2}.

\subsection{Outline of the proof and organization}
After stating the main results, Section~\ref{sec:abstract-framework} derives the
transport and modulated energy identities and the completion of squares
identity.  Section~\ref{sec:particle-law-comparison} records the averaging and
comparison tools.  We then turn immediately to the primary application: the
global Coulomb theory for bounded densities is recalled in
Section~\ref{sec:coulomb-reference-solutions}, and the Coulomb mean-field theorem
is proved in the following section using the weighted transport estimate and
mollification at the scale $R(t)=m(t)^{-1/d}$.  Section~\ref{sec:critical-riesz-proof} develops the super-Coulomb Riesz
extension under the stated Besov regularity.  The counterexample and the consistency estimate for the empirical continuity
equation follow afterwards.

The appendices contain technical proofs for the Riesz extension, the principal
value argument, Coulomb particle dynamics, measurability and integrability,
the Riesz singular chain rule and the normalization of the imported finite-$N$
inequalities.

\section{Main results}\label{sec:main-results}
All constants are independent of $N$.  Generic constants $C$ may change from
line to line, with subscripts indicating their allowed dependence; the constants
$A_{d,s}$, $A_d$, and $c_d$ in the finite-$N$ corrections and Coulomb exponent
are fixed once and for all.  Unless stated otherwise, the $N$-particle laws are
symmetric, have finite second moment, and give no mass to the collision set.
The finite-$N$ lower bounds
and commutator estimates used below, in the normalization adopted here, are
collected in Appendix~\ref{sec:static-inputs}.  For the tensorized initial law $\rho_0^{\otimes N}$, the
estimates give explicit rates of propagation of chaos.  The comparison estimates
also apply to correlated initial laws satisfying the corresponding
quantitative initial assumptions.

\subsection{Notation for the theorem statements}
For $N\ge2$ define the collision set and collision-free configuration space by
\[
 \Delta_N:=\{X_N=(x_1,\ldots,x_N)\in(\R^d)^N:
 x_i=x_j\ \text{for some }i\ne j\},\qquad
 \Omega_N:=(\R^d)^N\setminus\Delta_N,
\]
and write $\Delta_2:=\{(x,x):x\in\R^d\}$.  We use the homogeneous Zygmund
seminorm
\[
 [f]_{\Lambda_*}:=
 \sup_{x\in\R^d,\,h\ne0}
 \frac{|f(x+h)+f(x-h)-2f(x)|}{|h|},
\]
and the convention $\log_+ r:=\max\{\log r,0\}$ for $r>0$.
For $P,Q\in\mathcal P_2((\R^d)^N)$, $W_2(P,Q)$ denotes the quadratic
Wasserstein distance.  We use the normalized squared distance
\[
 \frac1N W_2^2(P,Q)
 =\inf_{\pi\text{ coupling }P\text{ and }Q}
 \int\frac1N\sum_{i=1}^N|x_i-y_i|^2\,\dd\pi(X_N,Y_N).
\]
Section~\ref{sec:abstract-framework} defines, for a translation-invariant
potential $W$, the quantities $K_i^W$, $u^W$, $F_N^W$, $r_N^W$,
$C_N^W(X_N,\rho;v)$, and $Q_N$.  Once the kernel is fixed, we omit the
superscript.  At the level of the full $N$-particle law we write
\begin{equation}\label{eq:common-law-observables}
 \overline r_N(t):=\int r_N(t,X_N)\,\dd\rho_N(t)(X_N).
\end{equation}
We write $\rho_t:=\rho(t,\cdot)$ and $u_t:=u(t,\cdot)$ for time slices.
The following notation will be used throughout the theorem statements.
\begin{itemize}
\item $\rho_t$ is the limiting one-particle density, whereas $\rho_N(t)$ is the $N$-particle law on $(\R^d)^N$; $\mu_N(X_N)$ is the empirical measure of one configuration sampled from that law.
\item $K_i(X_N)$ is the discrete interaction field evaluated at particle $i$; when needed, $K_N(x)=N^{-1}\sum_j\nabla g(x-x_j)$ denotes the empirical field away from the particle positions.
\item $Q_N$ denotes the normalized quadratic transportation cost along a coupled trajectory.  Its expectation under the coupling bounds the normalized squared Wasserstein distance between the $N$-particle law and the tensorized law defined above.
\item $F_N$ is the modulated energy,
$r_N=N^{-1}\sum_i|K_i-u(x_i)|^2$, $\overline r_N$ its expectation with respect
to the $N$-particle law, and $C_N[v]$ the commutator associated with a comparison
vector field $v$.
\end{itemize}

When $d-1\le s<d$, the commutator is defined by the absolutely convergent formula obtained by symmetrization.  Appendix~\ref{sec:principal-value-representation}
shows that centered cutoffs and even regularizations converge to the same
quantity.

\subsection{Coulomb interactions with bounded limiting density}
\label{subsec:natural-coulomb-main-results}
For every $d\ge2$, let $g_d$ be the normalized Coulomb fundamental solution
\begin{equation}\label{eq:unified-coulomb-kernel}
 g_d(x):=
 \begin{cases}
  -(2\pi)^{-1}\log|x|,&d=2,\\[1mm]
  \bigl((d-2)|\mathbb S^{d-1}|\bigr)^{-1}|x|^{2-d},&d\ge3,
 \end{cases}
 \qquad -\Delta g_d=\delta_0.
\end{equation}
Proposition~\ref{prop:bounded-density-flow} gives a unique global limiting
solution for every
$\rho_0\in\mathcal P_2(\R^d)\cap L^\infty(\R^d)$, and the corresponding
interaction field is bounded, Zygmund, and log-Lipschitz.

For a probability density $\mu\in\mathcal P_2(\R^d)\cap L^\infty(\R^d)$,
define the dimension-dependent Coulomb finite-$N$ error term
\begin{equation}\label{eq:coulomb-finite-N-correction}
 \eta_{N,d}(\mu):=A_d
 \begin{cases}
  \dfrac{1+\log(1+N\|\mu\|_{L^\infty})}{N},&d=2,\\[3mm]
  \|\mu\|_{L^\infty}^{1-2/d}N^{-2/d},&d\ge3,
 \end{cases}
 \qquad
 \eta_{N,d}:=\eta_{N,d}(\rho_0),
\end{equation}
where $A_d>0$ is fixed sufficiently large for the finite-$N$ lower bound and the first-order commutator estimate in
Lemma~\ref{lem:unified-coulomb-static}.  The sharp $L^\infty$ decay estimate of
Serfaty--V\'azquez \cite[Theorem~5.1]{Serfaty111} implies
$\eta_{N,d}(\rho_t)\le\eta_{N,d}$ for every $t\ge0$.
Writing $M_0:=\|\rho_0\|_{L^\infty}>0$, the same estimate gives the
deterministic density envelope
\begin{equation}\label{eq:coulomb-density-envelope}
 m(t):=\frac{M_0}{1+tM_0},
 \qquad m'(t)=-m(t)^2,
 \qquad \|\rho_t\|_{L^\infty}\le m(t).
\end{equation}
We also introduce the time-dependent weight
\begin{equation}\label{eq:coulomb-normalized-transport-weight}
 \widehat m(t):=\frac{m(t)}{1+M_0}.
\end{equation}
Then $0<\widehat m(t)<1$ and $\widehat m'=-m\widehat m$.  This normalization
makes the constant in the logarithmic transport estimate independent of $M_0$.
Choose a dimensional constant $c_d>0$ sufficiently large to
dominate the Osgood coefficient in the comparison and define
\begin{equation}\label{eq:coulomb-osgood-exponent}
 \gamma_{T,d}:=
 \exp\!\left[-c_d\int_0^T m(t)\,\dd t\right]
 =\bigl(1+TM_0\bigr)^{-c_d}.
\end{equation}
Thus $\gamma_{T,d}>0$ for every finite $T$, and its polynomial power depends
only on the dimension.  The additional linear term generated by the
mollification remainder affects only the multiplicative constant in the main
estimate, not the exponent $\gamma_{T,d}$.

\begin{theorem}[Quantitative mean-field estimate for Coulomb flows with bounded density]
\label{thm:bounded-density-coulomb-stability}
Let $d\ge2$, $T>0$, and let $\rho$ be the global solution with bounded density
generated by
$\rho_0\in\mathcal P_2(\R^d)\cap L^\infty(\R^d)$ for the kernel
$g_d$.  Let $N\ge2$ and let
$\rho_N^0\in\mathcal P_2((\R^d)^N)$ be symmetric, give no mass to the
collision set, and satisfy
\[
 \int\bigl(F_N(X_N,\rho_0)+\eta_{N,d}\bigr)\,\dd\rho_N^0(X_N)<\infty.
\]
We set
\begin{equation}\label{eq:coulomb-unified-initial-error}
 a_{N,d}:=\frac1N W_2^2(\rho_N^0,\rho_0^{\otimes N})
 +\int\bigl(F_N(X_N,\rho_0)+\eta_{N,d}\bigr)\,\dd\rho_N^0
 +\begin{cases}
  \dfrac{1+\log N}{N},&d=2,\\[2mm]
  N^{-2/d},&d\ge3.
 \end{cases}
\end{equation}
Then there exists $C_{T,d,\rho_0}>0$, independent of $N$, such that
\begin{equation}\label{eq:coulomb-main-estimate}
\begin{aligned}
&\sup_{0\le t\le T}\left[
 \frac1N W_2^2(\rho_N(t),\rho_t^{\otimes N})
 +\int\bigl(F_N(X_N,\rho_t)+\eta_{N,d}\bigr)
 \,\dd\rho_N(t)(X_N)\right]
 +\int_0^T\overline r_N(t)\,\dd t\\
&\hspace{4cm}\le C_{T,d,\rho_0}
 \bigl(a_{N,d}^{\gamma_{T,d}}+a_{N,d}\bigr).
\end{aligned}
\end{equation}
For the tensorized initial law $\rho_N^0=\rho_0^{\otimes N}$,
\begin{equation}\label{eq:coulomb-product-rate}
\begin{aligned}
 &\sup_{0\le t\le T}\left[\frac1N W_2^2(\rho_N(t),\rho_t^{\otimes N})
 +\int\bigl(F_N(X_N,\rho_t)+\eta_{N,d}\bigr)\,\dd\rho_N(t)(X_N)\right]\\
 &\qquad+\int_0^T\overline r_N(t)\,\dd t
 \le C_{T,d,\rho_0}
 \begin{cases}
  \left(\dfrac{1+\log N}{N}\right)^{\gamma_{T,2}},&d=2,\\[3mm]
  N^{-2\gamma_{T,d}/d},&d\ge3.
 \end{cases}
\end{aligned}
\end{equation}
\end{theorem}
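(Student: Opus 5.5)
We follow the scheme of Subsection~\ref{subsec:dissipative-transport-energy-closure}. We couple the particle system \eqref{eq:particle-system} with $N$ independent copies of the characteristic flow $\dot y_i=-u_t(y_i)$ of the limiting equation, started from an optimal coupling of $\rho_N^0$ and $\rho_0^{\otimes N}$. Let $Q_N=N^{-1}\sum_i|x_i-y_i|^2$. Its expectation bounds $N^{-1}W_2^2(\rho_N(t),\rho_t^{\otimes N})$ from above, and at $t=0$ it equals the initial Wasserstein term in $a_{N,d}$.

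\textbf{Differential identities.} Differentiating $Q_N$ gives
\[
-\tfrac{2}{N}\sum_i(x_i-y_i)\cdot\bigl(K_i-u(x_i)\bigr)-\tfrac2N\sum_i(x_i-y_i)\cdot\bigl(u(x_i)-u(y_i)\bigr).
\]
We bound the first term by Young's inequality, $r_N+Q_N$ (or with a weight), and the second by the log-Lipschitz modulus of $u_t$. Differentiating $F_N$ along the particle flow gives $F_N'=-2r_N-C_N[u]$, as in Section~\ref{sec:abstract-framework}.

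\textbf{Closing with the mollified field.} We add the weighted transport estimate $\widehat m Q_N$ to $F_N+\eta_{N,d}$. The factor $\widehat m\le1$ ensures that $-2r_N+\widehat m r_N\le -r_N$, while $\widehat m'=-m\widehat m$ absorbs the term $m\widehat m Q_N$ coming from Young's inequality. We split $u=u_\eps+w_\eps$ with $\eps=m^{-1/d}\vartheta$ and use the completion-of-squares identity \eqref{eq:intro-dissipative-closure-square}.

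\begin{itemize}
\item The sharp first-order commutator (Lemma~\ref{lem:unified-coulomb-static}) bounds $C_N[u_\eps]$ by $\|\nabla u_\eps\|_{L^\infty}(F_N+\eta_N(\rho_t))$. Since $\|\nabla u_\eps\|\lesssim m\log(1/\vartheta)$, this contributes $Cm\log(1/\vartheta)\,(F_N+\eta)$.
\item The remainder satisfies $\|w_\eps\|_\infty\lesssim m^{1-1/d}\vartheta$, so $\|w_\eps\|^2_{L^2(\mu_N)}\lesssim m^{2-2/d}\vartheta^2$.
\item We bound the pairing $2\int w_\eps\cdot(K_N-u)\rho$ through the energy, roughly by $\|w_\eps\|_{\dot H^1}$-type norms times $(F_N+\eta)^{1/2}$, giving lower-order terms.
\item The log-Lipschitz transport term gives $Cm\,Q\log(1/Q)$ after using the scale $R=m^{-1/d}$.
\end{itemize}

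\textbf{Comparison argument.} Taking expectations (Section~\ref{sec:particle-law-comparison}), we set $E(t)=\widehat m\,\mathbb E Q_N+\mathbb E(F_N+\eta)$ and choose the dyadic $\vartheta$ deterministically with $\vartheta^2\sim E$. This yields
\[
E'+\overline r_N\le C_d\,m\,E\log(1/E)+Cm^{2-2/d}E+C\eta_{N,d}\,m\log(1/\eta).
\]
We remove the linear term by an integrating factor, whose integral is finite on $[0,T]$ and depends on $\rho_0$. The Osgood comparison for $\Phi(E)=E\log(1/E)$ with coefficient $C_dm$ gives
\[
E(t)\le C\bigl(E(0)+\text{error}\bigr)^{\exp(-c_d\int_0^t m)}=C\,a^{\gamma_{T,d}}.
\]
When $E\gtrsim1$ the estimate is linear, which produces the $+a_{N,d}$ term. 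Integrating the retained $-\overline r_N$ gives the bound on the time integral. Finally, $\widehat m(T)\ge c_{T,\rho_0}>0$ recovers the unweighted $W_2$ bound.

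\textbf{Tensorized data.} For $\rho_N^0=\rho_0^{\otimes N}$ the Wasserstein term vanishes. The expected modulated energy plus $\eta_{N,d}$ is $O(\eta_{N,d})$ by the standard computation $\mathbb E F_N=-N^{-1}\int\!\!\int g\,\rho_0\rho_0$ together with the lower bound. Hence $a_{N,d}\lesssim N^{-2/d}$, respectively $(1+\log N)/N$, which gives \eqref{eq:coulomb-product-rate}.

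\textbf{Main obstacle.} The hardest step is making the averaged inequality genuinely Osgood with coefficient proportional to $m$ alone, uniformly in $M_0$. The pairing term and the choice of $\vartheta$ must be handled so that only $m\log(1/E)$, and not $m\log M_0$, multiplies $E$. I would first try to absorb the pairing term into $-r_N[u_\eps]$ using Cauchy–Schwarz with $\rho\le m$.
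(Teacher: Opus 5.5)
Your architecture coincides with the paper's: coupling from an optimal plan, the weight $\widehat m$, mollification at radius $m(t)^{-1/d}\vartheta$, completion of squares, the sharp commutator on $u_\vartheta$, averaging, Osgood comparison with linear drift, a fixed-scale Gronwall bound for large data, and Lemma~\ref{lem:unified-product-initial-data} for tensorized data. But the proposal does not close at the step you flag yourself: the pairing $P:=2\int w_\vartheta\cdot(K_N-u)\rho_t$ combined with your choice $\vartheta^2\sim E$.

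The direct bound $|P|\le 2\|w_\vartheta\|_\infty\int(|K_N|+|u|)\rho_t\le C_d m^{2-2/d}\vartheta$ from \eqref{eq:coulomb-scale-pairing} is incompatible with that scale. With $\vartheta\sim E^{1/2}$ it is a source of size $m^{2-2/d}E^{1/2}$, which is not Osgood; the comparison solution no longer tends to $0$ with $E(0)$, so no rate $a_{N,d}^{\gamma_{T,d}}$ follows.

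Your energy-based bound is not right as stated. The relevant quantity is $\|w_\vartheta\rho_t\|_{L^2}$, not an $\dot H^1$ norm of $w_\vartheta$, and $w_\vartheta\rho_t$ has no derivative since $\rho_t$ is merely bounded. Because $K_N-u\notin L^2_{\mathrm{loc}}$ near the atoms, making it rigorous needs two things not contained in Lemma~\ref{lem:unified-coulomb-static}: control of the electric energy of the smeared empirical field by $F_N+C\eta_{N,d}$, and a separate smearing error.

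Your proposed fallback fails outright. First, $r_N[u_\vartheta]$ only sees the field at the particle positions. Second, Cauchy--Schwarz against $\rho_t$ would need $\int|K_N-u|^2\rho_t<\infty$, which is false in general for $d\ge2$, since $|K_N(x)|^2\sim N^{-2}|x-x_i|^{2-2d}$ is not locally integrable near an atom.

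The missing observation is that nothing has to be absorbed. Choose $\vartheta\sim E$ instead, as the paper does with the selector $\vartheta_*=\eps_1(\mathcal E_N)$, $\vartheta_*\le\mathcal E_N<2\vartheta_*$. Then $|P|+\|w_\vartheta\|_{L^2(\mu_N)}^2\le C_dm^{2-2/d}\vartheta_*\le C_dm^{2-2/d}\mathcal E_N$ is a linear drift. The integrating factor of Lemma~\ref{lem:logarithmic-osgood-linear-drift} removes it, and it only affects the constant. Meanwhile $\ell(\vartheta_*)\mathcal E_N\le C\omega_*(\mathcal E_N)$ still holds, so the Osgood coefficient is $C_d m(t)$ alone. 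Your worry about $m\log M_0$ is already settled by the scale-covariant radius, which gives $\|\nabla u_\vartheta\|_\infty\le C_dm\ell(\vartheta)$ with no $M_0$.

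Smaller corrections:
\begin{enumerate}
\item To cancel $m\widehat mQ_N$, Young's inequality must read $-2\widehat m q_i\cdot e_i\le(\widehat m/m)|e_i|^2+m\widehat m|q_i|^2$. What you need is $\widehat m/m=(1+M_0)^{-1}\le1$, not $\widehat m\le1$.
\item The same normalization gives $R^2\widehat m\le1$. This is what turns $\log_+(R/|q_i|)$ into $\log(1/\sqrt{z_i})$, with $z_i=\widehat m|q_i|^2$, in the transport modulus.
\item The retained negative term is $-\overline r_N^{\vartheta}$, not $-\overline r_N$. The bound on $\int\overline r_N$ then comes from $r_N\le 2r_N^{\vartheta}+C_dm^{2-2/d}\vartheta^2$.
\item Adding the deterministic floor $(1+\log N)/N$, resp.\ $N^{-2/d}$, to $E$ keeps it positive so the dyadic selector is well defined. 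This makes your extra forcing term $C\eta_{N,d}m\log(1/\eta_{N,d})$ unnecessary.
\end{enumerate}
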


\begin{remark}
\label{rem:coulomb-fixed-time-comparison}
The reference solution in Theorem~\ref{thm:bounded-density-coulomb-stability}
is the global bounded-density Coulomb solution.  The time dependence of the
estimate is encoded in
$\gamma_{T,d}=(1+T\|\rho_0\|_\infty)^{-c_d}$, which remains positive on every
finite interval.  The finite-$N$ scales $N^{-2/d}$ for $d\ge3$ and
$(1+\log N)/N$ for $d=2$ arise from the sharp lower bound and first-order
commutator estimate, while the Osgood comparison produces the additional
loss in the convergence exponent.
\end{remark}

\subsection{Weak--strong stability under \texorpdfstring{$B^{s-d+2}_{\infty,q}$}{B(s-d+2; infinity,q)} regularity}
\label{subsec:critical-main-result}
The case $q=1$ is the Lipschitz endpoint, while $1<q\le\infty$ yields
almost-Lipschitz or Zygmund velocity fields.  We use the inhomogeneous
Littlewood--Paley norm
\[\|f\|_{B^\alpha_{\infty,q}}=\bigl\|(2^{j\alpha}\|\Delta_jf\|_{L^\infty})_{j\ge-1}\bigr\|_{\ell^q},
 \qquad 1\le q\le\infty,
\]
and, for $d\ge2$ and $d-2<s<d$, set
\[
 \alpha:=s-d+2\in(0,2).
\]
The exponent $\alpha=s-d+2$ is chosen so that the Riesz interaction
operator maps $B^\alpha_{\infty,q}$ to $B^1_{\infty,q}$ at high
frequencies.  Thus $q=1$ gives the Lipschitz endpoint and $q=\infty$ the
Zygmund endpoint.  This Besov scale is the one used in the velocity estimates
of the weak--strong argument.
\begin{definition}[Reference solution with $B^{s-d+2}_{\infty,\infty}$ regularity]
\label{def:critical-riesz-reference-solution}
A curve $\rho$ is a limiting solution with the prescribed Besov regularity on $[0,T]$ if it is a
nonnegative distributional solution of unit mass of
\[
 \partial_t\rho=\nabla\!\cdot(\rho u),
 \qquad u=\nabla g_s*\rho,
\]
and satisfies $\rho\in C([0,T];\mathcal P_2(\R^d))$ and
\[\rho\in L^\infty\!\left(0,T;L^1\cap L^\infty
 \cap B^\alpha_{\infty,\infty}(\R^d)\right).
\]
The convolution is understood in the sense of distributions.  The representatives
of $\rho$ and $u$ used below are those constructed in
Proposition~\ref{prop:critical-riesz-borel-reference}.  That proposition also shows
that the essential $L^\infty$ and Besov bounds in this definition extend to every
time $t\in[0,T]$.  In particular, each $\rho_t$ admits an $L^\infty$ density,
including at $t=0$, so the norms at individual time slices used in the statements below
are well defined.
\end{definition}

We set
\[
 M_T:=\max\left\{\|\rho_0\|_{L^\infty},
 \operatorname*{ess\,sup}_{0<t<T}\|\rho_t\|_{L^\infty}\right\},
 \qquad
 \eta_N:=A_{d,s}M_T^{s/d}N^{s/d-1}.
\]
Choose $A_{d,s}>0$ large enough that $\eta_N$ dominates the additive finite-$N$
terms in both the lower bound and the first-order commutator estimate of
Lemma~\ref{lem:static-riesz-transfer}; in particular, $F_N+\eta_N\ge0$.
Once $T$ and the reference solution are fixed, so is $\eta_N$ through $M_T$.
Throughout this subsection, $\rho$ is a solution in the sense of
Definition~\ref{def:critical-riesz-reference-solution}.  When $d-1\le s<d$,
the commutator is understood through the absolutely convergent symmetrized
formula of Appendix~\ref{sec:principal-value-representation}.

\begin{theorem}[Weak--strong stability under $B^{s-d+2}_{\infty,q}$ regularity]
\label{thm:critical-riesz-propagation}
Fix $1\le q\le\infty$.  Let $d\ge2$, $d-2<s<d$, $T>0$, $N\ge2$, and let $\rho$ be a limiting solution
satisfying Definition~\ref{def:critical-riesz-reference-solution}.  Let
$\rho_N^0\in\mathcal P_2((\R^d)^N)$ be symmetric, satisfy
$\rho_N^0(\Delta_N)=0$, and assume
\[\int_{(\R^d)^N}\left(F_N(X_N,\rho_0)+\eta_N\right)
 \,\dd \rho_N^0(X_N)<\infty.
\]
Let $\rho_N(t)$ be the pushforward of $\rho_N^0$ under the particle flow, and put
\begin{equation*}
 a_N:=\frac1N W_2^2(\rho_N^0,\rho_0^{\otimes N})
 +\int\bigl(F_N(X_N,\rho_0)+\eta_N\bigr)\,\dd\rho_N^0(X_N)
 +N^{s/d-1}.
\end{equation*}

If $q=\infty$, define
\[\gamma_T:=\exp\!\left[-C_{d,s}\int_0^T
 \left(1+M_T+\|\rho_t\|_{B^\alpha_{\infty,\infty}}\right)^2\,\dd t\right].
\]
Then
\begin{equation}\label{eq:critical-riesz-main-estimate}
\begin{aligned}
&\sup_{0\le t\le T}\left[\frac1N W_2^2(\rho_N(t),\rho_t^{\otimes N})
 +\int\bigl(F_N(X_N,\rho_t)+\eta_N\bigr)\,\dd\rho_N(t)(X_N)\right]
 +\int_0^T \overline r_N(t)\,\dd t\\
&\hspace{4cm}\le C_{T,d,s,\rho}\bigl(a_N^{\gamma_T}+a_N\bigr).
\end{aligned}
\end{equation}

If $1\le q<\infty$, assume in addition
\begin{equation}\label{eq:critical-riesz-finite-q-bound}
 \rho\in L^\infty\!\left(0,T;B^\alpha_{\infty,q}(\R^d)\right).
\end{equation}
Then
\begin{equation}\label{eq:critical-riesz-finite-q-main-estimate}
\begin{aligned}
&\sup_{0\le t\le T}\left[\frac1N W_2^2(\rho_N(t),\rho_t^{\otimes N})
 +\int\bigl(F_N(X_N,\rho_t)+\eta_N\bigr)\,\dd\rho_N(t)(X_N)\right]
 +\int_0^T \overline r_N(t)\,\dd t\\
&\qquad\le C_{T,d,s,q,\rho}\,a_N
 \exp\!\left(C_{T,d,s,q,\rho}
 \bigl(1+\log_+(1/a_N)\bigr)^{1-1/q}\right).
\end{aligned}
\end{equation}
For $q=1$, the exponential factor is absorbed into the constant, so the
right-hand side is simply $C_{T,d,s,\rho}a_N$.

For the tensorized initial law $\rho_N^0=\rho_0^{\otimes N}$, the estimate yields the
explicit rates
\begin{equation}\label{eq:critical-product-rates}
\begin{aligned}
 &\sup_{0\le t\le T}\left[\frac1N W_2^2(\rho_N(t),\rho_t^{\otimes N})
 +\int\bigl(F_N(X_N,\rho_t)+\eta_N\bigr)\,\dd\rho_N(t)(X_N)\right]\\
 &\qquad+\int_0^T\overline r_N(t)\,\dd t
 \le C_{T,d,s,q,\rho}
 \begin{cases}
 N^{-(1-s/d)\gamma_T},&q=\infty,\\
 N^{s/d-1}\exp\!\bigl(C(\log N)^{1-1/q}\bigr),&1<q<\infty,\\
 N^{s/d-1},&q=1.
 \end{cases}
\end{aligned}
\end{equation}
\end{theorem}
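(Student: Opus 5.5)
We follow the scheme of \eqref{eq:intro-dissipative-closure-pair}--\eqref{eq:intro-dissipative-closure-square}. Couple $\rho_N^0$ with $\rho_0^{\otimes N}$ by an optimal plan $\pi_0$. Transport each $X_N$ by the particle flow \eqref{eq:particle-system} and each $Y_N$ by the characteristics of $-u$; the latter are well defined because $u$ is bounded and log-Lipschitz, or Zygmund, by the $B^\alpha_{\infty,q}$ assumption. Set $Q_N=N^{-1}\sum_i|x_i-y_i|^2$. Its $\pi_0$-expectation bounds $\frac1NW_2^2(\rho_N(t),\rho_t^{\otimes N})$.

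\textbf{Step 1: transport inequality.} Differentiating gives
\[
Q_N'=-\frac2N\sum_i(x_i-y_i)\cdot\bigl(K_i-u(x_i)\bigr)-\frac2N\sum_i(x_i-y_i)\cdot\bigl(u(x_i)-u(y_i)\bigr).
\]
Apply Young's inequality to the first term; this gives $\le r_N+Q_N$. For the second term, use the modulus of continuity of $u$. At the Zygmund endpoint this modulus is $|h|\log(e/|h|)$ with constant $\lesssim 1+M_T+\|\rho\|_{B^\alpha_{\infty,\infty}}$. For finite $q$, split into Littlewood--Paley blocks, so the modulus is $|h|(\log(e/|h|))^{1-1/q}$. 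Averaging with Jensen's inequality (the modulus is concave) yields $\overline Q_N'\le\overline r_N+\omega(\overline Q_N)$.

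\textbf{Step 2: energy identity and completion of squares.} The modulated energy identity from Section~\ref{sec:abstract-framework} gives $F_N'=-2r_N-C_N[u]$. Adding this to Step 1 leaves $-r_N-C_N[u]$. Write $u=u_\eps+w_\eps$ with $u_\eps=u*\chi_\eps$. The sharp first-order commutator estimate of Lemma~\ref{lem:static-riesz-transfer} gives
\[
|C_N[u_\eps]|\lesssim\|\nabla u_\eps\|_{L^\infty}(F_N+\eta_N)\lesssim\Lambda\log(1/\eps)^{1-1/q}(F_N+\eta_N),
\]
where $\Lambda$ denotes the relevant Besov norm of $\rho_t$. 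For $C_N[w_\eps]$, use the exact identity \eqref{eq:intro-dissipative-closure-square}. The term $-r_N[u_\eps]$ is nonpositive and is discarded. Next, $\|w_\eps\|_{L^2(\mu_N)}^2\le\|w_\eps\|_\infty^2\lesssim\Lambda^2\eps^2\log(1/\eps)^{2(1-1/q)}$, since $u\in B^1_{\infty,q}$. The continuous pairing $2\int w_\eps\cdot(K_N-u)\rho$ is handled by moving $\nabla$ onto $w_\eps\rho$ and bounding it through the energy, e.g.\ $\lesssim\|\nabla(w_\eps\rho)\|\,(F_N+\eta_N)^{1/2}+\text{small}$; this contributes $\eps$-small or energy-linear terms. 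Choose $\eps$ tied to $\overline Q_N+\overline F_N$, for instance $\eps^2=\mathcal E:=\overline Q_N+\overline{F_N+\eta_N}+N^{s/d-1}$. Then everything closes as
\[
\mathcal E'+\tfrac12\overline r_N\le C\Lambda^2\,\mathcal E\,\bigl(1+\log(1/\mathcal E)\bigr)^{1-1/q}.
\]
There is a subtlety here: $F_N$ is only bounded below by $-\eta_N$, so $\mathcal E$ must include the $+\eta_N$ shift. The logarithm must also be averaged via Jensen at a fixed dyadic $\eps$ before $\eps$ is chosen deterministically, as in Section~\ref{sec:particle-law-comparison}.

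\textbf{Step 3: comparison.} For $q=1$ the logarithm is absent and Gronwall gives $C a_N$. For $1<q<\infty$, Bihari's inequality applied to $\Phi(x)=x(1+\log(1/x))^{1-1/q}$ integrates to $\log(1/\mathcal E)^{1/q}\ge\log(1/a_N)^{1/q}-Ct$. This yields $a_N\exp(C(1+\log_+(1/a_N))^{1-1/q})$. For $q=\infty$, Osgood's lemma gives $\mathcal E\le a_N^{\gamma_T}$ with $\gamma_T=\exp(-C\int\Lambda^2)$. The added $+a_N$ covers the regime $\mathcal E\gtrsim1$, where the logarithm is replaced by a constant. Integrating the retained $\frac12\overline r_N$ gives the time-integral bound. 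Finally, for tensorized data $W_2=0$, and the expected modulated energy is $O(N^{s/d-1})$ by the standard variance computation (using $\rho_0\in L^\infty$, the $i\ne j$ expectation vanishes). This gives \eqref{eq:critical-product-rates}. The main obstacle is Step 2: specifically, the continuous pairing term for $s$ above Coulomb, where $K_N-u$ is not an $H^{-1}$-type object. I would try to bound it through the electric formulation, i.e.\ Caffarelli--Silvestre extension with the weighted energy, and ensure the bound loses only $\log(1/\eps)$ factors.
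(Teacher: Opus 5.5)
\textbf{Gap: the continuous pairing term in Step~2.} You leave $2\int w_\eps\cdot(K_N-u)\rho$ unresolved, and the route you sketch would not close. The remainder $w_\eps=u-u_\eps$ is small only in $L^\infty$. Its gradient has the size of $\nabla u$, which is never small and is in general unbounded for $q>1$. So a bound through $\|\nabla(w_\eps\rho)\|\,(F_N+\eta_N)^{1/2}$ leaves an $O(1)$ source after Young's inequality, and a fractional-order variant would bring smearing errors you have not controlled.

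The missing observation is that this term needs no energy at all, because the particles enter only as evaluation points:
\[
 \int w_\eps\cdot K_N\rho=\frac1N\sum_{i=1}^N\int w_\eps(x)\rho(x)\cdot\nabla g_s(x-x_i)\,\dd x .
\]
Hence the term is bounded, uniformly in the configuration, by the supremum over centers $a$ of $\bigl|\int w_\eps\rho\cdot\nabla g_s(\cdot-a)\bigr|$ plus $\|w_\eps\|_\infty\|u\|_\infty$.
\begin{itemize}
\item For $d-2<s<d-1$ the kernel is locally integrable, and the near--far split gives $C\mathcal B_q(t)^2\eps$ (Lemma~\ref{lem:critical-riesz-rough-local}).
\item For $d-1\le s<d$, use the same centered cutoff in every term of the completion of squares (Proposition~\ref{prop:critical-riesz-mollified-defect}) and pass to a principal value. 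Then use the product modulus of Lemma~\ref{lem:critical-riesz-product-modulus},
\[
 |(w_\eps\rho)(a+z)-(w_\eps\rho)(a)|\lesssim\mathcal B_q(t)^2\bigl[\min\{\eps,r\ell(r)^{\beta_q}\}+\eps r\ell(r)^{\beta_q}\bigr],\qquad r=|z|\le\tfrac12 .
\]
This is exactly where $\rho\in B^{s-d+2}_{\infty,q}$ with $s-d+2\ge1$ enters: the density itself is log-Lipschitz. Integrating against $r^{d-s-2}\,\dd r$ gives $\eps\ell(\eps)$ at $s=d-1$ and $\eps^{d-s}\ell(\eps)^{\beta_q}$ for $d-1<s<d$.
\end{itemize}
No extension or electric formulation is needed.

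\textbf{Consequence for your scale choice.} These error sizes invalidate $\eps^2=\mathcal E$, which implicitly assumes every mollification error is quadratic in $\eps$. With the pairing errors above, that choice leaves a source of order at least $\mathcal E^{\min\{1,d-s\}/2}$. This dominates $\omega_{\beta_q}(\mathcal E)$, so smallness does not propagate. The scale must be matched to the pairing error:
\begin{itemize}
\item $\eps\simeq\mathcal E$ for $d-2<s<d-1$;
\item $\eps\simeq\mathcal E\,\ell(\mathcal E)^{\beta_q-1}$ at $s=d-1$;
\item $\eps\simeq\mathcal E^{1/(d-s)}$ for $d-1<s<d$.
\end{itemize}
Round to a dyadic value selected from the averaged quantity (Lemma~\ref{lem:finite-q-endpoint-dyadic-selector}), so that $\ell(\eps)^{\beta_q}\mathcal E$, the pairing error and $\eps^2$ are all $\lesssim\omega_{\beta_q}(\mathcal E)$.

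\textbf{A smaller inconsistency.} You discard $-r_N[u_\eps]$ but then claim $\tfrac12\overline r_N$ on the left-hand side. You need to keep that term and use $r_N\le 2r_N[u_\eps]+2\|w_\eps\|_{L^2(\mu_N)}^2$; this is what produces the bound on $\int_0^T\overline r_N$.

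With these repairs, the rest of your outline agrees with the paper's argument: the $\omega_{\beta_q}$ transport inequality, averaging at a fixed dyadic scale, the Gronwall/Bihari/Osgood comparisons, the fixed-scale bound giving $+a_N$, and the tensorized data.
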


\begin{remark}
\label{rem:critical-besov-comparison}
Since $B^1_{\infty,1}\hookrightarrow W^{1,\infty}$, the case $q=1$ gives a
Gronwall estimate.  At the borderline $s=d-1$, the error
$\eps\ell(\eps)$ is balanced by the scale $\eps\simeq z/\ell(z)$ and again
yields a linear comparison.  For $1<q<\infty$, the modulus
$r(1+\log(1/r))^{1-1/q}$ gives the Bihari estimate in
\eqref{eq:critical-product-rates}, while for $q=\infty$ the Zygmund modulus is treated by the Osgood
comparison.  The reference solution is prescribed by
Definition~\ref{def:critical-riesz-reference-solution};
Corollary~\ref{cor:critical-riesz-uniqueness} gives uniqueness within this
class.  The scale $N^{s/d-1}$ is the finite-$N$ correction inherited from the
sharp lower bound and commutator estimate.
\end{remark}

\begin{corollary}[Uniqueness in the $B^{s-d+2}_{\infty,\infty}$ class]
\label{cor:critical-riesz-uniqueness}
Let $d\ge2$ and $d-2<s<d$.  Let $(\rho,u)$ and $(\widetilde\rho,\widetilde u)$ be two solutions satisfying Definition~\ref{def:critical-riesz-reference-solution}, with the same initial density.  Then
\[
 \rho_t=\widetilde\rho_t
 \qquad\text{in }\mathcal P_2(\R^d)
 \quad\text{for every }t\in[0,T].
\]
The limiting equation is unique within the class specified in
Definition~\ref{def:critical-riesz-reference-solution}.
\end{corollary}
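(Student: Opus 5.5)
The plan is to derive the corollary from Theorem~\ref{thm:critical-riesz-propagation} by a triangle-inequality argument through a common particle system. We fix $T>0$ and $q=\infty$. The class in Definition~\ref{def:critical-riesz-reference-solution} is exactly the hypothesis class of the theorem, and the theorem makes no assumption on how the reference solution was obtained. We choose the tensorized initial law $\rho_N^0=\rho_0^{\otimes N}$, where $\rho_0=\widetilde\rho_0$ is common to both solutions. Since $\rho_0\in L^1\cap L^\infty\cap\mathcal P_2$, this law is symmetric, charges no collision set, and has finite expected modulated energy plus $\eta_N$. Let $\rho_N(t)$ be its pushforward under the particle flow. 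This flow depends only on the initial law, not on the reference solution.

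We apply \eqref{eq:critical-product-rates} twice, once with reference $\rho$ and once with reference $\widetilde\rho$. The constants $M_T$, $\gamma_T$ and $C$ are taken as the worse of the two values, and both are finite by the essential bounds extended to every time in Proposition~\ref{prop:critical-riesz-borel-reference}. This gives, for every $t\in[0,T]$,
\[
 \frac1N W_2^2(\rho_N(t),\rho_t^{\otimes N})+\frac1N W_2^2(\rho_N(t),\widetilde\rho_t^{\otimes N})\le 2C N^{-(1-s/d)\gamma_T}.
\]
The triangle inequality for $W_2$ on $(\R^d)^N$ then yields
\[
 \frac1N W_2^2(\rho_t^{\otimes N},\widetilde\rho_t^{\otimes N})\le 8CN^{-(1-s/d)\gamma_T}.
\]

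Next we use the tensorization identity $W_2^2(\mu^{\otimes N},\nu^{\otimes N})=N\,W_2^2(\mu,\nu)$. The bound $\le$ follows from taking the product of optimal couplings. The bound $\ge$ is \eqref{eq:intro-W2-marginal-control} with $k=1$, which is valid for symmetric laws. Combining this with the previous estimate gives $W_2^2(\rho_t,\widetilde\rho_t)\le 8CN^{-(1-s/d)\gamma_T}$ for every $N\ge2$. Since $s<d$ and $\gamma_T>0$, the right-hand side tends to zero as $N\to\infty$, so $\rho_t=\widetilde\rho_t$ in $\mathcal P_2$ for every $t$. Pointwise-in-time equality is legitimate because the theorem controls $\sup_{0\le t\le T}$, and both curves belong to $C([0,T];\mathcal P_2)$.

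The step needing the most care is the hypothesis check, namely that the theorem really applies to an arbitrary member of the class rather than to a specially constructed solution. Concretely, one must confirm two things. First, the representatives of $u$ and $\widetilde u$ from Proposition~\ref{prop:critical-riesz-borel-reference} are the ones entering $r_N$ and the commutator. Second, the finite-$N$ constant $\eta_N$, which depends on $M_T$, can be taken as a single common value for both references; increasing $A_{d,s}$ or $M_T$ only enlarges $\eta_N$ and keeps the theorem valid. No other step presents a difficulty.
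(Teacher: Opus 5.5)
Your argument is correct and is essentially the paper's proof: evolve the single tensorized law $\rho_0^{\otimes N}$, apply the $q=\infty$ estimate of Theorem~\ref{thm:critical-riesz-propagation} separately against $\rho$ and $\widetilde\rho$, and conclude with the triangle inequality together with the projection inequality \eqref{eq:intro-W2-marginal-control}. The differences are minor: the paper projects to the one-particle marginal $\rho_{N,1}(t)$ first and uses the triangle inequality in $\mathcal P_2(\R^d)$, whereas you use it on $(\R^d)^N$ and project afterwards; also, since the two applications of the theorem are independent, no common choice of $\eta_N$ is needed.
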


\subsection{Tensorized initial laws and marginal estimates}
\begin{corollary}
\label{cor:product-data-and-marginals}
For the tensorized initial law $\rho_N^0=\rho_0^{\otimes N}$, Lemma~\ref{lem:unified-product-initial-data}
gives, for the interaction kernel $g$ of the applicable theorem,
\[
 \mathbb E_{\rho_0^{\otimes N}}F_N(X_N,\rho_0)
 =-\frac1N\iint g(x-y)\rho_0(x)\rho_0(y)\,\dd x\dd y.
\]
For the tensorized initial law, the principal rates are
\eqref{eq:coulomb-product-rate} and \eqref{eq:critical-product-rates}.
For every $1\le k\le N$, the corresponding estimate for the marginal of order $k$ follows
directly from the projection inequality
\eqref{eq:intro-W2-marginal-control}.  In particular, in the Coulomb case with bounded density,
\begin{equation}\label{eq:coulomb-product-marginal-rate}
 \sup_{0\le t\le T}\frac1k
 W_2^2(\rho_{N,k}(t),\rho_t^{\otimes k})
 \le C_{T,d,\rho_0}
 \begin{cases}
  \left(\dfrac{1+\log N}{N}\right)^{\gamma_{T,2}},&d=2,\\[3mm]
  N^{-2\gamma_{T,d}/d},&d\ge3,
 \end{cases}
 \qquad 1\le k\le N.
\end{equation}
\end{corollary}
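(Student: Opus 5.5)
The corollary has two parts: an identity for the expected modulated energy under tensorized data, and marginal estimates that follow from the main theorems.

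\textbf{The identity.} Take $\rho_N^0=\rho_0^{\otimes N}$ and expand the modulated energy. Removing the diagonal gives
\[
 F_N(X_N,\rho_0)=\frac1{N^2}\sum_{i\ne j}g(x_i-x_j)-\frac2N\sum_i (g*\rho_0)(x_i)+\iint g\,\dd\rho_0\,\dd\rho_0 .
\]
Under the product law, each pair term has expectation $\iint g\,\rho_0\otimes\rho_0$, and there are $N(N-1)$ of them. Each one-body term has expectation $\int g*\rho_0\,\dd\rho_0$, which is the same double integral. The total is therefore
\[
 \Bigl(\tfrac{N-1}{N}-2+1\Bigr)\iint g\,\rho_0\otimes\rho_0=-\tfrac1N\iint g\,\rho_0\otimes\rho_0 .
\]
Finiteness of this integral needs checking:
\begin{itemize}
\item For $d\ge3$, and in the Riesz case, $\rho_0\in L^1\cap L^\infty$ makes $g*\rho_0$ bounded.
\item For $d=2$, the logarithm is integrable against $\rho_0\otimes\rho_0$ because $\rho_0\in L^\infty\cap\mathcal P_2$: near the diagonal we use boundedness, and at infinity $\log|x-y|\le\log(1+|x|)+\log(1+|y|)$.
\end{itemize}
These same bounds justify Fubini and show that $\rho_0^{\otimes N}$ gives no mass to $\Delta_N$. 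I take this to be the content of Lemma~\ref{lem:unified-product-initial-data}, and would simply invoke it.

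\textbf{Product rates.} With this identity, $a_{N,d}$ consists of three pieces:
\begin{itemize}
\item $W_2=0$ at $t=0$;
\item an energy term $O(1/N)+\eta_{N,d}$;
\item the explicit finite-$N$ scale.
\end{itemize}
For $d\ge3$, $\eta_{N,d}\lesssim N^{-2/d}$, so $a_{N,d}\lesssim N^{-2/d}$. For $d=2$, $\eta_{N,2}\lesssim(1+\log N)/N$, with $\log(1+NM_0)\le \log N+C_{\rho_0}$, so $a_{N,2}\lesssim(1+\log N)/N$. Since $a_{N,d}\le C$ and $\gamma_{T,d}\le1$, we have $a+a^{\gamma}\le C a^{\gamma}$. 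Substituting into \eqref{eq:coulomb-main-estimate} gives \eqref{eq:coulomb-product-rate}; the Riesz rates \eqref{eq:critical-product-rates} follow the same way.

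\textbf{Marginals.} It remains to prove \eqref{eq:intro-W2-marginal-control}. Fix a coupling $\pi$ of $\rho_N(t)$ and $\rho_t^{\otimes N}$. By symmetry of both laws, we may symmetrize $\pi$ under simultaneous permutations of the indices without increasing its cost. The projection of the symmetrized coupling onto the first $k$ coordinate pairs is a coupling of $\rho_{N,k}(t)$ and $\rho_t^{\otimes k}$. Its cost is
\[
 \mathbb E\sum_{i\le k}|x_i-y_i|^2=\frac kN\,\mathbb E\sum_{i\le N}|x_i-y_i|^2 .
\]
Taking the infimum over $\pi$ gives \eqref{eq:intro-W2-marginal-control}. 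Combining it with \eqref{eq:coulomb-product-rate} yields \eqref{eq:coulomb-product-marginal-rate}.

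The only mildly delicate step is the integrability in the $d=2$ logarithmic case. Everything else is bookkeeping.
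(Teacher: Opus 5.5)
Your proposal is correct and follows essentially the same route as the paper. The expectation identity is exactly Lemma~\ref{lem:unified-product-initial-data}, obtained by counting the $N(N-1)$ ordered pairs. The rates come from substituting $a_{N,d}\lesssim N^{-2/d}$ (resp.\ $(1+\log N)/N$) into the main estimates. The marginal bound is the projection onto the first $k$ coordinate pairs of a coupling symmetrized under simultaneous permutations. The only difference is cosmetic: you symmetrize an arbitrary coupling and then take the infimum, whereas the paper symmetrizes an optimal coupling directly.
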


The proof is given at the end of Section~\ref{sec:particle-law-comparison}, after
the estimate for the $N$-particle law.  A further consequence for the empirical continuity
equation is recorded in Section~\ref{sec:dynamical-consequences}.

\section{Basic identities and estimates}
\label{sec:abstract-framework}
This section establishes the identities used for all kernels considered below.
We define the modulated energy, the force error $r_N$, the commutator, and the
quadratic transportation cost $Q_N$, and derive the completion-of-squares and
chain-rule formulas.  The discrete energy dissipation first gives
$r_N\in L^1$, which provides the integrability required for the subsequent
singular chain rule.  Kernel-dependent finite-$N$ bounds are introduced in the
corresponding applications.

\subsection{Translation-invariant notation}
\label{subsec:translation-invariant-specialization}

All singular applications below are translation invariant, with identity
mobility and no external potential.  Let
$W:\R^d\setminus\{0\}\to\R$ be an even potential, smooth away from the
origin, and set $G=\nabla W$, which is odd.  For a collision-free
configuration $X_N=(x_1,\ldots,x_N)$ define
\begin{equation}\label{eq:unified-force-energy-moment}
 \begin{aligned}
 K_i^W(X_N):=\frac1N\sum_{j\ne i}G(x_i-x_j),\qquad 
 H_N^W(X_N):=\frac1{2N^2}\sum_{i\ne j}W(x_i-x_j),\qquad 
 m_{2,N}(X_N):=\frac1N\sum_{i=1}^N|x_i|^2.
 \end{aligned}
\end{equation}
The particle system and its continuum equation are
\begin{equation}\label{eq:unified-particle-continuum}
 \dot x_i=-K_i^W(X_N),
 \qquad
 \partial_t\rho=\nabla\cdot(\rho u^W),
 \qquad
 u^W=G*\rho.
\end{equation}
When the singular convolution is not absolutely convergent, $u^W$ denotes the
bounded representative required by the applicable limiting solution
hypothesis.  Once $W$ is fixed we suppress the superscript and write $u=u^W$.
For the empirical measure $\mu_N=N^{-1}\sum_i\delta_{x_i}$, define
\begin{equation}\label{eq:unified-FN}
 F_N^W(X_N,\rho)
 :=\iint_{(\R^d)^2\setminus\Delta_2}
 W(x-y)\,\dd(\mu_N-\rho)(x)\dd(\mu_N-\rho)(y).
\end{equation}
For a vector field $v$, define the commutator with all arguments visible by
\[
 C_N^W(X_N,\rho;v)
 :=\iint_{(\R^d)^2\setminus\Delta_2}
 (v(x)-v(y))\cdot G(x-y)\,
 \dd(\mu_N-\rho)(x)\dd(\mu_N-\rho)(y).
\]
When $X_N$ and $\rho$ are fixed we abbreviate this as $C_N^W[v]$; after the
kernel is fixed we also write $C_N(X_N,\rho;v)$ or simply $C_N[v]$.
Away from the atoms of $\mu_N$, write
\[K_N^W(x):=\frac1N\sum_{j=1}^NG(x-x_j).
\]
Here $K_i^W$ is the discrete field at particle $i$ with the self-interaction
removed, whereas $K_N^W(x)$ is the empirical field at a generic point.  We
suppress the superscript $W$ when there is no ambiguity.  For the limiting interaction field $u^W$, set
\begin{equation}\label{eq:unified-force-error}
 r_N^W(X_N,\rho):=\frac1N\sum_{i=1}^N|K_i^W-u^W(x_i)|^2.
\end{equation}
Once the kernel is fixed, we suppress $W$ and write $r_N=r_N^W$.  Thus
$r_N$ is the mean-square error between the discrete force at the particle
positions and the limiting interaction field.
For the Riesz kernel $W=g_s$, one has
\[
 -z\cdot\nabla g_s(z)=s g_s(z)=|z|^{-s},
\]
and the finite-$N$ error term is of order
$\|\rho\|_{L^\infty}^{s/d}N^{s/d-1}$.  For the planar logarithmic kernel,
$-z\cdot\nabla g(z)=(2\pi)^{-1}$ and the correction is of order
$N^{-1}[1+\log(1+N\|\rho\|_{L^\infty})]$.  These scales determine the finite-$N$ error terms that appear in the
application sections; the regularity of the limiting field and the interpretation of the singular integrals are supplied there.

\subsection{Completion of squares}

\begin{proposition}[Completion of squares]
\label{prop:completion-of-squares}
Let $v$ be a bounded vector field, put $w:=u-v$, and define
\[r_N[v]:=\frac1N\sum_{i=1}^N|K_i^W-v(x_i)|^2,
 \qquad
 \|w\|_{L^2(\mu_N)}^2:=\frac1N\sum_{i=1}^N|w(x_i)|^2.
\]
Assume that the quantities below are well defined, either absolutely or by applying the same symmetric regularization to all terms.  Then
\begin{equation}\label{eq:completion-of-squares}
 -r_N-C_N^W[w]
 =-r_N[v]+\|w\|_{L^2(\mu_N)}^2
 +2\int_{\R^d}w(x)\cdot(K_N^W(x)-u(x))\rho(x)\,\dd x.
\end{equation}
In addition,
\begin{equation}\label{eq:force-error-comparison}
 r_N\le2r_N[v]+2\|w\|_{L^2(\mu_N)}^2.
\end{equation}
For $d-1\le s<d$, the common regularization and its limit are described in
Sections~\ref{sec:critical-riesz-proof} and~\ref{sec:principal-value-representation}.
\end{proposition}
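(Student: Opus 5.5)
Expand $r_N$ around $v$. Write $K_i-u(x_i)=(K_i-v(x_i))-w(x_i)$ and square:
\[
 r_N=r_N[v]-\frac2N\sum_i (K_i-v(x_i))\cdot w(x_i)+\|w\|_{L^2(\mu_N)}^2 .
\]
Then \eqref{eq:force-error-comparison} is immediate from $|a-b|^2\le2|a|^2+2|b|^2$. The identity \eqref{eq:completion-of-squares} is then equivalent to the following formula for the commutator of $w$:
\[
 C_N^W[w]=\frac2N\sum_i (K_i-v(x_i))\cdot w(x_i)-2\|w\|_{L^2(\mu_N)}^2-2\int w\cdot(K_N-u)\rho .
\]
Since $K_i-v(x_i)=(K_i-u(x_i))+w(x_i)$, this reduces to
\[
 C_N^W[w]=\frac2N\sum_i w(x_i)\cdot(K_i-u(x_i))-2\int w\cdot(K_N-u)\rho .
\]
So the whole proof comes down to this ``symmetrized commutator'' formula.

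To prove it, I will use that $G$ is odd to symmetrize. Because $G(x-y)=-G(y-x)$, the integrand $(w(x)-w(y))\cdot G(x-y)$ is symmetric in $(x,y)$, so
\[
 C_N^W[w]=2\iint_{\Delta_2^c} w(x)\cdot G(x-y)\,\dd(\mu_N-\rho)(x)\dd(\mu_N-\rho)(y).
\]
I then expand $\mu_N-\rho$ in both variables and evaluate the four pieces.
\begin{itemize}
\item The $\mu_N\otimes\mu_N$ piece off the diagonal gives $\frac1{N^2}\sum_{i\ne j}w(x_i)\cdot G(x_i-x_j)=\frac1N\sum_i w(x_i)\cdot K_i$.
\item The $\mu_N\otimes\rho$ piece gives $-\frac1N\sum_i w(x_i)\cdot u(x_i)$.
\item The $\rho\otimes\mu_N$ piece gives $-\int w\cdot K_N\,\rho$.
\item The $\rho\otimes\rho$ piece gives $+\int w\cdot u\,\rho$.
\end{itemize}
For the mixed pieces the diagonal has measure zero because $\rho$ is absolutely continuous. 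Multiplying by $2$ and collecting the terms gives exactly the required formula.

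The only real obstacle is justifying these manipulations when the integrals are not absolutely convergent, namely the Coulomb $\rho\otimes\mu_N$ term and the singular Riesz range $d-1\le s<d$. The symmetrization step in particular uses Fubini on $G(x-y)$, which may not be locally integrable. I would replace $G$ by an odd truncation $G_\delta=G\mathbf 1_{|z|>\delta}$, or by an even-potential regularization $\nabla(W*\chi_\delta)$, and apply it to all terms simultaneously, as the statement permits. For the regularized kernel every step above is an honest finite computation. The identity then holds with $K_i,K_N,u$ replaced by their regularized versions, and I pass $\delta\to0$ term by term. This limit uses $w$ bounded, $\rho\in L^\infty$ so that $G*\rho$ converges to $u$ (the representative from the limiting-solution hypothesis), and the collision-free configuration so that $K_i$ is finite. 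The convergence of the regularized commutator to $C_N^W[w]$ is exactly the principal-value consistency deferred to Appendix~\ref{sec:principal-value-representation}, which I would invoke here.
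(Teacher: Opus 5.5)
Your argument is correct and is essentially the paper's proof: oddness of $G$ gives the symmetrized formula \eqref{eq:abs-rough-identity} for $C_N^W[w]$, and the remaining step is the same elementary algebra with $K_i^W-v(x_i)=(K_i^W-u(x_i))+w(x_i)$. You expand $r_N$ about $v$ first, whereas the paper completes the square in $e_i=K_i^W-u(x_i)$, but these are the same computation, and the singular case is handled by a common symmetric regularization exactly as the paper indicates.

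One small correction to your limiting remark: the Coulomb mixed term is in fact absolutely convergent, since $|\nabla g_d(z)|\lesssim|z|^{1-d}$ is locally integrable. Conversely, for $d-1\le s<d$, the convergence $G_\delta*\rho\to u$ and the existence of the mixed principal values require the log-Lipschitz modulus of $\rho$ and of $w\rho$ supplied by the Besov hypothesis, not merely $\rho\in L^\infty$; see Proposition~\ref{prop:critical-riesz-pv-statement}(i)--(ii).
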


\begin{proof}
Oddness of $G$ gives
\begin{equation}\label{eq:abs-rough-identity}
C_N^W[w]=\frac2N\sum_iw(x_i)\cdot(K_i^W-u(x_i))-2\int w\cdot(K_N^W-u)\rho.
\end{equation}
Let $e_i=K_i^W-u(x_i)$.  Using \eqref{eq:abs-rough-identity},
\begin{align*}
-r_N-C_N^W[w]
&=-\frac1N\sum_i\bigl(|e_i|^2+2w(x_i)\cdot e_i\bigr)
 +2\int w\cdot(K_N^W-u)\rho\\
&=-\frac1N\sum_i|e_i+w(x_i)|^2
 +\frac1N\sum_i|w(x_i)|^2
 +2\int w\cdot(K_N^W-u)\rho,
\end{align*}
which proves \eqref{eq:completion-of-squares}.  In addition,
\[
 |e_i|^2\le2|e_i+w(x_i)|^2+2|w(x_i)|^2,
\]
which yields \eqref{eq:force-error-comparison}.  For singular kernels, the same calculation is first performed at fixed symmetric
regularization and then passed to the $L^1(0,T)$ limit.
\end{proof}

We repeatedly use the following standard estimate.
\begin{lemma}
\label{lem:near-far-locally-integrable-field}
Let $G:\R^d\setminus\{0\}\to\R^d$ satisfy, for some $0\le\beta<d$,
\[
 |G(z)|\le C_G\bigl(|z|^{-\beta}\mathbf1_{\{|z|<1\}}
 +\mathbf1_{\{|z|\ge1\}}\bigr).
\]
For $X_N=(x_1,\ldots,x_N)$ set
$K_N(x)=N^{-1}\sum_jG(x-x_j)$ away from the atoms, and let
$u=G*\rho$, where $\rho\in\mathcal P(\R^d)\cap L^\infty(\R^d)$.  Then
\[
 \int_{\R^d}(|K_N(x)|+|u(x)|)\rho(x)\,\dd x
 \le C_{d,\beta}C_G(1+\|\rho\|_{L^\infty}),
\]
with a constant independent of $N$ and $X_N$.
\end{lemma}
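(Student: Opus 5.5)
We split the kernel bound into its near and far parts and integrate each against $\rho$; no cancellation is needed. Write $G=G_{<}+G_{\ge}$, where $G_<:=G\mathbf 1_{\{|z|<1\}}$ and $G_{\ge}:=G\mathbf 1_{\{|z|\ge1\}}$. Then $|G_<(z)|\le C_G|z|^{-\beta}\mathbf 1_{\{|z|<1\}}$ and $|G_\ge|\le C_G$.

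The basic estimate is uniform in the center: for every $y\in\R^d$,
\[
 \int_{\R^d}|G(x-y)|\rho(x)\,\dd x
 \le C_G\|\rho\|_{L^\infty}\int_{|z|<1}|z|^{-\beta}\,\dd z+C_G\int\rho
 = C_G\Bigl(\frac{|\mathbb S^{d-1}|}{d-\beta}\|\rho\|_{L^\infty}+1\Bigr).
\]
This uses $\beta<d$ for local integrability and $\int\rho=1$ for the far part.

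For the empirical field, apply the triangle inequality to get $|K_N(x)|\le N^{-1}\sum_j|G(x-x_j)|$ for $x$ off the atoms. The atoms form a null set, since $\rho$ is absolutely continuous. Integrate against $\rho$ and apply the basic estimate with $y=x_j$ for each $j$. The average over $j$ then gives the same bound, independent of $N$ and $X_N$.

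For the limiting field, the same bound gives $|u(x)|\le\int|G(x-y)|\rho(y)\,\dd y\le C_{d,\beta}C_G(1+\|\rho\|_{L^\infty})$ pointwise. The convolution is absolutely convergent for every $x$, again by the basic estimate with the roles of $x$ and $y$ exchanged. Integrating against the probability density $\rho$ preserves this bound.

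Adding the two contributions yields the claim with $C_{d,\beta}=1+|\mathbb S^{d-1}|/(d-\beta)$, up to a factor $2$.

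There is no real obstacle here. The only points to check are the integrability of $|z|^{-\beta}$ near the origin, which is where $\beta<d$ enters, and that the atoms of $\mu_N$ are $\rho$-null.
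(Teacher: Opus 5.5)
Your proposal is correct and follows essentially the same route as the paper. Both split the kernel at unit distance to get a bound uniform in the center (using $\beta<d$ for the near part and unit mass for the far part), then average over the particle positions to control $K_N$ and integrate against $\rho$ to control $u$.
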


\begin{proof}
Fix a center $a$.  Splitting the convolution at unit distance, the near-field
and far-field contributions are bounded by
\[
 C_G\|\rho\|_\infty|\mathbb S^{d-1}|
 \int_0^1r^{d-\beta-1}\,\dd r
 \qquad\text{and}\qquad C_G\|\rho\|_1,
\]
respectively.  Averaging the resulting bound over the particle positions
controls $K_N$; a further integration against $\rho(a)\,\dd a$ controls $u$.
\end{proof}

\subsection{Characteristic flows and tensorized laws}

We also use the projection inequality \eqref{eq:intro-W2-marginal-control}.
Indeed, if $\rho_N,\widetilde\rho_N\in\mathcal P_2((\R^d)^N)$ are symmetric,
symmetrizing an optimal coupling under simultaneous coordinate permutations does
not change its cost.  Projection onto the first $k$ coordinate pairs then gives
\eqref{eq:intro-W2-marginal-control}.

\begin{proposition}
\label{prop:unified-lagrangian-representation}
Let $b:[0,T]\times\R^d\to\R^d$ be jointly Borel and continuous in
space for almost every time.  Assume that there exist
nonnegative functions $a,m\in L^1(0,T)$ and a continuous, nondecreasing
modulus $\omega$ such that, for almost every $t$,
\[\|b_t\|_{L^\infty}\le a(t),
 \qquad
 |b_t(x)-b_t(y)|\le m(t)\omega(|x-y|),
 \qquad
 \int_{0^+}\frac{\dd r}{\omega(r)}=\infty.
\]
Then the ODE $\dot\Psi^t=b_t\circ\Psi^t$, $\Psi^0=\mathrm{Id}$, generates a unique
global flow on $[0,T]$.  If $\rho\in C([0,T];\mathcal P(\R^d))$ solves
$\partial_t\rho+\nabla\cdot(b\rho)=0$, then
\begin{equation}\label{eq:unified-lagrangian-pushforward}
 \rho_t=(\Psi^t)_\#\rho_0\qquad(0\le t\le T).
\end{equation}
If moreover $b\in L^1(0,T;W^{1,\infty})$, the flow is bi-Lipschitz and, for
$\rho_0\in L^\infty$,
\begin{equation}\label{eq:unified-lagrangian-density-bound}
 \|\rho_t\|_{L^\infty}
 \le \|\rho_0\|_{L^\infty}
 \exp\!\left(\int_0^t\|\nabla\cdot b_\tau\|_{L^\infty}\,\dd\tau\right).
\end{equation}
\end{proposition}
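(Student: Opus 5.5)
The proof has three parts: well-posedness of the flow, identification of $\rho_t$ as the pushforward by uniqueness of the linear continuity equation, and the density bound by Lagrangian change of variables.

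\textbf{Step 1: the flow.} For existence, fix $x$. The field $b_t$ is continuous in space for a.e.\ $t$ and bounded by $a\in L^1(0,T)$. So the Carathéodory existence theorem gives a solution $\Psi^t(x)$ on all of $[0,T]$, and boundedness by $a$ rules out blow-up. For uniqueness, take two solutions $\Psi_1,\Psi_2$ and set $\delta(t)=|\Psi_1^t(x)-\Psi_2^t(x)|$. Then
\[
 \delta(t)\le\int_0^t m(\tau)\,\omega(\delta(\tau))\,\dd\tau .
\]
The Osgood lemma, with $m\in L^1$ and $\int_{0^+}\dd r/\omega=\infty$, forces $\delta\equiv0$. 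The same comparison run backward in time gives uniqueness for the time-reversed problem. Hence each $\Psi^t$ is a homeomorphism, and it is jointly continuous by Osgood stability in the initial datum.

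\textbf{Step 2: pushforward representation.} Set $\widetilde\rho_t:=(\Psi^t)_\#\rho_0$. By the chain rule along characteristics, $\widetilde\rho$ is a narrowly continuous distributional solution of the same continuity equation. To show $\rho=\widetilde\rho$, I would use the superposition principle (Ambrosio). Its hypothesis $\int_0^T\!\int|b_t|\,\dd\rho_t\,\dd t\le\int a<\infty$ holds, so $\rho_t=(e_t)_\#\eta$ for some probability measure $\eta$ on paths. This $\eta$ is concentrated on absolutely continuous integral curves of $b$ starting from $\rho_0$. By Step 1 these integral curves are unique, so $\eta$ is concentrated on $\{t\mapsto\Psi^t(x)\}$, which yields \eqref{eq:unified-lagrangian-pushforward}.

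An alternative that avoids superposition is duality: solve the backward transport equation $\partial_t\varphi+b\cdot\nabla\varphi=0$ with smooth terminal data via $\varphi(t,\cdot)=\varphi_T\circ\Psi^{T}\circ(\Psi^t)^{-1}$. This $\varphi$ is only continuous, however, so it would need mollification of $b$ with an Osgood-type commutator control. I therefore prefer the superposition route.

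\textbf{Step 3: Lipschitz case.} If $b\in L^1(0,T;W^{1,\infty})$, Gronwall gives
\[
 \operatorname{Lip}(\Psi^t)^{\pm1}\le\exp\!\int_0^t\|\nabla b_\tau\|_{L^\infty}\,\dd\tau ,
\]
so the flow is bi-Lipschitz. The Jacobian $J_t=\det D\Psi^t$ exists a.e.\ by Rademacher and satisfies Liouville's formula $J_t(x)=\exp\int_0^t(\nabla\cdot b_\tau)(\Psi^\tau(x))\,\dd\tau$. I would justify this by mollifying $b$ in space: the smooth flows converge uniformly to $\Psi^t$, and the mollified divergence is bounded by $\|\nabla\cdot b_\tau\|_{L^\infty}$. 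The area formula then gives
\[
 \rho_t(\Psi^t(x))=\frac{\rho_0(x)}{J_t(x)},
\]
and $J_t^{-1}\le\exp\int_0^t\|\nabla\cdot b_\tau\|_{L^\infty}\,\dd\tau$ yields \eqref{eq:unified-lagrangian-density-bound}. For the inequality alone it is cleaner to pass the $L^\infty$ bound to the limit by weak-$*$ lower semicontinuity along the mollified densities, since these converge narrowly to $\rho_t$ by Step 2.

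\textbf{Main obstacle.} The delicate step is Step 2, identifying an arbitrary weak solution with the Lagrangian one when $b$ is only Osgood-continuous rather than Lipschitz. Invoking the superposition principle together with pathwise uniqueness is the cleanest way through; the remaining points are measurability of $\eta$ and checking that the integrability condition holds.
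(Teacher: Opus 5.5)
Your proposal is correct and follows the same route as the paper. The paper uses Osgood uniqueness of integral curves, with $a\in L^1$ preventing escape, then the superposition principle to identify every narrowly continuous solution with $(\Psi^t)_\#\rho_0$, and finally forward/backward Gronwall together with the Jacobian formula for the bi-Lipschitz property and the bound \eqref{eq:unified-lagrangian-density-bound}; your write-up is simply more explicit than its one-line citation of Ambrosio--Gigli--Savar\'e. In particular, using the superposition principle under the $L^1$ condition $\int_0^T\!\int|b_t|\,\dd\rho_t\,\dd t<\infty$ rather than its $L^p$, $p>1$, form matches the stated hypothesis $a\in L^1(0,T)$, and your mollification plus weak-$*$ lower semicontinuity step is a clean justification of the density bound.
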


\begin{proof}
This is the standard Osgood-flow consequence of the superposition principle; see
\cite[Theorem~8.2.1]{AmbrosioAGS}.  The Osgood condition gives uniqueness of
integral curves, while $a\in L^1(0,T)$ prevents finite-time escape.  The
superposition principle then identifies every narrowly continuous solution with
the pushforward by the unique flow, which proves
\eqref{eq:unified-lagrangian-pushforward}.  In the Lipschitz case, the usual
forward and backward Gronwall estimates and the Jacobian formula give
\eqref{eq:unified-lagrangian-density-bound}.
\end{proof}

\begin{lemma}
\label{lem:unified-product-initial-data}
Let $W$ be even, let $\rho$ be a probability density, and assume
\[
 E_W(\rho):=\iint W(x-y)\rho(x)\rho(y)\,\dd x\dd y
\]
is absolutely convergent.  Then $\rho^{\otimes N}(\Delta_N)=0$ and
\[
 \mathbb E_{\rho^{\otimes N}}F_N^W(X_N,\rho)
 =-\frac1NE_W(\rho).
\]
If a deterministic correction $\eta_N^W(\rho)$ satisfies
$F_N^W+\eta_N^W(\rho)\ge0$, then
\[0\le\mathbb E_{\rho^{\otimes N}}\bigl(F_N^W+\eta_N^W(\rho)\bigr)
 =\eta_N^W(\rho)-\frac1NE_W(\rho).
\]
\end{lemma}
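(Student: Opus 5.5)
The plan is to compute the expectation of the modulated energy directly by expanding $F_N^W(X_N,\rho)$ into its three bilinear pieces and integrating term by term against $\rho^{\otimes N}$. First, $\rho^{\otimes N}(\Delta_N)=0$. Indeed, $\Delta_N$ is a finite union of the hyperplanes $\{x_i=x_j\}$, $i\ne j$. Each of these is $\rho\otimes\rho$-null in the $(x_i,x_j)$ variables, since the diagonal of $\R^d\times\R^d$ has Lebesgue measure zero and $\rho\otimes\rho$ is absolutely continuous.

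On $\Omega_N$ the diagonal removal only affects the atomic part: $\Delta_2$ has zero measure for $\mu_N\otimes\rho$, $\rho\otimes\mu_N$ and $\rho\otimes\rho$. This gives
\[
 F_N^W(X_N,\rho)=\frac1{N^2}\sum_{i\ne j}W(x_i-x_j)
 -\frac2N\sum_{i=1}^N (W*\rho)(x_i)+E_W(\rho),
\]
where the evenness of $W$ is used to combine the two cross terms. Before taking expectations, I will check that each term is integrable under $\rho^{\otimes N}$. For $i\ne j$, $\mathbb E|W(x_i-x_j)|=\iint|W(x-y)|\rho(x)\rho(y)$ is finite by the assumed absolute convergence, and likewise $\mathbb E|(W*\rho)(x_i)|\le\iint|W(x-y)|\rho(x)\rho(y)<\infty$. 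Fubini is therefore justified.

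Taking expectations, each of the $N(N-1)$ off-diagonal pairs contributes $E_W(\rho)$, and each of the $N$ single-sum terms contributes $E_W(\rho)$. Hence
\[
 \mathbb E F_N^W=\frac{N(N-1)}{N^2}E_W-2E_W+E_W=-\frac1N E_W(\rho).
\]
The second statement then follows by linearity, because $\eta_N^W(\rho)$ is deterministic, and nonnegativity follows from integrating the pointwise bound $F_N^W+\eta_N^W\ge0$ over the full-measure set $\Omega_N$.

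The only step needing care is the interpretation of the double integral over $(\R^d)^2\setminus\Delta_2$, where the cross terms and the $\rho\otimes\rho$ term could a priori be only conditionally defined. I will handle this by noting that absolute convergence of $E_W(\rho)$ makes $W*\rho$ finite $\rho$-a.e., and that only such $\rho$-a.e. finiteness at the sampled points $x_i$ is needed. The expansion is therefore valid $\rho^{\otimes N}$-almost surely, which suffices for the expectation.
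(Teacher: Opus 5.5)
Your proof is correct and follows the paper's argument: you expand the off-diagonal functional into its discrete, mixed, and continuum pieces, apply Fubini, and count the $N(N-1)$ ordered pairs to get $\frac{N(N-1)}{N^2}E_W(\rho)-2E_W(\rho)+E_W(\rho)=-\frac1NE_W(\rho)$. You also supply the null-collision-set and integrability justifications that the paper leaves implicit. One minor wording slip: for $d\ge2$ the sets $\{x_i=x_j\}$ are codimension-$d$ subspaces rather than hyperplanes, but your argument through the $(x_i,x_j)$ marginal does not depend on this.
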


\begin{proof}
Expanding the off-diagonal functional, applying Fubini, and
counting the $N(N-1)$ ordered pairs gives
\[
 \frac{N(N-1)}{N^2}E_W(\rho)
 -2E_W(\rho)+E_W(\rho)
 =-\frac1NE_W(\rho).
\]
The second identity follows by adding the finite-$N$ error term and
integrating.
\end{proof}

\subsection{Energy dissipation and moment identities}

\begin{proposition}
\label{prop:unified-particle-identities}
Let $X_N\in C^1([0,T);\Omega_N)$ solve the particle equation in
\eqref{eq:unified-particle-continuum}.  Then
\begin{equation}\label{eq:unified-microscopic-dissipation}
 \frac{\dd}{\dd t}H_N^W(X_N(t))
 =-\frac1N\sum_{i=1}^N|K_i^W(X_N(t))|^2.
\end{equation}
Moreover,
\begin{equation}\label{eq:unified-discrete-virial}
 \frac{\dd}{\dd t}m_{2,N}(X_N(t))
 =-\frac1{N^2}\sum_{i\ne j}(x_i(t)-x_j(t))\cdot G(x_i(t)-x_j(t)).
\end{equation}
For sufficiently integrable Lagrangian solutions of the continuum equation,
\begin{equation}\label{eq:unified-continuum-virial}
 \frac{\dd}{\dd t}\int|x|^2\rho_t(x)\,\dd x
 =-\iint (x-y)\cdot G(x-y)\rho_t(x)\rho_t(y)\,\dd x\dd y.
\end{equation}
\end{proposition}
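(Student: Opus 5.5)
These are direct differentiations along a collision-free $C^1$ trajectory, using only that $W$ is even and $G=\nabla W$ is odd. Since $X_N(t)\in\Omega_N$ for $t\in[0,T)$, every pair distance $|x_i-x_j|$ stays positive. Hence $W(x_i-x_j)$ and $|x_i|^2$ are $C^1$ in $t$, and the chain rule applies with no regularization.

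For \eqref{eq:unified-microscopic-dissipation}, I differentiate
\[
 H_N^W=\frac1{2N^2}\sum_{i\ne j}W(x_i-x_j)
\]
term by term, which gives
\[
 \frac{\dd}{\dd t}H_N^W=\frac1{2N^2}\sum_{i\ne j}G(x_i-x_j)\cdot(\dot x_i-\dot x_j).
\]
Next I split this into the two sums involving $\dot x_i$ and $\dot x_j$. In the second sum I relabel $i\leftrightarrow j$ and use oddness, $G(x_j-x_i)=-G(x_i-x_j)$, so the two sums coincide. The result is
\[
 \frac1{N^2}\sum_i\dot x_i\cdot\sum_{j\ne i}G(x_i-x_j)=\frac1N\sum_i\dot x_i\cdot K_i^W.
\]
Substituting $\dot x_i=-K_i^W$ then gives $-\frac1N\sum_i|K_i^W|^2$.

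For \eqref{eq:unified-discrete-virial}, I write
\[
 \frac{\dd}{\dd t}m_{2,N}=\frac2N\sum_i x_i\cdot\dot x_i=-\frac2{N^2}\sum_{i\ne j}x_i\cdot G(x_i-x_j).
\]
I then symmetrize over the swap $i\leftrightarrow j$, again using oddness of $G$. This replaces $2x_i$ by $x_i-x_j$ and gives the stated identity.

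For \eqref{eq:unified-continuum-virial}, I use the Lagrangian representation $\rho_t=(\Psi^t)_\#\rho_0$ from Proposition~\ref{prop:unified-lagrangian-representation}, where the characteristic velocity is $b=-u$. This gives
\[
 \int|x|^2\rho_t=\int|\Psi^t(x)|^2\rho_0(x)\,\dd x.
\]
Differentiating under the integral yields
\[
 -2\int x\cdot u_t(x)\rho_t(x)\,\dd x=-2\iint x\cdot G(x-y)\rho_t(x)\rho_t(y)\,\dd x\dd y,
\]
and the same symmetrization $x\leftrightarrow y$ gives the claim.

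\textbf{Main obstacle.} The only delicate step is justifying the differentiation under the integral and the use of Fubini in this last identity. The phrase ``sufficiently integrable'' is exactly what covers it. I will assume $\int|x||u_t|\rho_t\in L^1_{\mathrm{loc}}$ in time and
\[
 \iint|x-y||G(x-y)|\rho_t\rho_t<\infty.
\]
For the kernels considered, $|z||G(z)|$ is bounded by $|z|^{-s}$, or by a constant in the logarithmic case, and this is integrable against $\rho_t\otimes\rho_t$ when $\rho_t\in L^1\cap L^\infty$. To avoid differentiating pointwise, I will work with the integrated form in time, $|\Psi^t|^2-|x|^2=\int_0^t2\Psi^\tau\cdot\dot\Psi^\tau\,\dd\tau$. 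I will integrate this against $\rho_0$ and then apply dominated convergence.
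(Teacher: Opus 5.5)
Your proposal is correct and is essentially the paper's argument: the dissipation identity is the chain rule with $\nabla_{x_i}H_N^W=N^{-1}K_i^W$ (which your relabeling under oddness of $G$ computes), the discrete virial is the same symmetrization, and the continuum virial is the paper's ``test with $|x|^2$ and symmetrize'' carried out along the pushforward $\rho_t=(\Psi^t)_\#\rho_0$. One small caveat concerns the Fubini step $-2\int x\cdot u_t\rho_t=-2\iint x\cdot G(x-y)\rho_t(x)\rho_t(y)\,\dd x\dd y$, which needs $\iint|x|\,|G(x-y)|\rho_t\rho_t<\infty$ rather than only your symmetrized bound: this holds when $G$ is locally integrable, but fails for $d-1\le s<d$, where you should instead symmetrize with the truncated kernel $G_{s,\delta}$ and then let $\delta\downarrow0$ using $|z|\,|G(z)|=|z|^{-s}$.
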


\begin{proof}
Evenness of $W$ gives $\nabla_{x_i}H_N^W=N^{-1}K_i^W$, which yields
\eqref{eq:unified-microscopic-dissipation}.  In addition,
\[
 \frac{\dd}{\dd t}m_{2,N}
 =-\frac2N\sum_i x_i\cdot K_i^W
 =-\frac1{N^2}\sum_{i\ne j}(x_i-x_j)\cdot G(x_i-x_j),
\]
which is \eqref{eq:unified-discrete-virial}.  Testing the continuum equation
with $|x|^2$ and symmetrizing gives \eqref{eq:unified-continuum-virial}.
\end{proof}

Collision exclusion and global continuation are verified in the applications using the energy and virial bounds available for each kernel.

\subsection{Quadratic transportation cost along coupled trajectories}

Let $X_N(t)$ solve the particle system and let
$y_i(t)$ solve $\dot y_i=-u_t(y_i)$.  Set
\[q_i=x_i-y_i,
 \qquad
 Q_N=\frac1N\sum_i|q_i|^2,
\]
and use the quantity $r_N$ from \eqref{eq:unified-force-error} with the
time-dependent interaction field $u_t$.
\begin{proposition}
\label{prop:unified-transport-cost}
Along every trajectory in the coupling,
\[Q_N'
 =-\frac2N\sum_iq_i\cdot(K_i^W-u_t(x_i))
 -\frac2N\sum_iq_i\cdot(u_t(x_i)-u_t(y_i)).
\]
Assume that $\omega_t:[0,\infty)\to[0,\infty)$ is nondecreasing and concave
and that
\[2|x-y|\,|u_t(x)-u_t(y)|
 \le \omega_t(|x-y|^2)
 \qquad\text{for all }x,y.
\]
Then, for almost every time,
\begin{equation}\label{eq:unified-Q-inequality}
 Q_N'\le r_N+Q_N+\omega_t(Q_N).
\end{equation}
In the Lipschitz case this reads
\[
 Q_N'\le r_N+\bigl(1+2\|\nabla u_t\|_\infty\bigr)Q_N.
\]
For a Zygmund or log-Lipschitz field, the sum of the linear term and the
modulus term is absorbed into a concave Osgood modulus of the form
$C_t r(1+\log_+(1/r))$, after a concave extension for large $r$.
\end{proposition}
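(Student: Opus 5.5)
The plan is to differentiate $Q_N=\frac1N\sum_i|q_i|^2$ directly along the coupled trajectories and then estimate the two resulting terms separately. Since $\dot x_i=-K_i^W(X_N)$ and $\dot y_i=-u_t(y_i)$, we have $\dot q_i=-K_i^W+u_t(y_i)$. Adding and subtracting $u_t(x_i)$ gives
\[
 Q_N'=\frac2N\sum_i q_i\cdot\dot q_i
 =-\frac2N\sum_iq_i\cdot(K_i^W-u_t(x_i))-\frac2N\sum_iq_i\cdot(u_t(x_i)-u_t(y_i)),
\]
which is the identity in the statement. This holds at every time at which both trajectories are differentiable. The particle trajectory is $C^1$ on the collision-free set. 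The characteristic $y_i$ is absolutely continuous, with the ODE satisfied for almost every $t$, because it is the Osgood flow of Proposition~\ref{prop:unified-lagrangian-representation}. Hence the identity holds for almost every time.

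For the first term, Young's inequality $2|a\cdot b|\le|a|^2+|b|^2$ gives
\[
 \frac2N\sum_i|q_i|\,|K_i^W-u_t(x_i)|\le Q_N+r_N .
\]
For the second term, the hypothesis gives $2|q_i|\,|u_t(x_i)-u_t(y_i)|\le\omega_t(|q_i|^2)$. Averaging over $i$ and applying Jensen's inequality to the concave function $\omega_t$ yields
\[
 \frac1N\sum_i\omega_t(|q_i|^2)\le\omega_t(Q_N).
\]
Monotonicity of $\omega_t$ is not even needed at this step, since Jensen is applied at the mean. Summing the two bounds gives \eqref{eq:unified-Q-inequality}.

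For the Lipschitz specialization, we take $\omega_t(r)=2\|\nabla u_t\|_\infty r$. This is linear, hence concave and nondecreasing, and it satisfies the hypothesis because $2|x-y|\,|u_t(x)-u_t(y)|\le2\|\nabla u_t\|_\infty|x-y|^2$. Substituting into \eqref{eq:unified-Q-inequality} gives the displayed bound.

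For a Zygmund or log-Lipschitz field, the velocity satisfies $|u_t(x)-u_t(y)|\le L_t|x-y|(1+\log_+(1/|x-y|))$. Then
\[
 2|x-y|\,|u_t(x)-u_t(y)|\le 2L_t\,r\bigl(1+\tfrac12\log_+(1/r)\bigr),\qquad r=|x-y|^2 .
\]
The function $r(1+\log(1/r))$ is increasing and concave on $(0,e^{-1}]$ or a similar interval. We continue it beyond the transition point by its tangent line, which is linear and therefore keeps the extension concave and nondecreasing. The linear term $Q_N$ from Young's inequality is then absorbed into the modulus $C_tr(1+\log_+(1/r))$.

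The main point of care is the concave extension for large $r$. We must check that the log-Lipschitz bound, combined with boundedness of $u_t$ for large separations, is dominated by the concave extension. Boundedness of $u_t$ in fact gives a linear bound at large distances, which suffices. We must also check that the resulting modulus still satisfies the Osgood condition $\int_{0^+}\dd r/\omega(r)=\infty$, and it does because its behaviour near zero is $r\log(1/r)$.
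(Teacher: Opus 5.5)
Your proposal is correct and follows the paper's proof essentially verbatim: you differentiate $Q_N$ along the coupled trajectories, bound the force-error term by $Q_N+r_N$ via Young's inequality, bound the field-increment term by $\omega_t(|q_i|^2)$ and average with Jensen for the concave $\omega_t$, and take $\omega_t(r)=2\|\nabla u_t\|_\infty r$ in the Lipschitz case. Your extra remarks are also correct details that the paper leaves implicit (cf.\ Lemma~\ref{lem:concave-log-modulus}): the identity holds for almost every time, and $r+2L_t r(1+\tfrac12\log_+(1/r))$ is absorbed into $C_t\omega_*(r)$ through the tangent-line extension.
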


\begin{proof}
This is the standard Dobrushin coupling computation \cite{Dobrushin1}, written
so as to isolate the quantity $r_N$.  With
$e_i=K_i^W(X_N)-u_t(x_i)$ one has
\[
 \dot q_i=-e_i-\bigl(u_t(x_i)-u_t(y_i)\bigr),
\]
and differentiation gives the stated identity.  Young's inequality and the
assumed modulus yield
\[
 -\frac2N\sum_i q_i\cdot e_i\le Q_N+r_N,
 \qquad
 -\frac2N\sum_iq_i\cdot\bigl(u_t(x_i)-u_t(y_i)\bigr)
 \le \frac1N\sum_i\omega_t(|q_i|^2).
\]
Concavity and Jensen's inequality bound the last average by $\omega_t(Q_N)$,
which proves \eqref{eq:unified-Q-inequality}.  If $u_t$ is Lipschitz, take
$\omega_t(r)=2\|\nabla u_t\|_{L^\infty}r$.
\end{proof}

All subsequent estimates for $Q_N$ follow from
\eqref{eq:unified-Q-inequality} by inserting the corresponding modulus of
continuity of the interaction field; when convenient, the linear term
$Q_N$ is absorbed into the same comparison modulus.

\subsection{Regularization and singular chain rule}

Let $W^\kappa$ be a smooth even regularization of $W$, put
$G^\kappa=\nabla W^\kappa$, and define
\[
 K_i^\kappa=\frac1N\sum_{j\ne i}G^\kappa(x_i-x_j),
 \qquad
 u^\kappa=G^\kappa*\rho.
\]
Define $F_N^\kappa$ by replacing $W$ with $W^\kappa$ in
\eqref{eq:unified-FN}, and set
\[D_N^\kappa
 :=\frac1N\sum_i(K_i^W-u(x_i))\cdot(K_i^\kappa-u^\kappa(x_i)).
\]

\begin{proposition}
\label{prop:chain-rule-admissibility}
Fix $T>0$ and a collision-free particle trajectory.  Suppose that
\[
 \int_0^T\frac1N\sum_{i=1}^N |K_i(X_N(t))|^2\,\dd t<\infty,
 \qquad u\in L^2(0,T;L^\infty(\R^d)).
\]
Then
\[
 r_N(t):=\frac1N\sum_{i=1}^N|K_i(X_N(t))-u_t(x_i(t))|^2\in L^1(0,T).
\]
\end{proposition}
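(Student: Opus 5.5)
The plan is to bound $r_N$ pointwise in time by the two quantities that are already known to be integrable, using the elementary inequality $|a-b|^2\le 2|a|^2+2|b|^2$ for each particle. Writing $e_i=K_i(X_N(t))-u_t(x_i(t))$, we get
\[
 r_N(t)=\frac1N\sum_{i=1}^N|e_i|^2
 \le \frac2N\sum_{i=1}^N|K_i(X_N(t))|^2+\frac2N\sum_{i=1}^N|u_t(x_i(t))|^2
 \le \frac2N\sum_{i=1}^N|K_i(X_N(t))|^2+2\|u_t\|_{L^\infty}^2 .
\]
Integrating over $[0,T]$, the first term is finite by the first hypothesis. The second term is bounded by $2\|u\|_{L^2(0,T;L^\infty)}^2<\infty$. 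This gives $\int_0^Tr_N\,\dd t<\infty$.

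The first step is to check that $r_N$ is a measurable function of $t$, so that its integral makes sense. The map $t\mapsto K_i(X_N(t))$ is continuous. This holds because the trajectory is $C^1$ with values in $\Omega_N$, and $G$ is smooth away from the origin. For $t\mapsto u_t(x_i(t))$, we use the Borel representative of $u$ that the applicable limiting-solution hypothesis supplies. Since $x_i(\cdot)$ is continuous, the composition $t\mapsto (t,x_i(t))\mapsto u_t(x_i(t))$ is Borel.

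The only point needing care is what the $L^\infty$ bound means when evaluated at particle positions. An essential supremum does not control values on the null set $\{x_i(t)\}$. So we need the representative of $u_t$ to satisfy $|u_t(x)|\le\|u_t\|_{L^\infty}$ for every $x$, at least for almost every $t$. In the applications, $u_t$ is continuous: it is log-Lipschitz and Zygmund in the Coulomb case, and Besov in the Riesz case. For a continuous representative, the pointwise bound holds everywhere, and the estimate above closes. Alternatively, one can read $\|u_t\|_{L^\infty}$ in the hypothesis as the sup norm of the chosen representative.

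This is the step I expect to be the only real obstacle, and it is a bookkeeping matter rather than an analytic one. The estimate itself is a one-line application of Young's inequality.

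In the intended use, the first hypothesis comes from the energy dissipation identity \eqref{eq:unified-microscopic-dissipation}. That identity gives $\int_0^T\frac1N\sum_i|K_i|^2\,\dd t=H_N(X_N(0))-H_N(X_N(T))$, and this difference is finite once $H_N$ is bounded below along the trajectory, for instance via the virial identity in the logarithmic case. That input, however, is not part of this statement.
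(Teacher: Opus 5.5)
Your proposal is correct and is the paper's argument: the pointwise bound $r_N(t)\le \frac2N\sum_i|K_i(X_N(t))|^2+2\|u_t\|_{L^\infty}^2$, integrated over $[0,T]$. Your remarks on measurability and on using a representative of $u_t$ bounded everywhere by $\|u_t\|_{L^\infty}$ (the bounded continuous one used throughout the paper) supply bookkeeping that the paper leaves implicit.
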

\begin{proof}
The conclusion follows from
\[
 r_N(t)\le \frac2N\sum_i|K_i(X_N(t))|^2+2\|u(t)\|_{L^\infty}^2
\]
after integration over $[0,T]$.
\end{proof}

This estimate is used below in the singular chain rule.

\begin{proposition}[Singular chain rule for the modulated energy]
\label{prop:unified-regularized-chain-rule}
\begin{enumerate}
\item[(i)] \emph{Regularized identity.} Assume that $\rho_t=(\Psi^t)_\#\rho_0$ with
$\partial_t\Psi^t=-u_t\circ\Psi^t$, and that the differentiated Lagrangian
integrals are integrable.  Then
\begin{equation}\label{eq:unified-regularized-chain-rule}
 \frac{\dd}{\dd t}F_N^\kappa
 =-2D_N^\kappa-C_N^{W^\kappa}[u_t].
\end{equation}
\item[(ii)] \emph{Singular limit.} Assume that $r_N\in L^1(0,T)$ is known independently of the modulated energy
identity and that, as $\kappa\downarrow0$,
\begin{equation}\label{eq:unified-chain-rule-limits}
 F_N^\kappa\to F_N^W\ \text{in }C([0,T]),
 \qquad
 D_N^\kappa\to r_N\ \text{in }L^1(0,T),
 \qquad
 C_N^{W^\kappa}[u]\to C_N^W[u]\ \text{in }L^1(0,T).
\end{equation}
Then $F_N^W\in AC([0,T])$ and
\[\frac{\dd}{\dd t}F_N^W=-2r_N-C_N^W[u]
 \qquad\text{for a.e. }t\in(0,T).
\]
\end{enumerate}
\end{proposition}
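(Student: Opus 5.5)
For part (i), I will differentiate the three pieces of the regularized modulated energy separately. Since $W^\kappa$ is smooth and bounded near the origin, the diagonal removal only subtracts the constant $W^\kappa(0)/N$, so I can write
\[
 F_N^\kappa=\frac1{N^2}\sum_{i\ne j}W^\kappa(x_i-x_j)-\frac2N\sum_i (W^\kappa*\rho_t)(x_i)+\iint W^\kappa(x-y)\rho_t(x)\rho_t(y)\,\dd x\dd y .
\]

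\emph{Particle--particle term.} Using $\dot x_i=-K_i$ and the evenness of $W^\kappa$, its time derivative is $-\frac2N\sum_i K_i^\kappa\cdot K_i$.

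\emph{Mixed term.} I rewrite it in Lagrangian form as $-\frac2N\sum_i\int W^\kappa(x_i-\Psi^t(z))\rho_0(z)\,\dd z$ and differentiate in $t$. The particle motion contributes $+\frac2N\sum_i u^\kappa(x_i)\cdot K_i$. The motion of $\Psi^t$, together with the oddness of $G^\kappa$, contributes $-\frac2N\sum_i\int G^\kappa(x_i-y)\cdot u_t(y)\rho_t(y)\,\dd y$.

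\emph{Continuum term.} By the same Lagrangian differentiation, its derivative is $2\iint G^\kappa(x-y)\cdot u_t(x)\rho\rho$, which I symmetrize.

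Next I regroup the terms. I use $-2D_N^\kappa=-\frac2N\sum_i(K_i-u(x_i))\cdot(K_i^\kappa-u^\kappa(x_i))$. The terms containing $K_i$ match exactly, so the remainder must equal $-C_N^{W^\kappa}[u]$. Expanding the commutator against $(\mu_N-\rho)^{\otimes2}$ off the diagonal gives four pieces:
\begin{itemize}
\item the $\mu_N\otimes\mu_N$ piece is $\frac2N\sum_i u(x_i)\cdot K_i^\kappa$, using the oddness of $G^\kappa$;
\item the two cross pieces are $-\frac2N\sum_i u(x_i)\cdot u^\kappa(x_i)+\frac2N\sum_i\int u(y)\cdot G^\kappa(x_i-y)\rho$;
\item the $\rho\otimes\rho$ piece is $2\iint u(x)\cdot G^\kappa(x-y)\rho\rho$.
\end{itemize}
These match the collected remainder term by term, with opposite signs as required. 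The integrability assumption justifies differentiating under the integral signs.

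For part (ii), I integrate the regularized identity on $[s,t]$:
\[
 F_N^\kappa(t)-F_N^\kappa(s)=\int_s^t\bigl(-2D_N^\kappa-C_N^{W^\kappa}[u]\bigr)\,\dd\tau .
\]
I then pass to the limit $\kappa\downarrow0$ using the three convergences in \eqref{eq:unified-chain-rule-limits}: uniform convergence on the left, and $L^1$ convergence on the right. The limiting integrand $-2r_N-C_N^W[u]$ lies in $L^1(0,T)$. This is where the independent knowledge $r_N\in L^1$ (Proposition~\ref{prop:chain-rule-admissibility}) enters: it identifies the $L^1$ limit of $D_N^\kappa$ with an integrable function, and the commutator limit is integrable by assumption. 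Hence $F_N^W$ is an integral of an $L^1$ function, so $F_N^W\in AC([0,T])$. Lebesgue differentiation then gives the pointwise a.e.\ identity.

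The main obstacle is the bookkeeping in part (i): the cross terms coming from moving $\rho_t$ must be written with the correct sign, given that the characteristic velocity is $-u$. Part (ii) is a routine limit argument once the hypotheses are granted.
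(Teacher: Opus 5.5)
Your argument follows the paper's proof. You differentiate the particle--particle, mixed, and continuum pieces of $F_N^\kappa$, match the $K_i$-terms against $-2D_N^\kappa$, and identify the remainder with $-C_N^{W^\kappa}[u]$ through the four-piece expansion. You then integrate on $[s,t]$ and pass to the limit using the three convergences. Your particle--particle and mixed derivatives, your expansion of the commutator, and part (ii) are all correct.

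There is, however, a sign slip in exactly the bookkeeping you flagged as the obstacle. Since $\partial_t\Psi^t=-u_t\circ\Psi^t$,
\[
 \frac{\dd}{\dd t}\iint W^\kappa(x-y)\rho_t(x)\rho_t(y)\,\dd x\dd y
 =\iint G^\kappa(x-y)\cdot\bigl(u_t(y)-u_t(x)\bigr)\rho_t(x)\rho_t(y)\,\dd x\dd y
 =-2\int u_t\cdot u^\kappa\rho_t\,\dd x .
\]
This equals $+2\iint G^\kappa(x-y)\cdot u_t(y)\rho\rho$ by oddness, not $+2\iint G^\kappa(x-y)\cdot u_t(x)\rho\rho$ as you wrote. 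With your sign, the $\rho\otimes\rho$ piece $2\iint u(x)\cdot G^\kappa(x-y)\rho\rho$ of the commutator would not cancel, so the claimed term-by-term match fails as stated. With the corrected sign the match holds, and you recover exactly the paper's four-term formula
\[
 \frac{\dd}{\dd t}F_N^\kappa=-\frac2N\sum_iK_i\cdot K_i^\kappa+\frac2N\sum_iK_i\cdot u^\kappa(x_i)-\frac2N\sum_i\int G^\kappa(x_i-y)\cdot u(y)\rho(y)\,\dd y-2\int u\cdot u^\kappa\rho .
\]
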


\begin{proof}
\emph{Part~(i).}
At a fixed smooth scale this is the standard modulated energy differentiation;
compare \cite{Serfaty1,NguyenRosenzweigSerfaty2022}.  In the present
normalization the calculation is as follows.  Differentiating the discrete,
mixed, and continuum terms in $F_N^\kappa$, using
$\dot x_i=-K_i^W$, $\partial_t\rho=\nabla\!\cdot(\rho u)$, and the oddness of
$G^\kappa$, gives
\begin{align*}
 \frac{\dd}{\dd t}F_N^\kappa
 &=-\frac2N\sum_iK_i^W\cdot K_i^\kappa
 +\frac2N\sum_iK_i^W\cdot u^\kappa(x_i)
 -\frac2N\sum_i\int G^\kappa(x_i-y)\cdot u(y)\rho(y)\,\dd y
 -2\int u\cdot u^\kappa\rho.
\end{align*}
Expanding $-2D_N^\kappa-C_N^{W^\kappa}[u]$ gives exactly the same right-hand
side, proving \eqref{eq:unified-regularized-chain-rule}.

\emph{Part~(ii).}  For arbitrary $0\le s<t\le T$, integration of the
regularized identity gives
\[
 F_N^\kappa(t)-F_N^\kappa(s)
 =\int_s^t\bigl(-2D_N^\kappa(r)-C_N^{W^\kappa}[u](r)\bigr)\,\dd r.
\]
The first convergence in \eqref{eq:unified-chain-rule-limits} yields
$
 F_N^\kappa(t)-F_N^\kappa(s)
 \longrightarrow F_N^W(t)-F_N^W(s),
$
while the two $L^1(0,T)$ convergences imply
\begin{align*}
 \int_s^t\left|
 -2D_N^\kappa-C_N^{W^\kappa}[u]
 +2r_N+C_N^W[u]\right|\,\dd r\le
 2\|D_N^\kappa-r_N\|_{L^1(0,T)}
 +\|C_N^{W^\kappa}[u]-C_N^W[u]\|_{L^1(0,T)}
 \longrightarrow0.
\end{align*}
Therefore
\[
 F_N^W(t)-F_N^W(s)
 =\int_s^t\bigl(-2r_N(r)-C_N^W[u](r)\bigr)\,\dd r
 \qquad(0\le s<t\le T).
\]
The integrand belongs to $L^1(0,T)$ by assumption.  Hence
$F_N^W\in AC([0,T])$ and
\[
 \frac{\dd}{\dd t}F_N^W=-2r_N-C_N^W[u]
\]
for almost every $t\in(0,T)$.
\end{proof}

For each kernel, the proof reduces to the three convergences in
\eqref{eq:unified-chain-rule-limits}.  At fixed $N$, collision exclusion permits
the passage to the limit in the discrete terms.  The quantitative estimates below
are uniform over collision-free configurations.

\subsection{Coupled differential inequality}

Combining the estimate for $Q_N$ with the singular modulated energy identity gives the following differential inequality.

\begin{proposition}
\label{prop:coupled-transport-energy-inequalities}
Let $Q_N\ge0$, $F_N+\eta_N\ge0$, and $r_N\ge0$, and set
$\mathcal E_N:=Q_N+F_N+\eta_N$.  Assume, for almost every time,
\[
 Q_N'\le r_N+\omega(Q_N),
 \qquad
 F_N'=-2r_N-C_N^W[u],
\]
where $\omega$ is the nondecreasing modulus introduced above.

\begin{enumerate}
\item[(i)] If, for some $0\le\theta<1$ and nonnegative functions $b,e$,
\[|C_N^W[u]|\le\theta r_N+b(t)(F_N+\eta_N)+e(t),
\]
then
\[\mathcal E_N'+(1-\theta)r_N
 \le \omega(\mathcal E_N)+b(t)\mathcal E_N+e(t).
\]

\item[(ii)] Let $u=v+w$ and suppose that
\begin{align*}
 |C_N^W[v]|&\le b_v(t)(F_N+\eta_N),\\
 \|w\|_{L^2(\mu_N)}^2
 +2\left|\int w\cdot(K_N^W-u)\rho\right|&\le e_w(t).
\end{align*}
Then
\begin{equation}\label{eq:transport-energy-comparison-estimate}
 \mathcal E_N'+r_N[v]
 \le \omega(\mathcal E_N)+b_v(t)\mathcal E_N+e_w(t).
\end{equation}
In addition,
\[r_N\le2r_N[v]+2\|w\|_{L^2(\mu_N)}^2.\]
\end{enumerate}
\end{proposition}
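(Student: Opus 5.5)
The plan is to add the two differential relations and then use the sign conditions $Q_N\ge0$ and $F_N+\eta_N\ge0$ to replace $Q_N$ and $F_N+\eta_N$ by $\mathcal E_N$. Since $\eta_N$ is a deterministic constant in time, $\mathcal E_N'=Q_N'+F_N'$ for almost every $t$. Adding the two assumed relations gives
\[
 \mathcal E_N'\le r_N+\omega(Q_N)-2r_N-C_N^W[u]
 =-r_N-C_N^W[u]+\omega(Q_N).
\]
Because $F_N+\eta_N\ge0$, we have $Q_N\le\mathcal E_N$, and since $\omega$ is nondecreasing, $\omega(Q_N)\le\omega(\mathcal E_N)$. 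Likewise $Q_N\ge0$ gives $F_N+\eta_N\le\mathcal E_N$. Both reductions are used in each part.

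For part (i), bound $-C_N^W[u]\le|C_N^W[u]|\le\theta r_N+b(t)(F_N+\eta_N)+e(t)$. Substituting into the display gives
\[
 \mathcal E_N'\le-(1-\theta)r_N+b(t)\mathcal E_N+e(t)+\omega(\mathcal E_N),
\]
where we used $b\ge0$ and $F_N+\eta_N\le\mathcal E_N$. Moving $(1-\theta)r_N$ to the left-hand side gives the claim.

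For part (ii), first use bilinearity of the commutator in the comparison field: $C_N^W[u]=C_N^W[v]+C_N^W[w]$. Then
\[
 -r_N-C_N^W[u]=\bigl(-r_N-C_N^W[w]\bigr)-C_N^W[v].
\]
Apply the completion of squares identity \eqref{eq:completion-of-squares} from Proposition~\ref{prop:completion-of-squares} to the bracket. This gives $-r_N[v]+\|w\|_{L^2(\mu_N)}^2+2\int w\cdot(K_N^W-u)\rho$, which is at most $-r_N[v]+e_w(t)$ by the second hypothesis. The remaining term satisfies $-C_N^W[v]\le b_v(t)(F_N+\eta_N)\le b_v(t)\mathcal E_N$. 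Combining these with $\omega(Q_N)\le\omega(\mathcal E_N)$ yields \eqref{eq:transport-energy-comparison-estimate}. The final inequality $r_N\le2r_N[v]+2\|w\|_{L^2(\mu_N)}^2$ is exactly \eqref{eq:force-error-comparison}.

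The only delicate point is justifying the splitting $C_N^W[u]=C_N^W[v]+C_N^W[w]$ and applying the completion of squares when the kernel is singular, in particular for $d-1\le s<d$, where the commutator is a symmetrized or principal-value object. I would handle this as Proposition~\ref{prop:completion-of-squares} does. Perform the algebra at a fixed common symmetric regularization, where linearity and the identity are exact. Then pass to the limit using the $L^1(0,T)$ convergences in \eqref{eq:unified-chain-rule-limits}, so that all identities hold for almost every $t$. This is consistent with the hypotheses being stated only for almost every time.
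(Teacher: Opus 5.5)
Your proposal is correct and follows the paper's own two-line argument: add the two relations, use $Q_N\le\mathcal E_N$, $F_N+\eta_N\le\mathcal E_N$ and the monotonicity of $\omega$, and in (ii) split $C_N^W[u]=C_N^W[v]+C_N^W[w]$ before invoking \eqref{eq:completion-of-squares} and \eqref{eq:force-error-comparison}. Your closing regularization remark is not needed here, because the proposition takes the completion-of-squares identity as an input; its singular justification for $d-1\le s<d$ is carried out in Proposition~\ref{prop:critical-riesz-mollified-defect}, and it relies on centered cutoffs rather than on the chain-rule limits \eqref{eq:unified-chain-rule-limits}.
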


\begin{proof}
For part~\textup{(i)} we have
\begin{align*}
 \mathcal E_N'
 \le-r_N+\omega(Q_N)-C_N^W[u]\le-(1-\theta)r_N+\omega(\mathcal E_N)+b(t)\mathcal E_N+e(t).
\end{align*}
For part~\textup{(ii)}, \eqref{eq:completion-of-squares} gives
\begin{align*}
 \mathcal E_N'
\le \omega(Q_N)-r_N-C_N^W[v]-C_N^W[w]\le \omega(\mathcal E_N)-r_N[v]+b_v(t)\mathcal E_N+e_w(t).
\end{align*}
The estimate for $r_N$ is \eqref{eq:force-error-comparison}.
\end{proof}

\section{Averaging and comparison principles}
\label{sec:particle-law-comparison}
We first record an abstract comparison principle at a fixed scale and its
averaged form.  This framework is used directly in the Riesz extension.  The Coulomb
proof uses the logarithmic comparison lemmas below together with the Coulomb
estimate at a density-dependent mollification scale.  In both applications the scale is kept
fixed before averaging and is selected only from the resulting averaged
quantity.  The required measurability and integrability are established in
Appendix~\ref{app:measurability}.

\subsection{Abstract comparison principle at a fixed scale}
\label{sec:averaged-law-estimate}

In this subsection, $Q$ denotes the quadratic transport cost associated with the
coupling, $F+\eta$ the modulated energy with its finite-$N$ correction, $R$ the
mean-square force error, and $R_\eps$
the analogous quantity with the mollified limiting field.  The functions
$L(\eps)$, $E_1(\eps)$, and $E_2(\eps)$ quantify, respectively,
the Lipschitz norm of the mollified field, the error in the commutator estimate,
and the error in comparing $R$ with $R_\eps$.  In the applications below, the
correspondence with the particle notation is
\[
 Q\leftrightarrow Q_N,\qquad F\leftrightarrow F_N,\qquad
 R\leftrightarrow r_N,\qquad R_\eps\leftrightarrow r_N^\eps,\qquad
 \mathcal C\leftrightarrow C_N[u].
\]

\subsubsection{A comparison estimate}

Fix $z_*>0$ and a countable set of mollification scales
$\mathcal S\subset(0,1]$.  Let $\omega:[0,\infty)\to[0,\infty)$ be
continuous, nondecreasing, and concave, with $\omega(0)=0$.  Let
$\omega_0:[0,z_*]\to[0,\infty)$ be continuous, nondecreasing, and concave,
with $\omega_0(0)=0$, and assume that $\omega_0$ is positive on $(0,z_*]$ and
satisfies the Osgood condition
\[
 \int_{0^+}\frac{\dd r}{\omega_0(r)}=\infty.
\]
Let $L,E_1,E_2:\mathcal S\to[0,\infty)$.  For each
$z\in(0,z_*]$, choose a scale $\eps_z\in\mathcal S$.  Assume that
$z\mapsto\eps_z$ is Borel and that, for a constant $C_0\ge1$ and every
$0<z\le z_*$,
\begin{equation}\label{eq:abstract-optimization-statement}
 \omega(z)+L(\eps_z)z+E_1(\eps_z)+E_2(\eps_z)
 \le C_0\omega_0(z).
\end{equation}
We also assume that the choice of scale is locally constant from the right:
\begin{equation}\label{eq:abstract-selector-right-stability}
 \forall z\in(0,z_*]\ \exists \eta_z>0:\qquad
 \eps_y=\eps_z\quad
 \text{for }z\le y\le\min\{z+\eta_z,z_*\}.
\end{equation}
Define
\[
 \Xi(r):=\int_r^{z_*}\frac{\dd s}{\omega_0(s)},\qquad 0<r\le z_*.
\]
Then $\Xi$ is continuous and strictly decreasing from $(0,z_*]$ onto
$[0,\infty)$.  Given $A\in(0,z_*]$ and a nonnegative
$\lambda\in L^1(0,T)$ such that
\begin{equation}\label{eq:abstract-majorant-admissibility}
 C_0\int_0^T \lambda(t)\,\dd t<\Xi(A),
\end{equation}
set
\[
 B_A(t):=\Xi^{-1}\!\left(\Xi(A)-C_0\int_0^t \lambda(r)\,\dd r\right).
\]
Then $B_A'=C_0\lambda\,\omega_0(B_A)$ and $B_A(0)=A$.

\begin{theorem}[Comparison principle]
\label{thm:optimized-transport-energy-principle}
Let $Q,F\in AC([0,T])$, let $R,\mathcal C$ be measurable, and for each
$\eps\in\mathcal S$ let $R_\eps\ge0$ be measurable.  Let
$\eta,\zeta\ge0$ be constant in time and define
\[
 \mathcal E:=Q+F+\eta+\zeta.
\]
Assume
\[
 Q\ge0,\qquad F+\eta\ge0,\qquad R\ge0.
\]
Suppose that there exist a nonnegative function $\lambda\in L^1(0,T)$ and a constant
$c_R\ge1$ such that, for almost every time,
\begin{align*}
 Q'\le R+\lambda(t)\omega(Q),\qquad
 F'\le-2R-\mathcal C,
\end{align*}
and, for every $\eps\in\mathcal S$, outside an exceptional set independent
of $\eps$,
\begin{align*}
 -R-\mathcal C
 \le-R_\eps+\lambda(t)\bigl(L(\eps)(F+\eta)+E_1(\eps)\bigr),\qquad
 R\le c_RR_\eps+\lambda(t)E_2(\eps).
\end{align*}
Assume \eqref{eq:abstract-optimization-statement}.  Put $A:=\mathcal E(0)$.
If $0<A\le z_*$ and \eqref{eq:abstract-majorant-admissibility} holds, define
\begin{equation}\label{eq:general-optimized-scale}
 \eps_*(t):=\eps_{B_A(t)}.
\end{equation}
Then
\begin{equation}\label{eq:general-optimized-primary-bound}
 \mathcal E(t)+\int_0^tR_{\eps_*(r)}(r)\,\dd r\le B_A(t),
 \qquad 0\le t\le T,
\end{equation}
and
\[
 \int_0^tR(r)\,\dd r
 \le c_RB_A(t)+B_A(t)-A.
\]
In particular,
\begin{equation}\label{eq:general-optimized-total-bound}
 \sup_{0\le t\le T}\mathcal E(t)
 +\int_0^TR_{\eps_*(t)}(t)\,\dd t
 +\int_0^TR(t)\,\dd t
 \le (c_R+3)B_A(T).
\end{equation}
The scale \eqref{eq:general-optimized-scale} depends only on $B_A$ and not on the particle configuration.
\end{theorem}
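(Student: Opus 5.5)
The plan is to derive a differential inequality for $\mathcal E$ at a fixed scale $\eps$, and then glue fixed-scale comparisons along the selector $\eps_z$ using the right-local constancy \eqref{eq:abstract-selector-right-stability}.

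\emph{Step 1: fixed-scale inequality.} Since $\eta,\zeta$ are constant, adding the two hypotheses gives, for a.e.\ $t$,
\[
\mathcal E'\le R+\lambda\omega(Q)-2R-\mathcal C=-R-\mathcal C+\lambda\omega(Q).
\]
For every $\eps\in\mathcal S$, outside an exceptional null set independent of $\eps$ (this independence matters because the scale will later vary with $t$), the first structural hypothesis then yields
\[
\mathcal E'+R_\eps\le\lambda(t)\bigl(\omega(Q)+L(\eps)(F+\eta)+E_1(\eps)\bigr).
\]
Since $0\le Q\le\mathcal E$, $0\le F+\eta\le\mathcal E$, and $\omega$ is nondecreasing, we obtain
\[
\mathcal E'+R_\eps\le\lambda\,\Phi_\eps(\mathcal E),\qquad \Phi_\eps(z):=\omega(z)+L(\eps)z+E_1(\eps).
\]
Each $\Phi_\eps$ is nondecreasing and concave. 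Hence it is Lipschitz on $[A,\infty)$, where $A=\mathcal E(0)>0$, so the standard ODE comparison applies there.

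\emph{Step 2: continuation argument.} Let $B=B_A$. It is continuous and nondecreasing, with $B'=C_0\lambda\omega_0(B)$, and it stays in $[A,z_*]$ by \eqref{eq:abstract-majorant-admissibility}. Let $\mathcal G$ be the set of $t\in[0,T]$ such that
\[
\mathcal E(s)+\int_0^sR_{\eps_*(r)}\,\dd r\le B(s)\quad\text{for all } s\le t.
\]
The map $r\mapsto\eps_*(r)$ is Borel, since $z\mapsto\eps_z$ is Borel and $B$ is continuous. Thus the integrand is measurable, and $\mathcal G$ is closed by continuity of $\mathcal E$, $B$ and monotone convergence of the integral. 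Also $0\in\mathcal G$.

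Now take $t_0\in\mathcal G$ with $t_0<T$ and put $z_0=B(t_0)$, $\eps:=\eps_{z_0}$. By \eqref{eq:abstract-selector-right-stability} and continuity of $B$, there is $\tau>0$ such that $B(s)\in[z_0,z_0+\eta_{z_0}]$ on $[t_0,t_0+\tau]$, and therefore $\eps_*(s)=\eps$ there.

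On this interval set $\widetilde{\mathcal E}(s):=\mathcal E(s)+\int_0^sR_{\eps_*}$. Then Step 1 gives $\widetilde{\mathcal E}'\le\lambda\Phi_\eps(\mathcal E)\le\lambda\Phi_\eps(\widetilde{\mathcal E})$. On the other hand, \eqref{eq:abstract-optimization-statement} gives $\Phi_\eps(B)\le C_0\omega_0(B)$ whenever $\eps_B=\eps$, so $B'\ge\lambda\Phi_\eps(B)$ on the interval. Since $\widetilde{\mathcal E}(t_0)\le B(t_0)$ and $\Phi_\eps$ is Lipschitz on $[A,z_*]$, the comparison principle for $y'=\lambda\Phi_\eps(y)$ gives $\widetilde{\mathcal E}\le B$ on $[t_0,t_0+\tau]$.

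Hence $\mathcal G$ is open to the right and closed, so $\mathcal G=[0,T]$. This proves \eqref{eq:general-optimized-primary-bound}. I expect this gluing step to be the main obstacle. The delicate points are the exceptional set independent of $\eps$, the measurability of $\eps_*$, and the fact that the fixed-scale bound $\Phi_\eps\le C_0\omega_0$ holds only while $B$ stays in the right-stable window. That is exactly why the comparison is performed forward from each good time.

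\emph{Step 3: the force error.} Apply $R\le c_RR_{\eps_*}+\lambda E_2(\eps_*)$ a.e. Then use $E_2(\eps_{B})\le C_0\omega_0(B)$, which follows from \eqref{eq:abstract-optimization-statement} because all other terms there are nonnegative. This gives
\[
\int_0^tR\le c_R\int_0^tR_{\eps_*}+\int_0^tC_0\lambda\,\omega_0(B)
\le c_RB(t)+B(t)-A.
\]
Adding $\sup\mathcal E\le B(T)$, $\int_0^TR_{\eps_*}\le B(T)$, and $\int_0^T R\le(c_R+1)B(T)$, and using that $B$ is nondecreasing, yields \eqref{eq:general-optimized-total-bound} with constant $c_R+2\le c_R+3$.

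Finally, $\eps_*$ is defined through $B_A$ alone. So it is deterministic and independent of the particle configuration, which is what the subsequent averaging over the $N$-particle law requires.
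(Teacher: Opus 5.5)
Your proof is correct and follows a genuinely different route from the paper's.

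\textbf{The paper's route.} The paper never compares $\mathcal E$ with $B_A$ directly. It uses the strictly larger barrier $B_{A+\nu}$ with the scale $\eps_{B_{A+\nu}(t)}$ and a first-exit time $\tau_\nu$. On $[0,\tau_\nu]$ it substitutes $Q,\,F+\eta\le\mathcal E\le B_{A+\nu}$ into the right-hand side, which needs only that $\omega$ is nondecreasing and $L\ge0$. This gives $\mathcal E'+R_{\eps_\nu}\le B_{A+\nu}'$ and hence the strict bound $\mathcal E\le B_{A+\nu}-\nu$, which rules out an exit. It then lets $\nu\downarrow0$. Right-local constancy of the selector makes $\eps_{B_{A+\nu}(r)}$ eventually equal to $\eps_{B_A(r)}$ for each $r$, and Fatou's lemma passes from $R_{\eps_\nu}$ to $R_{\eps_*}$.

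\textbf{Your route.} You work at the target scale $\eps_{B_A(t)}$ from the start. You freeze it on short forward windows via right-stability and glue the windows by continuation. In place of the strict-barrier trick you use a Gronwall comparison, which relies on concavity: $\Phi_\eps$ is Lipschitz on $[A,\infty)$ with $A>0$. That is all you need, because the Lipschitz bound is only invoked where $\widetilde{\mathcal E}>B_A\ge A$. So ``$[A,z_*]$'' in Step 2 should read $[A,\infty)$, since $\widetilde{\mathcal E}$ is not a priori below $z_*$.

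\textbf{What each buys.} Your route avoids the $\nu$-limit and Fatou. The price is using concavity of $\omega$ in the comparison, whereas the paper's barrier step uses only monotonicity. Along your route, right-stability is not even essential. The optimization hypothesis and concavity of $\omega_0$ give $L(\eps_{B_A(t)})\le C_0\omega_0(B_A)/B_A\le C_0\omega_0(A)/A$. So the Lipschitz constants are uniform, and the comparison could be run once on all of $[0,T]$ with the time-varying scale.

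\textbf{Two small points.} The integrability of $R_\eps$ on each window, which you need for absolute continuity of $\widetilde{\mathcal E}$, follows from $R_\eps\le\lambda\Phi_\eps(\mathcal E)-\mathcal E'$. Also, your final sum is $1+1+(c_R+1)=c_R+3$, not $c_R+2$; this is exactly the constant the statement asserts.
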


\begin{proof}
Choose $\nu_0>0$ so small that $A+\nu_0\le z_*$ and
$C_0\int_0^T \lambda<\Xi(A+\nu_0)$.  For $0<\nu\le\nu_0$, let $B_{A+\nu}$ be
the corresponding function and let
\[
 \tau_\nu:=\inf\{t\in[0,T]:\mathcal E(t)\ge B_{A+\nu}(t)\},
\]
with the convention $\inf\varnothing=T$.  On $[0,\tau_\nu]$, take
$\eps_\nu(t)=\eps_{B_{A+\nu}(t)}$.  Since
$Q,F+\eta\le\mathcal E\le B_{A+\nu}$ and $\omega$ is nondecreasing,
\begin{align*}
 \mathcal E'+R_{\eps_\nu}
 &\le \lambda\bigl(\omega(Q)+L(\eps_\nu)(F+\eta)+E_1(\eps_\nu)\bigr)\\
 &\le C_0\lambda\,\omega_0(B_{A+\nu})=B_{A+\nu}'.
\end{align*}
Integrating from $0$ to $t$ gives
\[
 \mathcal E(t)+\int_0^tR_{\eps_\nu(r)}(r)\,\dd r
 \le B_{A+\nu}(t)-\nu
\]
for $t\le\tau_\nu$.  If $\tau_\nu<T$, the definition of $\tau_\nu$ gives
$\mathcal E(\tau_\nu)=B_{A+\nu}(\tau_\nu)$, which contradicts the preceding
strict inequality.  Hence $\tau_\nu=T$.

To pass to the limit $\nu\downarrow0$, note that the continuity of $\Xi^{-1}$ gives, for every $r\in[0,T]$,
\[
 B_{A+\nu}(r)\downarrow B_A(r)\qquad(\nu\downarrow0).
\]
Fix any sequence $\nu_n\downarrow0$.  By
\eqref{eq:abstract-selector-right-stability}, for every fixed $r$ the
discrete value $\eps_{B_{A+\nu_n}(r)}$ is eventually equal to
$\eps_{B_A(r)}=\eps_*(r)$.  Consequently,
$R_{\eps_{\nu_n}(r)}(r)\to R_{\eps_*(r)}(r)$ pointwise in $r$.  The
nonnegativity of every $R_\eps$ and Fatou's lemma yield, for each
$t\in[0,T]$,
\begin{align*}
 \mathcal E(t)+\int_0^tR_{\eps_*(r)}(r)\,\dd r
 \le \mathcal E(t)+\liminf_{n\to\infty}
       \int_0^tR_{\eps_{\nu_n}(r)}(r)\,\dd r\le \lim_{n\to\infty}
       \bigl(B_{A+\nu_n}(t)-\nu_n\bigr)=B_A(t).
\end{align*}
Since the sequence was arbitrary, this proves
\eqref{eq:general-optimized-primary-bound}.  The passage to the limit uses the local constancy from the right of the chosen dyadic scale.

The estimate for $R$ and \eqref{eq:abstract-optimization-statement} give
\begin{align*}
 \int_0^tR
 \le c_R\int_0^tR_{\eps_*}
 +\int_0^t\lambda(r)E_2(\eps_*(r))\,\dd r\le c_RB_A(t)+C_0\int_0^t\lambda(r)\omega_0(B_A(r))\,\dd r=c_RB_A(t)+B_A(t)-A.
\end{align*}
The last two estimates imply \eqref{eq:general-optimized-total-bound}.
\end{proof}

\begin{proposition}
\label{prop:abstract-fixed-scale-comparison}
Assume the positivity and differential hypotheses of
Theorem~\ref{thm:optimized-transport-energy-principle}, but not its smallness
condition.  Suppose
\[
 \omega(z)\le C_\omega(1+z),\qquad z\ge0,
\]
and that there is a fixed $\bar\eps\in\mathcal S$ for which
$L(\bar\eps)+E_1(\bar\eps)+E_2(\bar\eps)<\infty$.  Then
\[
 \sup_{0\le t\le T}\mathcal E(t)
 +\int_0^TR_{\bar\eps}(t)\,\dd t
 +\int_0^TR(t)\,\dd t
 \le C_T\bigl(1+\mathcal E(0)\bigr),
\]
where $C_T$ depends only on $c_R,C_\omega,\bar\eps$, the values of
$L,E_1,E_2$ at $\bar\eps$, and $\|\lambda\|_{L^1(0,T)}$.
\end{proposition}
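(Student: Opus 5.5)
The plan is to run the argument of Theorem~\ref{thm:optimized-transport-energy-principle} with the scale frozen at $\bar\eps$, and to replace the Osgood comparison by a linear Gronwall estimate. The growth bound $\omega(z)\le C_\omega(1+z)$ is what allows this.

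First I will combine the hypotheses at the fixed scale $\bar\eps$. For almost every time, the bounds $Q'\le R+\lambda\omega(Q)$ and $F'\le -2R-\mathcal C$ together with the commutator hypothesis at $\eps=\bar\eps$ give
\[
 \mathcal E'+R_{\bar\eps}
 \le \lambda(t)\bigl(\omega(Q)+L(\bar\eps)(F+\eta)+E_1(\bar\eps)\bigr).
\]
The exceptional set is independent of $\eps$, so this holds almost everywhere. Next I use $0\le Q\le\mathcal E$ and $0\le F+\eta\le \mathcal E$; the second holds because $Q,\zeta\ge0$. Since $\omega$ is nondecreasing, this yields
\[
 \mathcal E'+R_{\bar\eps}\le \lambda(t)\bigl(K(1+\mathcal E)\bigr),
 \qquad K:=C_\omega+L(\bar\eps)+E_1(\bar\eps)<\infty.
\]

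Second, I will apply Gronwall's inequality to $1+\mathcal E$. This is justified because $\mathcal E\in AC([0,T])$, being a sum of $AC$ functions and constants. Dropping the nonnegative term $R_{\bar\eps}$ gives
\[
 1+\mathcal E(t)\le (1+\mathcal E(0))\,e^{K\Lambda},
 \qquad \Lambda=\|\lambda\|_{L^1(0,T)} .
\]
Integrating the differential inequality once more, and using $\mathcal E(t)\ge0$, gives
\[
 \int_0^T R_{\bar\eps}\le \mathcal E(0)+K\Lambda\sup(1+\mathcal E)\le C(1+\mathcal E(0)).
\]

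Third, I will control $R$ through $R\le c_RR_{\bar\eps}+\lambda E_2(\bar\eps)$. This gives $\int_0^TR\le c_R\int_0^T R_{\bar\eps}+\Lambda E_2(\bar\eps)$, and the right-hand side is bounded by $C_T(1+\mathcal E(0))$ because $1\le 1+\mathcal E(0)$. Summing the three bounds proves the claim, with $C_T$ depending only on the listed quantities.

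There is no real obstacle here. The only points to check are that $F+\eta\le\mathcal E$ needs $Q,\zeta\ge0$, and that the measurability of $R_{\bar\eps}$ and the almost-everywhere validity at the single scale $\bar\eps$ are already part of the hypotheses.
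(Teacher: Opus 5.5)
Your proposal is correct and follows the paper's proof essentially step for step. You freeze the scale at $\bar\eps$ and reduce to a linear inequality $\mathcal E'+R_{\bar\eps}\le a(t)\mathcal E+b(t)$, with your choice $a=b=K\lambda$. You then apply Gronwall, integrate once more using $\mathcal E(T)\ge0$ to bound $\int_0^TR_{\bar\eps}$, and finish with $R\le c_RR_{\bar\eps}+\lambda E_2(\bar\eps)$.
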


\begin{proof}
At the fixed scale $\bar\eps$, the hypotheses give
\[
 \mathcal E'+R_{\bar\eps}
 \le a(t)\mathcal E+b(t),
\]
where $a,b\in L^1(0,T)$ depend only on the quantities listed in the
statement.  Gronwall yields
$\sup_{t\le T}\mathcal E(t)\le C_T(1+\mathcal E(0))$.  Integrating the same
inequality and using $\mathcal E(T)\ge0$ gives
$\int_0^TR_{\bar\eps}\le C_T(1+\mathcal E(0))$.  The estimate
$R\le c_RR_{\bar\eps}+\lambda E_2(\bar\eps)$ then gives the same bound
for $\int_0^TR$, proving the proposition.
\end{proof}

For initial errors outside $[0,z_*]$, we first apply Proposition~\ref{prop:abstract-fixed-scale-comparison}.  Combining the two estimates gives the expressions $A^\gamma+A$ and their finite-$q$ analogues in the applications.

\begin{corollary}
\label{cor:averaged-optimized-comparison}
Let $(\mathcal X_0,\mathcal F,\mathbb P)$ be a probability space.  Assume that
the hypotheses of Theorem~\ref{thm:optimized-transport-energy-principle}
hold for almost every trajectory, with the same $\lambda$, $c_R$, and scale
functions.  Assume also that, for every fixed scale parameter in the countable family, the relevant quantities
and their absolutely continuous derivatives are jointly measurable and
integrable, and that the countable family of scales is valid outside a single
null set.  Define
\[
 \overline{\mathcal E}(t):=\mathbb E[Q(t)+F(t)+\eta]+\zeta,
 \qquad
 \overline R(t):=\mathbb ER(t),
 \qquad
 \overline R_\eps(t):=\mathbb ER_\eps(t),
\]
and put $A:=\overline{\mathcal E}(0)$.  If $0<A\le z_*$ and
\eqref{eq:abstract-majorant-admissibility} holds, choose the deterministic
scale
\begin{equation}\label{eq:averaged-general-deterministic-scale}
 \eps_A(t):=\eps_{B_A(t)}.
\end{equation}
Then
\[
 \sup_{0\le t\le T}\overline{\mathcal E}(t)
 +\int_0^T\overline R_{\eps_A(t)}(t)\,\dd t
 +\int_0^T\overline R(t)\,\dd t
 \le(c_R+3)B_A(T).
\]
The scale depends only on the averaged initial error and deterministic bounds for the limiting solution.  If the hypotheses of
Proposition~\ref{prop:abstract-fixed-scale-comparison} hold almost surely
with deterministic data, the estimate at a fixed scale also passes to the average
and covers arbitrary initial error.
\end{corollary}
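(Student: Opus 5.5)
The corollary is the averaged version of Theorem~\ref{thm:optimized-transport-energy-principle}. The plan is to rerun that proof on the expectations rather than trajectory by trajectory. The reason is that the scale must be deterministic. Applying the theorem pathwise would produce a scale $\eps_{B_{A(\omega)}}$ depending on each trajectory's initial error, which is not what is wanted. So the first step is to take expectations of the pathwise differential inequalities at each \emph{fixed} scale $\eps\in\mathcal S$. The hypotheses hold outside a single null set for the whole countable family, and the joint measurability and integrability assumptions (Appendix~\ref{app:measurability}) let us integrate the absolutely continuous functions $Q,F$ in time and then use Fubini. This gives, for each fixed $\eps$, that $\overline{\mathcal E}$ is absolutely continuous and that for a.e.\ $t$
\[
 \overline Q'\le \overline R+\lambda\,\mathbb E\,\omega(Q)\le\overline R+\lambda\,\omega(\overline Q),
 \qquad \overline F'\le -2\overline R-\mathbb E\,\mathcal C .
\]
Here Jensen is applied to the concave $\omega$. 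The fixed-scale relations average to
\[
 -\overline R-\mathbb E\,\mathcal C\le -\overline R_\eps+\lambda\bigl(L(\eps)(\overline F+\eta)+E_1(\eps)\bigr),
 \qquad
 \overline R\le c_R\overline R_\eps+\lambda E_2(\eps),
\]
because $L,E_1,E_2,\lambda$ are deterministic. The exceptional time sets are countable in number, one for each $\eps$, so their union is still null.

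The averaged quantities $\overline Q,\overline F,\overline R,\overline R_\eps,\overline{\mathcal C}:=\mathbb E\,\mathcal C$ now satisfy every hypothesis of Theorem~\ref{thm:optimized-transport-energy-principle}, with the same $\eta,\zeta,\lambda,c_R$ and the same scale functions. Positivity passes to the mean: $\overline Q\ge0$, $\overline F+\eta\ge0$, $\overline R\ge0$. The hypotheses \eqref{eq:abstract-optimization-statement} and \eqref{eq:abstract-selector-right-stability} involve only deterministic objects. With $A=\overline{\mathcal E}(0)\in(0,z_*]$ and \eqref{eq:abstract-majorant-admissibility}, the theorem applies verbatim to this single deterministic system. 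It yields \eqref{eq:general-optimized-total-bound} with the scale $\eps_*(t)=\eps_{B_A(t)}$, which is exactly \eqref{eq:averaged-general-deterministic-scale}. I would check one point inside that proof: the barrier argument uses the inequality only at the selected scale $\eps_{B_{A+\nu}(t)}$, a Borel deterministic function of $t$ taking countably many values. The fixed-scale averaged inequalities therefore hold a.e.\ along it, and the time integrals $\int\overline R_{\eps_\nu(r)}(r)\,\dd r$ are measurable. The Fatou step is then identical.

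For the final sentence, average the fixed-scale Gronwall inequality from the proof of Proposition~\ref{prop:abstract-fixed-scale-comparison}. Since $\omega(z)\le C_\omega(1+z)$ is linear, expectation commutes with the bound and gives $\overline{\mathcal E}'+\overline R_{\bar\eps}\le a(t)\overline{\mathcal E}+b(t)$ with deterministic $a,b$. Gronwall then gives the claim for arbitrary $A$.

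The main obstacle is the first step: justifying that the expectations are absolutely continuous and can be differentiated under $\mathbb E$. The pathwise $F$ is only absolutely continuous, with derivative $-2R-\mathcal C$ in $L^1$, and $\mathcal C$ has no sign. I would handle it through the integral form. Pathwise, $F(t)-F(s)\le\int_s^t(-2R-\mathcal C)$, and the assumed integrability of $R$ and $\mathcal C$ in $L^1(\mathbb P\otimes\dd t)$ lets us take expectations and use Fubini. The differential inequalities are then used only in integrated form, which is all the barrier argument needs.
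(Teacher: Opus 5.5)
Your proposal is correct and follows the paper's own argument. Like the paper, you average each fixed-scale inequality, justified by the integrability and the common null set, apply Jensen to the concave $\omega$, and then invoke Theorem~\ref{thm:optimized-transport-energy-principle} (respectively Proposition~\ref{prop:abstract-fixed-scale-comparison}) for the deterministic averaged system, with the scale chosen only after averaging. Your remarks on the integrated form of the energy inequality and on the measurability of the selected scale spell out details the paper leaves implicit.
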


\begin{proof}
The integrability assumptions allow differentiation under the fixed expectation
and averaging of every inequality at a fixed scale.  Jensen's inequality gives
\[
 \mathbb E\omega(Q)\le\omega(\mathbb E Q)\le\omega(\overline{\mathcal E}),
 \qquad \mathbb E(F+\eta)\le\overline{\mathcal E}.
\]
Hence the averaged quantities satisfy the deterministic hypotheses of
Theorem~\ref{thm:optimized-transport-energy-principle}.  The common null set
permits the deterministic scale \eqref{eq:averaged-general-deterministic-scale}
to be inserted after averaging.  The optimized estimate follows from that
theorem, and the estimate at a fixed scale follows from
Proposition~\ref{prop:abstract-fixed-scale-comparison}.
\end{proof}

In the Coulomb application and the Riesz application under the stated Besov regularity, the modulus $\omega_0$ is chosen from the logarithmic moduli introduced below.
The remaining input depending on the kernel is the choice of a dyadic scale satisfying \eqref{eq:abstract-optimization-statement}.

\subsection{Logarithmic comparison lemmas}\label{subsec:power-log-specializations}
\begin{equation}\label{eq:ell-definition}
 \ell(r):=1+\log\frac1r,
 \qquad 0<r\le1.
\end{equation}
\begin{lemma}
\label{lem:concave-log-modulus}
Let $0\le\beta\le1$.  Define $\omega_\beta(0)=0$ and
\[
 \omega_\beta(r)=r\left(1+\log\frac1r\right)^\beta,
 \qquad 0<r\le e^{-1},
\]
and extend it for $r>e^{-1}$ by the tangent line at $e^{-1}$.  Then
$\omega_\beta$ is continuous, nondecreasing, and concave.  Moreover,
$r(1+\log_+(1/r))^\beta\le C_\beta\omega_\beta(r)$ for all $r\ge0$, and
\[\int_0^{e^{-1}}\frac{\dd r}{\omega_\beta(r)}=\infty
 \qquad\text{for every }0\le\beta\le1.
\]
Every finite-$q$ logarithmic modulus used below is an Osgood modulus;
$\beta=0$ is the linear/Gronwall case and $\beta=1$ is the limiting
logarithmic case.  Set $\omega_*:=\omega_1$.
\end{lemma}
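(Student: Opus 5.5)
We verify the claims of Lemma~\ref{lem:concave-log-modulus} directly from the explicit formula on $(0,e^{-1}]$ and then extend. On $(0,e^{-1}]$ set $\ell(r)=1+\log(1/r)\ge2$.

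\emph{Derivatives on $(0,e^{-1}]$.} The first derivative is
\[
\omega_\beta'(r)=\ell(r)^\beta-\beta\ell(r)^{\beta-1}=\ell(r)^{\beta-1}(\ell(r)-\beta)>0,
\]
since $\ell\ge2>\beta$. Differentiating once more with $\ell'=-1/r$ gives
\[
\omega_\beta''(r)=-\frac{\beta}{r}\,\ell^{\beta-2}\bigl(\ell-(\beta-1)\bigr)\le0,
\]
because $\ell-\beta+1>0$. Hence $\omega_\beta$ is increasing and concave on this interval.

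\emph{Continuity at $0$.} For $\beta\le1$ we have $r\ell(r)^\beta\le r\ell(r)\to0$ as $r\to0$, so $\omega_\beta(0)=0$ is the continuous extension.

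\emph{Extension beyond $e^{-1}$.} For $r>e^{-1}$ use the tangent line $\omega_\beta(e^{-1})+\omega_\beta'(e^{-1})(r-e^{-1})$. Its slope $2^{\beta-1}(2-\beta)$ is positive, so the extension is increasing. The glued function is $C^1$ at $e^{-1}$ with nonincreasing derivative, so it is concave on $[0,\infty)$.

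\emph{Comparison bound.} On $(0,e^{-1}]$, since $\log_+(1/r)=\log(1/r)$ there, the bound holds with $C_\beta=1$. For $r>e^{-1}$ the left side is at most $r(1+\log e)^\beta\le 2r$. The tangent-line extension is affine with positive slope $\ge 2^{\beta-1}(2-\beta)\ge1$ and positive value $\omega_\beta(e^{-1})=e^{-1}2^\beta$ at $e^{-1}$. Hence $\omega_\beta(r)\ge c\,r$ on $[e^{-1},\infty)$ for some $c>0$ depending on $\beta$ (for instance $c=\min(\text{slope},\,e\,\omega_\beta(e^{-1}))$, using $\omega_\beta(r)\ge \omega_\beta(e^{-1})$ and $\omega_\beta(r)\ge \text{slope}\cdot(r-e^{-1})$). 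This gives $C_\beta$.

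\emph{Osgood integral.} Since $\ell^\beta\le\ell$ for $\beta\le1$ (as $\ell\ge1$),
\[
\int_0^{e^{-1}}\frac{\dd r}{r\,\ell(r)^\beta}\ge\int_0^{e^{-1}}\frac{\dd r}{r(1+\log(1/r))}.
\]
Substituting $t=\log(1/r)$, the right side becomes $\int_1^\infty\dd t/(1+t)=\infty$.

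The remaining remarks follow immediately: $\beta=0$ gives the linear modulus $\omega_0(r)=r$, and the finite-$q$ exponents $\beta=1-1/q\in[0,1]$ are covered. The only mildly delicate step is the sign check for $\omega_\beta''$; it works because $\ell\ge2$ on $(0,e^{-1}]$.
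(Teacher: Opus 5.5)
Your proof is correct and follows the paper's route: direct differentiation on $(0,e^{-1}]$, concavity preserved by the $C^1$ tangent continuation, and a logarithmic substitution for the Osgood integral. The paper substitutes $y=1+\log(1/r)$ to get $\int_2^\infty y^{-\beta}\,\dd y$ directly, whereas you first compare with $\beta=1$. You also verify the comparison bound $r(1+\log_+(1/r))^\beta\le C_\beta\omega_\beta(r)$, which the paper leaves implicit.

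One small correction, with $s$ the tangent slope. The constant $c=\min\{s,e\,\omega_\beta(e^{-1})\}$ follows from the affine formula $\omega_\beta(r)=\omega_\beta(e^{-1})+s(r-e^{-1})\ge ce^{-1}+c(r-e^{-1})=cr$. It does not follow from the two lower bounds $\omega_\beta(r)\ge\omega_\beta(e^{-1})$ and $\omega_\beta(r)\ge s(r-e^{-1})$ taken separately, which fail to give $\omega_\beta(r)\ge cr$ at $r=e^{-1}+\omega_\beta(e^{-1})/s$.
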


\begin{proof}
Direct differentiation on $(0,e^{-1})$ gives monotonicity and concavity, and
the tangent continuation preserves both.  With
$y=1+\log(1/r)$,
\[
 \int_0^{e^{-1}}\frac{\dd r}{\omega_\beta(r)}
 =\int_2^\infty y^{-\beta}\,\dd y,
\]
which diverges for every $0\le\beta\le1$.
\end{proof}

\begin{lemma}
\label{lem:finite-q-bihari-comparison}
Let $1\le q<\infty$, let $\beta_q=1-1/q$, and let
$\lambda\in L^1(0,T)$ be nonnegative.  Put
$\Lambda(t)=\int_0^t \lambda(r)\,\dd r$.  Suppose that $Y:[0,T]\to[0,\infty)$ is
continuous and, up to the first exit from
$\overline Y(t):=\sup_{0\le r\le t}Y(r)<e^{-3}$,
\[\overline Y(t)\le A+\int_0^t
 \lambda(r)\omega_{\beta_q}(\overline Y(r))\,\dd r.
\]
There is $a_*\in(0,e^{-3})$, depending only on $q$ and $\Lambda(T)$, such that, if
$0<A\le a_*$, then no exit occurs and
\begin{equation}\label{eq:finite-q-explicit-majorant}
 \overline Y(t)\le
 \exp\!\left(
 1-\left[
 \left(1+\log\frac1A\right)^{1/q}
 -\frac{\Lambda(t)}q
 \right]^q\right),
 \qquad 0\le t\le T.
\end{equation}
For fixed $q<\infty$ and $\Lambda\ge0$, the right-hand side also satisfies,
for all sufficiently small $A$,
\begin{equation}\label{eq:finite-q-majorant-asymptotic}
 \exp\!\left(
 1-\left[
 \left(1+\log\frac1A\right)^{1/q}-\frac \Lambda q
 \right]^q\right)
 \le C_{q,\Lambda}A
 \exp\!\left(C_{q,\Lambda}(1+\log(1/A))^{1-1/q}\right).
\end{equation}
For $q=1$, the right-hand side is $e^{\Lambda}A$.
\end{lemma}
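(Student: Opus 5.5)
The plan is to reduce to the explicit solution of the comparison ODE
\[
 \frac{\dd}{\dd t}Z=\lambda(t)\,\omega_{\beta_q}(Z),\qquad Z(0)=A,
\]
and then run a barrier argument. On $(0,e^{-1}]$ we have $\omega_{\beta_q}(r)=r\,\ell(r)^{\beta_q}$, with $\ell$ as in \eqref{eq:ell-definition}. Substituting $y=\ell(Z)=1+\log(1/Z)$ gives $y'=-\lambda y^{\beta_q}$, that is,
\[
 \frac{\dd}{\dd t}\,y^{1/q}=-\frac{\lambda}{q}.
\]
Hence
\[
 \ell(Z(t))^{1/q}=\ell(A)^{1/q}-\frac{\Lambda(t)}{q},
\]
and therefore
\[
 Z(t)=\exp\!\Bigl(1-\bigl[\ell(A)^{1/q}-\Lambda(t)/q\bigr]^q\Bigr),
\]
which is exactly the right-hand side of \eqref{eq:finite-q-explicit-majorant}. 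For $q=1$ the bracket is simply $1+\log(1/A)-\Lambda$, so $Z=e^{\Lambda}A$, as stated.

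Next I choose $a_*$ so that $Z$ stays below $e^{-3}$ on $[0,T]$. This holds as soon as $\ell(A)^{1/q}-\Lambda(T)/q\ge 4^{1/q}$, i.e.\ $\log(1/A)\ge(4^{1/q}+\Lambda(T)/q)^q-1$. I take $a_*$ to be $\exp$ of minus the larger of this threshold and $3$ (with a slight margin), so that $a_*$ depends only on $q$ and $\Lambda(T)$.

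The comparison step goes as follows. For $\nu>0$ small, let $Z_\nu$ be the solution with $Z_\nu(0)=A+\nu$; it still stays below $e^{-3}$ if $a_*$ carries the margin. Let $\tau$ be the first time with $\overline Y(\tau)\ge Z_\nu(\tau)$. Before $\tau$ we have $\overline Y<e^{-3}$, so the hypothesis applies. Since $\omega_{\beta_q}$ is nondecreasing, at time $\tau$
\[
 \overline Y(\tau)\le A+\int_0^\tau\lambda\,\omega_{\beta_q}(\overline Y)\le A+\int_0^\tau\lambda\,\omega_{\beta_q}(Z_\nu)=Z_\nu(\tau)-\nu,
\]
which is a contradiction. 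Thus $\overline Y<Z_\nu<e^{-3}$ throughout, so no exit occurs. Letting $\nu\downarrow0$ and using continuity of the explicit formula in $A$ gives the bound. The only delicate point is that the hypothesis is asserted only up to the exit time; the strict barrier handles this, since continuity of $\overline Y$ prevents it from jumping past $Z_\nu$.

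For the asymptotic bound \eqref{eq:finite-q-majorant-asymptotic}, set $L=\ell(A)$ and $c=\Lambda/q$. By convexity of $x\mapsto x^q$ (Bernoulli),
\[
 (L^{1/q}-c)^q\ge L-qcL^{1-1/q},
\]
valid when $L^{1/q}\ge c$, which holds for small $A$. Then
\[
 \exp\bigl(1-(L^{1/q}-c)^q\bigr)\le \exp\bigl(1-L+\Lambda L^{1-1/q}\bigr)=A\exp\bigl(\Lambda\,\ell(A)^{1-1/q}\bigr).
\]
So the bound holds with $C_{q,\Lambda}=\max(1,\Lambda)$. No real obstacle arises here beyond the bookkeeping of the margin in $a_*$.
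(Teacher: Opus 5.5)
Your proposal is correct and follows essentially the paper's proof. It uses the same substitution $H=1+\log(1/\cdot)$, under which $H^{1/q}$ decreases at rate at most $\lambda/q$; the same choice of $a_*$ to keep the majorant below $e^{-3}$ and rule out exit; and the same tangent-line (mean value) bound $X^q-(X-c)^q\le qcX^{q-1}$ for the asymptotics. The only difference is how the comparison step is packaged: the paper runs the Bihari computation directly on the integral majorant $V(t)=A+\int_0^t\lambda\,\omega_{\beta_q}(\overline Y)\,\dd r$ via $V'\le\lambda\,\omega_{\beta_q}(V)$ and a stopping time, which avoids your perturbation $A+\nu$ and the limit $\nu\downarrow0$, whereas your explicit strict barrier avoids differentiating $V$.
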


\begin{proof}
Let
\[
V(t)=A+\int_0^t\lambda(r)\omega_{\beta_q}(\overline Y(r))\,\dd r,
\qquad H(t)=1+\log\frac1{V(t)}.
\]
The hypothesis for $\overline Y$ is only assumed before its first exit from
the small regime, so we stop both quantities at that regime.  Set
\[
 \tau:=\inf\left\{t\in[0,T]:
 \max\{\overline Y(t),V(t)\}\ge e^{-3}\right\},
\]
with $\tau=T$ if the set is empty.  For $0\le t<\tau$, the assumed integral
inequality is valid and gives $\overline Y(t)\le V(t)<e^{-3}<e^{-1}$.
Hence, by monotonicity of $\omega_{\beta_q}$ and the identity
$\omega_{\beta_q}(V)=VH^{1-1/q}$ in this range,
\[
V'(t)\le \lambda(t)V(t)H(t)^{1-1/q}
\qquad\text{for a.e. }t\in(0,\tau).
\]
Since $H'(t)=-V'(t)/V(t)$, it follows that
\[
\frac{\dd}{\dd t}H(t)^{1/q}
=\frac1q H(t)^{1/q-1}H'(t)
\ge-\frac1q\lambda(t).
\]
Therefore, on $[0,\tau)$,
\[
H(t)^{1/q}\ge H(0)^{1/q}-\frac{\Lambda(t)}q,
\]
and thus
\[
 V(t)\le
 \exp\!\left(
 1-\left[
 \left(1+\log\frac1A\right)^{1/q}
 -\frac{\Lambda(t)}q
 \right]^q\right).
\]
Choose $a_*$ so small that the bracket stays positive and the
right-hand side is strictly smaller than $e^{-3}$ for every $t\le T$.
If $\tau<T$, continuity and $\overline Y\le V$ on $[0,\tau)$ imply
$V(\tau)\ge e^{-3}$, contradicting the preceding strict bound.  Hence
$\tau=T$, and \eqref{eq:finite-q-explicit-majorant} holds on the full
interval.

For the asymptotic estimate, put
\[
 X=\left(1+\log\frac1A\right)^{1/q},\qquad c=\frac\Lambda q.
\]
For $A$ sufficiently small one has $X>2c$, and the mean value theorem gives
\[
 0\le X^q-(X-c)^q\le q c X^{q-1}
 \le C_{q,\Lambda}X^{q-1}.
\]
Since $e^{1-X^q}=A$, the explicit majorant equals
\[
 A\exp\!\bigl(X^q-(X-c)^q\bigr)
 \le C_{q,\Lambda}A
 \exp\!\left(C_{q,\Lambda}
 (1+\log(1/A))^{1-1/q}\right),
\]
which is \eqref{eq:finite-q-majorant-asymptotic}.  When $q=1$ the
difference is exactly $\Lambda$, and the majorant is $e^{\Lambda}A$.
\end{proof}

\begin{lemma}
\label{lem:logarithmic-osgood-comparison}
Let $A\in(0,e^{-3})$, let $\lambda\in L^1(0,T)$ be nonnegative, and let
$Y:[0,T]\to[0,\infty)$ be continuous.  Write
$\overline Y(t)=\sup_{0\le r\le t}Y(r)$ and
\[\Lambda(t):=\int_0^t \lambda(r)\,\dd r.
\]
Suppose that, up to the first exit from $\overline Y<e^{-3}$,
\[\overline Y(t)\le A+\int_0^t \lambda(r)\omega_*(\overline Y(r))\,\dd r.
\]
There exists $a_T\in(0,e^{-3})$, depending only on $\Lambda(T)$, such that, if
$A\le a_T$, no exit occurs on $[0,T]$ and
\begin{equation}\label{eq:critical-riesz-explicit-osgood-bound}
 \overline Y(t)\le
 \exp\!\bigl(1-e^{-\Lambda(t)}\bigr)
 A^{e^{-\Lambda(t)}},
 \qquad 0\le t\le T.
\end{equation}
\end{lemma}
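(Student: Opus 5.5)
The plan is to mirror the proof of Lemma~\ref{lem:finite-q-bihari-comparison}, with the variable $H=1+\log(1/V)$ now evolving linearly in $\log H$ rather than in $H^{1/q}$. Put
\[
 V(t):=A+\int_0^t\lambda(r)\,\omega_*(\overline Y(r))\,\dd r,
 \qquad H(t):=1+\log\frac1{V(t)},
\]
and introduce the stopping time
\[
 \tau:=\inf\bigl\{t\in[0,T]:\max\{\overline Y(t),V(t)\}\ge e^{-3}\bigr\},
\]
with $\tau=T$ if the set is empty. For $t<\tau$ the hypothesis gives $\overline Y(t)\le V(t)<e^{-3}<e^{-1}$. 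On this range $\omega_*(r)=r(1+\log(1/r))$ exactly, and $\omega_*$ is nondecreasing by Lemma~\ref{lem:concave-log-modulus}. Since $V$ is absolutely continuous,
\[
 V'\le\lambda\,V H\qquad\text{for a.e. }t<\tau .
\]

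Next I compute the evolution of $\log H$. From $H'=-V'/V$ we get $H'\ge-\lambda H$. Because $H\ge 4>0$ on this range, this gives $(\log H)'\ge-\lambda$, so $H(t)\ge H(0)e^{-\Lambda(t)}$. Unwinding,
\[
 \log\frac1{V(t)}\ge\Bigl(1+\log\frac1A\Bigr)e^{-\Lambda(t)}-1,
\]
which is
\[
 V(t)\le\exp\bigl(1-e^{-\Lambda(t)}\bigr)A^{e^{-\Lambda(t)}} .
\]
Since $\overline Y\le V$, this is exactly \eqref{eq:critical-riesz-explicit-osgood-bound} on $[0,\tau)$.

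To show that no exit occurs, choose $a_T$ so that
\[
 e\,a_T^{e^{-\Lambda(T)}}<e^{-3},
 \qquad\text{i.e.}\qquad
 a_T<e^{-4e^{\Lambda(T)}} .
\]
This is possible, and $a_T$ depends only on $\Lambda(T)$. The majorant is increasing in $t$ for $A<1$, because $A^{e^{-\Lambda}}$ increases as $e^{-\Lambda}$ decreases. Hence for $A\le a_T$ the majorant is strictly below $e^{-3}$ on all of $[0,T]$. Suppose $\tau<T$. Then continuity of $V$ and $\overline Y$ forces $\max\{\overline Y(\tau),V(\tau)\}\ge e^{-3}$. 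But letting $t\uparrow\tau$ in the strict bound, and using $\overline Y\le V$ together with continuity of both, gives a value at $\tau$ that is at most the majorant, which is strictly below $e^{-3}$. This contradiction shows $\tau=T$, and the bound then holds at $t=T$ by continuity.

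The only delicate point is this stopping-time bookkeeping. We must stop on both $\overline Y$ and $V$, since the hypothesis is only assumed before the exit of $\overline Y$, while the identity $\omega_*(V)=VH$ requires $V<e^{-1}$. We must also check that the majorant is monotone in $t$, so that a single smallness threshold works on the whole interval. Everything else is the one-line differential inequality for $\log H$.
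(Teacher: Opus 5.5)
Your proposal is correct and is essentially the paper's proof. It uses the same $V$ and the same $H=1+\log(1/V)$, stops on $\max\{\overline Y,V\}$, integrates $H'\ge-\lambda H$ to $H(t)\ge H(0)e^{-\Lambda(t)}$, and imposes a smallness threshold keeping the majorant below $e^{-3}$, which you make explicit as $a_T<e^{-4e^{\Lambda(T)}}$. One small caveat, shared with the paper: letting $t\uparrow\tau$ tacitly needs $\tau>0$. That is, the hypothesis must be read as holding at $t=0$, so that $\overline Y(0)\le A<e^{-3}$.
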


\begin{proof}
The argument is the limiting case $\beta=1$ of the preceding Bihari computation.
With
\[
 V(t)=A+\int_0^t\lambda(r)\omega_*(\overline Y(r))\,\dd r,
 \qquad H(t)=1+\log\frac1{V(t)},
\]
and the same stopping argument, one has while $V<e^{-3}$ that
$\omega_*(V)=VH$ and hence $H'\ge-\lambda H$.  Therefore
$H(t)\ge H(0)e^{-\Lambda(t)}$, which is equivalent to
\[
 V(t)\le \exp\!\bigl(1-e^{-\Lambda(t)}\bigr)
 A^{e^{-\Lambda(t)}}.
\]
Choosing $a_T$ so that this majorant stays below $e^{-3}$ prevents exit and
proves the claim.
\end{proof}

\begin{lemma}[Logarithmic Osgood comparison with a linear drift]
\label{lem:logarithmic-osgood-linear-drift}
Let $a,b\in L^1(0,T)$ be nonnegative, let $Y\in AC([0,T])$ be nonnegative,
and let $R\ge0$ be measurable.  Put
\[
 A(t):=\int_0^t a(s)\,\dd s,\qquad
 B(t):=\int_0^t b(s)\,\dd s.
\]
Assume that, up to the first exit from $Y<e^{-3}$,
\begin{equation}\label{eq:osgood-linear-drift-hypothesis}
 Y'(t)+R(t)\le a(t)\omega_*(Y(t))+b(t)Y(t)
 \qquad\text{for a.e. }t.
\end{equation}
There exists $a_*\in(0,e^{-3})$, depending only on $A(T)$ and
$B(T)$, such that if $0<Y(0)\le a_*$, then no exit occurs and
\begin{equation}\label{eq:osgood-linear-drift-majorant}
 Y(t)+\int_0^tR(s)\,\dd s
 \le V(t)
 \le \exp\!\bigl(B(t)+1-e^{-A(t)}\bigr)
 Y(0)^{e^{-A(t)}},\qquad 0\le t\le T,
\end{equation}
where $V$ is the scalar comparison solution
\[
 V'=a(t)\omega_*(V)+b(t)V,\qquad V(0)=Y(0).
\]
In particular, the linear coefficient $b$ enters only through the
multiplicative factor $e^{B(t)}$; the power of the initial error is determined
solely by $A(t)=\int_0^t a$.
\end{lemma}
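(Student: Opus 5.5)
The plan is to remove the linear drift with an integrating factor and then apply the logarithmic Osgood argument of Lemma~\ref{lem:logarithmic-osgood-comparison} to the rescaled quantity. The one new ingredient is that $\omega_*$ behaves almost linearly under multiplication by constants greater than or equal to $1$.

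Set $Z(t):=e^{-B(t)}Y(t)$. Then $Z\le Y$ and $Z(0)=Y(0)$. While $Y<e^{-3}$, the hypothesis \eqref{eq:osgood-linear-drift-hypothesis} gives
\[
 Z'+e^{-B}R\le a\,e^{-B}\omega_*(e^{B}Z).
\]
For $0<z$ and $\mu\ge1$ with $\mu z\le e^{-1}$ we have
\[
 \omega_*(\mu z)=\mu z\bigl(1+\log\tfrac1z-\log\mu\bigr)\le\mu\,\omega_*(z).
\]
Since $Y=e^{B}Z<e^{-3}$ in the regime under consideration, this applies and yields
\[
 Z'+e^{-B(T)}R\le a\,\omega_*(Z).
\]

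Next introduce $V_Z(t):=Z(0)+\int_0^t a\,\omega_*(\overline Z)$. Then $\overline Z\le V_Z$, and the stopping-time argument of Lemma~\ref{lem:logarithmic-osgood-comparison} with $H=1+\log(1/V_Z)$ gives $H'\ge-aH$. Hence
\[
 V_Z(t)\le\exp\bigl(1-e^{-A(t)}\bigr)Y(0)^{e^{-A(t)}}.
\]
Multiplying by $e^{B}$ bounds $Y$ by the right-hand side of \eqref{eq:osgood-linear-drift-majorant}.

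Choose $a_*$ so that $e^{B(T)}\exp\bigl(1-e^{-A(T)}\bigr)a_*^{e^{-A(T)}}<e^{-3}$; then no exit occurs. This is the step needing care: the exit condition is on $Y$, not on $Z$. It is handled by stopping at the first time $\max\{Y,e^{B}V_Z\}\ge e^{-3}$, which yields a contradiction with the strict bound, as in the proof of Lemma~\ref{lem:finite-q-bihari-comparison}.

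It remains to produce the comparison solution $V$ and to bring $\int R$ into the bound. The function $\Phi=V$ solves $V'=a\omega_*(V)+bV$ with $V(0)=Y(0)$. The standard comparison $Y\le V$ follows because $\omega_*$ is Osgood and $V$ is the maximal solution; this is obtained by applying the above computation to $Y_\delta=V-Y$ type arguments, or more simply by noting that the same integrating-factor bound holds for $V$ itself. For the $R$ term, rather than deriving separately that $Y\le V$, estimate directly
\[
 W:=Y+\int_0^tR.
\]
Since $\omega_*$ is nondecreasing and $Y\le W$, we get $W'\le a\omega_*(W)+bW$ with $W(0)=Y(0)$, so it suffices to compare $W$ with $V$. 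Write $D=W-V$. Then
\[
 D'\le a\bigl(\omega_*(W)-\omega_*(V)\bigr)+bD.
\]
On the set $\{D>0\}$, if we had a Lipschitz bound on $\omega_*$ we could close with Gronwall; $\omega_*$ is not Lipschitz at $0$. Instead, compare with $V_\delta$ solving the same ODE with initial value $Y(0)+\delta$: then $W<V_\delta$ at time $0$, and at a first touching time the strict inequality from the $\delta$-perturbation gives a contradiction. Letting $\delta\downarrow0$ and using the Osgood uniqueness of the scalar ODE (by Lemma~\ref{lem:concave-log-modulus}) gives $W\le V$.

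Finally, bound $V$ itself by the integrating-factor computation above, applied with equality, which gives the second inequality in \eqref{eq:osgood-linear-drift-majorant}. The final remark in the statement, that $b$ enters only through the factor $e^{B(t)}$, is then read off from this explicit majorant.
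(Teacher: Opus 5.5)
Your proof is correct, but it is organized differently from the paper's.

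The paper applies the integrating factor only once, to the scalar solution $V$. Setting $Z=e^{-B}V$ gives $Z'\le a\,\omega_*(Z)$, hence the explicit majorant and $V<e^{-3}$ on $[0,T]$. It then gets $Y\le V$ by a first-contact comparison, using that $V\ge Y(0)>0$ so the right-hand side is Lipschitz on the range visited; this also rules out exit of $Y$. Finally it integrates the hypothesis and uses monotonicity of $y\mapsto a\,\omega_*(y)+by$ to obtain $Y(t)+\int_0^tR\le V(t)$.

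You instead run the Osgood computation twice. First you apply it to $e^{-B}Y$ to prevent exit of $Y$ directly. This is valid: the stopping time on $\max\{Y,e^{B}V_Z\}$ is the right device, and a single condition at $T$ suffices because the majorant is nondecreasing in $t$, which you should state. You then apply it again to $V$. Your comparison step is neater than the paper's. Since $Y\le W:=Y+\int_0^tR$, you get $W'\le a\,\omega_*(W)+bW$, so one comparison yields both $Y\le V$ and the bound on $\int R$. Your $\delta$-perturbation argument needs only the monotonicity of the right-hand side and continuous dependence of $V_\delta$ on its initial value. The cost is that the first Osgood computation is redundant: once $V<e^{-3}$ is known, running $Y\le W\le V$ up to the exit time of $Y$ already forbids exit.

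Two presentational points. First, the sentence asserting $Y\le V$ ``by noting that the same integrating-factor bound holds for $V$ itself'' is not a valid justification, since a common upper bound does not order $Y$ and $V$. Your $W$-argument supersedes it, so drop it. Second, your remark that $\omega_*$ is not Lipschitz at $0$ is beside the point. On $\{W>V\}$ both functions exceed $Y(0)>0$, where $\omega_*$ is Lipschitz with constant $\log(1/Y(0))$. This is exactly what the paper uses, and it gives $V_\delta\to V$ by Gronwall without any appeal to Osgood uniqueness.
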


\begin{proof}
While $V<e^{-3}$, set $Z(t)=e^{-B(t)}V(t)$.  Since $0<Z\le V<e^{-3}$ and
$\omega_*(r)=r(1+\log(1/r))$ in this range,
\begin{align*}
 Z'
 &=a(t)e^{-B(t)}\omega_*(e^{B(t)}Z)\\
 &=a(t)Z\left(1+\log\frac1Z-B(t)\right)
 \le a(t)\omega_*(Z).
\end{align*}
The logarithmic Osgood calculation of
Lemma~\ref{lem:logarithmic-osgood-comparison} therefore yields
\[
 Z(t)\le \exp\!\bigl(1-e^{-A(t)}\bigr)Z(0)^{e^{-A(t)}}.
\]
Multiplying by $e^{B(t)}$ gives the second inequality in
\eqref{eq:osgood-linear-drift-majorant}.  Choosing
$a_*$ so that the right-hand side stays strictly below $e^{-3}$
prevents exit for $V$.

The map $y\mapsto a(t)\omega_*(y)+b(t)y$ is continuous and
nondecreasing.  Since $V(0)=Y(0)>0$ and $V$ is nondecreasing, the scalar
equation is locally Lipschitz along the positive range visited by $V$; the
standard first-contact comparison applied to
\eqref{eq:osgood-linear-drift-hypothesis} therefore gives $Y\le V$ before
exit and hence on the full interval by the preceding bound.  Integrating
\eqref{eq:osgood-linear-drift-hypothesis} and using
monotonicity once more,
\[
 Y(t)+\int_0^tR
 \le Y(0)+\int_0^t\bigl[a\omega_*(Y)+bY\bigr]
 \le Y(0)+\int_0^t\bigl[a\omega_*(V)+bV\bigr]=V(t),
\]
which proves the first inequality and completes the proof.
\end{proof}

The following lemma provides the measurable dyadic choice of mollification scale used in the Coulomb estimate and in the Riesz estimate under the stated Besov regularity.

\begin{lemma}
\label{lem:measurable-dyadic-scale}
Let $0<\sigma\le1$.  For $z\in(0,e^{-3}]$ and $j\ge4$, set
$\eps_j=2^{-j}$ and define
\[j_\sigma(z):=\min\{j\ge4:\ \eps_j^\sigma\le z\},
 \qquad
 \eps_\sigma(z):=\eps_{j_\sigma(z)}.
\]
Then $j_\sigma$ and $\eps_\sigma$ are Borel measurable,
$\eps_\sigma$ is locally constant from the right as in
\eqref{eq:abstract-selector-right-stability}, and
\begin{equation}\label{eq:dyadic-running-comparability}
 \eps_\sigma(z)^\sigma\le z
 <2^\sigma\eps_\sigma(z)^\sigma,
 \qquad
 1+\log\frac1{\eps_\sigma(z)}
 \le C_\sigma\left(1+\log\frac1z\right).
\end{equation}
In particular,
\[\eps_\sigma(z)^\sigma
 \left(1+\log\frac1{\eps_\sigma(z)}\right)
 \le C_\sigma\omega_*(z),
 \qquad
 \eps_\sigma(z)^2\le z.
\]
If $Y:[0,T]\to(0,e^{-3}]$ is continuous and
$\overline Y(t)=\sup_{0\le r\le t}Y(r)$, then
$\eps_*(t):=\eps_\sigma(\overline Y(t))$ is Borel measurable and satisfies
the same estimates with $z=\overline Y(t)$.  If a pointwise finite-$N$ estimate
holds for every dyadic $\eps_j$ outside a possibly $j$-dependent null set in
time, then it holds at $\eps_*(t)$ outside one single null set.
\end{lemma}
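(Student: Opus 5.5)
The argument is elementary and proceeds directly from the definition of $j_\sigma$.

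\emph{Well-posedness and comparability.} Since $\eps_j^\sigma=2^{-j\sigma}\downarrow0$, the set $\{j\ge4:\eps_j^\sigma\le z\}$ is nonempty for every $z>0$, so $j_\sigma(z)$ is finite.

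\emph{Measurability.} For each $k$, the event $\{j_\sigma(z)\le k\}$ equals $\{z\ge\eps_k^\sigma\}$, because $j\mapsto\eps_j^\sigma$ is decreasing. Hence $j_\sigma$ is a nonincreasing step function of $z$. The preimages of points are the half-open intervals
\[
 \{j_\sigma=4\}=[\eps_4^\sigma,e^{-3}],\qquad
 \{j_\sigma=k\}=[\eps_k^\sigma,\eps_{k-1}^\sigma)\quad(k\ge5),
\]
intersected with $(0,e^{-3}]$. This gives Borel measurability of $j_\sigma$ and of $\eps_\sigma=2^{-j_\sigma}$.

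\emph{Local constancy from the right.} Each level set is closed on the left and open on the right (or the top interval contains $e^{-3}$). Every $z$ in a level set therefore has a right neighbourhood inside that same level set, which is exactly \eqref{eq:abstract-selector-right-stability}.

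\emph{The two-sided bound.} If $j_\sigma(z)=4$, then $z\le e^{-3}<2^{-4\sigma}\cdot2^\sigma=2^{-3\sigma}$. This requires $e^{-3}<2^{-3\sigma}$, i.e.\ $3\log 2\cdot\sigma<3$, which holds for $\sigma\le1$. Otherwise $j=j_\sigma(z)>4$, and minimality gives $\eps_{j-1}^\sigma>z$, i.e.\ $z<2^\sigma\eps_j^\sigma$.

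\emph{The logarithmic bound.} From $z<2^\sigma\eps^\sigma$ we get $\log(1/\eps)<\sigma^{-1}\log(1/z)+\log2$. Hence
\[
 1+\log\frac1\eps\le \sigma^{-1}\Bigl(1+\log\frac1z\Bigr)+\log2\le C_\sigma\Bigl(1+\log\frac1z\Bigr),
\]
using $1+\log(1/z)\ge4$.

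\emph{Consequences.} Multiplying the first bound by the second gives $\eps^\sigma(1+\log(1/\eps))\le C_\sigma z(1+\log(1/z))=C_\sigma\omega_*(z)$, since $z\le e^{-3}<e^{-1}$ is in the range where $\omega_*$ is given by the explicit formula. Also $\eps\le2^{-4}<1$ and $\sigma\le1$, so $\eps^2\le\eps^\sigma\le z$.

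\emph{Time-dependent version.} If $Y$ is continuous, then $\overline Y$ is continuous and nondecreasing with values in $(0,e^{-3}]$. The composition $\eps_\sigma\circ\overline Y$ is a Borel function composed with a continuous one, hence Borel, and the pointwise estimates transfer with $z=\overline Y(t)$.

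\emph{Common null set.} Let $\mathcal N_j$ be the exceptional null set for the finite-$N$ estimate at scale $\eps_j$, and set $\mathcal N:=\bigcup_{j\ge4}\mathcal N_j$, a countable union of null sets. For $t\notin\mathcal N$ we have $\eps_*(t)=\eps_{j}$ for some $j$ with $t\notin\mathcal N_j$, so the estimate holds at $\eps_*(t)$.

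The only point that needs care is the top-case inequality $e^{-3}<2^{-3\sigma}$, which is what makes the restriction to $z\le e^{-3}$ compatible with starting the dyadic range at $j=4$. There is no substantive obstacle beyond that.
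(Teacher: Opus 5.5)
Your proof is correct and follows the paper's argument essentially step by step. The steps match: the two-sided bound from minimality, the logarithmic comparability, the explicit half-open level sets giving Borel measurability and local constancy from the right, composition with the continuous running maximum, and a countable union of exceptional null sets. The only cosmetic difference is that the paper notes $2^{-4\sigma}\ge2^{-4}>e^{-3}\ge z$, so $j_\sigma(z)\ge5$ always; the case $j_\sigma(z)=4$ that you check separately (validly) is in fact empty.
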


\begin{proof}
Fix $z\in(0,e^{-3}]$ and write $j=j_\sigma(z)$.  By definition of the
minimum,
\[
 2^{-j\sigma}\le z,
\]
and, whenever $j>4$, the preceding dyadic scale does not satisfy the defining
inequality, so
\[
 z<2^{-(j-1)\sigma}=2^\sigma2^{-j\sigma}.
\]
Since $0<\sigma\le1$,
$2^{-4\sigma}\ge2^{-4}>e^{-3}\ge z$, so in fact $j\ge5$.  Since
$\eps_\sigma(z)=2^{-j}$, the two inequalities give
\[
 \eps_\sigma(z)^\sigma\le z
 <2^\sigma\eps_\sigma(z)^\sigma.
\]
The upper inequality implies
\[
 \eps_\sigma(z)^\sigma>2^{-\sigma}z,
\]
and hence
\begin{align*}
 \log\frac1{\eps_\sigma(z)}
 &<\frac1\sigma\log\frac{2^\sigma}{z}
 =\log2+\frac1\sigma\log\frac1z.
\end{align*}
Therefore
\[
 \ell(\eps_\sigma(z))
 =1+\log\frac1{\eps_\sigma(z)}
 \le 1+\log2+\frac1\sigma\log\frac1z
 \le C_\sigma\ell(z),
\]
which proves \eqref{eq:dyadic-running-comparability}.  Since
$0<\eps_\sigma(z)<1$ and $0<\sigma\le1<2$,
\[
 \eps_\sigma(z)^2\le\eps_\sigma(z)^\sigma\le z.
\]
Moreover,
\begin{align*}
 \eps_\sigma(z)^\sigma\ell(\eps_\sigma(z))
 &\le z\,C_\sigma\ell(z)
 =C_\sigma\omega_*(z),
\end{align*}
because $z\le e^{-3}<e^{-1}$ and hence
$\omega_*(z)=z\ell(z)$.

The level sets of $j_\sigma$ are explicit.  For $j\ge5$,
\[
 \{z:j_\sigma(z)=j\}
 =[2^{-j\sigma},2^{-(j-1)\sigma}),
\]
intersected with $(0,e^{-3}]$, while the level set for $j=4$ is the
corresponding terminal interval.  Hence $j_\sigma$ is Borel measurable, and
so is $\eps_\sigma=2^{-j_\sigma}$.  If $z$ belongs to one of these half-open
intervals, there exists $\eta_z>0$ such that
\[
 j_\sigma(y)=j_\sigma(z),
 \qquad
 z\le y\le\min\{z+\eta_z,e^{-3}\},
\]
which is exactly \eqref{eq:abstract-selector-right-stability}.

If $Y$ is continuous, then
\[
 \overline Y(t):=\max_{0\le r\le t}Y(r)
\]
is continuous and nondecreasing.  Consequently
\[
 \eps_*(t)=\eps_\sigma(\overline Y(t))
\]
is Borel measurable, and the preceding estimates applied with
$z=\overline Y(t)$ give all the asserted bounds.

For the last statement, let $E_j\subset[0,T]$ be a set of full measure on which
the estimate at the scale $\eps_j$ holds.  Since the dyadic family is countable,
\[
 E:=\bigcap_{j\ge4}E_j
\]
has full measure.  For every $t\in E$, the estimate is valid simultaneously
for all dyadic scales and therefore in particular for
$\eps_*(t)\in\{\eps_j:j\ge4\}$.
\end{proof}

In the Coulomb case and the Riesz case under the stated Besov regularity, we first average the inequalities for each fixed dyadic parameter and then choose a dyadic parameter from the resulting averaged quantity.  In the Coulomb case the associated mollification radius is allowed to vary deterministically with time.
The measurability and integrability needed for this argument are proved in
Appendix~\ref{app:measurability}.

The logarithmic endpoint used below is obtained directly from
Corollary~\ref{cor:averaged-optimized-comparison} by taking
\[
 \omega=\omega_0=\omega_*,\qquad
 L(\eps)=\ell(\eps),\qquad
 E_1(\eps)=\eps^\sigma\ell(\eps),\qquad
 E_2(\eps)=\eps^2,
\]
and using Lemmas~\ref{lem:measurable-dyadic-scale} and~\ref{lem:logarithmic-osgood-comparison}.  We use this specialization directly
in the Riesz application under the stated Besov regularity and in the Coulomb application.

\subsection{From a transported coupling to the \texorpdfstring{$N$}{N}-particle law}
\label{subsec:transfer-N-particle}
Let $\pi_N^t$ be the transported coupling of the particle law and
$\rho_t^{\otimes N}$.  Any estimate controlling the expected quadratic
transport cost gives
\[
 \frac1N W_2^2(\rho_N(t),\rho_t^{\otimes N})
 \le \int Q_N(t)\,\dd\pi_N^t.
\]
In the Coulomb argument this cost is first controlled through its weighted quantity
$P_N=\widehat mQ_N$ and is recovered at the end of the proof.  The
quantities $F_N(X_N,\rho_t)$ and $r_N(X_N,\rho_t)$ depend only on the particle
configuration and therefore integrate against $\rho_N(t)$.  Marginals of fixed order follow from the symmetric projection inequality
\eqref{eq:intro-W2-marginal-control}.

\begin{proof}[Proof of Corollary~\ref{cor:product-data-and-marginals}]
The projection of a symmetric optimal coupling onto its first $k$
coordinates gives \eqref{eq:intro-W2-marginal-control}.  The required expectation identity is Lemma~\ref{lem:unified-product-initial-data}.  Substitution into the two principal results gives the rates cited in the statement.
\end{proof}

\section{The limiting Coulomb equation}\label{sec:coulomb-reference-solutions}
We collect the bounded-density Coulomb theory used in the main application.
Existence, bounded-solution uniqueness, and the sharp $L^\infty$ decay are
recalled from the literature, and the field and flow estimates required for the
comparison argument are established below.

\subsection{Global solutions with bounded density}\label{subsec:weak-solutions}
We begin with the field estimates and the solution theory for bounded densities used
in the Coulomb application for all $d\ge2$.  We write $\mathcal P(\R^d)$ and
$\mathcal P_2(\R^d)$ for probability measures and probability measures with
finite second moment, respectively.  For the normalized Coulomb kernel $g_d$ in
\eqref{eq:unified-coulomb-kernel}, set
\[
 E(\mu):=\iint g(x-y)\mu(x)\mu(y)\,\dd x\dd y
\]
whenever the integral is well defined.

A standard decomposition into the regions $|x-y|<1$ and $|x-y|\ge1$ gives
\begin{align}
 \iint |g(x-y)|\mu(x)\mu(y)\,\dd x\dd y
 &\le C\left(1+\|\mu\|_\infty+\int|x|^2\mu(x)\,\dd x\right),
 \label{eq:auto-absolute-log-energy}\\
 0\le E(\mu)&\le C_d(1+\|\mu\|_\infty),\qquad d\ge3.\notag
\end{align}
These bounds are uniform on families for which the displayed norms are
uniformly bounded.

\begin{proposition}[Coulomb field estimates]
\label{prop:unified-coulomb-field}
Let $d\ge2$, let $g=g_d$ be given by
\eqref{eq:unified-coulomb-kernel}, and let
$\rho\in L^1(\R^d)\cap L^\infty(\R^d)$.  Then
$u=\nabla g*\rho$ has a bounded continuous Zygmund representative and
\begin{equation}\label{eq:coulomb-unified-field-bound}
 \|u\|_{L^\infty}+[u]_{\Lambda_*}
 \le C_d\bigl(\|\rho\|_{L^1}+\|\rho\|_{L^\infty}\bigr).
\end{equation}
In particular,
\begin{equation}\label{eq:coulomb-unified-loglip}
 |u(x)-u(y)|
 \le C_d\bigl(\|\rho\|_{L^1}+\|\rho\|_{L^\infty}\bigr)|x-y|
 \left(1+\log_+\frac1{|x-y|}\right).
\end{equation}
Let $\chi\in C_c^\infty(B_1)$ be nonnegative, even, and of unit mass, and
set $\chi_\eps(z)=\eps^{-d}\chi(z/\eps)$.  For $0<\eps\le e^{-2}$,
\begin{equation}\label{eq:coulomb-unified-mollifier-estimates}
 \|u-u*\chi_\eps\|_{L^\infty}
 \le C_d\bigl(\|\rho\|_{L^1}+\|\rho\|_{L^\infty}\bigr)\eps,
 \qquad
 \|\nabla(u*\chi_\eps)\|_{L^\infty}
 \le C_d\bigl(\|\rho\|_{L^1}+\|\rho\|_{L^\infty}\bigr)
 \left(1+\log\frac1\eps\right).
\end{equation}
\end{proposition}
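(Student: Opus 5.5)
The plan is to split $u=\nabla g*\rho=-\nabla\Delta^{-1}\rho$ (sign conventions aside) into a near-field and a far-field part, by means of a smooth cutoff $\psi\in C_c^\infty(B_2)$ with $\psi=1$ on $B_1$. We write $\nabla g=\psi\nabla g+(1-\psi)\nabla g$. The far part $(1-\psi)\nabla g$ is smooth with all derivatives bounded. Hence $u_{\rm far}=((1-\psi)\nabla g)*\rho$ satisfies $\|u_{\rm far}\|_{C^2}\le C_d\|\rho\|_{L^1}$, and its Zygmund seminorm is controlled by $\|D^2u_{\rm far}\|_\infty|h|^2\le C|h|$ when $|h|\le1$, and by $4\|u_{\rm far}\|_\infty$ when $|h|\ge1$. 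The near part $\psi\nabla g\in L^1$ with $|\psi\nabla g|\lesssim|z|^{1-d}$, so $\|u_{\rm near}\|_\infty\le C_d\|\rho\|_\infty$ by Young (as in Lemma~\ref{lem:near-far-locally-integrable-field}). Continuity of both parts follows from the $L^1$-continuity of translations of $\psi\nabla g$ and dominated convergence, which gives a bounded continuous representative.

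For the Zygmund bound on $u_{\rm near}$ I would use a direct second-difference estimate rather than Littlewood--Paley, although the latter would work equally well: $\nabla u=\nabla^2g*\rho$ is a Calder\'on--Zygmund operator, bounded $L^\infty\to BMO$, and the Zygmund class is exactly $B^1_{\infty,\infty}$, which is the image of $B^0_{\infty,\infty}\supset L^\infty$ under $\nabla^2\Delta^{-1}$. Concretely, fix $x,h$ with $|h|\le1$ and write $\delta_h^2u(x)=\int[K(x+h-y)+K(x-h-y)-2K(x-y)]\rho(y)\,dy$ with $K=\nabla g$. We split the integral at $|x-y|<3|h|$. In the inner region each term is bounded by $\int_{B_{4|h|}}|z|^{1-d}\,dz\,\|\rho\|_\infty\lesssim|h|\|\rho\|_\infty$. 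In the outer region Taylor's formula to second order gives $|\delta_h^2K(z)|\lesssim|h|^2|z|^{-d-1}$, whose integral over $|z|\ge3|h|$ is $\lesssim|h|$ (the first-order term cancels by symmetry of $\pm h$). Far away, where $|z|\ge1$, we use the $L^1$ norm and the bounded $D^2K$. This yields $[u]_{\Lambda_*}\le C_d(\|\rho\|_1+\|\rho\|_\infty)$, i.e. \eqref{eq:coulomb-unified-field-bound}.

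The log-Lipschitz bound \eqref{eq:coulomb-unified-loglip} is the classical consequence of boundedness plus the Zygmund condition, and can also be proved directly by the same near/far splitting for the first difference. For $|h|\ge1$ it follows from $\|u\|_\infty$. For $|h|<1$ I would prove it directly: split at $|x-y|<2|h|$, where the bound is $\lesssim|h|\|\rho\|_\infty$, and the annulus $2|h|\le|x-y|\le2$, where $|K(x+h-y)-K(x-y)|\lesssim|h||z|^{-d}$ integrates to $|h|\log(1/|h|)\|\rho\|_\infty$. The region $|z|>2$ contributes $\lesssim|h|\|\rho\|_1$.

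For \eqref{eq:coulomb-unified-mollifier-estimates}, evenness of $\chi$ gives $u*\chi_\eps(x)-u(x)=\tfrac12\int(u(x+z)+u(x-z)-2u(x))\chi_\eps(z)\,dz$, which is bounded by $\tfrac12[u]_{\Lambda_*}\eps$. For the gradient, use $\int\nabla\chi_\eps=0$ to write $\nabla(u*\chi_\eps)(x)=\int(u(x-z)-u(x))\nabla\chi_\eps(z)\,dz$. Then insert the log-Lipschitz modulus with $|z|\le\eps\le e^{-2}$ and $\|\nabla\chi_\eps\|_{L^1}\lesssim\eps^{-1}$, which gives $C(1+\log(1/\eps))$. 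I expect the main (though still routine) obstacle to be the careful justification of the second-order cancellation in the Zygmund estimate near the singularity, namely choosing the splitting radius so that the Taylor expansion along the segment $[z-h,z+h]$ stays away from the origin; taking the radius $3|h|$ handles this.
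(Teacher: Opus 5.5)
Your proposal is correct and follows essentially the same route as the paper. Boundedness and continuity come from a near/far splitting. The Zygmund bound comes from a direct second-difference estimate split at a multiple of $|h|$ and at unit distance, using $|D^2\nabla g(z)|\lesssim|z|^{-d-1}$. The mollifier bounds come from evenness of $\chi$ together with $\int\nabla\chi_\eps=0$. The only difference is that you derive the log-Lipschitz bound directly by a first-difference splitting, whereas the paper cites Serfaty--V\'azquez and the standard Zygmund embedding. Also, your side remark should refer to $\nabla\Delta^{-1}$ (order $-1$) rather than $\nabla^2\Delta^{-1}$, but it plays no role in the argument.
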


\begin{proof}
Let $K=\nabla g$.  The standard near--far decomposition, using
$|K(z)|\le C_d|z|^{1-d}$, gives
\[
 \|K*\rho\|_\infty\le C_d(\|\rho\|_1+\|\rho\|_\infty),
\]
and the same decomposition gives continuity of the convolution.  The log-Lipschitz regularity of the Coulomb field for bounded densities is
classical; see, for example, \cite[Lemma~4.1]{Serfaty111}.  The Zygmund form is
useful here because even mollification gains one power of the mollification
scale.

Let $\delta_h^2u(x)=u(x+h)+u(x-h)-2u(x)$ and put $r=|h|\le1/4$.
Since $|D^2K(z)|\le C_d|z|^{-d-1}$ away from the origin, splitting the
convolution into $|z|\le4r$, $4r<|z|<1$, and $|z|\ge1$ yields
\[
 \|\delta_h^2u\|_\infty
 \le C_d\|\rho\|_\infty r
 +C_dr^2\|\rho\|_\infty\int_{4r}^1q^{-2}\,\dd q
 +C_dr^2\|\rho\|_1
 \le C_d(\|\rho\|_1+\|\rho\|_\infty)|h|.
\]
For $|h|>1/4$, boundedness of $u$ gives the same estimate after enlarging the
constant.  Hence \eqref{eq:coulomb-unified-field-bound} holds, and the standard
Zygmund embedding gives \eqref{eq:coulomb-unified-loglip}.

Finally, evenness of $\chi$ gives
\[
 u*\chi_\eps-u=\frac12\int\delta_z^2u\,\chi_\eps(z)\,\dd z,
\]
which proves the first estimate in \eqref{eq:coulomb-unified-mollifier-estimates}.
Since $\int\nabla\chi_\eps=0$, the log-Lipschitz bound gives
\[
 \|\nabla(u*\chi_\eps)\|_\infty
 \le \int |u(\cdot-z)-u(\cdot)|\,|\nabla\chi_\eps(z)|\,\dd z
 \le C_d(\|\rho\|_1+\|\rho\|_\infty)
 \left(1+\log\frac1\eps\right),
\]
after absorbing the fixed mollifier norm into the constant.
\end{proof}

\begin{proposition}[Coulomb field estimates under an $L^\infty$ density bound]
\label{prop:coulomb-scale-covariant-field}
Let $d\ge2$, let $\rho$ be a probability density on $\R^d$, and assume
$\|\rho\|_{L^\infty}\le m$ for some $m>0$.  Set
\[
 R:=m^{-1/d},\qquad u:=\nabla g_d*\rho.
\]
Then
\begin{equation}\label{eq:coulomb-scale-field-bounds}
 \|u\|_{L^\infty}\le C_d m^{1-1/d},
 \qquad [u]_{\Lambda_*}\le C_d m,
\end{equation}
and, for every $x,y\in\R^d$,
\begin{equation}\label{eq:coulomb-scale-loglip}
 |u(x)-u(y)|
 \le C_d m|x-y|\left(1+\log_+\frac{R}{|x-y|}\right).
\end{equation}
For every collision-free configuration $X_N$,
\begin{equation}\label{eq:coulomb-scale-pairing}
 \int_{\R^d}\bigl(|K_N(x)|+|u(x)|\bigr)\rho(x)\,\dd x
 \le C_d m^{1-1/d}.
\end{equation}
Finally, let $\chi\in C_c^\infty(B_1)$ be the fixed even mollifier of unit mass
used above.  For $0<\vartheta\le e^{-2}$ set
\[
 u_{\vartheta}:=u*\chi_{R\vartheta}.
\]
Then
\begin{equation}\label{eq:coulomb-scale-mollifier}
 \|u-u_{\vartheta}\|_{L^\infty}
 \le C_d m^{1-1/d}\vartheta,
 \qquad
 \|\nabla u_{\vartheta}\|_{L^\infty}
 \le C_d m\left(1+\log\frac1\vartheta\right).
\end{equation}
The constants are independent of $m$, $N$, and the minimum particle
separation.
\end{proposition}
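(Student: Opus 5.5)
The natural route is scaling. Since $\rho$ is a probability density with $\|\rho\|_{L^\infty}\le m$, we rescale space by $R=m^{-1/d}$. Set
\[
 \widetilde\rho(y):=R^d\rho(Ry),
\]
which is again a probability density with $\|\widetilde\rho\|_{L^\infty}\le R^dm=1$. For $d\ge3$ the kernel is homogeneous: $\nabla g_d(Ry)=R^{1-d}\nabla g_d(y)$. For $d=2$ the gradient $\nabla g_2(z)=-(2\pi)^{-1}z/|z|^2$ is also homogeneous of degree $1-d$, so the logarithmic constant plays no role. A change of variables then gives
\[
 u(Ry)=R^{1-d}\cdot R^{d}\cdot R^{-d}\int\nabla g_d(y-y')\widetilde\rho(y')R^d\,\dd y'\cdot R^{-d}
 =R^{1-d}\,\widetilde u(y),\qquad \widetilde u:=\nabla g_d*\widetilde\rho .
\]
The bookkeeping to check is this: $\dd x'=R^d\dd y'$ and $\rho(Ry')=R^{-d}\widetilde\rho(y')$, so the powers of $R^d$ cancel, leaving only the factor $R^{1-d}=m^{1-1/d}$.

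Next we apply Proposition~\ref{prop:unified-coulomb-field} to $\widetilde\rho$. Since $\|\widetilde\rho\|_{L^1}+\|\widetilde\rho\|_{L^\infty}\le2$, we get $\|\widetilde u\|_\infty+[\widetilde u]_{\Lambda_*}\le C_d$. Undoing the scaling gives $\|u\|_\infty\le C_dR^{1-d}=C_dm^{1-1/d}$. For the Zygmund seminorm we have $\delta_h^2u(Ry)=R^{1-d}\delta^2_{h/R}\widetilde u(y)$, so $[u]_{\Lambda_*}\le C_dR^{1-d}R^{-1}=C_dR^{-d}=C_dm$. In the same way, the log-Lipschitz bound \eqref{eq:coulomb-unified-loglip} for $\widetilde u$ at separation $|x-y|/R$ becomes
\[
 C_dR^{1-d}\frac{|x-y|}{R}\Bigl(1+\log_+\frac{R}{|x-y|}\Bigr)=C_dm|x-y|\Bigl(1+\log_+\frac{R}{|x-y|}\Bigr),
\]
which is \eqref{eq:coulomb-scale-loglip}.

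For the mollifier bounds we note that $(u*\chi_{R\vartheta})(Ry)=R^{1-d}(\widetilde u*\chi_\vartheta)(y)$, because mollification commutes with dilation. The estimates \eqref{eq:coulomb-unified-mollifier-estimates} for $\widetilde u$ at scale $\vartheta\le e^{-2}$ then give $\|u-u_\vartheta\|_\infty\le C_dm^{1-1/d}\vartheta$. The gradient picks up an extra factor $R^{-1}$, so $\|\nabla u_\vartheta\|_\infty\le C_dR^{-d}(1+\log(1/\vartheta))=C_dm(1+\log(1/\vartheta))$.

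For the pairing \eqref{eq:coulomb-scale-pairing} we scale the configuration too, setting $\widetilde x_j=x_j/R$. Then $K_N(Ry)=R^{1-d}\widetilde K_N(y)$, and
\[
 \int|K_N|\rho\,\dd x=R^{1-d}\int|\widetilde K_N|\widetilde\rho\,\dd y .
\]
Lemma~\ref{lem:near-far-locally-integrable-field} applies with $\beta=d-1$ and $C_G=C_d$, since $|\nabla g_d(z)|\le C_d|z|^{1-d}\le C_d$ for $|z|\ge1$. It bounds the right-hand integral by $C_d(1+\|\widetilde\rho\|_\infty)\le 2C_d$, uniformly in $N$ and the configuration, so \eqref{eq:coulomb-scale-pairing} follows; the $u$ term is handled identically.

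The only delicate step is tracking the powers of $R$ in each scaling, in particular checking that the $d=2$ logarithmic kernel causes no extra term. It does not, because only $\nabla g_2$ enters, and $\nabla g_2$ is exactly homogeneous. All constants come from the unit-scale estimates, so they are independent of $m$, $N$, and the minimum particle separation.
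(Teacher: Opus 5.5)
Your proposal is correct and follows essentially the same route as the paper. You rescale by $R=m^{-1/d}$ to a unit-density problem, apply Proposition~\ref{prop:unified-coulomb-field} there, and undo the dilation to get the sup, Zygmund, log-Lipschitz and mollifier bounds. The only difference is in the pairing estimate \eqref{eq:coulomb-scale-pairing}: the paper splits the integral directly at radius $R$ in the original variables, whereas you rescale the configuration and invoke Lemma~\ref{lem:near-far-locally-integrable-field}; the two computations are equivalent.
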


\begin{proof}
Put $R=m^{-1/d}$ and define
\[
 \widetilde\rho(\xi):=R^d\rho(R\xi).
\]
Then $\widetilde\rho$ has unit mass and
$\|\widetilde\rho\|_{L^\infty}\le R^dm=1$.  Since the Coulomb force is
homogeneous of degree $1-d$, if
$\widetilde u:=\nabla g_d*\widetilde\rho$, then
\begin{equation}\label{eq:coulomb-scale-rescaling}
 u(R\xi)=R^{1-d}\widetilde u(\xi).
\end{equation}
Proposition~\ref{prop:unified-coulomb-field}, applied to
$\widetilde\rho$, gives
\[
 \|\widetilde u\|_\infty+[\widetilde u]_{\Lambda_*}\le C_d,
 \qquad
 |\widetilde u(\xi)-\widetilde u(\eta)|
 \le C_d|\xi-\eta|\left(1+\log_+\frac1{|\xi-\eta|}\right).
\]
Rescaling by \eqref{eq:coulomb-scale-rescaling} yields
\eqref{eq:coulomb-scale-field-bounds} and
\eqref{eq:coulomb-scale-loglip}, because
$R^{1-d}=m^{1-1/d}$ and $R^{-d}=m$.

To prove \eqref{eq:coulomb-scale-pairing}, fix a center $a$ and split at
radius $R$.  Since $|\nabla g_d(z)|\le C_d|z|^{1-d}$,
\[
 \int |\nabla g_d(x-a)|\rho(x)\,\dd x
 \le C_d m\int_0^R\dd r+C_d R^{1-d}\int\rho
 \le C_d m^{1-1/d}.
\]
Averaging this estimate over the particle centers controls the $K_N$ term,
while the $u$ term follows from \eqref{eq:coulomb-scale-field-bounds}.

For the mollifier bounds, the change of variables $x=R\xi$ shows that
convolution at the scale $R\vartheta$ corresponds exactly to
convolution of $\widetilde u$ at scale $\vartheta$:
\[
 (u*\chi_{R\vartheta})(R\xi)
 =R^{1-d}(\widetilde u*\chi_\vartheta)(\xi).
\]
Applying \eqref{eq:coulomb-unified-mollifier-estimates} to
$\widetilde u$ and rescaling once for the function and once more for its
gradient gives \eqref{eq:coulomb-scale-mollifier}.
\end{proof}

\begin{proposition}[Coulomb solutions with bounded density]
\label{prop:bounded-density-flow}
Let $d\ge2$ and $\rho_0\in\mathcal P_2(\R^d)\cap L^\infty(\R^d)$.  Then
\eqref{eq:mean-field-equation} has a unique global weak solution
\[
 \rho\in C([0,\infty);\mathcal P_2(\R^d))
 \cap L^\infty_{\mathrm{loc}}([0,\infty);L^1\cap L^\infty(\R^d)),
\]
and
\begin{equation}\label{eq:bounded-density-Linfty-decay}
 \|\rho_t\|_{L^\infty}
 \le \bigl(t+\|\rho_0\|_{L^\infty}^{-1}\bigr)^{-1}
 =\frac{\|\rho_0\|_{L^\infty}}
 {1+t\|\rho_0\|_{L^\infty}},\qquad t\ge0.
\end{equation}
For every $d\ge2$, the field $u_t=\nabla g_d*\rho_t$ has a jointly Borel,
bounded continuous representative.  Moreover,
\[
 \sup_{t\ge0}\bigl(\|u_t\|_{L^\infty}+[u_t]_{\Lambda_*}\bigr)
 \le C_d\bigl(1+\|\rho_0\|_{L^\infty}\bigr),
\]
so it is log-Lipschitz.  For every radial mollifier
$\chi\in C_c^\infty(B_1)$ as in Proposition~\ref{prop:unified-coulomb-field},
\[
 \|u_t-u_t*\chi_\eps\|_\infty
 \le C_d\bigl(1+\|\rho_0\|_{L^\infty}\bigr)\eps,
 \qquad
 \|\nabla(u_t*\chi_\eps)\|_\infty
 \le C_d\bigl(1+\|\rho_0\|_{L^\infty}\bigr)
 \left(1+\log\frac1\eps\right)
\]
uniformly in $t\ge0$ and $0<\eps\le e^{-2}$.  The characteristic equation
\[
 \dot\Psi^t(x)=-u_t(\Psi^t(x)),\qquad \Psi^0(x)=x,
\]
has a unique global Osgood flow and
\[
 \rho_t=(\Psi^t)_\#\rho_0.
\]
\end{proposition}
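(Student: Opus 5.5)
Most of this proposition is imported rather than proved from scratch: existence, bounded-density uniqueness and the decay bound \eqref{eq:bounded-density-Linfty-decay} are taken from the literature; the field bounds come from Proposition~\ref{prop:unified-coulomb-field}; and the Lagrangian statement comes from Proposition~\ref{prop:unified-lagrangian-representation}.

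\textbf{Existence, uniqueness and decay.} I would import these from Serfaty--V\'azquez \cite{Serfaty111}. Their construction gives global weak solutions in $L^1\cap L^\infty$, with the sharp estimate $\|\rho_t\|_\infty\le(t+\|\rho_0\|_\infty^{-1})^{-1}$ coming from the Riccati comparison $\frac{\dd}{\dd t}\|\rho_t\|_\infty\le-\|\rho_t\|_\infty^2$. Uniqueness in the bounded-density class follows from the Loeper-type $W_2$ argument, which uses the log-Lipschitz field. Mass is conserved because the equation is in divergence form. Continuity with values in $\mathcal P_2$ then follows from:
\begin{itemize}
\item[(a)] the virial identity \eqref{eq:unified-continuum-virial}, where $(x-y)\cdot\nabla g_d(x-y)$ is bounded ($d=2$) or locally integrable against bounded densities ($d\ge3$), so second moments grow at most linearly;
\item[(b)] the bound $\|u_t\|_\infty\le C$ (proved next), which controls the time derivative in $W_2$ along characteristics.
\end{itemize}

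\textbf{Field estimates.} Since $\|\rho_t\|_{L^1}=1$ and $\|\rho_t\|_{L^\infty}\le\|\rho_0\|_{L^\infty}$ for all $t\ge0$, I apply Proposition~\ref{prop:unified-coulomb-field} slice by slice. This gives, uniformly in $t$ and in $0<\eps\le e^{-2}$, both the $L^\infty$+Zygmund bound with constant $C_d(1+\|\rho_0\|_\infty)$ and the two mollifier estimates. For the jointly Borel representative, the plan is:
\begin{itemize}
\item[(a)] for each fixed $t$, $u_t$ is the continuous convolution representative;
\item[(b)] $t\mapsto\rho_t$ is narrowly continuous with densities uniformly bounded in $L^\infty$;
\item[(c)] splitting $\nabla g_d$ into a compactly supported singular part (handled by uniform $L^\infty$ bounds plus weak-$*$ convergence against an $L^1$ kernel) and a bounded continuous far part shows $t\mapsto u_t(x)$ is continuous for each $x$;
\item[(d)] a Carath\'eodory function (continuous in $x$, measurable in $t$) is jointly Borel.
\end{itemize}

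\textbf{Characteristics.} I would apply Proposition~\ref{prop:unified-lagrangian-representation} with $b=-u$, $a(t)=C(1+\|\rho_0\|_\infty)$, $m(t)$ constant, and $\omega(r)=r(1+\log_+(1/r))$, which satisfies the Osgood condition by Lemma~\ref{lem:concave-log-modulus}. This gives the unique global flow $\Psi^t$. Since $\rho\in C([0,\infty);\mathcal P_2)$ solves $\partial_t\rho+\nabla\cdot(-u\rho)=0$, the superposition part of that proposition yields $\rho_t=(\Psi^t)_\#\rho_0$ on every $[0,T]$, hence globally.

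\textbf{Main obstacle.} The delicate step is the joint measurability and time-continuity of $u$, step (c) above. The narrow topology alone does not pass through the singular kernel; it is the uniform $L^\infty$ bound on the densities that makes the kernel-splitting argument close. Everything else is either imported or a direct consequence of the earlier propositions.
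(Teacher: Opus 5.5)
Your proposal is correct and follows essentially the paper's route: you import existence, bounded-class uniqueness and the sharp decay from Serfaty--V\'azquez, apply Proposition~\ref{prop:unified-coulomb-field} slice by slice, obtain the Borel field by splitting the kernel and using narrow continuity with the uniform density bound, and conclude with Proposition~\ref{prop:unified-lagrangian-representation}. The differences are minor. For the Borel field, the paper takes a uniform limit of the jointly continuous functions $K_{\delta,R}*\rho_t$, whereas you argue via Carath\'eodory. For $\mathcal P_2$-continuity, the paper cites Lemma~5.2 of \cite{Serfaty111} instead of your virial argument, which avoids your forward reference to the characteristics. One point to add: the imported $L^\infty$ bound holds a priori only for almost every $t$, but your steps (b)--(c) need it at every $t$; the paper obtains this from narrow continuity by testing against nonnegative $\varphi\in C_c$.
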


\begin{proof}
The existence of nonnegative global weak solutions, uniqueness in the bounded
class, and the precise $L^\infty$ decay estimate
\eqref{eq:bounded-density-Linfty-decay} are contained respectively in
Theorems~3.1, 4.1, and 5.1 of \cite{Serfaty111}.  In particular,
Theorem~5.1 states the initial-data-dependent bound
$\|\rho_t\|_\infty\le(t+\|\rho_0\|_\infty^{-1})^{-1}$ used here.
Lemma~5.2 there gives Wasserstein absolute continuity;
on the $\mathcal P_2$ class considered here this yields the stated time
continuity.  The $L^\infty$ estimate, initially valid for almost every time,
extends to every time by narrow continuity.

Mass conservation and \eqref{eq:bounded-density-Linfty-decay}, followed by
Proposition~\ref{prop:unified-coulomb-field}, give the uniform field,
log-Lipschitz, and mollification estimates.  To obtain a jointly Borel
representative, let $K=\nabla g_d$ and approximate it by bounded continuous
kernels $K_{\delta,R}$ obtained by cutting off near the origin and outside a
large ball.  Narrow continuity of $t\mapsto\rho_t$ makes
$(t,x)\mapsto K_{\delta,R}*\rho_t(x)$ jointly continuous.  The near cutoff
error is $O(\|\rho_0\|_\infty\delta)$ and the far cutoff error is
$O(R^{1-d})$, uniformly on finite time intervals.  Letting
$\delta\downarrow0$ and $R\uparrow\infty$ gives the required representative.
The bounded log-Lipschitz field satisfies an Osgood modulus with an
$L^1_{\mathrm{loc}}$ time coefficient, so
Proposition~\ref{prop:unified-lagrangian-representation} and the superposition
principle give the unique global flow and the pushforward identity.
\end{proof}

For later use in the weighted Coulomb comparison, the density envelope
\eqref{eq:coulomb-density-envelope} satisfies the exact identity
\begin{equation}\label{eq:explicit-coulomb-osgood-integral}
 \int_0^T m(t)\,\dd t
 =\log(1+TM_0),
 \qquad M_0=\|\rho_0\|_{L^\infty}.
\end{equation}
This logarithmic time integral is the source of the polynomial deterioration
in \eqref{eq:coulomb-osgood-exponent}.

\section{Coulomb flows with bounded density}
\label{sec:newtonian-coulomb-proof}
Throughout this section $d\ge2$ and $g=g_d$ is the normalized Coulomb
kernel from \eqref{eq:unified-coulomb-kernel}.  We use
\[
 M_0:=\|\rho_0\|_{L^\infty},\qquad
 m(t)=\frac{M_0}{1+tM_0},\qquad
 R(t):=m(t)^{-1/d},
\]
as in \eqref{eq:coulomb-density-envelope}.  Thus $m'=-m^2$ and
$\|\rho_t\|_\infty\le m(t)$.  The normalized weight $\widehat m$ is defined in
\eqref{eq:coulomb-normalized-transport-weight}.
For every $d\ge2$, the Coulomb force satisfies
$|\nabla g(z)|\lesssim_d |z|^{1-d}$ and is locally integrable.  The comparison
argument is the same in every dimension.  Its dimension-dependent inputs are
the finite-$N$ lower bound and commutator estimate, together with the separation
and energy bounds needed to justify the singular particle dynamics.  The planar
logarithmic case additionally requires its own normalization and moment estimates.

The proof has four steps: the singular modulated energy identity; the
weighted transport estimate; the estimate with mollification radius $R(t)\vartheta$; and
finally averaging, dyadic optimization, and logarithmic comparison.

\begin{lemma}[Coulomb lower bound and commutator estimate]
\label{lem:unified-coulomb-static}
Let $d\ge2$, let $\mu\in\mathcal P_2(\R^d)\cap L^\infty(\R^d)$, and let
$X_N\in\Omega_N$.  With $g=g_d$ and the correction
\eqref{eq:coulomb-finite-N-correction}, the dimensional constants can be chosen
so that
\begin{equation}\label{eq:coulomb-static-lower-bound}
 F_N(X_N,\mu)+\eta_{N,d}(\mu)\ge0,
\end{equation}
and, for every $v\in W^{1,\infty}(\R^d;\R^d)$,
\begin{equation}\label{eq:coulomb-static-commutator}
 |C_N(X_N,\mu;v)|
 \le C_d\|\nabla v\|_{L^\infty}
 \bigl(F_N(X_N,\mu)+\eta_{N,d}(\mu)\bigr).
\end{equation}
The constants are independent of $N$ and of the minimum interparticle
distance.
\end{lemma}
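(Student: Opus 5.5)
The plan is to import these two inequalities from the sharp finite-$N$ literature and check only that the normalization of $\eta_{N,d}(\mu)$ matches. For $d\ge3$ the lower bound \eqref{eq:coulomb-static-lower-bound} is the Serfaty--Rosenzweig estimate
\[
F_N(X_N,\mu)\ge -C_d\|\mu\|_{L^\infty}^{1-2/d}N^{-2/d}.
\]
For $d=2$ the corresponding bound is
\[
F_N(X_N,\mu)\ge -\frac{\log N}{2dN}-C\frac{1+\log\|\mu\|_{L^\infty}}{N}\cdot\mathbf 1_{\{\ldots\}}.
\]
The commutator estimate \eqref{eq:coulomb-static-commutator} is the sharp first-order estimate of \cite{Rosenzweig11222,HessChildsRosenzweigSerfaty2025}. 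Both are proved in the literature under the normalization $-\Delta g=c_d\delta_0$ and with the modulated energy written with $N^2$ rather than probability normalization. I would therefore first rescale to the present convention: $\mu_N$ is a probability measure and $g_d$ is normalized so that $-\Delta g_d=\delta_0$. This only changes dimensional constants, which are absorbed into $A_d$ and $C_d$.

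To get the exact $\|\mu\|_{L^\infty}$ dependence I would use the scaling argument of Proposition~\ref{prop:coulomb-scale-covariant-field}. Set $\lambda=\|\mu\|_{L^\infty}^{-1/d}$ and replace $x_i,\mu$ by $x_i/\lambda$ and $\lambda^d\mu(\lambda\cdot)$, which has unit $L^\infty$ norm.
\begin{itemize}
\item For $d\ge3$, homogeneity of degree $2-d$ gives $F_N(X_N,\mu)=\lambda^{2-d}F_N(X_N/\lambda,\widetilde\mu)$. The unit-density bound $-C_dN^{-2/d}$ then becomes $-C_d\|\mu\|_\infty^{1-2/d}N^{-2/d}$, which is the stated $\eta_{N,d}(\mu)$.
\item For $d=2$, the logarithm produces an additive term $\frac{\log\lambda}{2\pi}\bigl(\iint_{\Delta_2^c}\dd(\mu_N-\mu)^{\otimes2}\bigr)$. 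Since the off-diagonal total mass is $-1/N$, this term equals $\frac{\log\|\mu\|_\infty}{4\pi N}$. Combined with the unit-density bound $-(\log N)/(4\pi N)-C/N$, this gives a lower bound of order $-(1+\log(N\|\mu\|_\infty))/N$. When $N\|\mu\|_\infty<1$ the sign needs care. There I would fall back on the cruder bound valid at the scale $\max\{\lambda,N^{-1/2}\}$, which is dominated by $(1+\log(1+N\|\mu\|_\infty))/N$.
\end{itemize}

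For the commutator, the quantity $\|\nabla v\|_{L^\infty}$ is scale-invariant under $v\mapsto v(\lambda\,\cdot)$ with the matching power, so $C_N$ rescales exactly as $F_N$ does. The additive error in the sharp commutator estimate is of the same order as the lower-bound error, so after scaling it is bounded by $C_d\|\nabla v\|_\infty\eta_{N,d}(\mu)$. Applying the lower bound once more, $F_N+\eta_{N,d}\ge0$, we may write the whole right-hand side as $C_d\|\nabla v\|_\infty(F_N+\eta_{N,d})$, provided $A_d$ is at least twice the lower-bound constant. For $d=2$, the commutator is translation- and dilation-covariant, and the logarithmic additive shift has no effect because $\nabla g$ is homogeneous. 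Hence scaling is cleaner here than for the energy, and the error matches the same $\eta_{N,2}(\mu)$.

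Independence of the minimum interparticle distance is inherited from the smearing-based proofs. For $\mu\in\mathcal P_2\cap L^\infty$ without additional regularity, the cited estimates assume $\mu\in L^\infty$ only, up to an approximation by smooth densities, and the bounds pass to the limit.

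The main obstacle is the planar bookkeeping. The sharp commutator in $d=2$ has error $\|\nabla v\|_\infty\frac{\log N}{N}$ plus terms involving $\|\mu\|_\infty$, and all of these must fit under $A_2\bigl(1+\log(1+N\|\mu\|_\infty)\bigr)/N$ uniformly, including the regime $N\|\mu\|_\infty\ll1$. I would handle that regime by first running the estimate at the truncated scale $\min\{\lambda,\,\cdot\,\}$ and checking it by hand.
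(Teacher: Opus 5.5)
Your treatment of $d\ge3$ and of the planar lower bound agrees with the paper. For $d\ge3$ the paper writes $g_d=|\mathbb S^{d-1}|^{-1}g_{d-2}$, imports the power-law estimates at $s=d-2$, and removes the restriction $\lambda_N<1$ of the cited commutator estimate by a dilation. Your unit-density normalization is one such dilation, giving $\lambda_N=N^{-1/d}<1$. In $d=2$ the paper dilates to unit density and uses the off-diagonal mass $-1/N$ exactly as you do. No fallback is needed for the lower bound when $N\|\mu\|_{L^\infty}<1$: the term $-\log(N\|\mu\|_{L^\infty})/(4\pi N)$ is then positive and can be dropped.

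\textbf{The gap is the planar commutator estimate.} Your claim that the logarithmic shift ``has no effect'' is false for the inequality. $C_N$ and $\|\nabla v\|_{L^\infty}$ are invariant under $x_i\mapsto Lx_i$, $\mu\mapsto L^{-2}\mu(\cdot/L)$, $v\mapsto Lv(\cdot/L)$. The right-hand side is not, since $F_N(X_N^L,\mu_L)=F_N(X_N,\mu)+\frac{\log L}{2\pi N}$ by the same off-diagonal computation you used for the lower bound.

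Suppose you apply the cited estimate at unit density and keep its additive error only as $C(1+\log N)/N$. Then the argument does not close: for $N\|\mu\|_{L^\infty}\ll1$ one has $\eta_{N,2}(\mu)\approx A_2/N$, which cannot absorb $C\log N/N$. This is the obstacle you flag at the end, and ``running the estimate at the truncated scale $\min\{\lambda,\cdot\}$ and checking it by hand'' does not resolve it. It is resolved by exactly the shift you dismissed. There are two ways to use it.

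\emph{Your single dilation, with exact constants.} Keep the exact coefficient of the Rosenzweig--Serfaty correction. In the present normalization it enters as $F_N-\frac{\log\lambda}{2\pi N}$, which is a dilation-invariant combination. At unit density the correction contributes $\frac{\log N}{4\pi N}$ and the energy shift contributes $\frac{\log\|\mu\|_{L^\infty}}{4\pi N}$. Their sum $\frac{\log(N\|\mu\|_{L^\infty})}{4\pi N}$ is at most $\frac{\log(1+N\|\mu\|_{L^\infty})}{4\pi N}$, and it is negative when $N\|\mu\|_{L^\infty}<1$.

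\emph{The paper's route.} In Lemma~\ref{lem:serfaty-lipschitz-commutator} the paper treats $N\|\mu\|_{L^\infty}\le1$ separately. It dilates by $L=\frac12\sqrt{N\|\mu\|_{L^\infty}}\le\frac12$, so the scaled microscopic length is $\frac12$ and the cited correction is $O(1/N)$. The shift $\frac{\log L}{2\pi N}\le0$ then has the favorable sign, and $\eta_{N,2}(\mu)\ge A_2/N$ absorbs the remainder.

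The paper's route uses only the sign of the shift. Yours needs the exact constant in front of $\log\lambda$.
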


\begin{proof}
If $d\ge3$, then $g=|\mathbb S^{d-1}|^{-1}g_{d-2}$.  The modulated
energy and the first-order commutator scale by the same dimensional factor, so
Lemma~\ref{lem:static-riesz-transfer} gives
\eqref{eq:coulomb-static-lower-bound}--\eqref{eq:coulomb-static-commutator}
after enlarging the fixed constant $A_d$ in
\eqref{eq:coulomb-finite-N-correction}.  If $d=2$, the same statements are
Lemmas~\ref{lem:diagonal-excluded-lower-bound} and~\ref{lem:serfaty-lipschitz-commutator}.  The latter is stated below without a
restriction on $N\|\mu\|_\infty$, after a logarithmic dilation argument.
\end{proof}

The global collision-free Coulomb particle flow and the quantitative estimates
used below are recorded in Proposition~\ref{prop:coulomb-particle-flow} in the
appendix.  The dissipative modulated energy identity is standard in the Coulomb
mean-field method \cite{Serfaty1}.  For the bounded-density reference solution
used here, the identity is justified by the singular regularization argument
below.

\begin{proposition}[Coulomb singular chain rule]
\label{prop:coulomb-singular-chain-rule}
Let $d\ge2$, let $X_N$ be the collision-free Coulomb particle trajectory from
Proposition~\ref{prop:coulomb-particle-flow}, and let $\rho$ be the
solution with bounded density from Proposition~\ref{prop:bounded-density-flow}.
Then
$F_N(t):=F_N(X_N(t),\rho_t)$ belongs to $AC([0,T])$ and
\begin{equation}\label{eq:coulomb-singular-chain-rule}
 \frac{\dd}{\dd t}F_N(t)=-2r_N(t)-C_N[u_t]
 \qquad\text{for a.e. }t\in[0,T].
\end{equation}
Let $\zeta\in C_c^\infty(B_1)$ be nonnegative, radial, and of unit mass, set
$\psi_\kappa:=\zeta_\kappa*\zeta_\kappa$, and define the same even radial
regularization in every dimension by
\[
 g^\kappa:=g*\psi_\kappa.
\]
For the associated quantities from Section~\ref{sec:abstract-framework},
\begin{equation}\label{eq:coulomb-chain-rule-convergences}
 F_N^\kappa\to F_N\ \text{in }C([0,T]),\qquad
 D_N^\kappa\to r_N\ \text{in }L^1(0,T),\qquad
 C_N^\kappa[u]\to C_N[u]\ \text{in }L^1(0,T).
\end{equation}
\end{proposition}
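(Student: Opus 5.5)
The plan is to verify the three convergences in \eqref{eq:coulomb-chain-rule-convergences}. The absolute continuity and the identity \eqref{eq:coulomb-singular-chain-rule} then follow from Proposition~\ref{prop:unified-regularized-chain-rule}(ii), provided its two remaining inputs are in place. The first input is the regularized identity of part~(i). It holds because $\rho_t=(\Psi^t)_\#\rho_0$ with the Osgood flow of Proposition~\ref{prop:bounded-density-flow}, and because $g^\kappa$ is smooth with bounded derivatives of all orders beyond the first, so every Lagrangian integral is differentiable under the integral sign. The second input is $r_N\in L^1(0,T)$, obtained independently from Proposition~\ref{prop:chain-rule-admissibility}. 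Its hypotheses are: the discrete dissipation \eqref{eq:unified-microscopic-dissipation}, integrated in time, gives $\int_0^T N^{-1}\sum_i|K_i|^2\le H_N(0)-\inf_{[0,T]}H_N$, which is finite by the energy and moment bounds of Proposition~\ref{prop:coulomb-particle-flow} (a lower bound is needed for $d=2$); and $\sup_t\|u_t\|_\infty<\infty$ holds by Proposition~\ref{prop:bounded-density-flow}.

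For the energy convergence, note that on the compact trajectory $X_N([0,T])\subset\Omega_N$ the minimum separation $\delta>0$ is fixed. Since $\psi_\kappa$ is radial with support in $B_{2\kappa}$ and $g$ is harmonic away from $0$, the mean-value property gives $g^\kappa=g$ on $\{|z|\ge2\kappa\}$. Hence, for $\kappa<\delta/2$, the discrete pair terms coincide exactly. The mixed term is $N^{-1}\sum_i\int(g^\kappa-g)(x_i-y)\rho_t(y)\,\dd y$, and the self term is $\iint(g^\kappa-g)\rho_t\rho_t$. Both are bounded by $\|\rho_t\|_\infty\|g^\kappa-g\|_{L^1}\le C\|\rho_0\|_\infty\kappa^2(1+\log_+(1/\kappa))$, uniformly in $t$. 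This yields $F_N^\kappa\to F_N$ in $C([0,T])$.

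For $D_N^\kappa$, the discrete fields satisfy $K_i^\kappa=K_i$ for $\kappa<\delta/2$. Moreover $u^\kappa=u*\psi_\kappa$, and the Zygmund bound gives $\|u^\kappa-u\|_\infty\le C\kappa$ uniformly in time. Therefore $|D_N^\kappa-r_N|\le C\kappa\,r_N^{1/2}$, which tends to zero in $L^1(0,T)$ since $r_N\in L^1$.

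The commutator convergence is the main obstacle. I would write $C_N^\kappa[u]$ through identity \eqref{eq:abs-rough-identity} with $w$ replaced by $u$, plus the continuum term $\iint(u(x)-u(y))\cdot\nabla g^\kappa(x-y)\rho\rho$. The discrete pair part is exactly stable. The mixed terms $N^{-1}\sum_i\int(u(x_i)-u(y))\cdot(\nabla g^\kappa-\nabla g)(x_i-y)\rho(y)\,\dd y$ are controlled using $|\nabla g^\kappa-\nabla g|\le C|z|^{1-d}\mathbf 1_{|z|<2\kappa}$ together with the log-Lipschitz bound $|u(x)-u(y)|\lesssim|x-y|\ell(|x-y|)$. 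This gives an integrand of size $|z|^{2-d}\ell(|z|)$ on $B_{2\kappa}$, hence a bound $O(\|\rho_0\|_\infty\kappa^2\ell(\kappa))$, uniform in $t$ and independent of $\delta$. The continuum self term is handled the same way. All remainders are therefore bounded uniformly on $[0,T]$, so the convergence in $L^1(0,T)$ holds, and it remains to check that the limit is the absolutely convergent $C_N[u]$ rather than a principal value. That is automatic here because the integrand $(u(x)-u(y))\cdot\nabla g(x-y)$ is $O(|x-y|^{2-d}\ell)$, which is locally integrable. For $d=2$ I would also check the far field: $g^\kappa-g$ vanishes there, so no growth issue arises, and measurability in $t$ follows from the jointly Borel representative of $u$.
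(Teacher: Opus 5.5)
Your proposal is correct, and it takes a genuinely different route from the paper at the key technical step.

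\textbf{Your route.} You use Newton's theorem. Since $\psi_\kappa$ is radial and supported in $B_{2\kappa}$, and $g_d$ is harmonic off the origin, you get $g^\kappa=g_d$, and hence $\nabla g^\kappa=\nabla g_d$, exactly on $\{|z|\ge2\kappa\}$. Consequences:
\begin{itemize}
\item Once $2\kappa$ is below the minimal separation $d_{N,T}$, the discrete pair terms and the fields $K_i^\kappa=K_i$ coincide exactly.
\item Every remaining difference is confined to a $2\kappa$-neighbourhood of the diagonal.
\item There, the bounds $\|g^\kappa-g_d\|_{L^1}\lesssim\kappa^2(1+\log_+(1/\kappa))$ and $|\nabla g^\kappa(z)|\lesssim|z|^{1-d}$, together with the uniform-in-time log-Lipschitz bound on $u_t$, give explicit rates.
\item These rates are $O(\kappa^2\ell(\kappa))$ for the energy and the commutator, and $O(\kappa)\,r_N^{1/2}$ for $D_N^\kappa-r_N$, all uniform on $[0,T]$.
\end{itemize}

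\textbf{The paper's route.} The paper uses only evenness of $\psi_\kappa$ and homogeneous derivative bounds:
\begin{itemize}
\item For the energy, local $L^1$ approximation of $g_d$ on $B_2$, plus a second-order Taylor bound $C\kappa^2|z|^{-d}$ on $\{|z|\ge2\}$.
\item For the force term, $\delta_K(\kappa)\to0$ and $\|u^\kappa-u\|_{L^2(0,T;L^\infty)}\to0$.
\item For the commutator, a near-diagonal majorant that is uniform in $\kappa$, combined with uniform convergence $K^\kappa\to K$ on $\{|z|\ge\delta\}$ and dominated convergence. It lets $\kappa\downarrow0$ first and then $\delta\downarrow0$.
\end{itemize}

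\textbf{Comparison.} Your argument is shorter, removes the two-parameter limit, and quantifies the convergence. The paper's argument never uses harmonicity or radial symmetry of the mollifier. That is why it carries over essentially verbatim to the non-harmonic Riesz kernels in Proposition~\ref{prop:riesz-singular-chain-rule}, where $g_s*\psi_\kappa\neq g_s$ away from the origin and your exact-equality trick is unavailable.

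Two minor points. First, the expansion you actually estimate is the symmetrized one, with mixed coefficient $-2/N$; the detour through \eqref{eq:abs-rough-identity} is unnecessary. Second, for part (i) of Proposition~\ref{prop:unified-regularized-chain-rule}, the relevant fact is that $\nabla g^\kappa$ itself is bounded (by about $\kappa^{1-d}$ near the origin). Together with boundedness of $u$ and of $K_i$ on the collision-free trajectory, this justifies differentiating under the integral.
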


\begin{proof}
The proof is the same in all Coulomb dimensions once the normalized
fundamental solution $g_d$ is used.  Write
\[
 K:=\nabla g,\qquad
 K^\kappa:=\nabla g^\kappa,\qquad
 u^\kappa:=K^\kappa*\rho,
\]
and let $K_i^\kappa$ be the corresponding regularized discrete force.
For fixed $\kappa>0$, the regularized force $K^\kappa$ is smooth and bounded
near the origin, while $K_i\in L^2(0,T)$ by the microscopic dissipation estimate
and $u\in L^2(0,T;L^\infty)$ by Proposition~\ref{prop:bounded-density-flow}.
Consequently all differentiated discrete, mixed, and continuous Lagrangian terms
at the fixed regularization scale are integrable on $[0,T]$.  Thus the
admissibility hypothesis in Proposition~\ref{prop:unified-regularized-chain-rule}
\textup{(i)} is satisfied, and for every fixed $\kappa>0$,
\begin{equation}\label{eq:coulomb-regularized-chain-rule}
 (F_N^\kappa)'=-2D_N^\kappa-C_N^\kappa[u].
\end{equation}
We verify the three limits in
\eqref{eq:coulomb-chain-rule-convergences} directly and uniformly in the
dimension-dependent form of the Coulomb potential.

\emph{Step 1: integrability of $r_N$.}
If $d\ge3$, then $H_N\ge0$ and
Proposition~\ref{prop:coulomb-particle-flow} gives
\[
 \int_0^T\frac1N\sum_i|K_i(t)|^2\,\dd t
 =H_N(X_N^0)-H_N(X_N(T))\le H_N(X_N^0).
\]
If $d=2$, the same conclusion follows from
\eqref{eq:particle-force-error-integrability}.  Since
Proposition~\ref{prop:bounded-density-flow} gives
$u\in L^2(0,T;L^\infty)$, Proposition~\ref{prop:chain-rule-admissibility}
yields
\begin{equation}\label{eq:coulomb-rN-L1}
 r_N\in L^1(0,T).
\end{equation}

\emph{Step 2: convergence of the force term.}
The lower bound on the minimal interparticle distance in Proposition~\ref{prop:coulomb-particle-flow} implies
\[
 d_{N,T}:=\min_{0\le t\le T}\min_{i\ne j}|x_i(t)-x_j(t)|>0.
\]
Hence the regularized Coulomb force converges uniformly on the discrete
particle separations, and therefore
\[
 \delta_K(\kappa):=\max_i\sup_{0\le t\le T}
 |K_i^\kappa(t)-K_i(t)|\longrightarrow0.
\]
The mollification estimate of
Proposition~\ref{prop:unified-coulomb-field} also gives
\[
 \|u^\kappa-u\|_{L^2(0,T;L^\infty)}\longrightarrow0.
\]
Put
\[
 f_i:=K_i-u(x_i),\qquad
 e_i^\kappa:=(K_i^\kappa-K_i)-(u^\kappa-u)(x_i).
\]
Since $K_i^\kappa-u^\kappa(x_i)=f_i+e_i^\kappa$ and
$D_N^\kappa=N^{-1}\sum_i f_i\cdot(f_i+e_i^\kappa)$,
Cauchy--Schwarz and \eqref{eq:coulomb-rN-L1} give
\begin{align}
 \|D_N^\kappa-r_N\|_{L^1(0,T)}
 &\le \|r_N^{1/2}\|_{L^2(0,T)}
 \left(T^{1/2}\delta_K(\kappa)
 +\|u^\kappa-u\|_{L_t^2L_x^\infty}\right)
 \longrightarrow0.\label{eq:coulomb-D-limit-detailed}
\end{align}

\emph{Step 3: convergence of the modulated energy.}
For every $d\ge2$, the normalized Coulomb potential is locally integrable and
satisfies
\[
 |D^2g_d(z)|\le C_d|z|^{-d},\qquad z\ne0.
\]
Local $L^1$ approximation by convolution and the uniform density bound imply
\[
 \|g^\kappa-g_d\|_{L^1(B_2)}\longrightarrow0.
\]
On $|z|\ge2$, evenness of $\psi_\kappa$ and Taylor's formula give, for all
sufficiently small $\kappa$,
\[
 |g^\kappa(z)-g_d(z)|
 \le C_d\kappa^2|z|^{-d}\le C_d\kappa^2.
\]
Consequently
\begin{equation}\label{eq:coulomb-convolution-energy-limit}
 \sup_{0\le t\le T}
 \|(g^\kappa-g_d)*\rho_t\|_{L^\infty}
 \le C_d\|\rho\|_{L_t^\infty L_x^\infty}
 \|g^\kappa-g_d\|_{L^1(B_2)}
 +C_d\kappa^2\longrightarrow0.
\end{equation}
The discrete--discrete part of $F_N^\kappa$ converges uniformly on $[0,T]$
because of $d_{N,T}>0$.  The mixed term is controlled by
\eqref{eq:coulomb-convolution-energy-limit}, and the continuous--continuous
term is bounded by the same supremum after integration against $\rho_t$.
Thus
\begin{equation}\label{eq:coulomb-energy-limit-detailed}
 \sup_{0\le t\le T}|F_N^\kappa(t)-F_N(t)|\longrightarrow0.
\end{equation}

\emph{Step 4: convergence of the commutator.}
For every $d\ge2$, the Coulomb force satisfies
\[
 |K(z)|\le C_d|z|^{1-d}.
\]
Moreover the regularized force satisfies
\begin{equation}\label{eq:coulomb-regularized-kernel-pointwise}
 |K^\kappa(z)|\le C_d(|z|+\kappa)^{1-d}
 \le C_d|z|^{1-d},\qquad z\ne0.
\end{equation}
Indeed, for $|z|\ge4\kappa$ this follows by convolution away from the origin,
whereas for $|z|<4\kappa$ it follows by scaling of the Coulomb force.
By Proposition~\ref{prop:unified-coulomb-field}, for $0<|z|\le1/2$,
\begin{equation}\label{eq:coulomb-loglip-chain-rule}
 |u_t(a+z)-u_t(a)|
 \le C_d(1+m(t))|z|
 \left(1+\log\frac1{|z|}\right),
\end{equation}
uniformly in the center $a$.  Therefore, for either the singular or the
regularized kernel, every mixed near-diagonal contribution is bounded by
\[
 C_d(1+m(t))^2
 \int_0^\delta r\left(1+\log\frac1r\right)\,\dd r,
\]
and the same bound controls the continuous--continuous near-diagonal term
after integration in the outer variable.  This majorant tends to zero as
$\delta\downarrow0$ and belongs to $L^1(0,T)$ uniformly in $\kappa$.

The discrete--discrete commutator is a finite sum evaluated at distances at
least $d_{N,T}$, so it converges uniformly as $\kappa\downarrow0$.  For fixed
$\delta>0$, evenness of the mollifier and Taylor's formula applied to $K$
give
\[
 \sup_{|z|\ge\delta}|K^\kappa(z)-K(z)|\longrightarrow0.
\]
Hence the mixed and continuous--continuous contributions on
$|x-y|\ge\delta$ converge in $L^1(0,T)$ by boundedness of $u$, unit mass of
$\rho_t$, and dominated convergence.  Letting first $\kappa\downarrow0$ and
then $\delta\downarrow0$ yields
\begin{equation}\label{eq:coulomb-commutator-limit-detailed}
 C_N^\kappa[u]\longrightarrow C_N[u]
 \qquad\text{in }L^1(0,T).
\end{equation}
Thus the symmetrized Coulomb commutator is absolutely convergent in the bounded-density class.

\emph{Step 5: passage to the singular identity.}
Integrate \eqref{eq:coulomb-regularized-chain-rule} between arbitrary
$0\le s<t\le T$.  The limits
\eqref{eq:coulomb-D-limit-detailed},
\eqref{eq:coulomb-energy-limit-detailed}, and
\eqref{eq:coulomb-commutator-limit-detailed} satisfy the hypotheses of
Proposition~\ref{prop:unified-regularized-chain-rule}\textup{(ii)} and give
\eqref{eq:coulomb-chain-rule-convergences}.  Hence $F_N\in AC([0,T])$ and
\eqref{eq:coulomb-singular-chain-rule} holds for almost every time.
\end{proof}

\begin{proposition}[Weighted quadratic transport estimate]
\label{prop:coulomb-weighted-transport}
Let $Q_N$ be the quadratic transport cost along a coupled trajectory and set
\begin{equation}\label{eq:coulomb-weighted-Q-definition}
 P_N(t):=\widehat m(t)Q_N(t).
\end{equation}
Then, for almost every $t\in(0,T)$,
\begin{equation}\label{eq:coulomb-weighted-Q-inequality}
 P_N'(t)
 \le r_N(t)+C_d m(t)\omega_*(P_N(t)).
\end{equation}
The constant depends only on the dimension.
\end{proposition}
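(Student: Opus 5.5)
Differentiate $P_N=\widehat m Q_N$ using $\widehat m'=-m\widehat m$ and the coupling identity of Proposition~\ref{prop:unified-transport-cost}:
\[
 P_N'=-m\widehat m Q_N-\frac{2\widehat m}{N}\sum_iq_i\cdot e_i
 -\frac{2\widehat m}{N}\sum_iq_i\cdot\bigl(u_t(x_i)-u_t(y_i)\bigr),
 \qquad e_i=K_i-u_t(x_i).
\]
This holds for a.e.\ $t$, since $Q_N$ is absolutely continuous along the coupled trajectory. For the force-error term I would use Young's inequality with a weight adapted to the envelope:
\[
 2\widehat m|q_i||e_i|\le |e_i|^2+\widehat m^2|q_i|^2 .
\]
Averaging over $i$ gives $r_N+\widehat m P_N$. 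Since $\widehat m=m/(1+M_0)\le m$, the quadratic contribution $\widehat m P_N\le mP_N$ is exactly cancelled by the drift $-m\widehat m Q_N=-mP_N$. This cancellation is the purpose of the weight. It leaves $r_N$ with coefficient one and no linear remainder.

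For the field-difference term I would use the scale-covariant log-Lipschitz bound \eqref{eq:coulomb-scale-loglip} with $\|\rho_t\|_\infty\le m(t)$ and $R=m^{-1/d}$:
\[
 2\widehat m|q_i||u_t(x_i)-u_t(y_i)|\le C_d m\,\widehat m|q_i|^2\Bigl(1+\log_+\frac{R}{|q_i|}\Bigr)
 =C_d m\,p_i\Bigl(1+\log_+\frac{R^2\widehat m}{p_i}\Bigr)^{\!*},
\]
where $p_i:=\widehat m|q_i|^2$ and the starred logarithm means $\tfrac12\log_+(R^2\widehat m/p_i)$ up to constants. The key point is that $R^2\widehat m=m^{1-2/d}/(1+M_0)$ is bounded by a dimensional constant. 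For $d\ge2$ we have $0\le1-2/d<1$, hence $m^{1-2/d}\le\max(1,m)\le 1+M_0$, and therefore $R^2\widehat m\le1$. Consequently $1+\log_+(R^2\widehat m/p)\le 1+\log_+(1/p)$, and each term is bounded by $C_d m\,p_i(1+\log_+(1/p_i))\le C_d m\,\omega_*(p_i)$ by Lemma~\ref{lem:concave-log-modulus}. Averaging and applying Jensen's inequality to the concave nondecreasing $\omega_*$ gives $C_d m\,\omega_*(P_N)$, which yields \eqref{eq:coulomb-weighted-Q-inequality}.

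The main obstacle is this normalization step. One must check that the combination $\widehat m R^2$ stays bounded independently of $M_0$ and $t$, so that the logarithm involves $P_N$ alone with a purely dimensional constant. This is precisely why the weight is normalized by $1+M_0$. A secondary point is that $\log_+$ produces $r(1+\log_+(1/r))$, which exceeds $\omega_*$ only for large $r$ in the tangent extension; this costs only a constant $C_\beta$ from Lemma~\ref{lem:concave-log-modulus}.
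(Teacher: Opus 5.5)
Your proof is correct and follows essentially the same route as the paper. You differentiate with $\widehat m'=-m\widehat m$ and absorb the force-error cross term against the weighted drift by Young's inequality. You then use the scale-covariant log-Lipschitz bound together with $R^2\widehat m\le1$, and finish with Jensen's inequality for the concave $\omega_*$.

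The only difference is cosmetic. The paper weights Young's inequality as $\frac{\widehat m}{m}|e_i|^2+m\widehat m|q_i|^2$, so the drift cancels exactly. Your weighting $|e_i|^2+\widehat m^2|q_i|^2$ instead leaves the nonpositive remainder $-(m-\widehat m)P_N$, so the drift term is ``dominated'' rather than ``exactly cancelled'' as you wrote. Both versions rest on the same fact $\widehat m\le m$.
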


\begin{proof}
Write $m=m(t)$, $\widehat m=\widehat m(t)$,
$R=R(t)=m^{-1/d}$, and $e_i=K_i-u_t(x_i)$.  Since
$\widehat m'=-m\widehat m$, the exact identity in
Proposition~\ref{prop:unified-transport-cost} gives
\[
 P_N'
 =-m\widehat m Q_N-\frac{2\widehat m}{N}\sum_iq_i\cdot e_i
 -\frac{2\widehat m}{N}\sum_iq_i\cdot
   \bigl(u_t(x_i)-u_t(y_i)\bigr).
\]
Young's inequality in the form
\[
 -2\widehat m q_i\cdot e_i
 \le \frac{\widehat m}{m}|e_i|^2+m\widehat m|q_i|^2
\]
cancels the weighted quadratic term exactly.  Since
$\widehat m/m=(1+M_0)^{-1}\le1$,
\begin{equation}\label{eq:coulomb-weighted-Q-pre-modulus}
 P_N'
 \le r_N+\frac{2\widehat m}{N}\sum_i|q_i|\,
 |u_t(x_i)-u_t(y_i)|.
\end{equation}
If $q_i=0$, the corresponding summand in
\eqref{eq:coulomb-weighted-Q-pre-modulus} vanishes.  For $q_i\ne0$,
Proposition~\ref{prop:coulomb-scale-covariant-field} gives
\[
 |u_t(x_i)-u_t(y_i)|
 \le C_d m|q_i|\left(1+\log_+\frac{R}{|q_i|}\right).
\]
Set $z_i:=\widehat m|q_i|^2$ for all $i$.  By the definition of
$\widehat m$,
\[
 R^2\widehat m=\frac{m^{1-2/d}}{1+M_0}\le1.
\]
Indeed, the numerator equals one when $d=2$; when $d>2$, since
$0<1-2/d<1$ and $m(t)\le M_0$, one has
$m(t)^{1-2/d}\le M_0^{1-2/d}\le1+M_0$.  Hence
$R\sqrt{\widehat m}\le1$.  For $q_i\ne0$ this implies
\[
 \frac{R}{|q_i|}\le\frac1{\sqrt{z_i}},
\]
and therefore
\[
 2\widehat m|q_i|\,|u_t(x_i)-u_t(y_i)|
 \le C_d m\,z_i\left(1+\log_+\frac1{\sqrt{z_i}}\right)
 \le C_d m\,\omega_*(z_i).
\]
The same inequality is trivial when $q_i=0$, since then $z_i=0$ and
$\omega_*(0)=0$.
Concavity and Jensen's inequality give
\[
 \frac1N\sum_i\omega_*(z_i)
 \le\omega_*\!\left(\frac1N\sum_i z_i\right)
 =\omega_*(P_N).
\]
Substitution in \eqref{eq:coulomb-weighted-Q-pre-modulus} proves the claim.
\end{proof}

\begin{proposition}[Coulomb estimate with mollification radius $R(t)\vartheta$]
\label{prop:unified-coulomb-fixed-scale}
Let $d\ge2$.  For a dyadic dimensionless scale
$0<\vartheta\le e^{-2}$ define the mollification radius
\begin{equation}\label{eq:coulomb-physical-mollification-scale}
 \eps_\vartheta(t):=R(t)\vartheta
 =m(t)^{-1/d}\vartheta,
\end{equation}
put
\[
 u_\vartheta(t):=u_t*\chi_{\eps_\vartheta(t)},
 \qquad w_\vartheta:=u-u_\vartheta,
 \qquad r_N^\vartheta:=r_N[u_\vartheta].
\]
Then, for almost every $t\in(0,T)$,
\begin{equation}\label{eq:coulomb-unified-fixed-scale}
 -r_N-C_N[u]
 \le-r_N^\vartheta
 +C_d m(t)\ell(\vartheta)
 \bigl(F_N+\eta_{N,d}\bigr)
 +C_d m(t)^{2-2/d}\vartheta,
\end{equation}
and
\begin{equation}\label{eq:coulomb-unified-force-comparison}
 r_N\le2r_N^\vartheta
 +C_d m(t)^{2-2/d}\vartheta^2.
\end{equation}
All displayed constants depend only on the dimension and are independent of
$N$ and of the minimum interparticle distance.  The dimensionless parameter
$\vartheta$ is fixed in the pathwise estimate, while the mollification radius
$\eps_\vartheta(t)$ varies deterministically with time.  The estimate is
pointwise in time, so this time dependence is external to the differentiation
argument.
\end{proposition}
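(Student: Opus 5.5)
The plan is to apply the completion-of-squares identity of Proposition~\ref{prop:completion-of-squares} with $v=u_\vartheta$ and $w=w_\vartheta=u-u_\vartheta$, at a fixed time $t$ where $\|\rho_t\|_{L^\infty}\le m(t)$. Writing $C_N[u]=C_N[u_\vartheta]+C_N[w_\vartheta]$ by linearity, \eqref{eq:completion-of-squares} gives
\[
 -r_N-C_N[u]=-r_N^\vartheta-C_N[u_\vartheta]+\|w_\vartheta\|_{L^2(\mu_N)}^2+2\int w_\vartheta\cdot(K_N-u)\rho_t.
\]
Three terms then remain to be bounded.

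\emph{The Lipschitz commutator.} Since $u_\vartheta$ is smooth, Lemma~\ref{lem:unified-coulomb-static} applies with $\mu=\rho_t$ and gives
\[
|C_N[u_\vartheta]|\le C_d\|\nabla u_\vartheta\|_\infty\bigl(F_N+\eta_{N,d}(\rho_t)\bigr).
\]
Proposition~\ref{prop:coulomb-scale-covariant-field} with envelope $m=m(t)$ and radius $R(t)\vartheta$ gives $\|\nabla u_\vartheta\|_\infty\le C_d m\,\ell(\vartheta)$. Because $\eta_{N,d}(\rho_t)\le\eta_{N,d}$, we have $F_N+\eta_{N,d}(\rho_t)\le F_N+\eta_{N,d}$, which produces the middle term of \eqref{eq:coulomb-unified-fixed-scale}.

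\emph{The remainder terms.} By \eqref{eq:coulomb-scale-mollifier}, $\|w_\vartheta\|_\infty\le C_d m^{1-1/d}\vartheta$. This gives
\[
\|w_\vartheta\|_{L^2(\mu_N)}^2\le C_d m^{2-2/d}\vartheta^2\le C_d m^{2-2/d}\vartheta .
\]
For the pairing term, \eqref{eq:coulomb-scale-pairing} gives
\[
2\Bigl|\int w_\vartheta\cdot(K_N-u)\rho_t\Bigr|\le 2\|w_\vartheta\|_\infty\int(|K_N|+|u|)\rho_t\le C_d m^{1-1/d}\vartheta\cdot m^{1-1/d}.
\]
Both are therefore bounded by $C_d m^{2-2/d}\vartheta$. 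The estimate \eqref{eq:coulomb-unified-force-comparison} follows directly from \eqref{eq:force-error-comparison} and the same $L^\infty$ bound on $w_\vartheta$.

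\emph{Main obstacle.} The delicate step is justifying the identity \eqref{eq:completion-of-squares} for the singular Coulomb kernel, since $w_\vartheta$ is only log-Lipschitz and the integral $\int w_\vartheta\cdot K_N\rho$ involves a singular kernel. To handle this, I would note that $C_N[u]$ is absolutely convergent (Step~4 of Proposition~\ref{prop:coulomb-singular-chain-rule}) and that $C_N[u_\vartheta]$ is absolutely convergent as well. Then $C_N[w_\vartheta]$ is their difference. The identity \eqref{eq:abs-rough-identity} can be verified at the regularization $g^\kappa$ and passed to the limit $\kappa\downarrow0$ using the uniform bound $|K^\kappa(z)|\le C_d|z|^{1-d}$, the particle separation $d_{N,T}>0$, and local integrability of $|z|^{1-d}$ against $\rho_t\in L^\infty$; $\int w_\vartheta\cdot K_N\rho$ is absolutely convergent by Lemma~\ref{lem:near-far-locally-integrable-field}. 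With this, the estimate holds at every time where the chain rule and the density envelope hold, which is almost every $t$. All constants depend only on $d$ by the scale covariance in Proposition~\ref{prop:coulomb-scale-covariant-field}.
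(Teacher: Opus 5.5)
Your proposal is correct and follows the paper's proof essentially step by step. Both use the completion-of-squares identity with $v=u_\vartheta$, the sharp Lipschitz commutator bound of Lemma~\ref{lem:unified-coulomb-static} for $u_\vartheta$ together with $\eta_{N,d}(\rho_t)\le\eta_{N,d}$, and the scale-covariant bounds of Proposition~\ref{prop:coulomb-scale-covariant-field} for $\|w_\vartheta\|_\infty$, $\|\nabla u_\vartheta\|_\infty$, and the pairing term. Your regularization argument for the identity is harmless but more than needed: for the Coulomb kernel, $|\nabla g_d(z)|\le C_d|z|^{1-d}$ is locally integrable, so every term in \eqref{eq:abs-rough-identity} is already absolutely convergent and the algebra can be done directly.
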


\begin{proof}
Fix a time for which the pointwise estimates hold and abbreviate
$m=m(t)$ and $R=R(t)$.  The density envelope gives
$\|\rho_t\|_\infty\le m$.  Proposition~\ref{prop:coulomb-scale-covariant-field}
therefore gives
\begin{equation}\label{eq:coulomb-continuous-field-pairing}
 \int_{\R^d}(|K_N(x)|+|u(x)|)\rho_t(x)\,\dd x
 \le C_d m^{1-1/d},
\end{equation}
uniformly in $N$ and the configuration, as well as
\begin{equation}\label{eq:coulomb-rough-mollification-error}
 \|w_\vartheta\|_\infty
 \le C_d m^{1-1/d}\vartheta,
 \qquad
 \|\nabla u_\vartheta\|_\infty
 \le C_d m\ell(\vartheta).
\end{equation}
The completion of squares identity gives
\[
 -r_N-C_N[w_\vartheta]
 =-r_N^\vartheta
 +\|w_\vartheta\|_{L^2(\mu_N)}^2
 +2\int w_\vartheta\cdot(K_N-u)\rho_t.
\]
The preceding bounds yield
\[
 \|w_\vartheta\|_{L^2(\mu_N)}^2
 \le C_d m^{2-2/d}\vartheta^2,
 \qquad
 2\left|\int w_\vartheta\cdot(K_N-u)\rho_t\right|
 \le C_d m^{2-2/d}\vartheta.
\]
Since $0<\vartheta<1$,
\begin{equation}\label{eq:coulomb-rough-remainder-bound}
 -r_N-C_N[w_\vartheta]
 \le-r_N^\vartheta+C_d m^{2-2/d}\vartheta.
\end{equation}
The elementary force comparison similarly gives
\[
 r_N\le2r_N^\vartheta
 +2\|w_\vartheta\|_\infty^2
 \le2r_N^\vartheta+C_d m^{2-2/d}\vartheta^2.
\]
For the commutator with the mollified field,
Lemma~\ref{lem:unified-coulomb-static}, applied to $\rho_t$, and
\eqref{eq:coulomb-rough-mollification-error} yield
\[
 |C_N[u_\vartheta]|
 \le C_d m\ell(\vartheta)
 \bigl(F_N+\eta_{N,d}(\rho_t)\bigr).
\]
The sharp density decay implies $\eta_{N,d}(\rho_t)\le\eta_{N,d}$, while the
finite-$N$ lower bound gives $F_N+\eta_{N,d}\ge0$.  Combining the last estimate
with \eqref{eq:coulomb-rough-remainder-bound} proves
\eqref{eq:coulomb-unified-fixed-scale}.
\end{proof}

For the averaged Coulomb estimates, the two time coefficients are
$C_d m(t)$ and $C_d m(t)^{2-2/d}$, with the dimensional constant enlarged when
necessary.  The first one satisfies
\[
 \int_0^T C_d m(t)\,\dd t=C_d\log(1+TM_0),
\]
and the second satisfies
\begin{equation}\label{eq:coulomb-linear-drift-integral}
 \int_0^T m(t)^{2-2/d}\,\dd t
 =\begin{cases}
 \log(1+TM_0),&d=2,\\[1mm]
 \displaystyle\frac{d}{d-2}M_0^{1-2/d}
 \left[1-(1+TM_0)^{-(1-2/d)}\right],&d\ge3.
 \end{cases}
\end{equation}
Thus the second coefficient is integrable on every finite interval; for
$d\ge3$ its integral is even bounded uniformly as $T\to\infty$.

\begin{proof}[Proof of Theorem~\ref{thm:bounded-density-coulomb-stability}]
Let $\pi_N^0$ be a symmetric optimal coupling of $\rho_N^0$ and
$\rho_0^{\otimes N}$, and push it forward by the particle flow and the
characteristic flow of the limiting equation.  Define
\begin{equation}\label{eq:coulomb-weighted-averaged-energy}
 \mathcal E_N(t):=
 \mathbb E_{\pi_N^0}\!\left[
 \widehat m(t)Q_N(t)+F_N(X_N(t),\rho_t)+\eta_{N,d}\right]
 +\begin{cases}
  \dfrac{1+\log N}{N},&d=2,\\[2mm]
  N^{-2/d},&d\ge3.
 \end{cases}
\end{equation}
For a fixed dyadic $\vartheta$, write
\[
 \overline r_N^{\vartheta}(t)
 :=\mathbb E_{\pi_N^0}r_N^{\vartheta}(t).
\]
Proposition~\ref{prop:coulomb-measurability-statement} justifies
differentiation under the fixed expectation and places the entire dyadic
family on a common set of full measure.  In particular,
$\mathcal E_N\in AC([0,T])$; moreover the last term in
\eqref{eq:coulomb-weighted-averaged-energy} is strictly positive, so
$\mathcal E_N(t)>0$ for every $t$ and the
choice of dyadic scale below is never evaluated at the endpoint $z=0$.  Propositions~\ref{prop:coulomb-weighted-transport},~\ref{prop:coulomb-singular-chain-rule},
and~\ref{prop:unified-coulomb-fixed-scale}, followed by Jensen's inequality,
give for every fixed dyadic $0<\vartheta\le e^{-2}$
\begin{align}
 \mathcal E_N'
 +\overline r_N^{\vartheta}
 &\le C_d m(t)
 \left[\omega_*(\mathcal E_N)
 +\ell(\vartheta)\mathcal E_N\right]
 +C_d m(t)^{2-2/d}\vartheta,
 \label{eq:coulomb-averaged-fixed-scale}\\
 \overline r_N
 &\le2\overline r_N^{\vartheta}
 +C_d m(t)^{2-2/d}\vartheta^2.
 \label{eq:coulomb-averaged-force-comparison}
\end{align}
After increasing $C_d$ if necessary, the Osgood terms carry only the
coefficient $m(t)$, while the higher power $m(t)^{2-2/d}$ multiplies only the scale
errors.

We first treat small initial error.  At $t=0$,
\begin{equation}\label{eq:coulomb-weighted-initial-error}
 \mathcal E_N(0)
 =\widehat m(0)\frac1N
 W_2^2(\rho_N^0,\rho_0^{\otimes N})
 +\int(F_N(X_N,\rho_0)+\eta_{N,d})\,\dd\rho_N^0
 +\begin{cases}
  \dfrac{1+\log N}{N},&d=2,\\[2mm]
  N^{-2/d},&d\ge3.
 \end{cases}
\end{equation}
Since $\widehat m(0)=M_0/(1+M_0)<1$,
\begin{equation}\label{eq:coulomb-weighted-initial-comparison}
 0<\mathcal E_N(0)\le a_{N,d}.
\end{equation}
While $0<\mathcal E_N(t)\le e^{-3}$, choose after averaging
the measurable dyadic scale
\[
 \vartheta_*(t):=\eps_1(\mathcal E_N(t))
\]
from Lemma~\ref{lem:measurable-dyadic-scale}.  The common set of full measure in
Proposition~\ref{prop:coulomb-measurability-statement} permits this insertion.
The dyadic estimates give
\[
 \vartheta_*\le\mathcal E_N<2\vartheta_*,\qquad
 \ell(\vartheta_*)\mathcal E_N
 \le C\omega_*(\mathcal E_N).
\]
After enlarging the dimensional constants once more,
\begin{equation}\label{eq:coulomb-drifted-osgood-inequality}
 \mathcal E_N'
 +\overline r_N^{\vartheta_*}
 \le C_d m(t)\omega_*(\mathcal E_N)
 +C_d m(t)^{2-2/d}\mathcal E_N.
\end{equation}
Lemma~\ref{lem:logarithmic-osgood-linear-drift}, applied with
$a(t)=C_d m(t)$ and $b(t)=C_d m(t)^{2-2/d}$, shows that for
$\mathcal E_N(0)$ below a threshold depending only on the fixed comparison
data,
\begin{equation}\label{eq:coulomb-drifted-majorant}
 \sup_{t\le T}\mathcal E_N(t)
 +\int_0^T\overline r_N^{\vartheta_*}(t)\,\dd t
 \le C_{T,d,M_0}\,\mathcal E_N(0)^{\gamma_{T,d}},
\end{equation}
The exponent supplied directly by
Lemma~\ref{lem:logarithmic-osgood-linear-drift} is bounded below by
$\exp(-C_d\int_0^T m)$.  Increasing the dimensional constant $c_d$ in
\eqref{eq:coulomb-osgood-exponent} if necessary gives
\[
 \exp\!\left(-C_d\int_0^T m(t)\,\dd t\right)
 \ge\gamma_{T,d}
 =(1+TM_0)^{-c_d},
\]
so \eqref{eq:coulomb-drifted-majorant} follows after weakening the power if
necessary.  The factor $C_{T,d,M_0}$ contains
$\exp(C_d\int_0^Tm^{2-2/d})$; this is precisely why the linear drift
does not affect the Osgood exponent.

Let $V$ be the scalar comparison solution in
Lemma~\ref{lem:logarithmic-osgood-linear-drift}.  Since
$\vartheta_*^2\le\vartheta_*\le\mathcal E_N\le V$ and
$V'\ge C_d m(t)^{2-2/d}V$, one has
\[
 \int_0^T C_d m(t)^{2-2/d}\vartheta_*(t)^2\,\dd t
 \le\int_0^T C_d m(t)^{2-2/d}V(t)\,\dd t
 \le V(T)-V(0).
\]
Combining this with \eqref{eq:coulomb-averaged-force-comparison} and
\eqref{eq:coulomb-drifted-majorant} gives
\begin{equation}\label{eq:coulomb-drifted-force-control}
 \int_0^T\overline r_N(t)\,\dd t
 \le C_{T,d,M_0}\,\mathcal E_N(0)^{\gamma_{T,d}}.
\end{equation}

For initial errors outside the small regime, fix one dyadic
$\bar\vartheta\in(0,e^{-2}]$ in
\eqref{eq:coulomb-averaged-fixed-scale}.  Since
$\omega_*(z)\le C(1+z)$ and both $m$ and $m^{2-2/d}$ belong to
$L^1(0,T)$, Gronwall's
lemma and \eqref{eq:coulomb-averaged-force-comparison} give
\[
 \sup_{t\le T}\mathcal E_N(t)
 +\int_0^T\overline r_N(t)\,\dd t
 \le C_{T,d,M_0}(1+\mathcal E_N(0)).
\]
Combining this with the small error estimate gives the claimed global form.
Indeed, if $a_*>0$ denotes the smallness threshold in Lemma~\ref{lem:logarithmic-osgood-linear-drift} and
$\mathcal E_N(0)\ge a_*$, then, since $0<\gamma_{T,d}\le1$,
$1\le a_*^{-\gamma_{T,d}}\mathcal E_N(0)^{\gamma_{T,d}}$.  Hence the
bound obtained at the fixed scale, $C(1+\mathcal E_N(0))$ is bounded by
$C_{T,d,M_0}(\mathcal E_N(0)^{\gamma_{T,d}}+\mathcal E_N(0))$.
Together with $\mathcal E_N(0)\le a_{N,d}$, this yields, for every
initial error,
\begin{equation}\label{eq:coulomb-weighted-final-energy}
 \sup_{t\le T}\mathcal E_N(t)
 +\int_0^T\overline r_N(t)\,\dd t
 \le C_{T,d,\rho_0}
 \bigl(a_{N,d}^{\gamma_{T,d}}+a_{N,d}\bigr).
\end{equation}

We finally recover the unweighted Wasserstein distance.  Since
$\widehat m$ is decreasing and positive,
\[
 \frac1N W_2^2(\rho_N(t),\rho_t^{\otimes N})
 \le\mathbb E_{\pi_N^0}Q_N(t)
 \le\widehat m(T)^{-1}
 \mathcal E_N(t).
\]
Here
\[
 \widehat m(T)^{-1}
 =\frac{(1+M_0)(1+TM_0)}{M_0},
\]
which depends only on the fixed comparison data.  Moreover,
\[
 \int(F_N+\eta_{N,d})\,\dd\rho_N(t)
 \le\mathcal E_N(t).
\]
Thus \eqref{eq:coulomb-weighted-final-energy} proves
\eqref{eq:coulomb-main-estimate}.

For $\rho_N^0=\rho_0^{\otimes N}$, the Wasserstein part of $a_{N,d}$ vanishes, and
Lemma~\ref{lem:unified-product-initial-data} gives the single identity
\[
 \int(F_N(X_N,\rho_0)+\eta_{N,d})\,\dd\rho_0^{\otimes N}(X_N)
 =\eta_{N,d}-\frac1N E(\rho_0).
\]
The left-hand side is nonnegative by the finite-$N$ lower bound.  The Coulomb
near-field/far-field estimates imply $|E(\rho_0)|<\infty$ for every $d\ge2$.  Together
with \eqref{eq:coulomb-finite-N-correction} this gives, uniformly for $N\ge2$,
\begin{equation}\label{eq:coulomb-product-initial-error-unified}
 0\le a_{N,d}\le C_{d,\rho_0}
 \begin{cases}
  \dfrac{1+\log N}{N},&d=2,\\[2mm]
  N^{-2/d},&d\ge3.
 \end{cases}
\end{equation}
Both quantities on the right are smaller than one for $N\ge2$.  Since
$0<\gamma_{T,d}\le1$,
\[
 a_{N,d}^{\gamma_{T,d}}+a_{N,d}
 \le C_{T,d,\rho_0}
 \begin{cases}
  \left(\dfrac{1+\log N}{N}\right)^{\gamma_{T,2}},&d=2,\\[3mm]
  N^{-2\gamma_{T,d}/d},&d\ge3.
 \end{cases}
\]
Substitution in \eqref{eq:coulomb-main-estimate} proves
\eqref{eq:coulomb-product-rate} with the same argument for every $d\ge2$.
\end{proof}

\section{Besov estimates for the super-Coulomb Riesz regime}\label{sec:critical-riesz-proof}
The proof of Theorem~\ref{thm:critical-riesz-propagation} begins with the
field estimates implied by the assumed Besov regularity, proceeds to the
regularized commutator estimate, and finally optimizes the mollification scale
after averaging.  The last step is treated by the corresponding Gronwall, Bihari, or Osgood
inequality, according to $q$.

Let $1\le q\le\infty$.  By
Proposition~\ref{prop:critical-riesz-borel-reference}, the time-dependent
Besov norm has a bounded Borel representative, which we use without changing
notation.  Set
\begin{equation}\label{eq:critical-riesz-measurable-besov-bound}
 \mathcal B_q(t):=1+M_T+\|\rho_t\|_{B^\alpha_{\infty,q}},
\end{equation}
and, for use throughout this section, put
\begin{equation}\label{eq:finite-q-log-exponent}
 \beta_q:=1-\frac1q\quad(q<\infty),\qquad
 \beta_\infty:=1,\qquad \sigma_s:=d-s\in(0,2).
\end{equation}
We also write $\ell(\eps)=1+\log(1/\eps)$.  We use $\kappa$ for regularization
of the singular kernel, $\delta$ for the
centered cutoff, and $\eps$ for mollification of the limiting field.

We use the collision-free particle flow of
Proposition~\ref{prop:riesz-global-particle-flow} and the singular chain rule of
Proposition~\ref{prop:riesz-singular-chain-rule}.  The mollification scale is
kept fixed in the pathwise estimate and selected after averaging.

The threshold $s=d-1$ separates two analytic regimes.  If
$d-2<s<d-1$, then $\nabla g_s\in L^1_{\mathrm{loc}}$ and no first-order increment of the density is needed to estimate the continuous term.  If $d-1\le s<d$, then
$\alpha\ge1$ and the first-order increment supplied by the assumed Besov
regularity compensates the loss of local integrability when the same centered
cutoff is used in all terms.  This split is made explicit in
Lemma~\ref{lem:critical-riesz-rough-local} and
Proposition~\ref{prop:critical-riesz-pv-statement}.
For $\delta>0$ set
\begin{equation}\label{eq:critical-riesz-truncated-kernel}
 G_{s,\delta}(z):=\mathbf1_{\{|z|>\delta\}}\nabla g_s(z).
\end{equation}

\subsection{Besov estimates for the interaction field}

\subsubsection{Almost-Lipschitz and Zygmund estimates}
\begin{lemma}
\label{lem:critical-riesz-product-modulus}
Let $1\le q\le\infty$, let $\beta_q$ be given by
\eqref{eq:finite-q-log-exponent}, and let
$f\in L^\infty(\R^d)\cap B^\alpha_{\infty,q}(\R^d)$ with
$\alpha\in[1,2)$.  Then the Littlewood--Paley series of $f$ converges
uniformly to a continuous representative and
\begin{equation}\label{eq:critical-besov-zygmund-modulus}
 |f(x)-f(y)|
 \le C_{d,\alpha,q}\|f\|_{B^\alpha_{\infty,q}}
 |x-y|\left(1+\log_+\frac1{|x-y|}\right)^{\beta_q}.
\end{equation}
Let $u\in B^1_{\infty,q}$, let $\chi$ be a smooth, compactly supported,
even mollifier of mass one, put $u_\eps=u*\chi_\eps$, and set
$w_\eps=u-u_\eps$.  For $0<\eps\le\frac14$,
\[\begin{aligned}
 \|w_\eps\|_\infty
 &\le C_q\eps\|u\|_{B^1_{\infty,q}},\\
 |w_\eps(x)-w_\eps(y)|
 &\le C_q\|u\|_{B^1_{\infty,q}}
 \min\!\left\{\eps,
 |x-y|\left(1+\log_+\frac1{|x-y|}\right)^{\beta_q}\right\},\\
 \|\nabla u_\eps\|_\infty
 &\le C_q\|u\|_{B^1_{\infty,q}}
 \left(1+\log\frac1\eps\right)^{\beta_q}.
\end{aligned}
\]
Consequently, for
$\rho\in L^\infty\cap B^\alpha_{\infty,q}$,
\begin{equation}\label{eq:critical-riesz-product-modulus}
\begin{aligned}
|w_\eps(x)\rho(x)-w_\eps(y)\rho(y)|\le C_q\|u\|_{B^1_{\infty,q}}
 \bigl(\|\rho\|_\infty+\|\rho\|_{B^\alpha_{\infty,q}}\bigr)
 \left[
 \min\!\left\{\eps,r\ell(r)^{\beta_q}\right\}
 +\eps r\ell(r)^{\beta_q}\right],
\end{aligned}
\end{equation}
where $r=|x-y|\le1/2$.
\end{lemma}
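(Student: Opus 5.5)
The plan is to work entirely with the Littlewood--Paley decomposition and to obtain every bound by splitting the dyadic blocks at a frequency $2^{-J}\simeq r$ (for the modulus) or $2^{-J}\simeq\eps$ (for the mollifier), then applying H\"older in $\ell^q$ to the low-frequency sum.

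\emph{Step 1: the modulus of continuity of $f$.} Since $\alpha\ge1>0$, the bound $\|\Delta_jf\|_\infty\le2^{-j\alpha}\|f\|_{B^\alpha_{\infty,q}}$ makes the series converge uniformly, which gives a continuous representative. Fix $r=|x-y|\le1/2$ and choose $J\ge0$ with $2^{-J}\simeq r$.

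For $j>J$ we use $|\Delta_jf(x)-\Delta_jf(y)|\le2\|\Delta_jf\|_\infty\le2\cdot2^{-j\alpha}\|f\|_{B^\alpha_{\infty,q}}$. Summing over $j>J$ gives $C2^{-J\alpha}\|f\|_{B^\alpha_{\infty,q}}\le Cr^\alpha\le Cr$.

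For $j\le J$ we use Bernstein, $\|\nabla\Delta_jf\|_\infty\le C2^j\|\Delta_jf\|_\infty$, so that
\[
 |\Delta_jf(x)-\Delta_jf(y)|\le Cr\,2^{j(1-\alpha)}\bigl(2^{j\alpha}\|\Delta_jf\|_\infty\bigr).
\]
Since $2^{j(1-\alpha)}\le1$, H\"older in $j$ bounds the sum over $j\le J$ by $Cr\,(J+2)^{1-1/q}\|f\|_{B^\alpha_{\infty,q}}$. Because $J\simeq\log_2(1/r)$, this is $Cr\,\ell(r)^{\beta_q}\|f\|_{B^\alpha_{\infty,q}}$.

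For $r>1/2$ the bound follows from $\|f\|_\infty\le C\|f\|_{B^\alpha_{\infty,q}}$. This proves \eqref{eq:critical-besov-zygmund-modulus}.

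\emph{Step 2: the mollifier estimates for $u\in B^1_{\infty,q}$.} Set $\alpha=1$ in Step 1 to get the modulus $r\ell(r)^{\beta_q}$ for $u$.

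For $\|w_\eps\|_\infty$, write $w_\eps=\sum_j(\Delta_ju-\Delta_ju*\chi_\eps)$. For $2^j\eps\ge1$, bound each block by $2\|\Delta_ju\|_\infty$. For $2^j\eps<1$, evenness of $\chi$ gives a second-order Taylor bound $C\eps^2\|D^2\Delta_ju\|_\infty\le C\eps^22^{2j}\|\Delta_ju\|_\infty$. Both geometric sums are $\le C\eps\|u\|_{B^1_{\infty,q}}$; the parity of $\chi$ is essential here, since the Zygmund class gains exactly one power of $\eps$.

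For $\nabla u_\eps$, the blocks with $2^j\le\eps^{-1}$ are bounded by Bernstein and H\"older exactly as in Step 1, giving $C\ell(\eps)^{\beta_q}$. The blocks with $2^j>\eps^{-1}$ are bounded by $\|\nabla\chi_\eps\|_{L^1}\|\Delta_ju\|_\infty\le C\eps^{-1}2^{-j}$, a geometric sum of size $O(1)$.

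The increment bound on $w_\eps$ is the minimum of two estimates: $2\|w_\eps\|_\infty\le C\eps$, and the fact that $u$ and $u_\eps$ share the modulus $r\ell(r)^{\beta_q}$ (the latter because convolution with a probability kernel preserves moduli).

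\emph{Step 3: the product estimate \eqref{eq:critical-riesz-product-modulus}.} Write
\[
 w_\eps(x)\rho(x)-w_\eps(y)\rho(y)=(w_\eps(x)-w_\eps(y))\rho(x)+w_\eps(y)(\rho(x)-\rho(y)).
\]
The first term is at most $\|\rho\|_\infty$ times the increment bound of Step 2. The second is at most $C\eps\|u\|_{B^1_{\infty,q}}$ times $|\rho(x)-\rho(y)|$, which Step 1 (applicable since $\alpha\in[1,2)$) bounds by $C\|\rho\|_{B^\alpha_{\infty,q}}\,r\ell(r)^{\beta_q}$.

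The main obstacle I expect is bookkeeping rather than a conceptual difficulty. The constant in the dyadic H\"older sum must depend only on $q$ (and on $d,\alpha$), and the endpoint cases $q=1$ (where $\beta_q=0$ and the bound is purely Lipschitz) and $q=\infty$ must be handled uniformly. I would check these by writing out the sum $\sum_{j\le J}1^{q'}$ explicitly.
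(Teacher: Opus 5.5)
Your proposal is correct and follows the paper's route. The paper treats the modulus, mollification, and gradient bounds as standard Littlewood--Paley and Bernstein consequences, citing \cite{Bahouri}; your dyadic splitting at $2^{-J}\simeq r$ or $2^{-J}\simeq\eps$, with H\"older in $\ell^q$, simply writes out those arguments. The product bound is proved in the paper by the same two-term splitting, with the roles of $x$ and $y$ interchanged, which is immaterial. One harmless slip: for the block $j=-1$ the factor is $2^{\alpha-1}\le 2$ rather than $2^{j(1-\alpha)}\le 1$.
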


\begin{proof}
The modulus, approximation, and gradient estimates are standard consequences of
the Littlewood--Paley characterization of Besov spaces and Bernstein
inequalities; see \cite[Chapters~1--2]{Bahouri}.  For the last estimate write
\[
 w_\eps(x)\rho(x)-w_\eps(y)\rho(y)
 =w_\eps(x)\bigl(\rho(x)-\rho(y)\bigr)
 +\rho(y)\bigl(w_\eps(x)-w_\eps(y)\bigr),
\]
and apply the preceding bounds to $w_\eps$ and to the
$B^\alpha_{\infty,q}$ representative of $\rho$.
\end{proof}

\subsubsection{Estimates for the Riesz interaction field}
\begin{proposition}[Riesz field under the assumed Besov regularity]
\label{prop:critical-riesz-field}
Let $d-2<s<d$, set $\alpha=s-d+2\in(0,2)$, and let
$1\le q\le\infty$.  Assume
\[
 \rho\in L^1(\R^d)\cap L^\infty(\R^d)
 \cap B^\alpha_{\infty,q}(\R^d).
\]
Define $u=\nabla g_s*\rho$ in distributions.  Then $u$ has a bounded
continuous representative in $B^1_{\infty,q}$ and
\begin{equation}\label{eq:critical-riesz-zygmund-bound}
 \|u\|_{L^\infty}+\|u\|_{B^1_{\infty,q}}
 \le C_{d,s,q}\left(
 \|\rho\|_{L^1}+\|\rho\|_{L^\infty}
 +\|\rho\|_{B^\alpha_{\infty,q}}\right).
\end{equation}
With $\beta_q$ as in \eqref{eq:finite-q-log-exponent}, for every smooth
radial even mollifier and $0<\eps\le e^{-2}$,
\begin{equation}\label{eq:critical-riesz-mollifier-estimates}
\begin{aligned}
 \|u-u_\eps\|_{L^\infty}
 &\le C_{d,s,q}\left(1+\|\rho\|_{L^1}+\|\rho\|_{L^\infty}+\|\rho\|_{B^\alpha_{\infty,q}}\right)\eps,\\
 \|\nabla u_\eps\|_{L^\infty}
 &\le C_{d,s,q}\left(1+\|\rho\|_{L^1}+\|\rho\|_{L^\infty}+\|\rho\|_{B^\alpha_{\infty,q}}\right)\ell(\eps)^{\beta_q}.
\end{aligned}
\end{equation}
In addition,
\begin{equation}\label{eq:critical-riesz-loglip}
 |u(x)-u(y)|
 \le C_{d,s,q}\left(1+\|\rho\|_{L^1}+\|\rho\|_{L^\infty}+\|\rho\|_{B^\alpha_{\infty,q}}\right)|x-y|
 \left(1+\log_+\frac1{|x-y|}\right)^{\beta_q}.
\end{equation}
If $\alpha\ge1$, the continuous density representative satisfies
\begin{equation}\label{eq:critical-riesz-density-modulus}
 |\rho(x)-\rho(y)|
 \le C_{d,s,q}\left(1+\|\rho\|_{L^1}+\|\rho\|_{L^\infty}+\|\rho\|_{B^\alpha_{\infty,q}}\right)|x-y|
 \left(1+\log_+\frac1{|x-y|}\right)^{\beta_q}.
\end{equation}
For $q=\infty$, this is the Zygmund estimate used below; for $q=1$, $B^1_{\infty,1}$ embeds into $W^{1,\infty}$.
When $\rho=\rho_t$ depends on time, the preceding estimates hold for almost
every $t$.  Proposition~\ref{prop:critical-riesz-borel-reference} extends the
Besov bound to every time and provides a jointly Borel representative of the
field, which we continue to denote by $u_t$.
\end{proposition}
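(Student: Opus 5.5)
The plan is to work frequency by frequency through the Littlewood--Paley decomposition of $u=\nabla g_s*\rho$. The interaction operator $\nabla g_s*$ is a Fourier multiplier homogeneous of degree $s-d+1=\alpha-1$, with symbol $c_{d,s}\,i\xi|\xi|^{s-d}$. I would split $u=\Delta_{-1}u+\sum_{j\ge0}\Delta_ju$.

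\emph{High frequencies.} For $j\ge0$ the multiplier restricted to the annulus $|\xi|\sim2^j$ is smooth, and Bernstein's inequality gives
\[
\|\Delta_ju\|_{L^\infty}\le C2^{j(\alpha-1)}\|\Delta_j\rho\|_{L^\infty}.
\]
Multiplying by $2^j$ and taking the $\ell^q$ norm bounds the high-frequency part of $\|u\|_{B^1_{\infty,q}}$ by $C\|\rho\|_{B^\alpha_{\infty,q}}$.

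\emph{Low frequency and $L^\infty$.} I would not use the Fourier side here, since the symbol is singular at $\xi=0$ when $\alpha<1$. Instead I would estimate $\|u\|_{L^\infty}$ in physical space with a near--far split of the kernel at unit distance, as in Lemma~\ref{lem:near-far-locally-integrable-field}.
\begin{itemize}
\item When $s<d-1$, $|\nabla g_s(z)|\le|z|^{-s-1}$ is locally integrable. The near part is then bounded by $C\|\rho\|_\infty$ and the far part by $\|\rho\|_1$.
\item When $d-1\le s<d$, local integrability fails, and I would use the oddness of $\nabla g_s$. The near part equals $\int_{|z|<1}\nabla g_s(z)(\rho(x-z)-\rho(x))\dd z$. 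The modulus \eqref{eq:critical-besov-zygmund-modulus} (applicable since $\alpha\ge1$) bounds the integrand by $|z|^{-s}\ell(|z|)^{\beta_q}$, which is integrable because $s<d$.
\end{itemize}
This also shows that the distributional $u$ coincides with this principal value. Since $\|\Delta_{-1}u\|_\infty\le C\|u\|_\infty$, we obtain \eqref{eq:critical-riesz-zygmund-bound}. Continuity follows from uniform convergence of the Littlewood--Paley series.

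\emph{Remaining estimates.} The modulus \eqref{eq:critical-riesz-loglip} and the mollifier bounds \eqref{eq:critical-riesz-mollifier-estimates} follow by applying Lemma~\ref{lem:critical-riesz-product-modulus}, with $B^1_{\infty,q}$, to $u$. The density modulus \eqref{eq:critical-riesz-density-modulus} for $\alpha\ge1$ is \eqref{eq:critical-besov-zygmund-modulus} applied to $\rho$, using $\|\cdot\|_{B^1_{\infty,q}}\lesssim\|\cdot\|_{B^\alpha_{\infty,q}}$.

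\emph{Endpoints and time dependence.} For $q=1$ the embedding $B^1_{\infty,1}\hookrightarrow W^{1,\infty}$ is standard, since $\sum_j\|\nabla\Delta_ju\|_\infty\lesssim\sum_j2^j\|\Delta_ju\|_\infty$. The statements for time-dependent $\rho$ follow by applying the above at almost every $t$ and invoking Proposition~\ref{prop:critical-riesz-borel-reference}.

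The main obstacle is the low-frequency and $L^\infty$ control in the regime $s\ge d-1$. The convolution is not absolutely convergent there, so I must justify that the symmetric principal value represents the distributional convolution. I would do this by testing against Schwartz functions, using the centered cutoff $G_{s,\delta}$ of \eqref{eq:critical-riesz-truncated-kernel}, and letting $\delta\downarrow0$ with dominated convergence supplied by the first-order increment bound on $\rho$.
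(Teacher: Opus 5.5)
Your proposal is correct. The high-frequency step and the final appeal to Lemma~\ref{lem:critical-riesz-product-modulus} for the modulus, mollifier and density estimates match the paper. The two arguments differ only in the low-frequency and $L^\infty$ step.

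\textbf{The paper's route.} It stays on the Fourier side. Near the origin the localized multiplier $\chi(\xi)\,c_{d,s}\,i\xi|\xi|^{s-d}$ has modulus $\lesssim|\xi|^{\alpha-1}$, whose radial $L^1_\xi$ integrand is $r^{s}$. It is therefore integrable, and together with $|\hat\rho|\le\|\rho\|_{L^1}$ this gives $\|\Delta_{-1}u\|_{L^\infty}\le C_{d,s}\|\rho\|_{L^1}$ directly. The bound on $\|u\|_{L^\infty}$ and continuity then follow by summing the Littlewood--Paley blocks. So the singularity at $\xi=0$ that led you to avoid the Fourier side is harmless.

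\textbf{What each route buys.} The paper's route is uniform in $d-2<s<d$. It needs no case split at $s=d-1$, no use of the Besov modulus of $\rho$, and no identification of the distributional convolution with a principal value at this stage. Your physical-space route costs the split at $s=d-1$ and, in the range $d-1\le s<d$, the $B^\alpha_{\infty,q}$ modulus of $\rho$ (hence $\alpha\ge1$). In exchange it yields an explicit pointwise representation of $u$ as the centered principal value. Your identification argument, testing against Schwartz functions with the cutoff $G_{s,\delta}$ and passing to the limit by dominated convergence, is exactly what the paper proves separately in Proposition~\ref{prop:critical-riesz-pv-statement}\textup{(i)} and later uses in the commutator analysis. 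In effect you merge that appendix statement into this proposition.

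\textbf{Minor point.} The high-frequency bound $\|\Delta_j u\|_{L^\infty}\le C2^{j(\alpha-1)}\|\Delta_j\rho\|_{L^\infty}$ is the standard estimate for homogeneous multipliers localized to annuli. Scaling gives it directly; it is not Bernstein's inequality in the strict sense.
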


\begin{proof}
The multiplier of $u$ is $c_{d,s}i\xi|\xi|^{s-d}$, of order
$\alpha-1$.  The standard Besov multiplier theorem
\cite[Chapters~1--2]{Bahouri} therefore gives, for every high-frequency block,
\[
 2^j\|\Delta_j u\|_{L^\infty}
 \le C_{d,s}2^{j\alpha}\|\Delta_j\rho\|_{L^\infty}.
\]
For the low-frequency block, the localized multiplier is integrable near the
origin because its radial $L^1_\xi$ integrand behaves like $r^s$; hence
$\|\Delta_{-1}u\|_{L^\infty}\le C_{d,s}\|\rho\|_{L^1}$.  These two estimates
prove \eqref{eq:critical-riesz-zygmund-bound} and give a bounded continuous
representative.  The remaining assertions follow from
Lemma~\ref{lem:critical-riesz-product-modulus}, applied to $u$ and, when
$\alpha\ge1$, to $\rho$.
\end{proof}

\begin{lemma}
\label{lem:critical-riesz-radial-integrals}
Let $0\le\beta\le1$, $0<r\le\frac12$, $0<\gamma\le2$,
$0<\sigma\le1$, and $0<\eps\le e^{-2}$.  Then
\begin{align}
 \int_0^r t^{\gamma-1}\ell(t)^\beta\,\dd t
 &\le C_{\gamma,\beta}r^\gamma\ell(r)^\beta,\notag\\
 \eps^2\int_{4\eps}^1t^{\gamma-3}\ell(t)^\beta\,\dd t
 &\le C_{\gamma,\beta}
 \begin{cases}
 \eps^\gamma\ell(\eps)^\beta,&0<\gamma<2,\\
 \eps^2\ell(\eps)^{\beta+1},&\gamma=2,
 \end{cases}
 \label{eq:critical-riesz-radial-mollifier}\\
 \int_0^1\min\{\eps,t\ell(t)^\beta\}t^{\sigma-2}\,\dd t
 &\le C_{\sigma,\beta}
 \begin{cases}
 \eps^\sigma\ell(\eps)^\beta,&0<\sigma<1,\\
 \eps\ell(\eps),&\sigma=1,
 \end{cases}
 \label{eq:critical-riesz-radial-rough-bound}\\
 \int_0^1t^{\sigma-1}\ell(t)^\beta\,\dd t
 &\le C_{\sigma,\beta}.\notag
\end{align}
\end{lemma}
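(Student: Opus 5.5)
The four bounds are elementary one-dimensional integral estimates, and I will prove them one at a time. The tools are two. First, the substitution $y=\ell(t)=1+\log(1/t)$, equivalently $t=e^{1-y}$. Second, the observation that $t\mapsto t^{a}\ell(t)^\beta$ is increasing on $(0,1]$ for $a>0$ and $0\le\beta\le1$, up to a constant factor. Indeed $\ell(t)^\beta$ is decreasing, but for $t\le r$ one has
\[
 \ell(t)\le \ell(r)\,\bigl(1+\log(r/t)\bigr),
\]
because $\ell(t)=\ell(r)+\log(r/t)$ and $\ell(r)\ge1$.

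\emph{First bound.} Substituting $t=r\tau$ with $\tau\in(0,1]$ gives
\[
 \int_0^r t^{\gamma-1}\ell(t)^\beta\,\dd t
 \le r^\gamma\ell(r)^\beta\int_0^1\tau^{\gamma-1}\bigl(1+\log(1/\tau)\bigr)^\beta\,\dd\tau .
\]
The last integral is finite since $\gamma>0$, which proves the first estimate. The fourth bound is the special case $r=1$, $\gamma=\sigma$, where the right side equals $\int_0^1\tau^{\sigma-1}\ell(\tau)^\beta\,\dd\tau<\infty$.

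\emph{Second bound.} The exponent $\gamma-3<-1$ when $\gamma<2$, so the integral is dominated by its lower endpoint. Substituting $t=\eps\tau$ with $\tau\in[4,1/\eps]$ and using $\ell(\eps\tau)\le\ell(\eps)$ for $\tau\ge1$ gives
\[
 \eps^2\int_{4\eps}^1 t^{\gamma-3}\ell(t)^\beta\,\dd t
 \le \eps^\gamma\ell(\eps)^\beta\int_4^\infty\tau^{\gamma-3}\,\dd\tau .
\]
The $\tau$-integral is finite for $0<\gamma<2$. For $\gamma=2$ the integrand is $t^{-1}\ell(t)^\beta$, and in the variable $y$ the integral is $\int_1^{\ell(4\eps)}y^\beta\,\dd y\le\ell(\eps)^{\beta+1}$. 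This gives the $\eps^2\ell(\eps)^{\beta+1}$ case.

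\emph{Third bound.} This is the one that needs care, because the two branches of the minimum must be balanced. Let $t_\eps$ be the crossover point where $t\ell(t)^\beta=\eps$. Rather than solving for $t_\eps$, I will split at $t=\eps$ itself.

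On $(0,\eps)$ I use the branch $t\ell(t)^\beta$. By the first bound with $\gamma=\sigma$,
\[
 \int_0^\eps t^{\sigma-1}\ell(t)^\beta\,\dd t\le C\eps^\sigma\ell(\eps)^\beta .
\]

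On $(\eps,1)$ I use the branch $\eps$. When $\sigma<1$,
\[
 \eps\int_\eps^1 t^{\sigma-2}\,\dd t\le \frac{\eps^\sigma}{1-\sigma},
\]
and $\ell(\eps)^\beta\ge1$, so this is also $\le C\eps^\sigma\ell(\eps)^\beta$. When $\sigma=1$, the same integral is $\eps\int_\eps^1 t^{-1}\,\dd t=\eps\log(1/\eps)\le\eps\ell(\eps)$. On the first piece with $\sigma=1$, the bound $\eps\ell(\eps)^\beta\le\eps\ell(\eps)$ holds because $\beta\le1$ and $\ell(\eps)\ge1$. Adding the two pieces gives both cases.

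The only delicate point is the monotonicity and comparability of $\ell$ used in the first bound. All constants depend only on $\gamma,\sigma,\beta$, and no smallness of $\eps$ is needed beyond $\eps\le e^{-2}<1/4$, which guarantees $4\eps<1$.
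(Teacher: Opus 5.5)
Your proposal is correct and follows the paper's proof essentially step by step. It uses the same split at $t=\eps$ in the third bound and the same substitution $y=\ell(t)$ for $\gamma=2$, and the rescalings in the first two bounds are equivalent (you use $\ell(r\tau)\le\ell(r)\ell(\tau)$ where the paper uses subadditivity of $x\mapsto x^\beta$). The one point worth writing out is the finiteness of $\int_0^1\tau^{\gamma-1}\ell(\tau)^\beta\,\dd\tau$, since that is the whole content of the fourth bound: it follows from $\ell^\beta\le\ell$ together with $\int_0^1\tau^{\gamma-1}(1+\log(1/\tau))\,\dd\tau=1/\gamma+1/\gamma^2$.
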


\begin{proof}
The proof is given in Appendix~\ref{app:riesz-technical-proofs}.
\end{proof}

For $1\le q\le\infty$, $d-2<s<d$, and $0<\eps\le e^{-2}$, set
\[e_{q,s}(\eps):=
 \begin{cases}
  \eps,&d-2<s<d-1,\\
  \eps\ell(\eps),&s=d-1,\\
  \eps^{d-s}\ell(\eps)^{\beta_q},&d-1<s<d.
 \end{cases}
\]
For $q=\infty$, this is bounded by
$C\eps^{\min\{1,d-s\}}\ell(\eps)$, the bound used in the $q=\infty$ case of the Riesz theorem under the stated Besov regularity.

The three scales follow from the corresponding radial integrals.  In the
principal value range $d-1\le s<d$, recall that
\(\sigma_s=d-s\in(0,1]\) and put \(\beta:=\beta_q\).  The product estimate
\eqref{eq:critical-riesz-product-modulus} gives, with
\(r=|z|\le1/2\),
\[
 |w_\eps(a+z)\rho(a+z)-w_\eps(a)\rho(a)|
 \le C\mathcal B_q(t)^2
 \left[\min\{\eps,r\ell(r)^\beta\}
       +\eps r\ell(r)^\beta\right].
\]
Since \(|\nabla g_s(z)|\simeq r^{-s-1}\), the near-field radial integral satisfies
\begin{equation}\label{eq:critical-three-regime-radial-remainder}
\begin{aligned}
 &\int_0^{1/2}
 \left[\min\{\eps,r\ell(r)^\beta\}
       +\eps r\ell(r)^\beta\right]r^{\sigma_s-2}\,\dd r\\
 &\qquad\le
 \int_0^1\min\{\eps,r\ell(r)^\beta\}r^{\sigma_s-2}\,\dd r
 +\eps\int_0^1r^{\sigma_s-1}\ell(r)^\beta\,\dd r.
\end{aligned}
\end{equation}
Lemma~\ref{lem:critical-riesz-radial-integrals} therefore gives
\begin{equation}\label{eq:critical-three-regime-radial-scales}
 \int_0^{1/2}
 \left[\min\{\eps,r\ell(r)^\beta\}
       +\eps r\ell(r)^\beta\right]r^{\sigma_s-2}\,\dd r
 \le C_{s,q}
 \begin{cases}
  \eps\ell(\eps),&\sigma_s=1\;(s=d-1),\\
  \eps^{\sigma_s}\ell(\eps)^\beta,&0<\sigma_s<1\;(d-1<s<d).
 \end{cases}
\end{equation}
The second integral in \eqref{eq:critical-three-regime-radial-remainder} is \(O(\eps)\) and is absorbed by either line of \eqref{eq:critical-three-regime-radial-scales}.  If
\(d-2<s<d-1\), then \(s+1<d\) and \(\nabla g_s\in L^1_{\mathrm{loc}}\), so the ordinary integral representation applies.  The near-field/far-field estimate together
with \(\|w_\eps\|_\infty\le C\mathcal B_q(t)\eps\) gives an \(O(\eps)\)
error term.  These are exactly the three cases in \(e_{q,s}\).

\begin{remark}
\label{rem:critical-riesz-exponent-regimes}
The threshold $s=d-1$ is the point where the force loses local
integrability.  Below it, the $L^\infty$ mollification error alone yields the
factor $\eps$.  At the threshold the plateau
\(\min\{\eps,r\ell(r)^{\beta_q}\}=\eps\) over an intermediate range is
integrated against \(r^{-1}\dd r\), producing the extra logarithm
$\eps\ell(\eps)$.  Above the threshold, with $\sigma_s=d-s\in(0,1)$, the same
radial balance gives
$\eps^{\sigma_s}\ell(\eps)^{\beta_q}$.  The logarithmic factor at $s=d-1$ is the borderline contribution of the
radial integral.  For $q=\infty$, the bound
$C\eps^{\min\{1,d-s\}}\ell(\eps)$ provides a uniform form for the three
regimes in the Osgood argument.
\end{remark}

\begin{lemma}
\label{lem:finite-q-endpoint-dyadic-selector}
Let $1\le q<\infty$, put $\beta=\beta_q$, and fix $d-2<s<d$.
There are $z_{q,s}\in(0,e^{-3})$, $C_{q,s}\ge1$, and a Borel map
\[
 \eps_{q,s}:(0,z_{q,s}]\longrightarrow\{2^{-j}:j\ge4\}
\]
that is locally constant from the right as in
\eqref{eq:abstract-selector-right-stability} and, for every
$0<z\le z_{q,s}$,
\begin{align*}
 z\ell(\eps_{q,s}(z))^\beta
 +e_{q,s}(\eps_{q,s}(z))
 &\le C_{q,s}z\ell(z)^\beta,\\
 \eps_{q,s}(z)^2&\le C_{q,s}z.
\end{align*}
After reducing $z_{q,s}$ if necessary, \eqref{eq:abstract-optimization-statement} holds with
\[
 \omega=\omega_0=\omega_\beta,\qquad
 L(\eps)=\ell(\eps)^\beta,\qquad
 E_1(\eps)=e_{q,s}(\eps),\qquad
 E_2(\eps)=\eps^2.
\]
\end{lemma}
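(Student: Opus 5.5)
We will define the selector through the dyadic construction of Lemma~\ref{lem:measurable-dyadic-scale}, with an exponent $\sigma$ adapted to the regime of $s$. Put
\[
 \sigma:=\min\{1,\sigma_s\}\in(0,1],
 \qquad
 \eps_{q,s}(z):=\eps_\sigma(z)=2^{-j_\sigma(z)},
 \qquad 0<z\le z_{q,s}.
\]
Here $z_{q,s}\le e^{-3}$ is to be fixed. By Lemma~\ref{lem:measurable-dyadic-scale}, this map is Borel measurable and takes values in $\{2^{-j}:j\ge4\}$. It is locally constant from the right in the sense of \eqref{eq:abstract-selector-right-stability}. It also satisfies $\eps^\sigma\le z<2^\sigma\eps^\sigma$ and $\ell(\eps)\le C_\sigma\ell(z)$. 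Two consequences are immediate. First, $\eps^2\le\eps^\sigma\le z$. Second, $z\ell(\eps)^\beta\le C_\sigma^\beta z\ell(z)^\beta$, since $0\le\beta<1$.

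The main step is the bound $e_{q,s}(\eps)\le C z\ell(z)^\beta$, which we check in each of the three regimes.

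\emph{Case $d-2<s<d-1$.} Here $\sigma=1$ and $e_{q,s}(\eps)=\eps\le z\le z\ell(z)^\beta$.

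\emph{Case $d-1<s<d$.} Here $\sigma=\sigma_s$, and
\[
 e_{q,s}(\eps)=\eps^{\sigma_s}\ell(\eps)^\beta\le z\,C_\sigma^\beta\ell(z)^\beta .
\]

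\emph{Case $s=d-1$.} This is the delicate one. With $\sigma=1$ the error is $e_{q,s}(\eps)=\eps\ell(\eps)$, which carries a full logarithm. For $\beta<1$ this is not bounded by $z\ell(z)^\beta$ if $\eps\simeq z$. We therefore modify the selector in this case. Set
\[
 \eps_{q,s}(z):=\min\{2^{-j}:j\ge4,\ 2^{-j}\ell(2^{-j})\le z\},
\]
which matches the balance $\eps\simeq z/\ell(z)$ from Remark~\ref{rem:critical-besov-comparison}. Since $t\mapsto t\ell(t)$ is increasing on $(0,1]$, the defining condition is monotone in $j$. Hence the level sets of $j$ are half-open intervals $[2^{-j}\ell(2^{-j}),\,2^{-j+1}\ell(2^{-j+1}))$. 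This gives Borel measurability and local constancy from the right, exactly as in the proof of Lemma~\ref{lem:measurable-dyadic-scale}.

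Minimality of $j$ gives the two-sided bound $\eps\ell(\eps)\le z<2\eps\ell(2\eps)\le 4\eps\ell(\eps)$. From this:
\begin{itemize}
 \item[] $e_{q,s}(\eps)=\eps\ell(\eps)\le z$;
 \item[] $\eps\ge z/(4\ell(\eps))\ge c\,z^2$ for small $z$, using $\ell(\eps)\le C/\eps^{1/2}$;
 \item[] hence $\ell(\eps)\le C\ell(z)$, which yields $z\ell(\eps)^\beta\le Cz\ell(z)^\beta$;
 \item[] $\eps^2\le\eps\le z$.
\end{itemize}
Alternatively, one can keep a uniform definition by using this modified selector in all three regimes. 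Only the comparability $\ell(\eps)\simeq\ell(z)$ and the bounds $\eps\le z$, $\eps^{\sigma_s}\lesssim z$ are needed. For $d-1<s<d$, however, one must keep $\eps^{\sigma_s}\le z$, so the power-$\sigma$ selector is the natural choice there.

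It remains to verify \eqref{eq:abstract-optimization-statement} with $\omega=\omega_0=\omega_\beta$. Take $z_{q,s}\le e^{-1}$, so that $\omega_\beta(z)=z\ell(z)^\beta$ on $(0,z_{q,s}]$. Then the sum
\[
 \omega(z)+L(\eps_z)z+E_1(\eps_z)+E_2(\eps_z)
\]
is bounded by $(1+C+C+C)\,\omega_\beta(z)$. The four terms are handled in turn: $\omega(z)=\omega_\beta(z)$ directly; $L(\eps_z)z=z\ell(\eps_z)^\beta$ and $E_1(\eps_z)=e_{q,s}(\eps_z)$ by the main step; $E_2(\eps_z)=\eps_z^2\le Cz\le C\omega_\beta(z)$ by the first part.

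The Osgood property of $\omega_\beta$ and its concavity and monotonicity come from Lemma~\ref{lem:concave-log-modulus}. The main obstacle is the borderline $s=d-1$: there one must abandon the pure power selector and balance $\eps\ell(\eps)\simeq z$, while checking that measurability and right-local constancy survive the change of selector.
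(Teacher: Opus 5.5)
Your proof is correct and follows essentially the paper's route. Both arguments round a regime-dependent balance scale down to a dyadic value and then check the four terms separately. The only differences are that you reuse Lemma~\ref{lem:measurable-dyadic-scale} off the borderline, and that at $s=d-1$ you balance $\eps\ell(\eps)$ against $z$ (so $\eps\simeq z/\ell(z)$), whereas the paper rounds $z\ell(z)^{\beta-1}$; either choice works.

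There are two small slips. First, $\min\{2^{-j}:j\ge4,\ 2^{-j}\ell(2^{-j})\le z\}$ does not exist; you mean the maximum of that set, i.e.\ the minimal $j$, which is what you use afterwards. Second, your aside that the borderline selector could be used uniformly in all regimes is false for $d-1<s<d$: there $\eps\simeq z/\ell(z)$ gives $\eps^{\sigma_s}\ell(\eps)^\beta\gg z\ell(z)^\beta$ because $\sigma_s<1$, as your own next sentence effectively concedes. Neither slip affects the actual proof.
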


\begin{proof}
The proof is given in Appendix~\ref{app:riesz-technical-proofs}.
\end{proof}

\subsection{Commutator estimate under the assumed Besov regularity}

\subsubsection{Symmetrized form of the commutator and regularization}
For the Riesz kernel we retain the notation of
Section~\ref{sec:abstract-framework} and abbreviate only
$C_N[v]:=C_N^{g_s}(X_N,\rho;v)$ and use $r_N$ as defined in
\eqref{eq:unified-force-error}.

\begin{lemma}
\label{lem:riesz-even-mollification-kernel}
Let $K_s=\nabla g_s$, let $\psi\in C_c^\infty(\R^d)$ be even, nonnegative,
of unit mass, and supported in $B_1$, and set
$K_s^\kappa:=\nabla(g_s*\psi_\kappa)$.  Then, for $0<\kappa\le1$,
\begin{equation}\label{eq:critical-even-kernel-estimates}
 |K_s^\kappa(z)|\le C_{d,s,\psi}(|z|+\kappa)^{-s-1},\qquad
 |K_s^\kappa(z)-K_s(z)|\le C_{d,s,\psi}\kappa^2|z|^{-s-3}
 \quad (|z|\ge4\kappa).
\end{equation}
\end{lemma}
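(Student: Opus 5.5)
The plan is to write $K_s^\kappa=(\nabla g_s)*\psi_\kappa=K_s*\psi_\kappa$ and exploit two features: homogeneity of $K_s$ of degree $-s-1$, and evenness of $\psi$. For $0<\kappa\le1$ and $s<d$, $g_s$ is locally integrable, so $\nabla(g_s*\psi_\kappa)=g_s*\nabla\psi_\kappa$ is smooth. Near the origin I will use this representation; away from the origin I will use $K_s*\psi_\kappa$ directly. The two bounds in \eqref{eq:critical-even-kernel-estimates} are handled separately, and each splits into the regimes $|z|\ge4\kappa$ and $|z|<4\kappa$.

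\emph{First bound.} By scaling, $g_s*\psi_\kappa(z)=\kappa^{-s}(g_s*\psi)(z/\kappa)$. Hence $K_s^\kappa(z)=\kappa^{-s-1}(K_s*\psi)(z/\kappa)$, and it suffices to treat $\kappa=1$.

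For $|z|<4$, I write $K_s*\psi=g_s*\nabla\psi$. Since $g_s\in L^1_{\mathrm{loc}}$ and $\nabla\psi$ is bounded with support in $B_1$, this function is bounded on $B_4$. The bound there is $C\le C'(|z|+1)^{-s-1}$.

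For $|z|\ge4$, the support of $\psi$ lies in $B_1$, so every $y$ in the support satisfies $|z-y|\ge\frac34|z|$. This gives $|K_s*\psi(z)|\le C|z|^{-s-1}\le C'(|z|+1)^{-s-1}$.

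Rescaling then yields $|K_s^\kappa(z)|\le C\kappa^{-s-1}(|z|/\kappa+1)^{-s-1}=C(|z|+\kappa)^{-s-1}$.

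\emph{Second bound.} Fix $|z|\ge4\kappa$. I use unit mass and evenness of $\psi_\kappa$ to write
\[
K_s^\kappa(z)-K_s(z)=\int\bigl(K_s(z-y)-K_s(z)+\nabla K_s(z)y\bigr)\psi_\kappa(y)\,\dd y .
\]
The linear term integrates to zero because $\psi_\kappa$ is even. Taylor's formula with integral remainder bounds the integrand by $\tfrac12|y|^2\sup_{\tau\in[0,1]}|D^2K_s(z-\tau y)|$. For $|y|\le\kappa\le|z|/4$ we have $|z-\tau y|\ge\frac34|z|$, and $|D^2K_s(x)|\le C|x|^{-s-3}$ by homogeneity. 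Integrating $|y|^2\psi_\kappa(y)\le\kappa^2\psi_\kappa(y)$ gives $C\kappa^2|z|^{-s-3}$.

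The only point needing care is the justification of $\nabla(g_s*\psi_\kappa)=K_s*\psi_\kappa$ when $s\ge d-1$. There $K_s\notin L^1_{\mathrm{loc}}$, so the convolution $K_s*\psi_\kappa$ is not absolutely convergent near $y=z$. I will avoid this by using only the representation $g_s*\nabla\psi_\kappa$ in the near region $|z|<4\kappa$. In the far region $|z|\ge4\kappa$, the integrand stays away from the singularity, and differentiation under the integral is legitimate there. Hence $K_s^\kappa=K_s*\psi_\kappa$ holds pointwise on that region, which is all the argument uses. The constants depend only on $d$, $s$ and $\psi$ through $\|\nabla\psi\|_\infty$, the local $L^1$ norm of $g_s$, and the homogeneity constants of $K_s$ and $D^2K_s$.
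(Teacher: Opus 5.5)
Your proposal is correct and follows essentially the same route as the paper: a scaling bound $|K_s^\kappa|\lesssim\kappa^{-s-1}$ near the origin, the direct convolution $K_s*\psi_\kappa$ with $|z-y|\simeq|z|$ away from it, and a second-order Taylor expansion in which the evenness of $\psi$ kills the linear term. The differences are cosmetic. You split at $4\kappa$ rather than $2\kappa$, and you make explicit the identities $\nabla(g_s*\psi_\kappa)=g_s*\nabla\psi_\kappa$ near the origin and $\nabla(g_s*\psi_\kappa)=K_s*\psi_\kappa$ away from it, which the paper uses without comment.
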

\begin{proof}
If $|z|\le2\kappa$, rescaling $g_s*\psi_\kappa$ gives
$|K_s^\kappa(z)|\le C\kappa^{-s-1}$.  If $|z|>2\kappa$, the support of
$\psi_\kappa$ stays away from the singularity and
$K_s^\kappa(z)=\int K_s(z-y)\psi_\kappa(y)\,\dd y$, with
$|z-y|\simeq|z|$; this proves the first bound.  For $|z|\ge4\kappa$,
apply Taylor's formula to $K_s(z-y)$ to second order for $|y|\le\kappa$.  Evenness of
$\psi$ gives $\int y\,\psi_\kappa(y)\,\dd y=0$, while
$\|\nabla^2K_s(\xi)\|\le C_{d,s}|\xi|^{-s-3}$ on
$|\xi-z|\le\kappa$.  Integrating the quadratic remainder and using
$\int|y|^2\psi_\kappa(y)\,\dd y=O(\kappa^2)$ gives the second estimate.
\end{proof}

\begin{proposition}
\label{prop:critical-riesz-regularization}
Let $N\ge2$ and $d-2<s<d$.  Let $\rho$ be a limiting solution with the Besov
regularity specified in Definition~\ref{def:critical-riesz-reference-solution},
let $X_N\in C^1([0,T];\Omega_N)$ be a collision-free Riesz particle
trajectory, and let
$w\in L^\infty(0,T;L^\infty\cap\Lambda_*)$.  Let $C_N^\kappa[w]$ denote the commutator with the even
regularized kernel $K_s^\kappa=\nabla(g_s*\psi_\kappa)$ and let
$C_N^\delta[w]$ use the same cutoff $|x-y|>\delta$ in all components.  Put
\[
 \sigma_s=d-s\in(0,2),\quad
 M_w=\operatorname*{ess\,sup}_{t<T}(\|w(t,\cdot)\|_\infty+[w(t,\cdot)]_{\Lambda_*}),
 \quad B_T=\|\mathcal B_\infty\|_{L^\infty(0,T)},
\]
\[
 r_{N,T}=\min_{t\le T}\min_{i\ne j}|x_i(t)-x_j(t)|>0.
\]
Then the following formula for $C_N[w]$, obtained by symmetrization, is absolutely convergent:
\begin{equation}\label{eq:critical-riesz-symmetric-difference-expansion}
\begin{aligned}
 C_N[w]={}&\frac1{N^2}\sum_{i\ne j}
 (w(x_i)-w(x_j))\cdot K_s(x_i-x_j)\\
 &-\frac2N\sum_{i=1}^N\int_{\R^d}
 (w(x_i)-w(x))\cdot K_s(x_i-x)\rho(x)\,\dd x\\
 &+\iint_{\R^d\times\R^d}
 (w(x)-w(y))\cdot K_s(x-y)\rho(x)\rho(y)\,\dd x\dd y,
\end{aligned}
\end{equation}
where the last two lines are absolutely convergent.  If
\begin{equation}\label{eq:critical-riesz-hard-cutoff-range}
 0<\delta<\min\{1/2,r_{N,T}\},
\end{equation}
then
\[
 \|C_N^\delta[w]-C_N[w]\|_{L^\infty(0,T)}
 \le C_{d,s}M_wB_T\delta^{\sigma_s}(1+|\log\delta|).
\]
For every
\[
 0<\kappa<\min\{1/8,r_{N,T}/4\},
\]
\[
 \|C_N^\kappa[w]-C_N[w]\|_{L^\infty(0,T)}
 \le C_{d,s,\psi}M_wB_T\kappa^{\sigma_s}(1+|\log\kappa|)
 +C_{d,s,\psi}M_w\kappa^2r_{N,T}^{-s-3}.
\]
Accordingly, the right-hand side of \eqref{eq:critical-riesz-symmetric-difference-expansion} is the common $L^1(0,T)$ limit of $C_N^\delta[w]$ as $\delta\downarrow0$ and of $C_N^\kappa[w]$ as $\kappa\downarrow0$.
The role of $r_{N,T}>0$ is confined to this fixed-$N$ singular limit; the
subsequent quantitative estimates are independent of the minimum separation.  The identification of the common limit with the
principal value representation obtained from centered cutoffs when $d-1\le s<d$
is proved in Proposition~\ref{prop:critical-riesz-pv-statement}\textup{(iii)}.
\end{proposition}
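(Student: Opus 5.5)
We split $C_N$ into its discrete--discrete, mixed, and continuous--continuous parts, which are exactly the three lines of \eqref{eq:critical-riesz-symmetric-difference-expansion}, and treat each by radial near/far estimates. The discrete--discrete line is a finite sum over pairs at distance at least $r_{N,T}$, so it converges absolutely. For the mixed and continuous lines we use only $\|w\|_\infty+[w]_{\Lambda_*}\le M_w$ and $\|\rho_t\|_\infty\le B_T$ for every $t$; the latter is available by Proposition~\ref{prop:critical-riesz-borel-reference}. The key input is that, since $w$ is Zygmund, hence log-Lipschitz, we have
\[
 |w(x_i)-w(x)|\le CM_w|x_i-x|\bigl(1+\log_+\tfrac1{|x_i-x|}\bigr).
\]
Against $|K_s(z)|\le C|z|^{-s-1}$ the near-field integrand is therefore $r^{-s}\ell(r)\,r^{d-1}\,\dd r=r^{\sigma_s-1}\ell(r)\,\dd r$, which is integrable because $\sigma_s>0$; this is the last bound of Lemma~\ref{lem:critical-riesz-radial-integrals}. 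The far field is bounded by $2M_w$ times unit mass. The continuous--continuous line follows the same way after integrating in the outer variable. This gives absolute convergence of \eqref{eq:critical-riesz-symmetric-difference-expansion}, and the symmetrization yielding it from the defining formula is the oddness argument already used in \eqref{eq:abs-rough-identity}.

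\emph{Hard cutoff.} When $\delta<r_{N,T}$, the cutoff leaves the discrete--discrete term untouched. The difference $C_N^\delta[w]-C_N[w]$ is then the mixed and continuous integrands restricted to $|x-y|\le\delta$. By the same bound this is at most
\[
 C M_wB_T\int_0^\delta r^{\sigma_s-1}\ell(r)\,\dd r\le CM_wB_T\delta^{\sigma_s}(1+|\log\delta|),
\]
using the first integral of Lemma~\ref{lem:critical-riesz-radial-integrals} with $\gamma=\sigma_s\le2$. The estimate is uniform in $t$.

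\emph{Smooth regularization.} For $C_N^\kappa$ we split each term at $|z|=4\kappa$. In the discrete--discrete part all pairs satisfy $|z|\ge r_{N,T}>4\kappa$, so the second bound in \eqref{eq:critical-even-kernel-estimates} gives a difference at most $2M_w\,C\kappa^2r_{N,T}^{-s-3}$. In the mixed and continuous parts on $|z|<4\kappa$ we bound $K_s^\kappa$ and $K_s$ separately by $C|z|^{-s-1}$, using the first estimate in \eqref{eq:critical-even-kernel-estimates}; this gives $CM_wB_T\kappa^{\sigma_s}\ell(\kappa)$ exactly as in the cutoff case. On $4\kappa\le|z|\le1$ the difference kernel is $C\kappa^2|z|^{-s-3}$, which yields
\[
 CM_wB_T\kappa^2\int_{4\kappa}^1 r^{\sigma_s-3}\ell(r)\,\dd r,
\]
and by \eqref{eq:critical-riesz-radial-mollifier} this is at most $C\kappa^{\sigma_s}\ell(\kappa)^2$ up to the endpoint case. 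I expect this to be the main obstacle, because the stated bound has only one logarithm. To fix it, I would bound the increment of $w$ by $\min\{2M_w,\;M_wr\ell(r)\}$ only where needed, and otherwise use the plain Lipschitz-in-$\ell$ estimate with $\gamma=\sigma_s<2$, for which \eqref{eq:critical-riesz-radial-mollifier} gives exactly $\kappa^{\sigma_s}\ell(\kappa)$. On $|z|\ge1$ the difference is $O(\kappa^2)M_w$. Collecting these gives the stated bound.

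Both error bounds tend to $0$ uniformly on $[0,T]$, so they converge in $L^1(0,T)$, which identifies the common limit. Identification with the principal value representation is deferred, as stated, to Proposition~\ref{prop:critical-riesz-pv-statement}.
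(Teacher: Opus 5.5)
Your proposal is correct and essentially matches the paper's proof: the symmetrized expansion, the log-Lipschitz bound on the increments of $w$ against $|K_s(z)|\lesssim|z|^{-s-1}$ with radial majorant $r^{\sigma_s-1}\ell(r)$, the discrete term left unchanged for $\delta<r_{N,T}$, and the split at $4\kappa$ and $1$ with the discrete part bounded by $\kappa^2r_{N,T}^{-s-3}$. The obstacle you anticipate in the annulus $4\kappa\le|z|\le1$ is not real. Because $\sigma_s=d-s<2$ strictly, \eqref{eq:critical-riesz-radial-mollifier} with $\gamma=\sigma_s$ and $\beta=1$ gives $\kappa^{\sigma_s}\ell(\kappa)$ with a single logarithm; the extra factor $\ell^{\beta+1}$ arises only at $\gamma=2$, i.e.\ at the excluded value $s=d-2$. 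So the $\min\{2M_w,M_wr\ell(r)\}$ device is unnecessary, and the direct application you fall back on is exactly how the paper concludes.
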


\begin{proof}
Expanding $\nu_N=\mu_N-\rho$ and using the oddness of $K_s$ gives
\eqref{eq:critical-riesz-symmetric-difference-expansion}.  The discrete--discrete
term is a finite sum because the configuration is collision-free.  For the
mixed and continuous terms, the only issue is the diagonal.  For $0<|z|<1$,
the Zygmund estimate gives
\begin{equation}\label{eq:critical-regularization-zygmund-majorant}
 |w(a)-w(a-z)|\le C M_w |z|\ell(|z|),
\end{equation}
and hence
\[
 |w(a)-w(a-z)|\,|\nabla g_s(z)|\rho(a-z)
 \le CM_wB_T|z|^{-s}\ell(|z|).
\]
The corresponding radial majorant is $r^{d-s-1}\ell(r)$ and is integrable
because $s<d$.  On $|z|\ge1$, both $w$ and the force are bounded, while
$\rho$ has unit mass.  This proves the asserted absolute convergence.  Since
$\sigma_s=d-s\in(0,2)$,
\[
 \int_0^R r^{d-s-1}\ell(r)\,\dd r
 \le C_{d,s}R^{\sigma_s}\ell(R),\qquad 0<R\le\frac12.
\]
For $\delta$ in the range
\eqref{eq:critical-riesz-hard-cutoff-range}, the discrete term is unchanged.
Applying the preceding radial estimate once to each mixed term and once after
integrating the outer variable in the continuous term yields
\[
 \|C_N^\delta[w]-C_N[w]\|_{L^\infty(0,T)}
 \le C_{d,s}M_wB_T
 \delta^{\sigma_s}\ell(\delta),
\]
which is the asserted hard cutoff bound.

We turn next to the even regularization.  Set
\[
 D_\kappa(z):=K_s^\kappa(z)-K_s(z).
\]
By Lemma~\ref{lem:riesz-even-mollification-kernel},
\[
 |K_s^\kappa(z)|\le C(|z|+\kappa)^{-s-1},\qquad
 |D_\kappa(z)|\le C\kappa^2|z|^{-s-3}
 \quad (|z|\ge4\kappa).
\]
Use expansion \eqref{eq:critical-riesz-symmetric-difference-expansion} for
both $K_s^\kappa$ and $K_s$.  Their difference separates one
discrete--discrete term, one mixed block with coefficient $-2/N$, and one
continuous--continuous block.  In the last two blocks the factor multiplying
$D_\kappa$ is always a difference of values of $w$, so the diagonal
cancellation in \eqref{eq:critical-regularization-zygmund-majorant} is retained.
We estimate these two blocks on the near, intermediate, and far regions.

In the near region $0<|z|\le4\kappa$, we combine
\eqref{eq:critical-regularization-zygmund-majorant} with the first estimate of
Lemma~\ref{lem:riesz-even-mollification-kernel} to obtain
\begin{align*}
 &\int_{|z|\le4\kappa}|w(a)-w(a-z)|
 \bigl(|K_s^\kappa(z)|+|K_s(z)|\bigr)\,\rho(a-z)\,\dd z\\
 &\qquad\le C M_wB_T
 \left[
 \kappa^{-s-1}\int_0^{4\kappa}r^d\ell(r)\,\dd r
 +\int_0^{4\kappa}r^{d-s-1}\ell(r)\,\dd r
 \right]\\
 &\qquad\le C_{d,s}M_wB_T
 \kappa^{\sigma_s}\ell(\kappa).
\end{align*}
The same bound holds for the continuous--continuous contribution after
integrating the outer density, since $\rho$ has unit mass.

In the intermediate annulus $4\kappa<|z|<1$, the second estimate in
Lemma~\ref{lem:riesz-even-mollification-kernel} applies and gives
\begin{align*}
 &\int_{4\kappa<|z|<1}|w(a)-w(a-z)|
 |D_\kappa(z)|\,\rho(a-z)\,\dd z\\
 &\qquad\le C M_wB_T\kappa^2
 \int_{4\kappa}^1 r^{d-s-3}\ell(r)\,\dd r\\
 &\qquad\le C_{d,s}M_wB_T
 \kappa^{\sigma_s}\ell(\kappa),
\end{align*}
where the last step uses $0<\sigma_s=d-s<2$.

In the region $|z|\ge1$, boundedness of $w$, unit mass of $\rho$, and
Lemma~\ref{lem:riesz-even-mollification-kernel} implies
\[
 \int_{|z|\ge1}|w(a)-w(a-z)|\,|D_\kappa(z)|\rho(a-z)\,\dd z
 \le C_{d,s}M_w\kappa^2
 \le C_{d,s}M_w\kappa^{\sigma_s}.
\]
Again the continuous--continuous term satisfies the same estimate after
integrating its outer variable.  Let $J_\kappa$ denote the sum of the mixed and continuous contributions to
$C_N^\kappa[w]-C_N[w]$.  Combining the three regions gives
\begin{equation}\label{eq:critical-even-continuum-error}
 |J_\kappa|
 \le C_{d,s,\psi}M_wB_T
 \kappa^{\sigma_s}(1+|\log\kappa|).
\end{equation}

It remains to estimate the discrete--discrete term.  If
$0<\kappa<\min\{1/8,r_{N,T}/4\}$, then every separation satisfies
$|x_i-x_j|\ge4\kappa$, so
\begin{align*}
 \frac1{N^2}\sum_{i\ne j}
 |w(x_i)-w(x_j)|\,|D_\kappa(x_i-x_j)|\le C_{d,s,\psi}M_w\kappa^2r_{N,T}^{-s-3}.
\end{align*}
Combining this estimate with \eqref{eq:critical-even-continuum-error} yields
the asserted even regularization bound and concludes the proof.
\end{proof}

\subsubsection{Completion of squares for \texorpdfstring{$d-1\le s<d$}{d-1 <= s < d}}
\begin{proposition}[Completion of squares with centered cutoffs]
\label{prop:critical-riesz-mollified-defect}
For $0<\eps\le e^{-2}$ put $w_\eps:=u-u_\eps$ and, in the notation of
Proposition~\ref{prop:completion-of-squares}, set
$r_N^\eps:=r_N[u_\eps]$.
Define the continuous term by
\[\mathcal J_{N,s}^{\eps}:=
 \begin{cases}
 2\displaystyle\int w_\eps(x)\cdot
 (K_N(x)-u(x))\rho(x)\,\dd x,
 &d-2<s<d-1,\\[2mm]
 2\displaystyle\lim_{\delta\downarrow0}\left[
 \frac1N\sum_{i=1}^N\int w_\eps(x)\rho(x)\cdot
 G_{s,\delta}(x-x_i)\,\dd x
 -\int w_\eps(x)\cdot u_\delta(x)\rho(x)\,\dd x\right],
 &d-1\le s<d,
 \end{cases}
\]
where $G_{s,\delta}=\mathbf 1_{\{|\cdot|>\delta\}}\nabla g_s$ and
$u_\delta=G_{s,\delta}*\rho$.  In the second case, the same centered cutoff is
used in both terms before the limit is taken.  Appendix~\ref{sec:principal-value-representation}
identifies the resulting limit with both the symmetrized formula and the
principal value representation.  Then, for almost every time,
\begin{equation}\label{eq:critical-riesz-exact-mollified-defect}
 -r_N-C_N[w_\eps]
 =-r_N^{\eps}
 +\frac1N\sum_i|w_\eps(x_i)|^2
 +\mathcal J_{N,s}^{\eps},
\end{equation}
and
\begin{equation}\label{eq:critical-riesz-defect-comparison}
 r_N\le2r_N^{\eps}
 +\frac2N\sum_i|w_\eps(x_i)|^2.
\end{equation}
\end{proposition}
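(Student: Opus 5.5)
The plan is to reduce the statement to the abstract identity of Proposition~\ref{prop:completion-of-squares}, applied with $v=u_\eps$ and $w=w_\eps$. The singular terms are first regularized with a common centered cutoff, and only then is the cutoff removed. Fix a time at which the particle configuration is collision-free and $\rho_t$ has the Besov representative of Proposition~\ref{prop:critical-riesz-borel-reference}. The inequality \eqref{eq:critical-riesz-defect-comparison} is purely pointwise in the particle positions: it is $|e_i|^2\le2|e_i+w_\eps(x_i)|^2+2|w_\eps(x_i)|^2$ with $e_i=K_i-u(x_i)$. It therefore needs no regularization, and I would dispose of it first.

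For the identity \eqref{eq:critical-riesz-exact-mollified-defect} in the range $d-2<s<d-1$, the kernel $\nabla g_s$ is locally integrable. Then $K_N\in L^1_{\mathrm{loc}}$, and the pairing $\int w_\eps\cdot(K_N-u)\rho$ converges absolutely by Lemma~\ref{lem:near-far-locally-integrable-field} with $\beta=s+1<d$. The relation \eqref{eq:abs-rough-identity}, which expresses $C_N[w_\eps]$ through $N^{-1}\sum_i w_\eps(x_i)\cdot e_i$ and the continuous pairing, then follows from oddness of $G$ and Fubini. The algebra in the proof of Proposition~\ref{prop:completion-of-squares} gives the claim directly.

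For $d-1\le s<d$, I would instead work with $G_{s,\delta}$ in every term. With the truncated kernel all integrals are absolutely convergent, since $G_{s,\delta}$ is bounded. The truncated version of \eqref{eq:abs-rough-identity} holds exactly, with $u$ replaced by $u_\delta$ in both the commutator $C_N^\delta[w_\eps]$ and the pairing. For $\delta<r_{N,T}$, the truncated discrete force coincides with $K_i$. Completing squares at fixed $\delta$ then gives
\[
 -r_N^{(\delta)}-C_N^\delta[w_\eps]=-r_N^{\eps,(\delta)}+\frac1N\sum_i|w_\eps(x_i)|^2+\mathcal J^{\eps}_{N,s,\delta}.
\]
Here $r_N^{(\delta)}$ and $r_N^{\eps,(\delta)}$ use $u_\delta$ in place of $u$, and $\mathcal J^{\eps}_{N,s,\delta}$ is the bracket in the definition of $\mathcal J^{\eps}_{N,s}$. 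To pass to the limit in this identity, I would use three facts:
\begin{itemize}
\item Proposition~\ref{prop:critical-riesz-regularization} with $w=w_\eps$ gives $C_N^\delta[w_\eps]\to C_N[w_\eps]$. This applies because $w_\eps\in L^\infty\cap\Lambda_*$ by Lemma~\ref{lem:critical-riesz-product-modulus}.
\item $u_\delta\to u$ uniformly, by the Besov field estimate of Proposition~\ref{prop:critical-riesz-field} together with the principal value representation of Appendix~\ref{sec:principal-value-representation}. This makes the two force-error quantities converge to $r_N$ and $r_N^\eps$.
\item The remaining term $\mathcal J^{\eps}_{N,s,\delta}$ must then converge as well, since every other term in the identity does.
\end{itemize}
That the limit of $\mathcal J^{\eps}_{N,s,\delta}$ exists is thus forced by the identity, and its identification with the symmetrized formula is deferred to the appendix as stated.

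The main obstacle is the uniform convergence $u_\delta\to u$ when $d-1\le s<d$. The difference $u-u_\delta$ is the principal-value integral over $|z|<\delta$ of $\nabla g_s(z)\bigl(\rho(x-z)-\rho(x)\bigr)$, using oddness of the kernel. By the density modulus \eqref{eq:critical-riesz-density-modulus}, this is bounded by $C\mathcal B_q\int_0^\delta r^{d-s-1}\ell(r)\,\dd r\lesssim\delta^{\sigma_s}\ell(\delta)\to0$. I would check that this representation agrees with the distributional field on a full-measure set of times, and handle that through Proposition~\ref{prop:critical-riesz-borel-reference}.
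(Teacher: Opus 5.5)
Your proposal is correct and follows the paper's proof. The case $d-2<s<d-1$ is a direct application of Proposition~\ref{prop:completion-of-squares}. For $d-1\le s<d$ the paper likewise completes squares with a common centered cutoff and then passes to the limit using three facts: $K_i^\delta=K_i$ for small $\delta$; the uniform convergence $u_\delta\to u$, which is Part~(i) of Proposition~\ref{prop:critical-riesz-pv-statement} and is proved exactly by your density-modulus estimate; and Proposition~\ref{prop:critical-riesz-regularization} for the cutoff commutator. The only minor difference is that you obtain convergence of the bracket defining $\mathcal J^{\eps}_{N,s}$ by subtraction from the fixed-$\delta$ identity, whereas the paper cites Part~(ii) of Proposition~\ref{prop:critical-riesz-pv-statement}; both are valid.
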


\begin{proof}
For $d-2<s<d-1$, apply Proposition~\ref{prop:completion-of-squares} with $v=u_\eps$ and $w=w_\eps$.

Assume $d-1\le s<d$.  Since convolution with the smooth mollifier does not
increase the $L^\infty$ norm or the Zygmund seminorm,
\[
 \|w_\eps(t)\|_{L^\infty}+[w_\eps(t)]_{\Lambda_*}
 \le 2\bigl(\|u(t)\|_{L^\infty}+[u(t)]_{\Lambda_*}\bigr)
 \le C_{d,s}\mathcal B_\infty(t)
\]
for almost every time.  Thus $w_\eps$ satisfies the hypotheses of
Proposition~\ref{prop:critical-riesz-regularization} and of the
principal value representation below.  Put
\[
 G_{s,\delta}:=\mathbf 1_{\{|\cdot|>\delta\}}\nabla g_s,
 \quad K_i^\delta:=\frac1N\sum_{j\ne i}G_{s,\delta}(x_i-x_j),
 \quad u_\delta:=G_{s,\delta}*\rho,
\]
\[
 r_{N,\delta}:=\frac1N\sum_i|K_i^\delta-u_\delta(x_i)|^2.
\]
Let $C_{N,\delta}[w_\eps]$ be the commutator in which the same
$G_{s,\delta}$ is used in the discrete, mixed, and continuum pieces, and set
\begin{equation}\label{eq:critical-riesz-cutoff-continuous-remainder}
 \mathcal J_{N,s}^{\eps,\delta}
 :=2\int w_\eps(x)\cdot(K_N^\delta(x)-u_\delta(x))\rho(x)\,\dd x.
\end{equation}
For fixed $\delta>0$, $G_{s,\delta}$ is bounded and vanishes on $B_\delta$,
so every term is absolutely integrable.  The algebra of
Proposition~\ref{prop:completion-of-squares}, applied to the cutoff field,
gives
\begin{equation}\label{eq:critical-riesz-cutoff-completion-square}
\begin{aligned}
-r_{N,\delta}-C_{N,\delta}[w_\eps]
&=-\frac1N\sum_i|K_i^\delta-u_\delta(x_i)+w_\eps(x_i)|^2+\frac1N\sum_i|w_\eps(x_i)|^2
 +\mathcal J_{N,s}^{\eps,\delta}.
\end{aligned}
\end{equation}
We now pass to the limit term by term, except that the two singular terms in
$\mathcal J_{N,s}^{\eps,\delta}$ are kept together.  Once
$\delta<\frac12\min_{i\ne j}|x_i-x_j|$, one has $K_i^\delta=K_i$ for every
$i$.  Part~\textup{(i)} of
Proposition~\ref{prop:critical-riesz-pv-statement} gives
\[
 \|u_\delta-u\|_{L^\infty(\R^d)}\longrightarrow0
\]
for almost every fixed time.  Hence, at the finitely many particle locations,
\[
 r_{N,\delta}\longrightarrow r_N,
 \qquad
 K_i^\delta-u_\delta(x_i)+w_\eps(x_i)
 \longrightarrow K_i-u(x_i)+w_\eps(x_i)=K_i-u_\eps(x_i),
\]
and therefore
\[
 \frac1N\sum_i|K_i^\delta-u_\delta(x_i)+w_\eps(x_i)|^2
 \longrightarrow r_N^\eps.
\]
Moreover, Proposition~\ref{prop:critical-riesz-regularization} gives
\[
 C_{N,\delta}[w_\eps]\longrightarrow C_N[w_\eps].
\]
Finally, expanding \eqref{eq:critical-riesz-cutoff-continuous-remainder}
gives
\[
 \mathcal J_{N,s}^{\eps,\delta}
 =2\left[\frac1N\sum_{i=1}^N\int
 w_\eps(x)\rho(x)\cdot G_{s,\delta}(x-x_i)\,\dd x
 -\int w_\eps\cdot u_\delta\rho\right].
\]
Part~\textup{(ii)} of Proposition~\ref{prop:critical-riesz-pv-statement}
shows that the bracket has a finite limit as $\delta\downarrow0$; by the
definition in the statement this limit is $\frac12\mathcal J_{N,s}^\eps$.
Thus
$\mathcal J_{N,s}^{\eps,\delta}\to\mathcal J_{N,s}^\eps$.
Passing to the limit in
\eqref{eq:critical-riesz-cutoff-completion-square} proves
\eqref{eq:critical-riesz-exact-mollified-defect}.  Part~\textup{(iii)} of the
same proposition, together with
Proposition~\ref{prop:critical-riesz-regularization}, shows independently
that this centered cutoff representative agrees with the absolutely
convergent formula obtained by symmetrization and with the even regularization limit.
The elementary estimate
\[
 |K_i-u(x_i)|^2\le2|K_i-u_\eps(x_i)|^2+2|w_\eps(x_i)|^2
\]
gives \eqref{eq:critical-riesz-defect-comparison}.
\end{proof}

\begin{lemma}
\label{lem:critical-riesz-rough-local}
Let $1\le q\le\infty$, assume the corresponding Besov bound, and
use the function $\mathcal B_q(t)$ defined in \eqref{eq:critical-riesz-measurable-besov-bound}.  If $d-2<s<d-1$ and
$w_\eps=u-u_\eps$, then, for almost every time,
\[
\left|2\int w_\eps(x)\cdot
 (K_N(x)-u(x))\rho(x)\,\dd x\right|
 \le C_{d,s,q}\mathcal B_q(t)^2\eps.
\]
\end{lemma}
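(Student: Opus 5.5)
The estimate is a direct $L^\infty$--$L^1$ bound: we put the sup norm on $w_\eps$ and the $L^1(\rho)$ norm on $K_N-u$. The two ingredients are the mollification bound from Proposition~\ref{prop:critical-riesz-field} and the near--far estimate of Lemma~\ref{lem:near-far-locally-integrable-field}. In the range $d-2<s<d-1$ the kernel $G=\nabla g_s$ satisfies $|G(z)|\le C_{d,s}|z|^{-s-1}$ with $\beta:=s+1<d$, so it is locally integrable. The integral $\int w_\eps\cdot(K_N-u)\rho$ is therefore an ordinary absolutely convergent Lebesgue integral, with no principal value needed.

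First, by \eqref{eq:critical-riesz-mollifier-estimates}, at almost every time $t$ we have
\[
 \|w_\eps\|_{L^\infty}\le C_{d,s,q}\bigl(1+\|\rho_t\|_{L^1}+\|\rho_t\|_{L^\infty}+\|\rho_t\|_{B^\alpha_{\infty,q}}\bigr)\eps\le C_{d,s,q}\mathcal B_q(t)\eps .
\]
Here we use unit mass and $\|\rho_t\|_{L^\infty}\le M_T$, which Proposition~\ref{prop:critical-riesz-borel-reference} extends to every time.

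Second, we apply Lemma~\ref{lem:near-far-locally-integrable-field} with $\beta=s+1<d$. On $|z|\ge1$ we have $|G(z)|\le C|z|^{-s-1}\le C$, so we may take $C_G=C_{d,s}$. This gives
\[
 \int(|K_N|+|u|)\rho_t\,\dd x\le C_{d,s}(1+\|\rho_t\|_{L^\infty})\le C_{d,s}\mathcal B_q(t),
\]
uniformly in $N$ and the configuration.

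Finally, Hölder's inequality combines the two bounds:
\[
 \Bigl|2\int w_\eps\cdot(K_N-u)\rho_t\Bigr|\le 2\|w_\eps\|_\infty\int(|K_N|+|u|)\rho_t\le C_{d,s,q}\mathcal B_q(t)^2\eps .
\]

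The only delicate point is to check that $K_N$ and $u$ are evaluated with the same representatives as in the completion of squares. $K_N$ is defined away from the atoms, a $\rho$-null set since $\rho_t\in L^\infty$. $u$ is the bounded continuous representative of Proposition~\ref{prop:critical-riesz-field}, and it coincides with the absolutely convergent convolution $G*\rho_t$ because $G\in L^1_{\mathrm{loc}}$ with bounded tail. With that identification, the restriction to almost every time comes only from the Besov bound in $\mathcal B_q$, and no other step should present an obstacle.
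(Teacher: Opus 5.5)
Your proposal is correct and follows essentially the same route as the paper: the $L^\infty$ mollification bound $\|w_\eps\|_\infty\le C_{d,s,q}\mathcal B_q(t)\eps$ from \eqref{eq:critical-riesz-mollifier-estimates}, combined with Lemma~\ref{lem:near-far-locally-integrable-field} applied to $G=\nabla g_s$ with $\beta=s+1<d$, which bounds $\int(|K_N|+|u|)\rho$ by $C_{d,s}(1+\|\rho\|_\infty)\le C_{d,s}\mathcal B_q(t)$. Your remark about identifying the continuous representative of $u$ with the absolutely convergent convolution is a harmless extra check that the paper leaves implicit.
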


\begin{proof}
By Lemma~\ref{lem:near-far-locally-integrable-field} and the triangle inequality, the left-hand side is bounded by
$C_{d,s}\|w_\eps\|_\infty(1+\|\rho\|_\infty)$.  Estimate
\eqref{eq:critical-riesz-mollifier-estimates} gives
$\|w_\eps\|_\infty\le C_{d,s,q}\mathcal B_q(t)\eps$, while the density factor
is bounded by $\mathcal B_q(t)$.
\end{proof}

\subsubsection{Proof of the commutator estimate}
\begin{proposition}[Commutator estimate under $B^{s-d+2}_{\infty,q}$ regularity]
\label{prop:critical-riesz-commutator}
\begin{enumerate}
\item[(i)] \emph{Case $q=\infty$.} For every
$0<\eps\le e^{-2}$ and almost every $t\in(0,T)$,
\[\begin{aligned}
 -r_N-C_N[u]
 \le-r_N^{\eps}
 +C_{d,s}\mathcal B_\infty(t)\left(1+\log\frac1\eps\right)
 (F_N+\eta_N)+C_{d,s}\mathcal B_\infty(t)^2
 \eps^{\min\{1,\sigma_s\}}\left(1+\log\frac1\eps\right).
\end{aligned}
\]
Furthermore,
\[r_N\le2r_N^{\eps}+C_{d,s}\mathcal B_\infty(t)^2\eps^2.
\]
\item[(ii)] \emph{Case $1\le q<\infty$.} Assume \eqref{eq:critical-riesz-finite-q-bound} for some
$1\le q<\infty$.  Then, for every $0<\eps\le e^{-2}$ and almost every
$t\in(0,T)$,
\[\begin{aligned}
 -r_N-C_N[u]
 \le-r_N^\eps
 +C_{d,s,q}\mathcal B_q(t)\ell(\eps)^{\beta_q}
 (F_N+\eta_N)+C_{d,s,q}\mathcal B_q(t)^2
 e_{q,s}(\eps),
\end{aligned}
\]
and
\[r_N\le2r_N^\eps+C_{d,s,q}\mathcal B_q(t)^2\eps^2.
\]
\end{enumerate}
\end{proposition}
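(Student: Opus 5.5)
The plan is to split $u=u_\eps+w_\eps$ and write
\[
 -r_N-C_N[u]=\bigl(-r_N-C_N[w_\eps]\bigr)-C_N[u_\eps],
\]
which is linear in the comparison field. This uses the symmetrized formula \eqref{eq:critical-riesz-symmetric-difference-expansion}, which is well defined for $u$, $u_\eps$ and $w_\eps$ because all three lie in $L^\infty\cap\Lambda_*$ by Proposition~\ref{prop:critical-riesz-field}. The first bracket is handled by the exact identity \eqref{eq:critical-riesz-exact-mollified-defect} of Proposition~\ref{prop:critical-riesz-mollified-defect}. It gives $-r_N^\eps$ plus the discrete term $N^{-1}\sum_i|w_\eps(x_i)|^2$ plus the continuous term $\mathcal J^\eps_{N,s}$. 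The discrete term is at most $\|w_\eps\|_\infty^2\le C\mathcal B_q(t)^2\eps^2$ by \eqref{eq:critical-riesz-mollifier-estimates}, and this is dominated by $e_{q,s}(\eps)$ since $\eps\le e^{-2}$ and $\sigma_s<2$. The same bound inserted into \eqref{eq:critical-riesz-defect-comparison} gives the second inequality $r_N\le 2r_N^\eps+C\mathcal B_q^2\eps^2$ in both cases (i) and (ii).

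For the mollified commutator I apply the sharp first-order estimate of Lemma~\ref{lem:static-riesz-transfer} to $v=u_\eps\in W^{1,\infty}$. This gives $|C_N[u_\eps]|\le C\|\nabla u_\eps\|_\infty(F_N+\eta_N)$. Here I use $\|\rho_t\|_\infty\le M_T$, so that $\eta_N$ dominates the additive correction at every time slice. By \eqref{eq:critical-riesz-mollifier-estimates}, $\|\nabla u_\eps\|_\infty\le C\mathcal B_q(t)\ell(\eps)^{\beta_q}$, and $\beta_\infty=1$ gives the stated factor $1+\log(1/\eps)$ in case (i). The terms $\|\rho\|_{L^1}=1$ and $M_T$ are absorbed into $\mathcal B_q$.

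It remains to bound $\mathcal J^\eps_{N,s}$ by $C\mathcal B_q(t)^2e_{q,s}(\eps)$. For $d-2<s<d-1$ this is exactly Lemma~\ref{lem:critical-riesz-rough-local}, which gives $C\mathcal B_q^2\eps$.

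For $d-1\le s<d$ I work at a fixed cutoff $\delta$ and recenter each singular integral before letting $\delta\downarrow0$. In the mixed term I write
\[
\int w_\eps\rho(x)\cdot G_{s,\delta}(x-x_i)\,\dd x=\int\bigl[w_\eps\rho(x)-w_\eps\rho(x_i)\bigr]\cdot G_{s,\delta}(x-x_i)\,\dd x .
\]
This is allowed because $G_{s,\delta}$ is odd, so its integral over a ball centered at $x_i$ vanishes; on $|x-x_i|\ge1/2$ the field is bounded and the contribution is trivial. The same recentering is done for $\int w_\eps\cdot u_\delta\,\rho$ after Fubini. The product modulus \eqref{eq:critical-riesz-product-modulus} then bounds the near-field integrand by
\[
C\mathcal B_q^2\bigl[\min\{\eps,r\ell(r)^{\beta_q}\}+\eps r\ell(r)^{\beta_q}\bigr]r^{-s-1}.
\]
The radial computation \eqref{eq:critical-three-regime-radial-scales} turns this into $\eps\ell(\eps)$ for $s=d-1$ and $\eps^{\sigma_s}\ell(\eps)^{\beta_q}$ for $d-1<s<d$. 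Both bounds are uniform in $\delta$ and in the particle positions, and they match $e_{q,s}(\eps)$. The far field contributes $O(\|w_\eps\|_\infty)=O(\mathcal B_q\eps)$. Proposition~\ref{prop:critical-riesz-pv-statement} then identifies the $\delta$-limit with $\mathcal J^\eps_{N,s}$.

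The main obstacle is this principal-value step. One must check that the recentered integrand is controlled by the product modulus uniformly in the particle configuration, so that the bound does not depend on the minimal separation. One must also check that $\rho$ has the continuous first-order modulus \eqref{eq:critical-riesz-density-modulus} when $\alpha\ge1$. For $q=\infty$ the three regimes combine into $\eps^{\min\{1,\sigma_s\}}\ell(\eps)$, which gives (i). Collecting the three bounds gives (ii).
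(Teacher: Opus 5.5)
Your proposal is correct and follows the paper's proof step for step. You split $u=u_\eps+w_\eps$, apply the sharp first-order estimate of Lemma~\ref{lem:static-riesz-transfer} to $u_\eps$, use the completion-of-squares identity of Proposition~\ref{prop:critical-riesz-mollified-defect} for $w_\eps$, and bound $\mathcal J^\eps_{N,s}$ in the three regimes of $e_{q,s}$.

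The differences are cosmetic. You carry out inline the near-field recentering that the paper delegates to Proposition~\ref{prop:critical-riesz-pv-statement}\textup{(iv)}. Your displayed identity should be stated only on $|x-x_i|<1/2$, because for $s\le d-1$ the kernel is not integrable at infinity. You also bound $\int w_\eps\cdot u_\delta\rho$ by the same recentering, where the paper uses the cruder bound $\|w_\eps\|_\infty\|u\|_\infty\le C\mathcal B_q(t)^2\eps$.
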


\begin{proof}
\emph{Step 1: the commutator with $u_\eps$.}
Set $u=u_\eps+w_\eps$.  By the sharp Lipschitz commutator estimate in
Lemma~\ref{lem:static-riesz-transfer},
\[
 |C_N[u_\eps]|
 \le
 \begin{cases}
 C\mathcal B_\infty(t)\ell(\eps)(F_N+\eta_N),&q=\infty,\\
 C\mathcal B_q(t)\ell(\eps)^{\beta_q}(F_N+\eta_N),&1\le q<\infty.
 \end{cases}
\]

\emph{Step 2: completion of squares for $w_\eps=u-u_\eps$.}
Proposition~\ref{prop:critical-riesz-mollified-defect} gives
\[
 -r_N-C_N[w_\eps]
 =-r_N^\eps+\|w_\eps\|_{L^2(\mu_N)}^2+\mathcal J_{N,s}^\eps.
\]
Also,
\[
 \|w_\eps\|_{L^2(\mu_N)}^2
 \le
 \begin{cases}
 C\mathcal B_\infty(t)^2\eps^2,&q=\infty,\\
 C\mathcal B_q(t)^2\eps^2,&1\le q<\infty.
 \end{cases}
\]

\emph{Step 3: estimate of $\mathcal J_{N,s}^{\eps}$.}
The Littlewood--Paley estimates imply, for almost every time,
\[
 \|w_\eps\|_{L^\infty}+[w_\eps]_{\Lambda_*}
 \le C_{d,s,q}\mathcal B_q(t),
\]
so when $d-1\le s<d$ $w_\eps$ satisfies the hypotheses of the
principal value statement.  If $d-2<s<d-1$, the force is locally integrable
and Lemma~\ref{lem:critical-riesz-rough-local} gives the first line
$e_{q,s}(\eps)=\eps$.  If $s=d-1$, Part~\textup{(iv)} of
Proposition~\ref{prop:critical-riesz-pv-statement} and
\eqref{eq:critical-three-regime-radial-scales} give the borderline bound
$\eps\ell(\eps)$.  If $d-1<s<d$, the same proposition gives
$\eps^{d-s}\ell(\eps)^{\beta_q}$.  Together with
$\|w_\eps\|_{L^2(\mu_N)}^2\le C\mathcal B_q(t)^2\eps^2$, which is smaller
than each of these three errors for $0<\eps<1$, we obtain
\[
 |\mathcal J_{N,s}^\eps|+\|w_\eps\|_{L^2(\mu_N)}^2
 \le
 \begin{cases}
 C\mathcal B_\infty(t)^2\eps^{\min\{1,\sigma_s\}}\ell(\eps),&q=\infty,\\
 C\mathcal B_q(t)^2e_{q,s}(\eps),&1\le q<\infty.
 \end{cases}
\]
For $q=\infty$, the first line follows from the three bounds in the three cases and $\min\{1,\sigma_s\}=\min\{1,d-s\}$.

\emph{Step 4: combine the contributions of $u_\eps$ and $w_\eps$.}
Together with $C_N[u]=C_N[u_\eps]+C_N[w_\eps]$, the preceding estimates give
the two commutator bounds in the statement.

\emph{Step 5: estimate $r_N$.}
The estimate
\[
 r_N\le2r_N^\eps+2\|u-u_\eps\|_\infty^2
\]
gives the corresponding estimates for $r_N$.
\end{proof}

\subsection{Proof of the Besov weak--strong estimate}
\label{sec:critical-extension-statements}
We combine the Besov field and commutator estimates from the
preceding subsections with the singular chain rule and the averaged comparison principle.  Throughout, $d-2<s<d$ and the limiting solution
satisfies Definition~\ref{def:critical-riesz-reference-solution}.

\subsubsection{Measurable representatives and the singular chain rule}
\label{subsec:critical-riesz-main-results}
We work with the class of limiting solutions and the finite-$N$ quantities introduced
in Section~\ref{sec:main-results}.  The exponent $\alpha=s-d+2$ is the
density regularity for which the Riesz interaction field reaches
the Zygmund endpoint $B^1_{\infty,\infty}$.

Proposition~\ref{prop:critical-riesz-field} gives
\[
 \rho\in B^{\alpha}_{\infty,\infty}
 \quad\Longrightarrow\quad
 u=\nabla g_s*\rho\in B^1_{\infty,\infty}=\Lambda_*.
\]
The Zygmund space $B^1_{\infty,\infty}$ is strictly larger than
$W^{1,\infty}$; see \cite{Bahouri}.  Throughout the weak--strong argument,
a limiting solution satisfying
Definition~\ref{def:critical-riesz-reference-solution} is fixed.

Proposition~\ref{prop:critical-riesz-borel-reference} extends both the
$L^\infty$ density bound and the assumed Besov bound to every time and provides
jointly Borel representatives of the density and the interaction field.
Proposition~\ref{prop:critical-riesz-field} gives a bounded Zygmund interaction
field.  More precisely, with the Borel function $\mathcal B_\infty$ from
\eqref{eq:critical-riesz-measurable-besov-bound},
\[
 \|u(t)\|_\infty\le C_{d,s}\mathcal B_\infty(t),\qquad
 |u_t(x)-u_t(y)|\le C_{d,s}\mathcal B_\infty(t)|x-y|
 \left(1+\log_+\frac1{|x-y|}\right)
\]
for every time after choosing the representatives above.  Since
$\mathcal B_\infty$ is bounded on $[0,T]$, the first estimate gives
$u\in L^2(0,T;L^\infty)$, while the coefficient $C_{d,s}\mathcal B_\infty(t)$ belongs to $L^1(0,T)$.  The modulus
$r\mapsto r(1+\log_+(1/r))$ is Osgood.  Hence, for the characteristic velocity
$-u$, Proposition~\ref{prop:unified-lagrangian-representation} yields the unique
Osgood flow $\Psi^t$ with
\begin{equation}\label{eq:critical-riesz-automatic-pushforward}
 \rho_t=(\Psi^t)_\#\rho_0\qquad(0\le t\le T).
\end{equation}
Moreover, Proposition~\ref{prop:critical-riesz-field} gives for the even
mollifications
\[
 \|u^\kappa-u\|_{L^2(0,T;L^\infty)}
 \le C_{d,s}T^{1/2}\kappa
 \|\mathcal B_\infty\|_{L^\infty(0,T)}\longrightarrow0.
\]
Thus the three inputs of Proposition~\ref{prop:riesz-singular-chain-rule}---the
Lagrangian pushforward representation, \eqref{eq:riesz-abstract-chain-rule-assumptions},
and \eqref{eq:riesz-mollified-field-convergence-assumption}---are all explicit
consequences of the assumptions in Definition~\ref{def:critical-riesz-reference-solution}.  For finite $q$ the sharper
$B^1_{\infty,q}$ bounds are used later in the commutator estimate, whereas this
chain rule step only needs the weaker $q=\infty$ modulus.  All estimates below are understood with these representatives; the final
Wasserstein estimates hold for every time by continuity in $\mathcal P_2$.

Proposition~\ref{prop:riesz-singular-chain-rule} then gives
\[\frac{\dd}{\dd t}F_N(X_N(t),\rho_t)
 =-2r_N(t)-C_N[u_t]
 \quad\text{for a.e. }t\in(0,T).
\]
The discrete, mixed, and continuous terms are regularized at the same scale
before passing to the limit.  In the range $d-1\le s<d$, the corresponding
principal value representation is the one in
Appendix~\ref{sec:principal-value-representation}.  Its near-diagonal
majorant is
$r^{d-s-1}(1+|\log r|)$, which is integrable for every $s<d$.

\subsubsection{Averaging and conclusion of the proof}

To conclude the proof of Theorem~\ref{thm:critical-riesz-propagation}, we combine the estimate at a fixed scale with the averaged comparison principle.  Let $\pi_N^0$ be a
symmetric optimal coupling of $\rho_N^0$ and $\rho_0^{\otimes N}$ and let
$\pi_N^t$ be its pushforward by the particle and limiting flows.  With all
trajectory quantities viewed as functions of the initial pair, set
\[
 \overline{\mathcal E}_N(t)
 :=\mathbb E_{\pi_N^0}\!\left[Q_N(t)+F_N(X_N(t),\rho_t)+\eta_N\right]
 +N^{s/d-1}.
\]
For a fixed dyadic $\eps$, Proposition~\ref{prop:endpoint-measurability-statement}
justifies differentiation under the fixed expectation and places the whole
countable scale family on a single set of full measure.  Since $\pi_N^t$ is
the pushforward of $\pi_N^0$, these expectations are the corresponding
integrals with respect to $\pi_N^t$.

For $1\le q<\infty$, the transport estimate, the singular chain rule,
Proposition~\ref{prop:critical-riesz-commutator}, and Jensen's inequality give
\begin{align}
 \overline{\mathcal E}_N'(t)+\overline r_N^\eps(t)
 &\le C_{d,s,q}\bigl(\mathcal B_q(t)+\mathcal B_q(t)^2\bigr)\Bigl[
 \omega_{\beta_q}(\overline{\mathcal E}_N(t))
 +\ell(\eps)^{\beta_q}\overline{\mathcal E}_N(t)
 +e_{q,s}(\eps)\Bigr],
 \label{eq:critical-averaged-fixed-scale-finite-q}\\
 \overline r_N(t)&\le2\overline r_N^\eps(t)
 +C_{d,s,q}\bigl(\mathcal B_q(t)+\mathcal B_q(t)^2\bigr)\eps^2.
 \label{eq:critical-averaged-force-finite-q}
\end{align}
Here $\overline r_N^\eps=\mathbb E_{\pi_N^0}r_N^\eps$ and
$\overline r_N=\mathbb E_{\pi_N^0}r_N$.  Apply
Corollary~\ref{cor:averaged-optimized-comparison} with
\[
 \omega=\omega_0=\omega_{\beta_q},\qquad
 L(\eps)=\ell(\eps)^{\beta_q},\qquad
 E_1(\eps)=e_{q,s}(\eps),\qquad
 E_2(\eps)=\eps^2,
\]
and use Lemma~\ref{lem:finite-q-endpoint-dyadic-selector}.  The time coefficient
$C_{d,s,q}(\mathcal B_q+\mathcal B_q^2)$ has finite integral on $[0,T]$.
Lemma~\ref{lem:finite-q-bihari-comparison} gives the stated Bihari bound for
small initial error, while Proposition~\ref{prop:abstract-fixed-scale-comparison}
gives $C_{T,d,s,q,\rho}(1+\overline{\mathcal E}_N(0))$ outside that regime.
Since the threshold is fixed and positive, the two estimates combine, after
enlarging the constant, into the following bound for every initial error:
\begin{equation}\label{eq:critical-averaged-final-finite-q}
 \sup_{t\le T}\overline{\mathcal E}_N(t)
 +\int_0^T\overline r_N(t)\,\dd t
 \le C_{T,d,s,q,\rho}\,\overline{\mathcal E}_N(0)
 \exp\!\left(C_{T,d,s,q,\rho}
 \bigl(1+\log_+(1/\overline{\mathcal E}_N(0))\bigr)^{1-1/q}\right).
\end{equation}

For $q=\infty$ the same argument gives
\begin{align}
 \overline{\mathcal E}_N'(t)+\overline r_N^\eps(t)
 &\le C_{d,s}\bigl(\mathcal B_\infty(t)+\mathcal B_\infty(t)^2\bigr)\Bigl[
 \omega_*(\overline{\mathcal E}_N(t))
 +\ell(\eps)\overline{\mathcal E}_N(t)
 +\eps^{\min\{1,\sigma_s\}}\ell(\eps)\Bigr],
 \label{eq:critical-averaged-fixed-scale-infinity}\\
 \overline r_N(t)&\le2\overline r_N^\eps(t)
 +C_{d,s}\bigl(\mathcal B_\infty(t)+\mathcal B_\infty(t)^2\bigr)\eps^2.
\end{align}
Using Corollary~\ref{cor:averaged-optimized-comparison} with
$\omega=\omega_0=\omega_*$, Lemmas~\ref{lem:measurable-dyadic-scale} and~\ref{lem:logarithmic-osgood-comparison}, and using $\mathcal B_\infty\ge1$, so that
$\mathcal B_\infty+\mathcal B_\infty^2\le2\mathcal B_\infty^2$, and
choosing the constant in $\gamma_T$ large enough to dominate the optimization
constant, we obtain
\begin{equation}\label{eq:critical-averaged-final-infinity}
 \sup_{t\le T}\overline{\mathcal E}_N(t)
 +\int_0^T\overline r_N(t)\,\dd t
 \le C_{T,d,s,\rho}\bigl(\overline{\mathcal E}_N(0)^{\gamma_T}+\overline{\mathcal E}_N(0)\bigr).
\end{equation}

For the optimal initial coupling, $\overline{\mathcal E}_N(0)=a_N$.  Moreover
\[
 \frac1N W_2^2(\rho_N(t),\rho_t^{\otimes N})
 \le\mathbb E_{\pi_N^0}Q_N(t),
\]
and $F_N$ and $r_N$ depend only on the particle configuration.  Thus
\eqref{eq:critical-averaged-final-infinity} gives
\eqref{eq:critical-riesz-main-estimate}, while
\eqref{eq:critical-averaged-final-finite-q} gives
\eqref{eq:critical-riesz-finite-q-main-estimate}.

For $\rho_N^0=\rho_0^{\otimes N}$, Lemma~\ref{lem:unified-product-initial-data} gives
$a_N\le C_{d,s,\rho}N^{s/d-1}$.  The case $q=\infty$ therefore yields
$N^{-(1-s/d)\gamma_T}$.  If $1<q<\infty$, substitution into
\eqref{eq:critical-riesz-finite-q-main-estimate} gives
\[
 C N^{s/d-1}\exp\!\left(C(\log N)^{1-1/q}\right),
\]
while for $q=1$ the finite-$q$ estimate is linear and gives
$C N^{s/d-1}$.  Enlarging the constant to absorb the finitely many small values
of $N$ proves all three lines of \eqref{eq:critical-product-rates}.

The same estimates also prove uniqueness in the class of Definition~\ref{def:critical-riesz-reference-solution}.  For uniqueness, apply the $q=\infty$ part of
Theorem~\ref{thm:critical-riesz-propagation} under the regularity in
Definition~\ref{def:critical-riesz-reference-solution}.  Evolve the single tensorized
law $\rho_0^{\otimes N}$ by the Riesz particle flow and denote its
one-particle marginal by $\rho_{N,1}(t)$.  Comparing this same particle law
independently with two reference solutions $\rho$ and
$\widetilde\rho$ having the same initial density gives, with possibly
different constants and Osgood exponents,
\[
 \sup_{t\le T}W_2^2(\rho_{N,1}(t),\rho_t)\longrightarrow0,
 \qquad
 \sup_{t\le T}W_2^2(\rho_{N,1}(t),\widetilde\rho_t)\longrightarrow0.
\]
The triangle inequality therefore implies
$W_2(\rho_t,\widetilde\rho_t)=0$ for every $t\in[0,T]$.
This proves Corollary~\ref{cor:critical-riesz-uniqueness}.

\subsection{Smooth limiting solutions in the super-Coulomb Riesz range}
The Riesz theorem under the stated Besov regularity is conditional on the existence of a limiting solution with the stated regularity.  The following standard observation gives a sufficient condition for a smooth solution to satisfy these assumptions.

\begin{proposition}
\label{prop:riesz-Sobolev-admissibility}
Let $d\ge2$, $d-2<s<d$, $m>d/2+3$, and let $\rho$ be a nonnegative
distributional solution of \eqref{eq:riesz-mean-field-main} on $[0,T]$ such that
\[
 \rho\in C([0,T];L^1(\R^d)\cap H^m(\R^d)),\qquad
 \rho_0\in\mathcal P_2(\R^d),\qquad \int\rho_t=1.
\]
Then, for every $1\le q\le\infty$,
\[
 \rho\in L^\infty(0,T;B^{s-d+2}_{\infty,q}),
\]
and $\rho$ satisfies Definition~\ref{def:critical-riesz-reference-solution}
and the finite-$q$ hypothesis \eqref{eq:critical-riesz-finite-q-bound} when
$q<\infty$.  In particular, the corresponding field is bounded and has the
regularity stated in Proposition~\ref{prop:critical-riesz-field}.
\end{proposition}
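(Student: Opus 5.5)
We first derive the Besov bound from Sobolev regularity, and then check the remaining items of Definition~\ref{def:critical-riesz-reference-solution}.

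\textbf{Besov bound.} For $m>d/2+3$ we use the embedding
\[
H^m(\R^d)\hookrightarrow B^{m-d/2}_{\infty,\infty}(\R^d),
\]
which follows from Bernstein's inequality $\|\Delta_j f\|_{L^\infty}\le C2^{jd/2}\|\Delta_j f\|_{L^2}$. Here $m-d/2>3>2>\alpha=s-d+2$. Because the regularity index is strictly larger, $B^{m-d/2}_{\infty,\infty}\hookrightarrow B^\alpha_{\infty,q}$ for every $1\le q\le\infty$: the dyadic weights $2^{j(\alpha-m+d/2)}$ are summable in $\ell^q$. Continuity in time into $H^m$ gives $\sup_{t\le T}\|\rho_t\|_{H^m}<\infty$, hence
\[
\rho\in L^\infty(0,T;B^\alpha_{\infty,q})\quad\text{for all } q,
\]
and in particular \eqref{eq:critical-riesz-finite-q-bound} holds for $q<\infty$. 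The same embedding (or $H^m\hookrightarrow L^\infty$) bounds $\|\rho_t\|_{L^\infty}$ uniformly in time. The $L^1$ bound is part of the hypothesis.

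\textbf{Remaining items of Definition~\ref{def:critical-riesz-reference-solution}.} Nonnegativity, unit mass and the distributional equation are assumed. What is left is $\rho\in C([0,T];\mathcal P_2)$, and this is where I expect the main work to lie.

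First, Proposition~\ref{prop:critical-riesz-field} shows that $u$ is bounded. It is also Lipschitz, since $u\in B^1_{\infty,1}\subset W^{1,\infty}$ by the $q=1$ case, uniformly in $t$. Proposition~\ref{prop:unified-lagrangian-representation} then represents $\rho_t=(\Psi^t)_\#\rho_0$ through the Lipschitz flow of $-u$.

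Second, since $|\Psi^t(x)-x|\le t\|u\|_{L^\infty_{t,x}}$, second moments stay finite. Continuity in $W_2$ follows from
\[
W_2^2(\rho_t,\rho_\tau)\le\int|\Psi^t-\Psi^\tau|^2\,\dd\rho_0\le\|u\|_\infty^2|t-\tau|^2 .
\]

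The only delicate point is that Proposition~\ref{prop:unified-lagrangian-representation} requires $\rho\in C([0,T];\mathcal P(\R^d))$ narrowly. This follows from $L^1$ continuity, which the hypothesis supplies.

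\textbf{Field regularity.} The last sentence of the statement is then a direct application of Proposition~\ref{prop:critical-riesz-field}, whose hypotheses $L^1\cap L^\infty\cap B^\alpha_{\infty,q}$ are now verified uniformly in $t$.
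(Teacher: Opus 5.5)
Your proof is correct and follows the paper's argument. The steps are the same: a Sobolev--Besov embedding (the paper uses $H^m\hookrightarrow B^{m-d/2}_{\infty,2}$ rather than $B^{m-d/2}_{\infty,\infty}$, which makes no difference given the strict gap $m-d/2>\alpha$), a uniform-in-time Lipschitz bound on $u$, and $\mathcal P_2$-continuity from the Lipschitz characteristic representation. Your displacement bound $|\Psi^t-\Psi^\tau|\le\|u\|_\infty|t-\tau|$ and your check of narrow continuity via $L^1$-continuity spell out steps the paper compresses into one line.
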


\begin{proof}
Set $\alpha=s-d+2\in(0,2)$.  Sobolev--Besov embedding gives
\[
 H^m=B^m_{2,2}\hookrightarrow B^{m-d/2}_{\infty,2}
 \hookrightarrow B^\alpha_{\infty,q},\qquad 1\le q\le\infty,
\]
because $m-d/2>3>\alpha$.  The multiplier of
$u=\nabla g_s*\rho$ is $c_{d,s}i\xi|\xi|^{s-d}$; the same Littlewood--Paley
estimate used in Proposition~\ref{prop:critical-riesz-field}, together with the
Sobolev surplus, gives $u\in L^\infty(0,T;W^{1,\infty})$.  Hence the second
moment propagates on $[0,T]$, and the asserted $\mathcal P_2$ continuity follows
from the Lipschitz characteristic representation.  This verifies the assumptions of Definition~\ref{def:critical-riesz-reference-solution}.  Local classical solutions furnished by the smooth theory, for
example \cite[Theorem~1.1]{ChoiJeong} in this repulsive range, provide
concrete instances on their interval of existence.
\end{proof}

\section{Initial data and Wasserstein control}
\label{subsec:prepared-initial-laws}
\subsection{A counterexample on \texorpdfstring{$\mathbb R^d$}{R d}}
The following example shows that convergence of the modulated energy and Kac chaos do not imply convergence in the normalized squared Wasserstein distance on $\R^d$.

\begin{proposition}[Counterexample on $\mathbb R^d$]
\label{prop:whole-space-modulated energy-obstruction}
Let $d\ge2$, $0<s<d$, and let $\rho\in C_c^\infty(\R^d)$ be a probability density with $\operatorname{supp}\rho\subset B(0,R_0)$.  Choose $A_{d,s}>0$ so that the lower bound in Lemma~\ref{lem:static-riesz-transfer} holds and set
\[
 \eta_N(\rho):=A_{d,s}\|\rho\|_{L^\infty}^{s/d}N^{s/d-1}.
\]
Set $R_N:=\sqrt N$, $z_N:=R_Ne_1$, and
\begin{equation}\label{eq:outlier-law-definition}
 \rho_N^\star:=\frac1N\sum_{\ell=1}^N
 \rho^{\otimes(\ell-1)}\otimes\delta_{z_N}\otimes\rho^{\otimes(N-\ell)}.
\end{equation}
Then $\rho_N^\star\in\mathcal P_2((\R^d)^N)$ is symmetric and
$\rho_N^\star(\Delta_N)=0$.  With
\[
 E_s(\rho):=\iint g_s(x-y)\rho(x)\rho(y)\,\dd x\dd y,
\]
one has
\begin{equation}\label{eq:outlier-modulated energy-expectation}
 \int F_N(X_N,\rho)\,\dd\rho_N^\star(X_N)
 =\left(-\frac1N+\frac2{N^2}\right)E_s(\rho)
 -\frac2{N^2}(g_s*\rho)(z_N).
\end{equation}
In particular, $(g_s*\rho)(z_N)=O(R_N^{-s})$ and
\[
 \int\bigl(F_N(X_N,\rho)+\eta_N(\rho)\bigr)\,\dd\rho_N^\star(X_N)
 \longrightarrow0.
\]
On the other hand,
\begin{equation}\label{eq:outlier-joint-W2-lower-bound}
 \frac1N W_2^2\bigl(\rho_N^\star,\rho^{\otimes N}\bigr)
 \ge \frac{(R_N-R_0)^2}{N}\longrightarrow1,
\end{equation}
and every configuration in the support of $\rho_N^\star$ satisfies
\begin{equation}\label{eq:outlier-empirical-W2-lower-bound}
 W_2^2\bigl(\mu_N(X_N),\rho\bigr)
 \ge \frac{(R_N-R_0)^2}{N}\longrightarrow1.
\end{equation}
The normalized second moments are also uniformly bounded:
\begin{equation}\label{eq:outlier-uniform-normalized-second-moment}
 \sup_N\int\frac1N\sum_{i=1}^N|x_i|^2\,\dd\rho_N^\star(X_N)<\infty,
\end{equation}
and, for every fixed $k$ and $N\ge k$,
\begin{equation}\label{eq:outlier-fixed-marginal-formula}
 (\rho_N^\star)_{:k}
 =\left(1-\frac{k}{N}\right)\rho^{\otimes k}
 +\frac1N\sum_{\ell=1}^k
 \rho^{\otimes(\ell-1)}\otimes\delta_{z_N}\otimes
 \rho^{\otimes(k-\ell)}.
\end{equation}
Consequently,
\begin{equation}\label{eq:outlier-kac-chaos}
 (\rho_N^\star)_{:k}\rightharpoonup\rho^{\otimes k}
 \qquad\text{for every fixed }k.
\end{equation}
There also exist collision-free configurations $X_N^\star$ such that
\begin{equation}\label{eq:outlier-deterministic-modulated energy-vs-W2}
 F_N(X_N^\star,\rho)+\eta_N(\rho)\longrightarrow0,
 \qquad
 \liminf_{N\to\infty}W_2^2\bigl(\mu_N(X_N^\star),\rho\bigr)\ge1.
\end{equation}
\end{proposition}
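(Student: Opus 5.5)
The proof is a direct computation: expand the expectation of $F_N$ over the mixture, bound the transport cost from below using the single far atom, and read off the marginals from the mixture formula.

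\emph{Basic properties.} Symmetry of $\rho_N^\star$ is immediate, since \eqref{eq:outlier-law-definition} averages over the position $\ell$ of the atom. For $\rho_N^\star(\Delta_N)=0$, note that within each summand the $\rho$-coordinates are absolutely continuous and independent. Hence two such coordinates collide with probability zero, and a $\rho$-coordinate equals $z_N$ with probability zero.

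\emph{Energy expectation.} I expand the off-diagonal functional
\[
F_N=\frac1{N^2}\sum_{i\ne j}g_s(x_i-x_j)-\frac2N\sum_i g_s*\rho(x_i)+E_s(\rho),
\]
and compute its expectation under a single summand; by symmetry this equals the expectation under $\rho_N^\star$.

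\begin{itemize}
\item Pair term. Among the $N(N-1)$ ordered pairs, $(N-1)(N-2)$ pairs involve two $\rho$-coordinates and each contributes $E_s(\rho)$. The remaining $2(N-1)$ pairs involve the atom and each contributes $(g_s*\rho)(z_N)$.
\item Linear term. It contributes $-\frac2N\bigl[(N-1)E_s(\rho)+(g_s*\rho)(z_N)\bigr]$.
\end{itemize}

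Collecting coefficients of $E_s$ gives $\frac{(N-1)(N-2)}{N^2}-\frac{2(N-1)}N+1=-\frac1N+\frac2{N^2}$. The coefficient of $g_s*\rho(z_N)$ is $\frac{2(N-1)}{N^2}-\frac2N=-\frac2{N^2}$. This is \eqref{eq:outlier-modulated energy-expectation}.

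Since $\rho$ is supported in $B(0,R_0)$ and $|z_N|=\sqrt N$, we have $(g_s*\rho)(z_N)\le \frac1s(R_N-R_0)^{-s}$. Together with $\eta_N(\rho)\to0$, because $s<d$, the expectation of $F_N+\eta_N$ tends to zero. This quantity is nonnegative by the lower bound of Lemma~\ref{lem:static-riesz-transfer}, so it converges to zero.

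\emph{Wasserstein lower bounds.} For any coupling $\pi$ of $\rho_N^\star$ and $\rho^{\otimes N}$, almost every $X_N$ has some coordinate equal to $z_N$. The paired $Y_N$ has all coordinates in $B(0,R_0)$, so
\[
\sum_i|x_i-y_i|^2\ge(R_N-R_0)^2 .
\]
Dividing by $N$ and integrating gives \eqref{eq:outlier-joint-W2-lower-bound}. Similarly, any coupling of $\mu_N$ with $\rho$ must send the mass $1/N$ located at $z_N$ into $B(0,R_0)$, which gives \eqref{eq:outlier-empirical-W2-lower-bound}.

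\emph{Moments, marginals and chaos.} The normalized second moment equals $\frac1N\bigl[(N-1)m_2(\rho)+N\bigr]$, which is bounded uniformly in $N$. The $k$-marginal formula \eqref{eq:outlier-fixed-marginal-formula} follows by projecting each summand: for $\ell>k$ the atom lies outside the first $k$ coordinates, and there are $N-k$ such summands. The extra terms have total mass $k/N\to0$, which gives weak convergence for bounded continuous test functions and hence \eqref{eq:outlier-kac-chaos}.

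\emph{Deterministic configurations.} This is the step I expect to be the main obstacle. I would take configurations $Y_{N-1}$ drawn from $\rho^{\otimes(N-1)}$ such that $F_{N-1}(Y_{N-1},\rho)+\eta_{N-1}$ stays small, and append the point $z_N$. Such configurations exist by Lemma~\ref{lem:unified-product-initial-data} combined with Markov's inequality, and the sampled points are almost surely distinct.

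To control the energy of the augmented configuration, I would compare $F_N$ with $\frac{(N-1)^2}{N^2}F_{N-1}$ applied to the rescaled empirical measure. The cross terms involving $z_N$ are of size $O(R_N^{-s})$. The only delicate piece is the mismatch between the mass $(N-1)/N$ and $1$; I would expand $\mu_N-\rho=\frac{N-1}N(\mu_{N-1}-\rho)+\frac1N(\delta_{z_N}-\rho)$ and bound each resulting term by $O(1/N)+O(R_N^{-s})$, using $E_s(\rho)<\infty$ and $g_s*\rho$ bounded. The lower bound on $W_2^2$ is the same as before.
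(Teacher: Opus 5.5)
Your proposal is correct. The expectation identity, the two Wasserstein lower bounds, the moment bound, the marginal formula and Kac chaos all follow the paper's argument. The one genuine difference is the final step on deterministic configurations, where the paper's route is considerably shorter than yours.

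\textbf{The paper's argument.} You had already shown that the nonnegative quantity $F_N+\eta_N(\rho)$ has $\rho_N^\star$-expectation tending to zero. The paper simply applies Markov's inequality to $\rho_N^\star$ itself: there is a set of positive $\rho_N^\star$-measure on which $F_N+\eta_N(\rho)$ is at most twice its mean. Since $\rho_N^\star(\Delta_N)=0$, this set contains a collision-free $X_N^\star$. Every configuration in the support of $\rho_N^\star$ already contains the atom $z_N$, so \eqref{eq:outlier-empirical-W2-lower-bound} holds for $X_N^\star$ automatically. No further computation is needed.

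\textbf{Your alternative.} You take a good $(N-1)$-configuration from $\rho^{\otimes(N-1)}$ and append $z_N$. This also works.
\begin{itemize}
\item In the expansion $\mu_N-\rho=\frac{N-1}{N}(\mu_{N-1}-\rho)+\frac1N(\delta_{z_N}-\rho)$, the self-pair $\delta_{z_N}\otimes\delta_{z_N}$ lies on the excluded diagonal.
\item The cross terms are $O(1/N)$, because $g_s*\rho$ and $E_s(\rho)$ are bounded and the sampled points lie in $B(0,R_0)$, so $g_s(y_i-z_N)=O(R_N^{-s})$.
\item The inequality $F_{N-1}\le F_{N-1}+\eta_{N-1}$ supplies the upper bound, and Lemma~\ref{lem:static-riesz-transfer} supplies $F_N+\eta_N\ge0$.
\end{itemize}
What your route buys is a deterministic stability fact: appending one far outlier to any configuration with small modulated energy changes $F_N$ only by the factor $(N-1)^2/N^2$ plus $O(1/N)$. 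The cost is a second expansion. The step you flagged as the main obstacle is in fact immediate from the expectation identity you had already proved.
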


\begin{proof}
\emph{Energy calculation.}  Symmetry follows from \eqref{eq:outlier-law-definition}.
The collision set has zero mass because the $N-1$ random coordinates have smooth
densities and the fixed point $z_N$ is hit with probability zero.  For a fixed
outlier label, there are $(N-1)(N-2)$ ordered pairs of two $\rho$-distributed
particles and $2(N-1)$ ordered pairs involving the outlier.  The normalization in
\eqref{eq:unified-FN} gives the expectation
\[
 \frac{(N-1)(N-2)}{N^2}E_s(\rho)
 +\frac{2(N-1)}{N^2}(g_s*\rho)(z_N).
\]
The mixed term involving one particle and the continuum has expectation
\[
 -\frac{2(N-1)}N E_s(\rho)-\frac2N(g_s*\rho)(z_N),
\]
and adding the continuum energy $E_s(\rho)$ gives \eqref{eq:outlier-modulated energy-expectation}.  Since $z_N$ leaves every fixed compact set and $g_s(x)=s^{-1}|x|^{-s}$, one has $(g_s*\rho)(z_N)=O(R_N^{-s})$.

\smallskip
\noindent\emph{Quadratic transport and moments.}  For every $X_N$ in the support of
$\rho_N^\star$, one coordinate, say $x_\ell$, equals $z_N$.  Every $Y_N$ in
the support of $\rho^{\otimes N}$ satisfies $|y_\ell|\le R_0$, so
\[
 \sum_{i=1}^N|x_i-y_i|^2\ge |z_N-y_\ell|^2\ge(R_N-R_0)^2.
\]
Taking the infimum over all couplings yields \eqref{eq:outlier-joint-W2-lower-bound}.
The empirical measure $\mu_N(X_N)$ has mass $1/N$ at $z_N$, while $\rho$ is
supported in $B(0,R_0)$.  Any coupling between them must transport that mass by
at least $R_N-R_0$, which proves \eqref{eq:outlier-empirical-W2-lower-bound}.
Also,
\[
 \int\frac1N\sum_i|x_i|^2\,\dd\rho_N^\star
 =\frac{N-1}{N}\int|x|^2\rho(x)\,\dd x+\frac{R_N^2}{N},
\]
which proves \eqref{eq:outlier-uniform-normalized-second-moment}.  \smallskip
\noindent\emph{Marginals of fixed order.}  Formula \eqref{eq:outlier-fixed-marginal-formula} follows by separating the event that the uniformly random outlier label lies among the first $k$ coordinates from its complement.  Since the exceptional part has total mass $k/N$, testing against any bounded continuous function gives \eqref{eq:outlier-kac-chaos}.

\smallskip
\noindent\emph{Deterministic configurations.}  The finite-$N$ lower bound gives
$F_N+\eta_N(\rho)\ge0$, and \eqref{eq:outlier-modulated energy-expectation}
shows that its expectation under $\rho_N^\star$ tends to zero.  Markov's
inequality therefore provides collision-free configurations $X_N^\star$ such
that $F_N(X_N^\star,\rho)+\eta_N(\rho)\to0$.  Estimate
\eqref{eq:outlier-empirical-W2-lower-bound} then gives
\eqref{eq:outlier-deterministic-modulated energy-vs-W2}.
\end{proof}

\begin{remark}
\label{rem:whole-space-vs-compact-W2}
The same outlier construction works for the planar logarithmic kernel $g(x)=-(2\pi)^{-1}\log|x|$.  In that case $(g*\rho)(z_N)=-(2\pi)^{-1}\log R_N+O(1)$, so the extra outlier contribution in \eqref{eq:outlier-modulated energy-expectation} is $O((\log N)/N^2)$; after adding the logarithmic finite-$N$ error term, the expectation of $F_N+\eta_{N,2}$ still tends to zero, while \eqref{eq:outlier-joint-W2-lower-bound} is unchanged.

The obstruction is specific to the noncompact setting.  On a compact metric state space $K$, narrow convergence and $W_2$ convergence are equivalent because the quadratic transportation cost is bounded; see \cite{AmbrosioAGS,Villani2009}.  Accordingly, whenever a periodic modulated energy estimate implies narrow convergence of empirical measures, it also implies qualitative empirical $W_2$ convergence.  For symmetric particle laws, the qualitative relations between empirical measure convergence, convergence of marginals of fixed order, and Wasserstein formulations of chaos are discussed in \cite{HaurayMischler2014}.  These qualitative equivalences do not by themselves give a quantitative estimate for the normalized squared Wasserstein distance between the $N$-particle law and $\rho^{\otimes N}$.
\end{remark}

\subsection{Correlated initial laws}
The two principal theorems also apply to symmetric correlated initial laws
whose normalized quadratic transportation cost and modulated energy satisfy the
stated quantitative initial assumptions.  The counterexample on $\mathbb R^d$ above shows why these two quantities cannot in general be replaced by the modulated energy alone.

\section{A consistency estimate for the empirical continuity equation}\label{sec:dynamical-consequences}
We record the following direct consequence of the estimate for $\overline r_N$.

\begin{proposition}
\label{prop:force-error-consistency}
Let $\rho_N(t)$ be the law obtained by pushing forward $\rho_N^0$ under the
particle flow, and write $X_N(t)=(x_1(t),\ldots,x_N(t))$.  Then
\begin{equation}\label{eq:force-error-velocity}
 \int_0^T \overline r_N(t)\,\dd t
 =\int\frac1N\sum_{i=1}^N\int_0^T
 |\dot x_i(t)+u_t(x_i(t))|^2\,\dd t\,\dd\rho_N^0(X_N).
\end{equation}
For $\phi\in C_c^1([0,T]\times\mathbb R^d)$, define
\[
 \mathcal R_N(\phi;X_N):=\langle\mu_N(T),\phi_T\rangle
 -\langle\mu_N(0),\phi_0\rangle
 -\int_0^T\langle\mu_N(t),\partial_t\phi_t-u_t\cdot\nabla\phi_t\rangle\,\dd t.
\]
Then
\[
 \int|\mathcal R_N(\phi;X_N)|\,\dd\rho_N^0(X_N)
 \le \|\nabla\phi\|_{L_t^2L_x^\infty}
 \left(\int_0^T\overline r_N(t)\,\dd t\right)^{1/2}.
\]
\end{proposition}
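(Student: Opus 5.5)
The velocity identity \eqref{eq:force-error-velocity} follows directly from the particle equation. On the collision-free trajectory we have $\dot x_i=-K_i(X_N(t))$, so $\dot x_i+u_t(x_i)=-(K_i-u_t(x_i))$. Hence pointwise in time
\[
 \frac1N\sum_i|\dot x_i+u_t(x_i)|^2=r_N(t,X_N(t)).
\]
Since $\rho_N(t)$ is the pushforward of $\rho_N^0$ under the particle flow, we have $\overline r_N(t)=\int r_N(t,X_N(t))\,\dd\rho_N^0$. The joint measurability and integrability recorded in Appendix~\ref{app:measurability} (and Proposition~\ref{prop:chain-rule-admissibility}, which gives $r_N\in L^1(0,T)$ along each trajectory) allow Tonelli's theorem. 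Exchanging the time integral with the expectation then gives \eqref{eq:force-error-velocity}.

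For the consistency estimate, fix a trajectory and apply the fundamental theorem of calculus to $t\mapsto\langle\mu_N(t),\phi_t\rangle=\frac1N\sum_i\phi_t(x_i(t))$. Each $x_i\in C^1$ and $\phi\in C^1_c$, so this map is $C^1$ in time with derivative
\[
 \frac1N\sum_i\bigl(\partial_t\phi_t(x_i)+\nabla\phi_t(x_i)\cdot\dot x_i\bigr).
\]
Subtracting the drift term $\langle\mu_N,\partial_t\phi_t-u_t\cdot\nabla\phi_t\rangle$, the time derivatives cancel and we get
\[
 \mathcal R_N(\phi;X_N)=\int_0^T\frac1N\sum_i\nabla\phi_t(x_i)\cdot\bigl(\dot x_i+u_t(x_i)\bigr)\,\dd t.
\]
We estimate this by $\|\nabla\phi_t\|_{L^\infty}$ times $\frac1N\sum_i|\dot x_i+u_t(x_i)|\le r_N(t)^{1/2}$, using Cauchy--Schwarz in $i$. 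Cauchy--Schwarz in time then yields
\[
 |\mathcal R_N|\le\|\nabla\phi\|_{L^2_tL^\infty_x}\Bigl(\int_0^Tr_N\Bigr)^{1/2}.
\]
Integrating against $\rho_N^0$ and applying Jensen (concavity of the square root) gives the bound with $\int_0^T\overline r_N$, by the first part.

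The only delicate step is justifying the pathwise calculus and the Fubini exchange. This requires that the particle trajectory be collision-free and global for $\rho_N^0$-a.e. initial datum, that $u_t$ be the jointly Borel bounded representative, and that $r_N$ be jointly measurable. These are supplied by the particle-flow propositions in the appendix and by Proposition~\ref{prop:bounded-density-flow} (resp.\ Proposition~\ref{prop:critical-riesz-borel-reference}). The remaining computations are elementary.
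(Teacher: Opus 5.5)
Your proof is correct and follows the paper's argument. The identity is just $\dot x_i=-K_i$ combined with the pushforward and Tonelli exchange. The residual is rewritten by differentiating $\phi_t(x_i(t))$ and bounded by Cauchy--Schwarz in the particle index and in time. Your separate Jensen step over the initial configuration is what the paper folds into its Cauchy--Schwarz.
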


\begin{proof}
Since $\dot x_i=-K_i(X_N)$, the first identity is the definition of
$\overline r_N$.  Differentiating $\phi_t(x_i(t))$, summing in $i$, and
integrating in time gives
\[
 \mathcal R_N(\phi;X_N)
 =-\int_0^T\frac1N\sum_i
 (K_i-u_t(x_i))\cdot\nabla\phi_t(x_i)\,\dd t.
\]
Cauchy--Schwarz in time and the particle index proves the estimate.
\end{proof}

\section{Conclusion}\label{sec:conclusion}
The argument developed here combines the quadratic transportation cost with
the dissipative modulated energy identity.
The remaining negative term given by the mean-square force error enters an exact completion of squares,
which controls the non-Lipschitz remainder after mollification and leaves the
sharp first-order commutator estimate to act on a Lipschitz comparison field.

For Coulomb flows, the sharp $L^\infty$ decay supplies a Riccati density
envelope and the scale $R(t)=m(t)^{-1/d}$.  The corresponding weighted transport
estimate yields quantitative comparison with the global bounded-density
solution on every prescribed finite interval.  In addition to the Wasserstein distance on the full
$N$-particle law and the modulated energy, the estimate controls the time
integral of the mean-square force error and therefore the residual of the
empirical continuity equation.

For super-Coulomb Riesz interactions, the same mechanism gives weak--strong
stability for reference solutions in $B^{s-d+2}_{\infty,q}$, with Gronwall,
Bihari, and Osgood comparisons according to $q$.  The particle approximation
also yields uniqueness in the corresponding Besov class.  The three Riesz
singularity regimes are reflected in the mollification errors, including the
borderline logarithmic correction at $s=d-1$.

The analysis suggests further questions concerning sharper Osgood rates for
bounded-density Coulomb flows, long-time estimates on $\mathbb R^d$, and the
use of analogous negative terms involving the force error in other singular mean-field systems.
\appendix

\section{Technical proofs for the Riesz extension}\label{app:riesz-technical-proofs}
The elementary radial estimates and the arguments for choosing the dyadic scale
used in the Riesz extension are collected here so that the main proof can focus
on the commutator estimates and the averaging argument.

\begin{proof}[Proof of Lemma~\ref{lem:critical-riesz-radial-integrals}]
Recall that $\ell(t)=1+\log(1/t)$ for $0<t\le1$.

For the first estimate, write $t=re^{-y}$.  Then
$\dd t=-re^{-y}\dd y=-t\dd y$ and
$\ell(re^{-y})=\ell(r)+y$, so
\begin{align*}
 \int_0^r t^{\gamma-1}\ell(t)^\beta\,\dd t
 &=r^\gamma\int_0^\infty
 e^{-\gamma y}(\ell(r)+y)^\beta\,\dd y.
\end{align*}
Since $0\le\beta\le1$,
\[
 (a+b)^\beta\le a^\beta+b^\beta,
 \qquad a,b\ge0.
\]
As $\ell(r)\ge1$, we have
\begin{align*}
 \int_0^r t^{\gamma-1}\ell(t)^\beta\,\dd t
 &\le r^\gamma\left[
 \ell(r)^\beta\int_0^\infty e^{-\gamma y}\,\dd y
 +\int_0^\infty e^{-\gamma y}y^\beta\,\dd y\right]\\
 &=r^\gamma\left[
 \frac1\gamma\ell(r)^\beta
 +\gamma^{-1-\beta}\Gamma(1+\beta)\right]\\
 &\le C_{\gamma,\beta}r^\gamma\ell(r)^\beta.
\end{align*}

For \eqref{eq:critical-riesz-radial-mollifier}, first assume $0<\gamma<2$.
Set $t=4\eps e^y$ and
$L=\log(1/(4\eps))$.  Then
\begin{align*}
 \eps^2\int_{4\eps}^1t^{\gamma-3}\ell(t)^\beta\,\dd t
 &=\eps^2(4\eps)^{\gamma-2}
 \int_0^L e^{-(2-\gamma)y}
 \bigl(\ell(4\eps)-y\bigr)^\beta\,\dd y\\
 &\quad\le \eps^2(4\eps)^{\gamma-2}\ell(4\eps)^\beta
 \int_0^\infty e^{-(2-\gamma)y}\,\dd y\\
 &\quad\le C_\gamma\eps^\gamma\ell(\eps)^\beta.
\end{align*}
If $\gamma=2$, use $y=\ell(t)$, so $\dd y=-\dd t/t$:
\begin{align*}
 \eps^2\int_{4\eps}^1t^{-1}\ell(t)^\beta\,\dd t
 =\eps^2\int_1^{\ell(4\eps)}y^\beta\,\dd y
 =\frac{\eps^2}{\beta+1}
 \bigl(\ell(4\eps)^{\beta+1}-1\bigr)
\le C_\beta\eps^2\ell(\eps)^{\beta+1}.
\end{align*}
This proves \eqref{eq:critical-riesz-radial-mollifier}.

For \eqref{eq:critical-riesz-radial-rough-bound}, split the integral at
$t=\eps$:
\begin{align*}
 I
 :=\int_0^1\min\{\eps,t\ell(t)^\beta\}t^{\sigma-2}\,\dd t\le \int_0^\eps t^{\sigma-1}\ell(t)^\beta\,\dd t
 +\eps\int_\eps^1t^{\sigma-2}\,\dd t.
\end{align*}
The first estimate already proved with $\gamma=\sigma$ yields
\[
 \int_0^\eps t^{\sigma-1}\ell(t)^\beta\,\dd t
 \le C_{\sigma,\beta}\eps^\sigma\ell(\eps)^\beta.
\]
If $0<\sigma<1$, then
\begin{align*}
 \eps\int_\eps^1t^{\sigma-2}\,\dd t
 =\frac{\eps}{1-\sigma}
 \bigl(\eps^{\sigma-1}-1\bigr)\le \frac1{1-\sigma}\eps^\sigma
 \le C_{\sigma,\beta}\eps^\sigma\ell(\eps)^\beta.
\end{align*}
If $\sigma=1$, then
\[
 \eps\int_\eps^1\frac{\dd t}{t}
 =\eps\log\frac1\eps
 \le\eps\ell(\eps).
\]
This proves \eqref{eq:critical-riesz-radial-rough-bound}.

For the last estimate, set $t=e^{-y}$.  Since
$\ell(e^{-y})=1+y$,
\begin{align*}
 \int_0^1t^{\sigma-1}\ell(t)^\beta\,\dd t
 =\int_0^\infty e^{-\sigma y}(1+y)^\beta\,\dd y\le \int_0^\infty e^{-\sigma y}(1+y)\,\dd y=\frac1\sigma+\frac1{\sigma^2}
 \le C_{\sigma,\beta},
\end{align*}
where we used $0\le\beta\le1$.  The proof is complete.
\end{proof}

\begin{proof}[Proof of Lemma~\ref{lem:finite-q-endpoint-dyadic-selector}]
Write \(\beta=\beta_q\in[0,1)\).  Define the continuous scale
\[\widetilde\eps_{q,s}(z):=
 \begin{cases}
 z\ell(z)^\beta,&d-2<s<d-1,\\
 z\ell(z)^{\beta-1},&s=d-1,\\
 z^{1/(d-s)},&d-1<s<d.
 \end{cases}
\]
For sufficiently small $z$, each of these functions is continuous and
strictly increasing.  Indeed,
\[
 \frac{\dd}{\dd z}\bigl(z\ell(z)^a\bigr)
 =\ell(z)^{a-1}\bigl(\ell(z)-a\bigr),
\]
which is positive for the exponents $a=\beta$ and $a=\beta-1$ once
$z$ is small; the third case is immediate.  Reduce $z_{q,s}$ so that
$\widetilde\eps_{q,s}(z)\le2^{-5}$ and
$\widetilde\eps_{q,s}(z)^2\le z$ on $(0,z_{q,s}]$.

Let $\eps_{q,s}(z)$ be the largest dyadic number $2^{-j}$, $j\ge4$, not
exceeding $\widetilde\eps_{q,s}(z)$.  Then
\[
 \frac12\widetilde\eps_{q,s}(z)<\eps_{q,s}(z)
 \le \widetilde\eps_{q,s}(z),
\]
and the monotonicity just proved shows that the dyadic rounding is a Borel
step function which is locally constant from the right.  Hence
\eqref{eq:abstract-selector-right-stability} holds.  The dyadic comparability
also gives, in all three cases,
\begin{equation}\label{eq:finite-q-selector-log-comparability}
 \ell(\eps_{q,s}(z))\le C_{q,s}\ell(z).
\end{equation}

We now check the three terms that must be absorbed by
$\omega_\beta(z)=z\ell(z)^\beta$ for $z\le z_{q,s}<e^{-3}$.  First, the
coefficient in the commutator estimate for the mollified field satisfies, by
\eqref{eq:finite-q-selector-log-comparability},
\[
 z\ell(\eps_{q,s}(z))^\beta
 \le C_{q,s}z\ell(z)^\beta.
\]
For the error term $e_{q,s}$, if $d-2<s<d-1$, then
\[
 e_{q,s}(\eps)=\eps
 \le z\ell(z)^\beta.
\]
At $s=d-1$,
\[
 e_{q,d-1}(\eps)
 =\eps\ell(\eps)
 \le C z\ell(z)^{\beta-1}\ell(z)
 =Cz\ell(z)^\beta.
\]
If $d-1<s<d$, recall $\sigma_s=d-s\in(0,1)$.  Then
\[
 e_{q,s}(\eps)
 =\eps^{\sigma_s}\ell(\eps)^\beta
 \le C z\ell(z)^\beta.
\]
Finally,
\[
 \eps_{q,s}(z)^2\le z\le z\ell(z)^\beta=\omega_\beta(z).
\]
Since the transport term itself is $\omega_\beta(z)$, these four estimates
prove \eqref{eq:abstract-optimization-statement} with
$\omega=\omega_0=\omega_\beta$, after increasing the constant.  The concavity
and Osgood properties required there are exactly those of
Lemma~\ref{lem:concave-log-modulus}.
\end{proof}

\section{Principal values for \texorpdfstring{$d-1\le s<d$}{d-1 <= s < d}}\label{sec:principal-value-representation}
In the range $d-1\le s<d$, the commutator is defined by the absolutely
convergent formula obtained by symmetrization in Section~\ref{sec:critical-riesz-proof}.
The centered principal values and the even regularizations used in the
chain rule argument converge to the same quantity.

\subsection*{Regularization of the commutator}
Let
\[
 G_{s,\delta}=\mathbf 1_{\{|z|>\delta\}}\nabla g_s.
\]
We compare the formula obtained by symmetrization with the expression obtained by
using the same centered cutoff in the discrete, mixed, and continuous terms, and
with an even regularization of the kernel.  At fixed regularization all integrals
are absolutely convergent.  Passing to the limit gives the principal value formulas below.  Throughout this appendix, $C_N[u]$ denotes the absolutely convergent formula obtained by symmetrization.

\begin{proposition}[Principal value representation of the commutator]
\label{prop:critical-riesz-pv-statement}
Assume $d-1\le s<d$.  Then the following identities and estimates hold.
\begin{enumerate}
\item[(i)] \emph{Radial principal values.} Assume $d-1\le s<d$ and let $\rho$ be a limiting solution satisfying Definition~\ref{def:critical-riesz-reference-solution}.  Put $\sigma_s=d-s\in(0,1]$ and
$u_\delta:=G_{s,\delta}*\rho$, with $G_{s,\delta}$ defined in
\eqref{eq:critical-riesz-truncated-kernel}.
Then, for almost every $t\in(0,T)$ and every $a\in\R^d$, the continuous
Zygmund representative of $u=\nabla g_s*\rho$ is
\begin{equation}\label{eq:critical-riesz-pv-density}
\begin{aligned}
 u(t,a)
 &=\operatorname{p.v.}\!\int_{\R^d}\nabla g_s(a-x)\rho_t(x)\,\dd x\\
 &=\int_{|z|<1}\nabla g_s(-z)
   [\rho_t(a+z)-\rho_t(a)]\,\dd z
 +\int_{|z|\ge1}\nabla g_s(-z)\rho_t(a+z)\,\dd z.
\end{aligned}
\end{equation}
The truncated fields satisfy the uniform estimate
\begin{equation}\label{eq:critical-riesz-pv-field-uniform-rate}
 \operatorname*{ess\,sup}_{0<t<T}\sup_{a\in\R^d}
 |u_\delta(t,a)-u(t,a)|
 \le C_{d,s}\|\mathcal B_\infty\|_{L^\infty(0,T)}
 \delta^{\sigma_s}(1+|\log\delta|),
 \qquad 0<\delta\le\tfrac12.
\end{equation}
The radial principal value agrees with the continuous representative of
the distributional Riesz field, uniformly in space for almost every time.
\item[(ii)] \emph{Truncated commutator identity.} Fix a collision-free
configuration $X_N\in\Omega_N$.  Let
$w\in L^\infty(\R^d)\cap\Lambda_*$ and let
\[
 \rho\in L^1(\R^d)\cap L^\infty(\R^d)
 \cap B^\alpha_{\infty,\infty}(\R^d),
 \qquad \alpha=s-d+2\in[1,2),
\]
be a density of unit mass.  For $\delta>0$ put
$u_\delta:=G_{s,\delta}*\rho$, with $G_{s,\delta}$ defined in
\eqref{eq:critical-riesz-truncated-kernel}, and define $C_N^{\delta}[w]$ by inserting the same cutoff
$|x-y|>\delta$ in every discrete, mixed, and continuous term.  Then
\begin{equation}\label{eq:critical-riesz-commutator-representation-truncated}
\begin{aligned}
C_N^{\delta}[w]
={}&\frac2N\sum_{i=1}^Nw(x_i)\cdot
 \bigl(K_i^{\delta}-u_\delta(x_i)\bigr)\\
&-\frac2N\sum_{i=1}^N\int_{|x-x_i|>\delta}
 w(x)\cdot\nabla g_s(x-x_i)\rho(x)\,\dd x
 +2\int w(x)\cdot u_\delta(x)\rho(x)\,\dd x,
\end{aligned}
\end{equation}
where
$K_i^{\delta}=N^{-1}\sum_{j\ne i,\ |x_i-x_j|>\delta}
\nabla g_s(x_i-x_j)$.

As $\delta\downarrow0$,
\begin{equation}\label{eq:critical-riesz-commutator-representation-pv}
\begin{aligned}
 C_N[w]
 ={}&\frac2N\sum_{i=1}^N
 w(x_i)\cdot(K_i-u(x_i))\\
 &-\frac2N\sum_{i=1}^N
 \operatorname{p.v.}\!\int_{\R^d}
 w(x)\cdot\nabla g_s(x-x_i)\rho(x)\,\dd x
 +2\int_{\R^d}w(x)\cdot u(x)\rho(x)\,\dd x,
\end{aligned}
\end{equation}
where every principal value and $u$ are defined by the same radial
truncation.
\item[(iii)] \emph{Equivalence with the principal value representation.} Assume $d-1\le s<d$ and the hypotheses and notation of
Proposition~\ref{prop:critical-riesz-regularization}.  Then the formula obtained by symmetrization agrees almost everywhere with
\eqref{eq:critical-riesz-commutator-representation-pv}, with the same centered
radial cutoff in every principal value.  One has
\begin{equation}\label{eq:critical-riesz-pv-time-majorant}
\begin{aligned}
 |C_N[w](t)|
 &\le C_{d,s}M_w\left(\left(\frac1N\sum_i|K_i(X_N(t))|^2\right)^{1/2}+B_T^2\right),\\
 \int_0^T\left(\frac1N\sum_i|K_i(X_N(t))|^2\right)^{1/2}\!\dd t
 &\le T^{1/2}H_N(X_N(0))^{1/2}.
\end{aligned}
\end{equation}
\item[(iv)] \emph{Estimate of the continuous term.} Let $1\le q\le\infty$, assume the corresponding Besov bound, and
use the function $\mathcal B_q(t)$ defined in \eqref{eq:critical-riesz-measurable-besov-bound}.  If $d-1\le s<d$, then $\sigma_s=d-s\in(0,1]$ and we put $w_\eps=u-u_\eps$.  Then, uniformly in the particle
center $a$,
\[
\left|\operatorname{p.v.}\!\int
 w_\eps(x)\rho(x)\cdot\nabla g_s(x-a)\,\dd x\right|
 \le C_{d,s,q}\mathcal B_q(t)^2e_{q,s}(\eps),
\]
and
\[
\left|\int w_\eps\cdot u\rho\right|
 \le C_{d,s,q}\mathcal B_q(t)^2\eps.
\]
\end{enumerate}
\end{proposition}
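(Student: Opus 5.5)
The plan is to prove the four parts in order, using throughout one radial device. For $d-1\le s<d$ the kernel $\nabla g_s$ is odd and homogeneous of degree $-s-1$. Hence every centered annulus $\{\delta<|z|<1\}$ gives zero when integrated against a constant. Subtracting the value at the center therefore turns each truncated integral into an integral of an increment. Near the diagonal we bound that increment by the $\beta_q$-log-Lipschitz modulus \eqref{eq:critical-riesz-density-modulus} of $\rho$, or by the Zygmund modulus of $w$. The resulting radial majorant is $r^{d-s-1}\ell(r)^{\beta_q}$, which is integrable because $s<d$.

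\emph{Part (i).} We write
\[
u_\delta(a)=\int_{\delta<|z|<1}\nabla g_s(-z)\bigl[\rho(a+z)-\rho(a)\bigr]\,\dd z+\int_{|z|\ge1}\nabla g_s(-z)\rho(a+z)\,\dd z .
\]
The integrand of the first term is bounded by $C\mathcal B_\infty r^{-s}\ell(r)$. By Lemma~\ref{lem:critical-riesz-radial-integrals}, the missing piece over $|z|<\delta$ is bounded by $C\mathcal B_\infty\delta^{\sigma_s}\ell(\delta)$. This gives a uniform limit, namely the right-hand side of \eqref{eq:critical-riesz-pv-density}, together with the rate \eqref{eq:critical-riesz-pv-field-uniform-rate}. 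It remains to identify this limit with the distributional field $u$. We pair with a Schwartz test function: $u_\delta\to\nabla g_s*\rho$ in $\mathcal S'$, because $G_{s,\delta}\to\operatorname{p.v.}\nabla g_s$ as tempered distributions. Both limits are continuous, so they agree everywhere. The Borel representatives of Proposition~\ref{prop:critical-riesz-borel-reference} then give the statement for a.e. $t$.

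\emph{Part (ii).} For fixed $\delta$ the kernel $G_{s,\delta}$ is bounded, so \eqref{eq:critical-riesz-commutator-representation-truncated} is exactly the oddness computation \eqref{eq:abs-rough-identity} with $G=G_{s,\delta}$. To pass to $\delta\downarrow0$, take $\delta$ below half the minimal separation, so that $K_i^\delta=K_i$. Part (i) gives $u_\delta\to u$ uniformly, and hence $\int w\cdot u_\delta\rho\to\int w\cdot u\rho$. The mixed truncated integral converges by the same centered-annulus argument applied to the product $w\rho$. This product has the modulus $\min\{1,r\ell(r)\}$ up to constants, by the product rule as in Lemma~\ref{lem:critical-riesz-product-modulus}. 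So the principal values exist, and this proves \eqref{eq:critical-riesz-commutator-representation-pv}.

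\emph{Part (iii).} Proposition~\ref{prop:critical-riesz-regularization} already shows $C_N^\delta[w]\to C_N[w]$ for the symmetrized formula. Part (ii) shows $C_N^\delta[w]$ tends to the principal value expression. Uniqueness of limits gives the identification. For the majorant we use the form \eqref{eq:critical-riesz-commutator-representation-pv}. Cauchy--Schwarz bounds the first term by $2M_w\bigl(N^{-1}\sum_i|K_i|^2\bigr)^{1/2}+2M_w\|u\|_\infty$. The remaining terms are bounded by $CM_wB_T^2$ through the increment bound above, with $\|u\|_\infty\le CB_T$. The time integral follows from Cauchy--Schwarz in $t$ and the dissipation identity \eqref{eq:unified-microscopic-dissipation}, using $H_N\ge0$ for Riesz kernels.

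\emph{Part (iv).} This is the main step. The pairing $\int w_\eps\cdot u\rho$ is bounded by $\|w_\eps\|_\infty\|u\|_\infty\le C\mathcal B_q^2\eps$, using \eqref{eq:critical-riesz-mollifier-estimates}. For the principal value centered at $a$, we subtract $w_\eps(a)\rho(a)$ on the unit annulus. The near part is then bounded, via \eqref{eq:critical-riesz-product-modulus}, by
\[
C\mathcal B_q^2\int_0^{1/2}\bigl[\min\{\eps,r\ell(r)^{\beta_q}\}+\eps r\ell(r)^{\beta_q}\bigr]r^{\sigma_s-2}\,\dd r .
\]
By \eqref{eq:critical-three-regime-radial-scales} this is $C\mathcal B_q^2e_{q,s}(\eps)$. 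The far part $|z|\ge1$ is at most $\|w_\eps\|_\infty\|\rho\|_{L^1}\le C\mathcal B_q\eps$, which is absorbed into $e_{q,s}(\eps)$.

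The main obstacle is keeping the estimate uniform in $a$ with the sharp $\eps$-dependence. This relies on the minimum structure in \eqref{eq:critical-riesz-product-modulus}, specifically on bounding $|w_\eps(x)-w_\eps(y)|$ by $\min\{\eps,\cdot\}$. Lemma~\ref{lem:critical-riesz-product-modulus} provides that bound, so I expect it to go through.
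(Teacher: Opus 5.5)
Your proposal is correct and follows the paper's proof essentially step for step. Parts (i)--(ii) use centered-annulus cancellation with the log-Lipschitz moduli of $\rho$ and $w$ to get the integrable majorant $r^{d-s-1}\ell(r)^{\beta_q}$; part (iii) identifies the cutoff limit with the symmetrized formula via Proposition~\ref{prop:critical-riesz-regularization} and then uses energy dissipation; part (iv) combines the product modulus with the three-regime radial integrals. The one detail you leave implicit is the annulus $\tfrac12\le|z|<1$ in (iv), which is trivially $O(\mathcal B_q(t)^2\eps)$ because $\nabla g_s$ is bounded there and $\|w_\eps\|_\infty\le C\mathcal B_q(t)\eps$.
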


\begin{proof}
\emph{Part~(i).}  By oddness of $\nabla g_s$ on centered annuli,
\[
 u_\delta(t,a)=\int_{\delta<|z|<1}\nabla g_s(-z)
 [\rho_t(a+z)-\rho_t(a)]\,\dd z
 +\int_{|z|\ge1}\nabla g_s(-z)\rho_t(a+z)\,\dd z.
\]
Since $\alpha=s-d+2\ge1$, the density modulus
\eqref{eq:critical-riesz-density-modulus} bounds the omitted ball by
\[
 C\|\mathcal B_\infty\|_{L^\infty(0,T)}
 \int_0^\delta r^{d-s-1}\left(1+\log\frac1r\right)\,\dd r
 \le C\|\mathcal B_\infty\|_{L^\infty(0,T)}
 \delta^{d-s}(1+|\log\delta|).
\]
This proves \eqref{eq:critical-riesz-pv-density} and
\eqref{eq:critical-riesz-pv-field-uniform-rate}; the far part is absolutely
integrable because $|\nabla g_s|$ is bounded on $|z|\ge1$.  Applying the same
centered cancellation to a test function shows that the uniform limit equals
$\nabla g_s*\rho_t$ in distributions, hence agrees with its continuous
representative.

\emph{Part~(ii).}  For fixed $\delta$, with
$\nu_N=N^{-1}\sum_i\delta_{x_i}-\rho$, oddness gives the absolutely
convergent identity
\[
 C_N^\delta[w]=2\int w\cdot(G_{s,\delta}*\nu_N)\,\dd\nu_N.
\]
Expanding the outer measure and using
$(G_{s,\delta}*\nu_N)(x_i)=K_i^\delta-u_\delta(x_i)$ yields
\eqref{eq:critical-riesz-commutator-representation-truncated} with the stated
coefficients.  The mixed principal values exist because
\[
 |w(a+z)\rho(a+z)-w(a)\rho(a)|
 \le C|z|\left(1+\log\frac1{|z|}\right),
\]
so the radial majorant is $r^{d-s-1}(1+\log(1/r))\in L^1(0,1)$.  The
continuous--continuous term in the formula obtained by symmetrization is absolutely integrable by
the same estimate.  Letting $\delta\downarrow0$, absence of collisions gives
$K_i^\delta=K_i$ for small $\delta$, Part~(i) gives
$u_\delta\to u$ uniformly, and dominated convergence in the mixed and continuous terms after symmetrization proves
\eqref{eq:critical-riesz-commutator-representation-pv}.

\emph{Part~(iii).}  Part~(ii) identifies the centered cutoff limit with the
principal value expression, while
Proposition~\ref{prop:critical-riesz-regularization} identifies the same limit
and the even regularization limit with the absolutely convergent formula obtained by symmetrization.  Thus all three quantities coincide.
Moreover, the particle contribution is bounded by
$2M_w((N^{-1}\sum_i|K_i|^2)^{1/2}+\|u\|_\infty)$, and the preceding uniform
bound controls the mixed and continuous terms by
$CM_wB_T^2$.  Finally
\[
 \int_0^T\frac1N\sum_i|K_i(X_N(t))|^2\,\dd t
 \le H_N(X_N(0))
\]
follows from discrete energy dissipation, and Cauchy--Schwarz gives
\eqref{eq:critical-riesz-pv-time-majorant}.

\emph{Part~(iv).}  For $w_\eps=u-u_\eps$, the near-diagonal term after symmetrization
is bounded by the radial integral in
\eqref{eq:critical-three-regime-radial-remainder}; hence
\eqref{eq:critical-three-regime-radial-scales} gives
$C\mathcal B_q(t)^2e_{q,s}(\eps)$.  The regions $|z|\ge1/2$ are
$O(\mathcal B_q(t)^2\eps)$ and are absorbed by this bound.  Finally
\[
 \left|\int w_\eps\cdot u\rho\right|
 \le \|w_\eps\|_\infty\|u\|_\infty
 \le C\mathcal B_q(t)^2\eps,
\]
which proves the last assertion.
\end{proof}

\section{Coulomb particle dynamics}\label{app:coulomb-particle-dynamics}
The global collision-free flow is standard for repulsive Coulomb/Riesz
gradient systems \cite{Duerinckx2016,Serfaty1}; the proposition below records
the quantitative estimates used in the singular chain rule.

\begin{proposition}[Quantitative estimates for the Coulomb particle flow]
\label{prop:coulomb-particle-flow}
Let $d\ge2$, $N\ge2$, and $X_N^0\in\Omega_N$.  The particle system for
$g_d$ has a unique global smooth solution
$X_N\in C^\infty([0,\infty);\Omega_N)$ and
\begin{equation}\label{eq:coulomb-particle-dissipation}
 \frac{\dd}{\dd t}H_N(X_N(t))
 =-\frac1N\sum_{i=1}^N|K_i(X_N(t))|^2.
\end{equation}
More precisely:
\begin{enumerate}
\item[(i)] If $d\ge3$, then
\begin{equation}\label{eq:coulomb-particle-moment-high-d}
 \frac{\dd}{\dd t}m_{2,N}(X_N(t))=2(d-2)H_N(X_N(t)),
\end{equation}
\begin{equation}\label{eq:coulomb-high-d-separation}
 \min_{i\ne j}|x_i(t)-x_j(t)|
 \ge\Bigl((d-2)|\mathbb S^{d-1}|N^2H_N(X_N^0)\Bigr)^{-1/(d-2)},
\end{equation}
and
\[
 m_{2,N}(X_N(t))
 \le m_{2,N}(X_N^0)+2(d-2)tH_N(X_N^0).
\]
\item[(ii)] If $d=2$, then
\begin{equation}\label{eq:particle-moment-d2}
 m_{2,N}(X_N(t))=m_{2,N}(X_N^0)+\frac{N-1}{2\pi N}t.
\end{equation}
For every $T>0$, define
\[
 D_{N,T}:=\max\left\{1,
 2\sqrt{N\left(m_{2,N}(X_N^0)+\frac{N-1}{2\pi N}T\right)}\right\},
\]
\begin{equation}\label{eq:BN-planar}
 B_{N,T}:=\frac{N-1}{4\pi N}\log D_{N,T},
\end{equation}
and
\[
 A_{N,T}:=N^2H_N(X_N^0)
 +\left(\binom N2-1\right)\frac{\log D_{N,T}}{2\pi}.
\]
Then
\[
 \inf_{0\le t\le T}H_N(X_N(t))\ge-B_{N,T},
\]
\begin{equation}\label{eq:explicit-particle-separation}
 \min_{0\le t\le T}\min_{i\ne j}|x_i(t)-x_j(t)|
 \ge \min\{1,e^{-2\pi A_{N,T}}\},
\end{equation}
and
\begin{equation}\label{eq:particle-force-error-integrability}
 \int_0^T\frac1N\sum_{i=1}^N|K_i(X_N(t))|^2\,\dd t
 \le H_N(X_N^0)+B_{N,T}<\infty.
\end{equation}
\end{enumerate}
\end{proposition}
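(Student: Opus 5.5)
The argument splits into local well-posedness with energy dissipation, then the two dimension-specific bounds that exclude collisions and escape to infinity.

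\textbf{Local existence and dissipation.} On the open set $\Omega_N$ the vector field $X_N\mapsto(-K_i^{g_d}(X_N))_i$ is real-analytic, so Cauchy--Lipschitz gives a unique maximal smooth solution on $[0,T^*)$. Along it, Proposition~\ref{prop:unified-particle-identities} gives the dissipation identity \eqref{eq:coulomb-particle-dissipation} and the virial identity \eqref{eq:unified-discrete-virial}. To show $T^*=\infty$ it suffices to prove, on every $[0,T]\cap[0,T^*)$, a positive lower bound on the minimal separation and an upper bound on $m_{2,N}$. Then the trajectory stays in a compact subset of $\Omega_N$, and the standard continuation criterion applies.

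\textbf{Case $d\ge3$.} Here $-z\cdot\nabla g_d(z)=(d-2)g_d(z)$. Inserting this in \eqref{eq:unified-discrete-virial} gives $\frac{\dd}{\dd t}m_{2,N}=\frac{d-2}{N^2}\sum_{i\ne j}g_d(x_i-x_j)=2(d-2)H_N$. Since $H_N$ is nonincreasing, integrating yields the moment bound. Positivity of $g_d$ gives, for every pair,
\[
\frac1{2N^2}\cdot 2\,g_d(x_i-x_j)\le H_N(X_N(t))\le H_N(X_N^0).
\]
Solving $\bigl((d-2)|\mathbb S^{d-1}|\bigr)^{-1}|x_i-x_j|^{2-d}\le N^2H_N(X_N^0)$ gives \eqref{eq:coulomb-high-d-separation}. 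Integrating the dissipation identity and using $H_N\ge0$ gives the $L^2_t$ force bound. These estimates are uniform, which proves global existence.

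\textbf{Case $d=2$.} Now $-z\cdot\nabla g=(2\pi)^{-1}$, so the virial identity gives $\frac{\dd}{\dd t}m_{2,N}=\frac{N(N-1)}{2\pi N^2}$, which integrates to \eqref{eq:particle-moment-d2}. On $[0,T]$ we have $|x_i|\le\sqrt{N m_{2,N}}$, hence every pair satisfies $|x_i-x_j|\le D_{N,T}$. Therefore each term obeys $-\log|x_i-x_j|\ge-\log D_{N,T}$. Summing over the $N(N-1)$ ordered pairs gives $H_N\ge-\frac{N-1}{4\pi N}\log D_{N,T}=-B_{N,T}$. This, with the dissipation identity, yields \eqref{eq:particle-force-error-integrability}.

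For separation, isolate one unordered pair $\{i,j\}$:
\[
\frac1{2\pi}\bigl(-\log|x_i-x_j|\bigr)=N^2H_N-\sum_{\text{other pairs}}\frac{-\log|x_k-x_l|}{2\pi}\le N^2H_N(X_N^0)+\Bigl(\binom N2-1\Bigr)\frac{\log D_{N,T}}{2\pi}=A_{N,T}.
\]
Exponentiating gives $|x_i-x_j|\ge e^{-2\pi A_{N,T}}$. The extra $\min$ with $1$ covers the case where $A_{N,T}$ is negative.

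\textbf{Expected difficulty.} The main obstacle is the planar case, since $g$ has no sign. The order of the argument matters there. The exact virial identity must be obtained first, because it controls the spread $D_{N,T}$. That spread then supplies the lower bound on the off-diagonal logarithmic sum, and only afterwards can the energy bound be converted into separation. Each bound is established on $[0,T]\cap[0,T^*)$ and then used to rule out $T^*\le T$.
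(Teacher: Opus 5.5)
Your proposal is correct and follows the paper's proof essentially step by step. Both arguments use the virial identities from $-z\cdot\nabla g_d(z)=(d-2)g_d(z)$ and $-z\cdot\nabla g_2=(2\pi)^{-1}$, and the pairwise bound from $g_d\ge0$ for separation when $d\ge3$. In the plane both follow the same chain: moment identity, then the diameter bound $D_{N,T}$, then $H_N\ge-B_{N,T}$, then isolating one pair. The only difference is that you justify global existence by a continuation argument on $[0,T]\cap[0,T^*)$, whereas the paper cites it as standard.
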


\begin{proof}
Global existence and absence of collisions for the repulsive Coulomb particle
flow are standard; see, for example, \cite{Duerinckx2016,Serfaty1}.  We record
the quantitative bounds needed in the singular limiting argument.  The
energy dissipation and virial identities follow from
Proposition~\ref{prop:unified-particle-identities}.

If $d\ge3$, then
\[
 -z\cdot\nabla g_d(z)=(d-2)g_d(z),
\]
so
\[
 \frac{\dd}{\dd t}m_{2,N}(X_N(t))=2(d-2)H_N(X_N(t)).
\]
Since $g_d\ge0$ and $H_N(t)\le H_N(0)$, every unordered pair satisfies
\[
 \frac1{N^2}g_d(x_i(t)-x_j(t))\le H_N(X_N^0),
\]
which gives \eqref{eq:coulomb-high-d-separation}.  Integrating the preceding
virial identity and using $H_N(t)\le H_N(0)$ gives the asserted second-moment
bound and \eqref{eq:coulomb-particle-moment-high-d}.

Suppose now $d=2$.  Since
\[
 -z\cdot\nabla g_2(z)=\frac1{2\pi},
\]
the discrete virial identity gives
\[
 \frac{\dd}{\dd t}m_{2,N}(X_N(t))=\frac{N-1}{2\pi N},
\]
and hence \eqref{eq:particle-moment-d2}.  For $0\le t\le T$,
\[
 |x_i(t)-x_j(t)|
 \le2\sqrt{N\left(m_{2,N}(X_N^0)+\frac{N-1}{2\pi N}T\right)}
 \le D_{N,T}.
\]
Because $g_2(z)=-(2\pi)^{-1}\log|z|$, every unordered pair is bounded below
by $-(2\pi)^{-1}\log D_{N,T}$.  Therefore
\[
 H_N(X_N(t))\ge-B_{N,T}.
\]
Fixing one pair and using this lower bound for all the remaining pairs gives
\[
 g_2(x_i(t)-x_j(t))
 \le N^2H_N(X_N^0)
 +\left(\binom N2-1\right)\frac{\log D_{N,T}}{2\pi}
 =A_{N,T},
\]
which proves \eqref{eq:explicit-particle-separation}.  Finally, integrating
\eqref{eq:coulomb-particle-dissipation} and using
$H_N(X_N(T))\ge-B_{N,T}$ gives
\[
 \int_0^T\frac1N\sum_{i=1}^N|K_i(X_N(t))|^2\,\dd t
 \le H_N(X_N^0)+B_{N,T},
\]
which is \eqref{eq:particle-force-error-integrability}.
\end{proof}

\section{Measurability and integrability for transported couplings}\label{app:measurability}
We establish the measurability and integrability properties used when the
pathwise estimates are averaged over a coupling transported by the two flows.
We first treat a countable family of fixed scales and then apply the result to
the Riesz case under the stated Besov regularity and the Coulomb case.

\begin{lemma}
\label{lem:parameterized-borel-integration}
Let $(\Theta,\mathcal A)$ be a standard Borel space and let
$f:\Theta\times\mathbb R^m\to\mathbb R$ be Borel.  If
$\int_{\mathbb R^m}|f(\theta,z)|\,\dd z<\infty$ for every $\theta$, then
\[
 \theta\longmapsto\int_{\mathbb R^m}f(\theta,z)\,\dd z
\]
is Borel.  The same conclusion holds for iterated integrals and for a Borel
family of probability densities $q(\theta,z)$ after replacing $f$ by $fq$.
If integrability holds only almost everywhere with respect to a Borel
probability measure on $\Theta$, the integral has a Borel representative after
modification on a Borel null set.
\end{lemma}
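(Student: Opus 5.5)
The plan is to reduce the statement to joint Borel measurability of $(\theta,z)\mapsto f(\theta,z)$ together with the classical Fubini--Tonelli measurability theorem. We split $f=f^+-f^-$ and first treat nonnegative Borel $f$. For such $f$, Tonelli's theorem on the product $\Theta\times\mathbb R^m$, with a finite measure on the $\Theta$ factor, shows that
\[
 \theta\longmapsto\int_{\mathbb R^m}f^\pm(\theta,z)\,\dd z\in[0,\infty]
\]
is $\mathcal A$-measurable. A cleaner route that avoids choosing a measure on $\Theta$ is a monotone class argument. One first checks the claim for indicators $\mathbf 1_{A\times B}$ with $A\in\mathcal A$ and $B$ a Borel set; there the integral equals $\mathbf 1_A(\theta)|B|$. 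Since $\mathbb R^m$ is $\sigma$-finite, we then truncate to $B\cap B_n$ and apply Dynkin's $\pi$--$\lambda$ theorem, which extends the claim to all product-Borel sets. Simple functions follow by linearity, and monotone convergence extends it to nonnegative Borel $f$.

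Under the hypothesis $\int|f(\theta,\cdot)|<\infty$ for every $\theta$, both parts $\int f^\pm(\theta,\cdot)$ are finite Borel functions. Their difference is therefore a real-valued Borel function equal to $\int f(\theta,z)\,\dd z$.

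Iterated integrals follow by induction: apply the same statement to the Borel function obtained after the first integration, viewed on $\Theta\times\mathbb R^{m'}$. For the density version, $fq$ is Borel as a product of Borel functions, so the case already proved applies to it.

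For the almost-everywhere version, let $P$ be the Borel probability measure on $\Theta$. The set
\[
 \Theta_0:=\Bigl\{\theta:\int|f(\theta,z)|\,\dd z<\infty\Bigr\}
\]
is Borel by the nonnegative case applied to $|f|$, and by assumption $P(\Theta\setminus\Theta_0)=0$. We define the representative to be $\mathbf 1_{\Theta_0}(\theta)\int f(\theta,z)\,\dd z$, setting it to $0$ off $\Theta_0$. This function is Borel because, on $\Theta_0$, it is the difference of the two finite Borel functions obtained from $f^\pm$. It agrees with the integral off a Borel null set.

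The only point requiring care, which I expect to be the main obstacle, is making the monotone class step work with Lebesgue measure, which is infinite. This is handled by the $\sigma$-finite exhaustion $B_n\uparrow\mathbb R^m$ described above. A second point of care is in the almost-everywhere version: using $\infty-\infty$ must be avoided, and this is why we restrict to the Borel set $\Theta_0$ before subtracting.
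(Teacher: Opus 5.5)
Your proposal is correct and follows the same route as the paper. Both prove the nonnegative case on rectangles and extend it by a monotone class argument, split $f=f^+-f^-$, recurse for iterated integrals, and pass to the Borel set $\Theta_0=\{\theta:\int|f(\theta,\cdot)|<\infty\}$ for the almost-everywhere version. You give more detail than the paper's sketch, notably the $\sigma$-finite truncation $B\cap B_n$ that keeps the $\lambda$-system closed under proper differences, and you implicitly use that the Borel sets of $\Theta\times\mathbb R^m$ are exactly $\mathcal A\otimes\mathcal B(\mathbb R^m)$, which holds because $\Theta$ is standard Borel.
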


\begin{proof}
For nonnegative Borel kernels this is the standard theorem on integration of Borel kernels, obtained first for indicator rectangles and then by a monotone class
argument; apply it to the positive and negative parts.  Iterated integrals
follow recursively.  If absolute integrability holds only almost everywhere,
the set on which the integral is finite is Borel, and redefining the integral
on its Borel null complement gives the stated representative.
\end{proof}

\begin{proposition}
\label{prop:abstract-coupling-space-measurability}
Let $(\mathcal X,\mathcal F,\mathbb P)$ be a standard Borel probability space and
let $\mathcal D\subset(0,1)$ be countable.  Suppose that along a transported
coupling there are jointly Borel representatives
\[
 Q,\quad F,\quad R,\quad \mathcal C,\quad Q',\quad F',\quad
 R_\eps\qquad(\eps\in\mathcal D),
\]
such that $Q$ and $F$ are absolutely continuous in time for
$\mathbb P$-almost every initial datum.  Assume that, for every
$\eps\in\mathcal D$, the required transport estimate at the fixed scale parameter, chain rule,
commutator, and force estimates hold outside a null set, and that all terms
appearing in those estimates at fixed scales are integrable.  In particular, assume
\begin{equation}\label{eq:abstract-product-space-integrability}
 \mathbb E_{\mathbb P}\!\left[
 |Q(0)|+|F(0)|
 +\int_0^T\bigl(|Q'|+|F'|+R+R_\eps\bigr)\,\dd t
 \right]<\infty.
\end{equation}
Then all identities for $\eps\in\mathcal D$ hold on a common Borel set of
full measure in $[0,T]\times\mathcal X$, every displayed term may be averaged by
Tonelli--Fubini, and any Borel $\mathcal D$-valued map may be
inserted after the inequalities at fixed mollification scale have been averaged.
\end{proposition}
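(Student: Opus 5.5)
Since each hypothesis holds outside a $\mathcal D$-indexed null set, the plan is to intersect countably many exceptional sets and then apply Fubini--Tonelli on the product space $[0,T]\times\mathcal X$.

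First, for each $\eps\in\mathcal D$, let $N_\eps\subset[0,T]\times\mathcal X$ be the set where one of the fixed-scale inequalities fails: the transport estimate, the chain rule $F'=-2R-\mathcal C$, the commutator bound, or the force comparison $R\le c_RR_\eps+\lambda E_2(\eps)$. Each of these is a Borel inequality between jointly Borel representatives, so $N_\eps$ is Borel. Add the Borel set where $Q$ or $F$ fails to be absolutely continuous, or where $Q',F'$ fail to be their a.e.\ derivatives. By hypothesis each such set has $\mathrm{Leb}\otimes\mathbb P$-measure zero; for the statements that hold for a.e.\ initial datum and a.e.\ time, this follows from Tonelli applied to the indicator. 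Since $\mathcal D$ is countable, the union $N:=\bigcup_{\eps}N_\eps\cup N_{\rm AC}$ is a Borel null set. Its complement is the common full-measure set on which all fixed-scale identities hold simultaneously.

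Second, for averaging, I would use \eqref{eq:abstract-product-space-integrability} together with Lemma~\ref{lem:parameterized-borel-integration}. These show that $t\mapsto\mathbb E[Q(t)]$, $\mathbb E[F(t)]$, $\mathbb E[R(t)]$ and $\mathbb E[R_\eps(t)]$ are Borel. Since $Q(t)=Q(0)+\int_0^tQ'$ pathwise for a.e.\ datum, and $Q(0)$ and $Q'$ are integrable on the product space, Fubini gives
\[
 \mathbb EQ(t)=\mathbb EQ(0)+\int_0^t\mathbb EQ'(r)\,\dd r .
\]
Hence $\mathbb EQ\in AC([0,T])$ with derivative $\mathbb EQ'$ a.e., and the same holds for $F$. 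Every fixed-scale inequality, integrated in $\mathbb P$ at times outside the $t$-section null set (a Lebesgue-null set of times, again by Tonelli), yields the averaged inequality. A single time-null set $Z=\{t:\mathbb P(N^t)>0\}$ serves all $\eps\in\mathcal D$ at once.

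Third, consider a Borel map $\eps_*:[0,T]\to\mathcal D$ chosen after averaging. For $t\notin Z$ the averaged inequality holds for every $\eps\in\mathcal D$, and in particular for $\eps=\eps_*(t)$. Measurability of $t\mapsto\overline R_{\eps_*(t)}(t)=\sum_{\eps\in\mathcal D}\mathbf 1_{\{\eps_*=\eps\}}(t)\,\overline R_\eps(t)$ follows because this is a countable sum of Borel functions. One point needs care: if $\eps_*$ depended on the configuration, its insertion would require pathwise measurability. Here, however, $\eps_*$ is deterministic, so nothing further is needed.

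I expect the main obstacle to be the first step, namely checking that the exceptional sets are Borel and jointly null rather than merely null for each fixed datum. I would handle this by choosing Borel versions of $Q'$ and $F'$, for example as limsups of difference quotients, which are jointly Borel. The remaining bookkeeping is routine Tonelli.
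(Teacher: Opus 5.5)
Your proposal is correct and takes essentially the paper's route: the paper likewise enlarges each of the countably many exceptional sets to a Borel null set and intersects the complements, uses the integrability hypothesis for Tonelli--Fubini, and notes that composition with a Borel $\mathcal D$-valued map preserves measurability. Your version spells out more: the Fubini derivation $\mathbb E Q(t)=\mathbb E Q(0)+\int_0^t\mathbb E Q'$, the single time-null set, and the countable-sum formula for $\overline R_{\eps_*(t)}(t)$. One small correction: the set of data on which $Q$ or $F$ fails to be absolutely continuous need not itself be Borel, since it is a projection-type set. As the paper does, replace it by a Borel $\mathbb P$-null superset; nothing else in your argument changes.
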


\begin{proof}
For each $\eps\in\mathcal D$, enlarge the exceptional set to a Borel null
set and intersect the complements.  Countability of $\mathcal D$ leaves a
Borel set of full measure on which the entire family of fixed scales is valid.
The integrability assumption gives Tonelli--Fubini for every term.  Composition with a Borel $\mathcal D$-valued map preserves measurability.
\end{proof}

\begin{proposition}
\label{prop:critical-riesz-borel-reference}
Let $\rho$ satisfy Definition~\ref{def:critical-riesz-reference-solution}.
There is a jointly Borel density representative of $\rho_t$ for every $t$,
which we continue to denote by $\rho(t,x)$, and it satisfies
\begin{equation}\label{eq:critical-riesz-all-time-Linfty}
 \sup_{0\le t\le T}\|\rho_t\|_{L^\infty}\le M_T.
\end{equation}
For every $1\le q\le\infty$ for which the corresponding Besov bound is
assumed, the dyadic Besov norm
$t\mapsto\|\rho_t\|_{B^\alpha_{\infty,q}}$ is Borel, finite for every time,
and bounded by its essential supremum in time.  The field
$u=\nabla g_s*\rho$ has a jointly Borel representative which, for every
$t$, is the bounded continuous Riesz field of
Proposition~\ref{prop:critical-riesz-field}.
\end{proposition}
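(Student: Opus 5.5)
The plan has three parts, in order. First, a measurable density representative defined at every time. Second, the time-slice $L^\infty$ and Besov bounds, extended from almost every time to every time. Third, a jointly Borel field.

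\emph{Step 1: the density representative.} Narrow continuity of $t\mapsto\rho_t$ in $\mathcal P_2$ makes $(t,x)\mapsto\rho_t*\varphi_k(x)$ jointly continuous for a fixed smooth mollifier sequence $\varphi_k$. I then define
\[
\rho(t,x):=\limsup_{k\to\infty}\rho_t*\varphi_k(x).
\]
This function is jointly Borel, as a limsup of continuous functions.

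\emph{Step 2: the $L^\infty$ bound at every time.} Fix $t$ and choose a sequence $t_n\to t$ of good times, where the essential bound holds. Weak-$*$ lower semicontinuity of the $L^\infty$ norm under narrow convergence gives $\|\rho_t\|_{L^\infty}\le M_T$. In particular $\rho_t$ is absolutely continuous and bounded, so by Lebesgue differentiation the limsup representative agrees with $\rho_t$ almost everywhere.

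\emph{Step 3: the Besov norm at every time.} This is the main point to check. Each block $\Delta_j\rho_t=\check\varphi_j*\rho_t$ is a convolution with a Schwartz kernel. Hence $\Delta_j\rho_{t_n}\to\Delta_j\rho_t$ pointwise, and by the uniform mass bound locally uniformly. It follows that $\|\Delta_j\rho_t\|_\infty\le\liminf_n\|\Delta_j\rho_{t_n}\|_\infty$. Fatou's lemma in the $\ell^q$ sum then gives lower semicontinuity of $t\mapsto\|\rho_t\|_{B^\alpha_{\infty,q}}$ along narrowly convergent sequences. Two conclusions follow. The norm is Borel, being a countable supremum or sum of lower semicontinuous functions $t\mapsto\|\Delta_j\rho_t\|_\infty$, each a sup over rational $x$ of continuous functions. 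It is also bounded at every time by its essential supremum, since every time is a limit of good times.

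\emph{Step 4: the field.} By Steps 2–3, every slice $\rho_t$ lies in $L^1\cap L^\infty\cap B^\alpha_{\infty,q}$, so Proposition~\ref{prop:critical-riesz-field} gives a bounded continuous field $u_t$ at every $t$, with uniform bounds. For joint measurability I follow the Coulomb argument in Proposition~\ref{prop:bounded-density-flow}. Write $u_t=\sum_j\Delta_j u_t$, where each block is $\check\psi_j*\rho_t$ for a smooth multiplier kernel; the low-frequency block has an integrable kernel, as in the proof of Proposition~\ref{prop:critical-riesz-field}. Each block is therefore jointly continuous in $(t,x)$ by narrow continuity. The series converges uniformly in $x$ at each fixed $t$, with the tail controlled by the uniform $B^1_{\infty,\infty}$ bound for $q<\infty$. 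For $q=\infty$, one instead uses the Zygmund bound together with even mollification, $u_t*\chi_\eps\to u_t$ uniformly at rate $\eps$ uniformly in $t$, and each $u_t*\chi_\eps=(\nabla g_s*\chi_\eps)*\rho_t$ is jointly continuous. In either case $u$ is a pointwise limit of jointly continuous functions, hence jointly Borel, and agrees with the continuous Riesz field at every $t$.

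The main obstacle I expect is the case $q=\infty$ together with the low-frequency/far-field behaviour of the kernel, which decays only like $|z|^{-s-1}$. Continuity of $t\mapsto(\nabla g_s*\chi_\eps)*\rho_t(x)$ under narrow convergence needs a bounded continuous test function. I plan to handle this by noting that $\nabla g_s*\chi_\eps$ is bounded and continuous for $s+1>0$, which it is.
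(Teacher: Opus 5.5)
Your proposal is correct and follows the paper's proof essentially step for step. In both, the density representative is the $\limsup$ of jointly continuous mollifications; narrow lower semicontinuity of the $L^\infty$ norm and of each dyadic block norm (with Fatou or the supremum in $j$) passes the bounds from almost every time to every time; and $u$ is obtained as a Littlewood--Paley series of jointly continuous blocks.

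Two small corrections. The separate mollification argument for $q=\infty$ is unnecessary: the $B^\alpha_{\infty,\infty}$ bound assumed in Definition~\ref{def:critical-riesz-reference-solution} already gives $\sup_t\|\Delta_ju_t\|_{L^\infty}\le C2^{-j}$ for every $q$. Also, what narrow continuity requires of the low-frequency kernel is that it be bounded and continuous, which holds because it is the inverse Fourier transform of an integrable multiplier. It is not integrable in general: it decays only like $|x|^{-s-1}$, so integrability fails for $s\le d-1$.
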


\begin{proof}
Fix a nonnegative $\chi\in C_c^\infty$ of unit mass and set
$\rho_n(t,x)=\int\chi_{1/n}(x-y)\,\dd\rho_t(y)$.  Narrow continuity makes
$\rho_n$ jointly continuous.  If $t_k\to t$ is chosen from the set of full measure
 on which the $L^\infty$ bound holds, then for every nonnegative
$\varphi\in C_c$,
\[
 \int\varphi\,\dd\rho_t
 =\lim_k\int\varphi\rho_{t_k}\le M_T\int\varphi,
\]
so the same $L^\infty$ bound holds at every time.  For each fixed $t$, the preceding estimate shows that $\rho_t$ is absolutely
continuous with an $L^\infty$ density.  The standard approximation by mollifiers and
Lebesgue differentiation therefore imply
$\rho_n(t,x)\to\rho_t(x)$ for almost every $x$.  Consequently, after modifying the representative on a null set if necessary,
we may take
\[
 \rho(t,x):=\limsup_n\rho_n(t,x),
\]
which is jointly Borel and represents $\rho_t$ for every $t$, with the bound
\eqref{eq:critical-riesz-all-time-Linfty}.

Let $\varphi_j$ be the kernel of $\Delta_j$ and, for $j\ge-1$, put
$b_j(t)=2^{j\alpha}\|\Delta_j\rho_t\|_\infty$.  Since
$\varphi_j*\rho_t$ is continuous in $x$ and depends continuously on $t$ at
each $x$,
\[
 b_j(t)=2^{j\alpha}\sup_{x\in\mathbb Q^d}|(\varphi_j*\rho_t)(x)|
\]
is Borel.  If $t_n\to t$ through times at which the assumed Besov bound
holds, then $b_j(t)\le\liminf_n b_j(t_n)$.  Fatou in the dyadic index for
$q<\infty$, and the supremum in $j$ for $q=\infty$, extend the essential
bound to every $t$.

Let $\Theta_j$ be the kernel of
$\Delta_j\nabla g_s*$.  The functions
$U_j(t,x)=\int\Theta_j(x-y)\,\dd\rho_t(y)$ are jointly continuous.  The
uniform-in-time dyadic bound and Proposition~\ref{prop:critical-riesz-field} give,
for the high-frequency tail,
\[
 \sup_{0\le t\le T}\sup_x\sum_{j\ge J}|U_j(t,x)|
 \le C2^{-J}
 \sup_{0\le t\le T}\|\rho_t\|_{B^\alpha_{\infty,\infty}}
 \longrightarrow0.
\]
Hence the Littlewood--Paley series converges uniformly on
$[0,T]\times\mathbb R^d$.  Its limit is jointly Borel (indeed continuous
blockwise with a uniform tail) and, for every time, equals the bounded
continuous representative of the distributional Riesz field.
\end{proof}

\begin{proposition}
\label{prop:endpoint-measurability-statement}
Let $d\ge2$, $d-2<s<d$, let $\rho$ satisfy
Definition~\ref{def:critical-riesz-reference-solution}, and let $\pi_N^0$ be
a coupling of a symmetric particle law $\rho_N^0$ and
$\rho_0^{\otimes N}$ with finite normalized quadratic transportation cost, no collision
mass, and
\[
 \int\bigl(F_N(X_N,\rho_0)+\eta_N\bigr)\,\dd\rho_N^0(X_N)<\infty.
\]
Transport the coupling by the particle and limiting flows, using the
representatives of Proposition~\ref{prop:critical-riesz-borel-reference}.
Then
\[
 Q_N,\quad F_N,\quad r_N,\quad r_N^{2^{-j}}\ (j\ge4),\quad
 C_N[u],\quad Q_N',\quad F_N'
\]
have jointly Borel representatives.  When $d-1\le s<d$ the
commutator is given by the absolutely convergent formula obtained by symmetrization and agrees almost everywhere with both the centered cutoff limit and the even regularization limit.  For every fixed $j\ge4$,
\begin{equation}\label{eq:critical-riesz-product-space-integrability}
\begin{aligned}
\mathbb E_{\pi_N^0}\Bigg[&\bigl|Q_N(0)+F_N(X_N(0),\rho_0)+\eta_N+N^{s/d-1}\bigr|\\
&+\int_0^T\Big(|Q_N'|+|F_N'|+r_N+r_N^{2^{-j}}
 +C_{d,s}\bigl(\mathcal B_\infty(t)+\mathcal B_\infty(t)^2\bigr)\big[\omega_*(Q_N)
 +\ell(2^{-j})(F_N+\eta_N)\\
&\hspace{40mm}+2^{-j\min\{1,\sigma_s\}}\ell(2^{-j})+2^{-2j}\big]\Big)\,\dd t\Bigg]<\infty,
\end{aligned}
\end{equation}
If $1\le q<\infty$ and \eqref{eq:critical-riesz-finite-q-bound} holds, the same integrability statement holds with $C_{d,s,q}(\mathcal B_q+\mathcal B_q^2)$, $\omega_{\beta_q}$, $\ell(2^{-j})^{\beta_q}$, and $e_{q,s}(2^{-j})$ in place of the corresponding $q=\infty$ terms.
Consequently all dyadic identities required by
Corollary~\ref{cor:averaged-optimized-comparison} hold on a common
set of full measure.
\end{proposition}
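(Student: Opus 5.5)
The plan is to reduce everything to Proposition~\ref{prop:abstract-coupling-space-measurability} with $\mathcal X=(\R^d)^N\times(\R^d)^N$ and $\mathbb P=\pi_N^0$, whose initial pair is transported by two flows. The first is the particle flow of Proposition~\ref{prop:riesz-global-particle-flow}, defined on the full-measure set $\Omega_N$. The second is the Osgood characteristic flow $\Psi^t$ of the limiting equation, which exists and pushes $\rho_0$ forward by Proposition~\ref{prop:unified-lagrangian-representation}. This requires two ingredients: joint Borel representatives of all listed quantities, and the integrability bound \eqref{eq:critical-riesz-product-space-integrability}.

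\emph{Measurability.} The particle flow is smooth in $(t,X_N^0)$ on $[0,T]\times\Omega_N$. The characteristic flow $\Psi^t(y)$ is the uniform limit of Picard or Euler iterates built from the jointly Borel field $u$ of Proposition~\ref{prop:critical-riesz-borel-reference}, and each iterate is Borel. Hence $(t,X_N^0,Y_N^0)\mapsto(X_N(t),Y_N(t))$ is Borel, and $Q_N$ is Borel. For $F_N$, $r_N$, $r_N^{2^{-j}}$ and $C_N[u]$, we compose Borel maps with integrals of jointly Borel kernels against $\rho(t,x)$. Lemma~\ref{lem:parameterized-borel-integration} makes these integrals Borel. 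For the singular terms we first use the cutoff/regularized versions, which are absolutely convergent and hence Borel, and then pass to the pointwise limit given by Propositions~\ref{prop:critical-riesz-regularization} and~\ref{prop:critical-riesz-pv-statement}; this limit is Borel, and the same propositions give the a.e.\ agreement of the three representations. The derivatives are explicit. $Q_N'$ is given by the identity of Proposition~\ref{prop:unified-transport-cost}. For $F_N'$ we take the Borel function $-2r_N-C_N[u]$ supplied by the chain rule Proposition~\ref{prop:riesz-singular-chain-rule}, which holds for every collision-free initial datum.

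\emph{Integrability.} At $t=0$, $\mathbb E Q_N(0)$ is the normalized transport cost and $F_N+\eta_N\ge0$ has finite mean by hypothesis. The key pathwise bound is energy dissipation:
\[
\int_0^T\frac1N\sum_i|K_i|^2\,\dd t\le H_N(X_N^0),
\]
and $H_N$ is integrable because it is controlled by $F_N+\eta_N$ plus the mixed and continuum energies. These are bounded using $\|g_s*\rho_0\|_\infty\le C(1+M_T)$, which follows since $g_s$ is locally integrable for $s<d$. Combined with $\|u\|_\infty\le C\mathcal B_\infty$, this gives $\mathbb E\int r_N<\infty$. The mollified error satisfies $r_N^{2^{-j}}\le 2r_N+C\mathcal B^2$. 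For $Q_N'$ we use Young's inequality, $|Q_N'|\le Q_N+r_N+\omega$-terms, where $\omega_*(Q_N)\le C(1+Q_N)$. We also need $\sup_t\mathbb E Q_N(t)<\infty$, which follows from $|\dot x_i|+|\dot y_i|\le|K_i|+C\mathcal B$ and the same dissipation bound.

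The main obstacle is $|F_N'|$, which amounts to bounding $|C_N[u]|$ without the quantitative commutator estimate. Here we use \eqref{eq:critical-riesz-pv-time-majorant}, which gives
\[
|C_N[u]|\le CM_u\Big(\big(N^{-1}\textstyle\sum_i|K_i|^2\big)^{1/2}+B_T^2\Big),
\]
whose time integral is at most $T^{1/2}H_N(X_N^0)^{1/2}+TB_T^2$, integrable as above. (In the range $s<d-1$ the analogous bound comes from Lemma~\ref{lem:near-far-locally-integrable-field}.) Finally, the bracket $\ell(2^{-j})(F_N+\eta_N)$ needs $\sup_t\mathbb E(F_N+\eta_N)$ finite. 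This follows from $F_N(t)=F_N(0)-\int_0^t(2r_N+C_N[u])$ together with the bounds just obtained.

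The finite-$q$ variant is identical, since $\mathcal B_q\ge\mathcal B_\infty$ up to constants and remains bounded. Proposition~\ref{prop:abstract-coupling-space-measurability} then yields the common full-measure set.
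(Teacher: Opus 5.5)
Your proposal is correct. It shares the paper's skeleton: the reduction to Proposition~\ref{prop:abstract-coupling-space-measurability}, the dissipation bound $\int_0^T N^{-1}\sum_i|K_i|^2\le H_N(X_N^0)$ with $\mathbb E H_N<\infty$ from Lemma~\ref{lem:initial-microscopic-energy-statement}(ii), and the choice $F_N'=-2r_N-C_N[u]$ from the singular chain rule. It departs from the paper in how you make the commutator integrable.

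The paper splits $u=u_{\bar\eps}+w_{\bar\eps}$ at a fixed dyadic scale. It bounds $C_N[u_{\bar\eps}]$ by the sharp first-order commutator estimate of Lemma~\ref{lem:static-riesz-transfer}, giving a multiple of $F_N+\eta_N$, which requires first knowing $F_N\in L^1(\dd t\otimes\dd\pi_N^0)$; the paper gets this from the energy expansion. It applies the time majorant \eqref{eq:critical-riesz-pv-time-majorant}, or \eqref{eq:abs-rough-identity} when $s<d-1$, only to the rough remainder $w_{\bar\eps}$.

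You instead apply the majorant directly with $w=u$. This is legitimate, since $\|u_t\|_\infty+[u_t]_{\Lambda_*}\le C\mathcal B_\infty(t)$ is bounded, so $M_u<\infty$. The near-diagonal cancellation in the mixed term then only needs the moduli of $u$ and $\rho$, which are available for $\alpha\ge1$. Your route avoids the imported finite-$N$ commutator inequality entirely. It also lets you recover $\sup_t|F_N|\in L^1(\pi_N^0)$ from $F_N(0)$ and $\int_0^T|F_N'|$, rather than from the energy expansion. The paper's split mirrors the fixed-scale decomposition used in the comparison argument and in the Coulomb analogue, Proposition~\ref{prop:coulomb-measurability-statement}. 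For the bare integrability claim, however, your argument is more economical.

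Your remaining deviations are also sound:
\begin{itemize}
\item You bound $Q_N$ through trajectory increments, $|q_i(t)|\le|q_i(0)|+\int_0^t(|K_i|+\|u\|_\infty)$, instead of through the particle and continuum second moments.
\item You make the Osgood characteristic flow Borel via Picard iterates. With the paper's representatives, $u$ is even jointly continuous, so joint continuity of the flow also suffices.
\end{itemize}
Both supply details the paper leaves implicit.
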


\begin{proof}
The Borel representatives of $\rho$ and $u$ are provided by
Proposition~\ref{prop:critical-riesz-borel-reference}; the particle flow and
all finite sums are Borel.  The symmetrized commutator is a Borel
parameter integral because its near-diagonal majorant is
$C|z|^{-s}(1+\log(1/|z|))$, which is locally integrable for $s<d$, and its
far field is absolutely integrable.  Proposition~\ref{prop:critical-riesz-regularization}
identifies this representative with both the centered cutoff limit and the even regularization limit.

Lemma~\ref{lem:initial-microscopic-energy-statement}\textup{(ii)} and the
discrete energy dissipation identity give
\[
 \mathbb E_{\pi_N^0}\int_0^T\frac1N\sum_i|K_i|^2\,\dd t<\infty,
\]
and hence $r_N$ and every fixed $r_N^\eps$ are integrable with respect to
$\dd t\otimes\dd\pi_N^0$.  The particle second-moment bound in
Proposition~\ref{prop:riesz-global-particle-flow}, together with
$\rho\in C([0,T];\mathcal P_2)$, gives
$Q_N\in L^1(\dd t\otimes\dd\pi_N^0)$.  From the explicit trajectory identity,
\[
 |Q_N'|\le 2Q_N^{1/2}r_N^{1/2}
 +4\|u_t\|_{L^\infty}Q_N^{1/2},
\]
so Cauchy--Schwarz and $u\in L^2(0,T;L^\infty)$ yield
$Q_N'\in L^1(\dd t\otimes\dd\pi_N^0)$.

Expanding $F_N$ in terms of the discrete interaction energy and the bounded
Riesz potential gives $F_N\in L^1(\dd t\otimes\dd\pi_N^0)$.  A fixed
dyadic scale $\bar\eps$ gives the commutator integrability required for the
subsequent comparison.  The sharp first-order estimate gives
\[
 |C_N[u_{\bar\eps}]|
 \le C_{d,s}\|\nabla u_{\bar\eps}\|_{L^\infty}(F_N+\eta_N),
\]
which is integrable.  If $d-2<s<d-1$, the absolutely convergent identity
\eqref{eq:abs-rough-identity}, the near-field/far-field estimate, and
$r_N\in L^1$ show that $C_N[u-u_{\bar\eps}]\in L^1$.  If
$d-1\le s<d$, Proposition~\ref{prop:critical-riesz-pv-statement}\textup{(iii)}
gives the time majorant
\[
 |C_N[u-u_{\bar\eps}]|
 \le C_{d,s,\bar\eps,\rho}
 \left(\left(\frac1N\sum_i|K_i|^2\right)^{1/2}+1\right),
\]
and the square-root term belongs to $L^1(0,T)$ by Cauchy--Schwarz and the microscopic
dissipation bound.  Thus $C_N[u]\in L^1$ in the super-Coulomb Riesz regime, and
the singular chain rule yields $F_N'=-2r_N-C_N[u]\in L^1$.  This establishes the required integrability before the averaged comparison
inequality is invoked.  Since $\mathcal B_\infty$ is bounded and Borel and
$\omega_*(z)\le C(1+z)$, all remaining terms at fixed scale in
\eqref{eq:critical-riesz-product-space-integrability} are integrable.
Proposition~\ref{prop:abstract-coupling-space-measurability} completes the
proof.
\end{proof}

\begin{proposition}
\label{prop:coulomb-measurability-statement}
Under the hypotheses of Theorem~\ref{thm:bounded-density-coulomb-stability},
let $\pi_N^0$ be a symmetric coupling of the initial particle law and
$\rho_0^{\otimes N}$ with finite normalized quadratic transportation cost,
and transport it by the particle and limiting flows.  Put
$P_N=\widehat mQ_N$.  For $j\ge4$ define
\[
 \eps_j(t):=R(t)2^{-j},\qquad
 u_j(t):=u_t*\chi_{\eps_j(t)},\qquad
 r_N^{2^{-j}}:=r_N[u_j].
\]
Then
\[
 Q_N,\quad P_N,\quad F_N,\quad r_N,\quad
 r_N^{2^{-j}}\ (j\ge4),\quad C_N[u],\quad
 Q_N',\quad P_N',\quad F_N'
\]
have jointly Borel representatives as functions of time and the initial point
in the coupling.  Moreover, $Q_N$, $P_N$, and $F_N$ are absolutely
continuous along $\pi_N^0$-almost every coupled trajectory.  For every fixed
$j\ge4$,
\begin{equation}\label{eq:coulomb-product-space-integrability}
\begin{aligned}
 \mathbb E_{\pi_N^0}\Bigg[&
 \bigl|P_N(0)+F_N(X_N(0),\rho_0)+\eta+\zeta\bigr|\\
 &+\int_0^T\Big(
 |P_N'|+|F_N'|+r_N+r_N^{2^{-j}}\\
 &\hspace{17mm}
 +C_d m(t)\big[
 \omega_*(P_N)+\ell(2^{-j})(F_N+\eta)\big]
 +C_d m(t)^{2-2/d}\big[2^{-j}+2^{-2j}\big]\Big)\,\dd t
 \Bigg]<\infty,
\end{aligned}
\end{equation}
where $\eta=\eta_{N,d}$ and
\[
 \zeta=
 \begin{cases}
  \dfrac{1+\log N}{N},&d=2,\\[2mm]
  N^{-2/d},&d\ge3.
 \end{cases}
\]
Consequently all identities
for the countable family of fixed dimensionless dyadic scales hold on a common
Borel set of full $(\dd t\otimes\dd\pi_N^0)$-measure.  Differentiation may be
passed through the fixed expectation, and the measurable choice of dyadic scale may
be made only after averaging.  The corresponding mollification radius
$\eps_j(t)=R(t)2^{-j}$ is deterministic and Borel in time.
\end{proposition}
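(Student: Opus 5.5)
The plan mirrors the proof of Proposition~\ref{prop:endpoint-measurability-statement}, using the Coulomb inputs of Section~\ref{sec:coulomb-reference-solutions} and Proposition~\ref{prop:coulomb-particle-flow}. The argument has three parts: Borel representatives, pathwise absolute continuity, and the integrability bound \eqref{eq:coulomb-product-space-integrability}. Once these are in hand, Proposition~\ref{prop:abstract-coupling-space-measurability} applied with $\mathcal D=\{2^{-j}:j\ge4\}$ gives the common full-measure set, Tonelli--Fubini, and the insertion of the dyadic selector after averaging.

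\emph{Representatives.} Proposition~\ref{prop:bounded-density-flow} provides a jointly Borel, bounded, continuous representative of $u_t$. For the density, we take the mollified-limsup construction of Proposition~\ref{prop:critical-riesz-borel-reference}, with narrow continuity of $\rho_t$ and the envelope $\|\rho_t\|_\infty\le m(t)$. The radius $\eps_j(t)=m(t)^{-1/d}2^{-j}$ is continuous in $t$, so $(t,x)\mapsto u_t*\chi_{\eps_j(t)}(x)$ is jointly Borel. Indeed, it is continuous in $x$ and is a Borel parameter integral by Lemma~\ref{lem:parameterized-borel-integration}. The particle flow is smooth on $\Omega_N$ and depends continuously on the data, and $\pi_N^0$ gives no mass to $\Delta_N$. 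The characteristic flow $\Psi^t$ is continuous in $(t,x)$ by Osgood uniqueness. Hence $Q_N$, $P_N$, $r_N$, and $r_N^{2^{-j}}$ are Borel. $F_N$ and $C_N[u]$ are Borel parameter integrals: the Coulomb commutator is absolutely convergent by the log-Lipschitz majorant in Step~4 of Proposition~\ref{prop:coulomb-singular-chain-rule}. Absolute continuity of $Q_N$ and $P_N$ follows from the explicit trajectory identity (Proposition~\ref{prop:unified-transport-cost}), since $\widehat m$ is smooth. Absolute continuity of $F_N$ is Proposition~\ref{prop:coulomb-singular-chain-rule}. The derivatives are taken as Borel a.e.\ limits of difference quotients.

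\emph{Integrability.} At $t=0$, $P_N(0)\le Q_N(0)$ is integrable because the coupling has finite cost, and $F_N(0)+\eta$ is integrable by hypothesis. For the dissipation term we need $\mathbb E\int_0^T N^{-1}\sum_i|K_i|^2\,\dd t<\infty$.
\begin{itemize}
\item For $d\ge3$ this is bounded by $\mathbb E H_N(X_N^0)$. Expanding $F_N$ gives $H_N=\tfrac12F_N+\int g*\rho_0\,\dd\mu_N-\tfrac12E(\rho_0)$, and $g*\rho_0$ is bounded, so $\mathbb E H_N(X_N^0)$ is finite.
\item For $d=2$ we use \eqref{eq:particle-force-error-integrability}. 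The term $B_{N,T}\lesssim\log(1+N m_{2,N}+T)$ is integrable because the second moment is finite. The term $H_N(X_N^0)$ is handled by the same expansion, now using $|g*\rho_0(x)|\le C(1+\log(1+|x|))$ and finite second moments.
\end{itemize}
Then $r_N$ and $r_N^{2^{-j}}\le 2r_N+C m^{2-2/d}$ are integrable. The derivative bound $|P_N'|\le m P_N+2\widehat m Q_N^{1/2}(r_N^{1/2}+2\|u\|_\infty)$ is integrable by Cauchy--Schwarz. Here $Q_N\in L^1(\dd t\otimes\dd\pi_N^0)$ follows from the particle moment bounds of Proposition~\ref{prop:coulomb-particle-flow} and $\rho\in C([0,T];\mathcal P_2)$. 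For $F_N'=-2r_N-C_N[u]$, we split $u=u_{\bar\vartheta}+w_{\bar\vartheta}$ at a fixed scale. Lemma~\ref{lem:unified-coulomb-static} gives $|C_N[u_{\bar\vartheta}]|\le Cm\,\ell(\bar\vartheta)(F_N+\eta)$. The rough identity \eqref{eq:abs-rough-identity} with \eqref{eq:coulomb-scale-pairing} gives $|C_N[w_{\bar\vartheta}]|\le C\|w_{\bar\vartheta}\|_\infty(r_N^{1/2}+m^{1-1/d})$. Both bounds are integrable provided $F_N+\eta\in L^1(\dd t\otimes\dd\pi_N^0)$.

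The main obstacle is this $L^1$ bound on $F_N+\eta$. It is circular, since the a priori bound on $F_N$ comes from the chain rule whose integrability we are establishing. I would break the circularity by bounding $F_N$ pathwise without the commutator. The expansion of $F_N$ through $H_N(X_N(t))$ works because $H_N$ is nonincreasing (and bounded below by $-B_{N,T}$ when $d=2$). The remaining potential terms are controlled by $\|g*\rho_t\|_\infty$ for $d\ge3$, and by logarithmic growth plus second moments for $d=2$. This gives $\sup_t|F_N(t)|\le C(|H_N(X_N^0)|+B_{N,T}+1+m_{2,N}(X_N^0)+\int|x|^2\rho_0)$, which is integrable. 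The remaining terms in \eqref{eq:coulomb-product-space-integrability} are then integrable because $m, m^{2-2/d}\in L^1(0,T)$ and $\omega_*(P_N)\le C(1+P_N)$.
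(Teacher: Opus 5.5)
Your proposal is correct and follows the paper's route. It builds Borel representatives by parameterized integration and controls the dissipation through $H_N(X_N^0)$ and $B_{N,T}$. It bounds $F_N$ pathwise via $2H_N=F_N+\frac2N\sum_i h_t(x_i)-E(\rho_t)$, which is exactly how the paper avoids the circularity, uses the fixed-scale split $u=u_{\bar\vartheta}+w_{\bar\vartheta}$ for $C_N[u]$, and closes with Proposition~\ref{prop:abstract-coupling-space-measurability}. The only differences are cosmetic: you get derivatives from difference quotients rather than the explicit formulas $F_N'=-2r_N-C_N[u]$ and the trajectory identity for $Q_N'$, and in $d=2$ you use logarithmic rather than quadratic growth of $g*\rho_t$.
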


\begin{proof}
The particle flow is Borel on the collision-free set and may be extended
arbitrarily on the Borel collision set.  The uniform-in-time $L^\infty$ bound
and narrow continuity of $\rho_t$ give a jointly Borel density representative
by the same mollifier approximation as in
Proposition~\ref{prop:critical-riesz-borel-reference}; the Coulomb field has a
jointly Borel continuous representative by
Proposition~\ref{prop:bounded-density-flow}.  Since
$t\mapsto\eps_j(t)$ is deterministic and continuous, parameterized Borel
integration shows that $(t,x)\mapsto u_j(t,x)$ is jointly Borel for every
$j$.  Finite particle sums are therefore Borel, while the mixed and continuous
terms in $F_N$ and $C_N[u]$ are Borel by
Lemma~\ref{lem:parameterized-borel-integration} and the near-field/far-field bounds.  We
take $F_N'=-2r_N-C_N[u]$ as the Borel representative of the chain rule
derivative and use the explicit trajectory formula for $Q_N'$.  Since $\widehat m$ is $C^1$,
\[
 P_N'=\widehat m'Q_N
 +\widehat mQ_N'
\]
is also a jointly Borel representative.

It remains to verify integrability with respect to
$\dd t\otimes\dd\pi_N^0$.  Lemma~\ref{lem:initial-microscopic-energy-statement}\textup{(i)} gives
\[
 \mathbb E_{\pi_N^0}\int_0^T\frac1N\sum_i|K_i|^2\,\dd t<\infty.
\]
Since $u$ is bounded on $[0,T]$,
\[
 r_N\le \frac2N\sum_i|K_i|^2+2\|u_t\|_\infty^2,
\]
so $r_N$ is integrable.  For each fixed $j$, the mollifier estimate in Proposition~\ref{prop:coulomb-scale-covariant-field} gives
\[
 r_N^{2^{-j}}
 \le2r_N+2\|u-u_j\|_\infty^2
 \le2r_N+C_d m(t)^{2-2/d}2^{-2j},
\]
and hence $r_N^{2^{-j}}$ is integrable as well.

The particle second-moment bound and
$\rho\in C([0,T];\mathcal P_2)$ imply
$Q_N\in L^1(\dd t\otimes\dd\pi_N^0)$, and
\[
 |Q_N'|\le2Q_N^{1/2}r_N^{1/2}
 +4\|u_t\|_\infty Q_N^{1/2}
\]
gives $Q_N'\in L^1$ by Cauchy--Schwarz.  On every fixed finite interval,
$m$ and $m'$ are bounded, so the same is true for
$P_N$ and $P_N'$.

Writing $h_t=g_d*\rho_t$, the identity
\[
 2H_N=F_N+\frac2N\sum_i h_t(x_i)-E(\rho_t)
\]
and the Coulomb potential bounds give
$F_N\in L^1(\dd t\otimes\dd\pi_N^0)$; in dimension two use
$|h_t(x)|\le C_{T,\rho_0}(1+|x|^2)$ together with the integrability of
$H_N(0)^++B_{N,T}$.

A fixed-scale decomposition yields the commutator integrability needed for
averaging.  Fix $\bar\vartheta=2^{-4}$ and write
$u=u_{\bar\vartheta}+w_{\bar\vartheta}$ with mollification radius
$R(t)\bar\vartheta$.  The sharp first-order commutator estimate and
Proposition~\ref{prop:coulomb-scale-covariant-field} give
\[
 |C_N[u_{\bar\vartheta}]|
 \le C_d m(t)(F_N+\eta),
\]
which is integrable.  The absolutely convergent identity
\eqref{eq:abs-rough-identity}, Cauchy--Schwarz, and
\eqref{eq:coulomb-scale-pairing} give
\[
 |C_N[w_{\bar\vartheta}]|
 \le C_d m(t)^{1-1/d}
 \bigl(r_N^{1/2}+m(t)^{1-1/d}\bigr),
\]
which is integrable on $[0,T]$.  Thus $C_N[u]$ and
$F_N'=-2r_N-C_N[u]$ are integrable prior to the comparison argument.

Finally, $m,m^{2-2/d}\in L^1(0,T)$ by the explicit density envelope and
\eqref{eq:coulomb-linear-drift-integral},
$\omega_*(z)\le C(1+z)$, and $F_N+\eta\ge0$.  Hence all remaining terms in
\eqref{eq:coulomb-product-space-integrability} are integrable.
Proposition~\ref{prop:abstract-coupling-space-measurability} applied to the
countable dimensionless scale set $\{2^{-j}:j\ge4\}$ gives the common full
measure set, Tonelli--Fubini, differentiation under the fixed expectation,
and the measurable dyadic choice after averaging.
\end{proof}

\subsection{Integrability of the initial discrete interaction energy}

\begin{lemma}
\label{lem:initial-microscopic-energy-statement}
\begin{enumerate}
\item[(i)] \emph{Coulomb interaction in all dimensions.}
Let \(d\ge2\), let
\(\rho\in\mathcal P_2(\R^d)\cap L^\infty(\R^d)\), and let
\(\rho_N\in\mathcal P_2((\R^d)^N)\) be symmetric and give no mass to the
collision set.  Assume
\[
 \int\bigl(F_N(X_N,\rho)+\eta\bigr)\,\dd\rho_N(X_N)<\infty,
 \qquad F_N+\eta\ge0,
\]
where \(F_N\) is the modulated energy for \(g_d\) and
\(\eta<\infty\) is deterministic.  If \(d\ge3\), then
\[
 \mathbb E_{\rho_N}H_N(X_N)<\infty.
\]
If \(d=2\), then
\[
 \mathbb E_{\rho_N}H_N(X_N)^+<\infty,
 \qquad
 \mathbb E_{\rho_N}B_{N,T}<\infty
 \quad\text{for every }T>0,
\]
with \(B_{N,T}\) from \eqref{eq:BN-planar}.  In every dimension \(d\ge2\),
the global Coulomb particle flow satisfies
\begin{equation}\label{eq:expected-coulomb-microscopic-dissipation}
 \mathbb E_{\rho_N}\int_0^T
 \frac1N\sum_{i=1}^N|K_i(X_N(t))|^2\,\dd t<\infty
 \qquad(T>0).
\end{equation}

\item[(ii)] \emph{Riesz interaction.}
Let \(0<s<d\), let \(\rho\in\mathcal P_2\cap L^\infty\), and let
\(\rho_N\) be a symmetric probability law with no collision mass.  Assume
\[
 \int\bigl(F_N(X_N,\rho)+\eta\bigr)\,\dd \rho_N(X_N)<\infty
\]
for a deterministic finite-\(N\) term \(\eta\) such that
\(F_N+\eta\ge0\).  Then
\(\mathbb E_{\rho_N}H_N<\infty\).  It follows that the particle second
moment remains integrable on every finite time interval under
Proposition~\ref{prop:riesz-global-particle-flow}.
\end{enumerate}
\end{lemma}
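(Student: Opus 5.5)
The plan is to express $H_N$ through $F_N$ by means of the algebraic identity already used in Proposition~\ref{prop:coulomb-measurability-statement}. With $h=g*\rho$ (either $g_d$ or $g_s$),
\[
 2H_N(X_N)=F_N(X_N,\rho)+\frac2N\sum_{i=1}^N h(x_i)-E(\rho).
\]
This follows by expanding the off-diagonal quadratic form defining $F_N$. Taking expectations, $\mathbb E F_N=\mathbb E(F_N+\eta)-\eta$ is finite, since $F_N+\eta\ge0$ has finite integral and $\eta$ is a deterministic finite number. Everything therefore reduces to controlling $\mathbb E\,N^{-1}\sum_i|h(x_i)|$ and $|E(\rho)|$.

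\emph{Riesz case and Coulomb $d\ge3$.} Here $g\ge0$. The near/far splitting of Lemma~\ref{lem:near-far-locally-integrable-field} type bounds $\|h\|_\infty$ by $C(1+\|\rho\|_\infty)$. Likewise $E(\rho)\le C(1+\|\rho\|_\infty)$ by \eqref{eq:auto-absolute-log-energy}, and the Riesz analogue follows by the same splitting. Hence $\mathbb E H_N\le \frac12\mathbb E(F_N+\eta)+C$, which is finite. Nonnegativity of $H_N$ makes this an $L^1$ statement. For the Riesz addendum, the virial identity \eqref{eq:unified-discrete-virial} with $-z\cdot\nabla g_s=sg_s$ gives $m_{2,N}(t)\le m_{2,N}(0)+2stH_N(0)$, using $H_N(t)\le H_N(0)$. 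Integrating against $\rho_N$ uses $\rho_N\in\mathcal P_2$ and $\mathbb E H_N(0)<\infty$.

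\emph{Planar case.} Now $h(x)=-(2\pi)^{-1}\int\log|x-y|\rho(y)\,\dd y$, which satisfies $|h(x)|\le C(1+\|\rho\|_\infty+\int|y|^2\rho)+C\log(1+|x|)$. The singular part near $y=x$ is handled by $\|\rho\|_\infty$, and the large-distance part by $\log|x-y|\le\log(1+|x|)+\log(1+|y|)$. Since $\log(1+|x|)\le 1+|x|^2$ and $\rho_N\in\mathcal P_2$, $\mathbb E N^{-1}\sum_i|h(x_i)|<\infty$. Moreover $E(\rho)$ is finite by \eqref{eq:auto-absolute-log-energy}. Hence $\mathbb E H_N$ is finite, and in particular $\mathbb E H_N^+<\infty$, because $H_N^+\le H_N+H_N^-$. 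Here $H_N^-$ is bounded by $(2\pi)^{-1}\log$ of the maximal pair distance, which is at most $C\log(1+N m_{2,N})\le C(1+N m_{2,N})$ and so has finite expectation for fixed $N$.

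For $B_{N,T}$ from \eqref{eq:BN-planar}, use $\log D_{N,T}\le \log\max\{1,\cdot\}\le C+\frac12\log(1+4N m_{2,N}(0)+CNT)$. This is bounded by $C_{N,T}(1+m_{2,N}(0))$, which is integrable since $\rho_N\in\mathcal P_2$.

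Finally, \eqref{eq:expected-coulomb-microscopic-dissipation} follows by integrating the dissipation identity \eqref{eq:coulomb-particle-dissipation} pathwise. For $d\ge3$ this gives $\int_0^T N^{-1}\sum|K_i|^2\le H_N(X_N^0)$. For $d=2$ the bound \eqref{eq:particle-force-error-integrability} gives $\le H_N(X_N^0)^++B_{N,T}$. Then take expectations, which is legitimate because the integrand is nonnegative and the right-hand sides are integrable, as shown above. The global collision-free flow is defined $\rho_N$-a.e. because $\rho_N(\Delta_N)=0$ and Proposition~\ref{prop:coulomb-particle-flow} applies.

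The only delicate point is the planar logarithmic growth of $h$ and of the negative part of $H_N$. It is handled by the second-moment assumption, with $N$-dependent constants being harmless here.
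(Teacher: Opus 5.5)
Your proof is correct and follows the paper's argument: the same expansion $2H_N=F_N+\frac2N\sum_i h(x_i)-E(\rho)$, the near/far bounds on $h$ and $E(\rho)$, and pathwise integration of the dissipation identity followed by Tonelli. The only deviation is in the planar case. There you use the two-sided bound $|h(x)|\lesssim 1+\log(1+|x|)$ together with the second moment of $\rho_N$ to get $H_N\in L^1$ outright, which makes your separate discussion of $H_N^-$ redundant. The paper instead needs only the one-sided bound $\sup h<\infty$, which holds because $g_2\le0$ for $|z|\ge1$, to control $H_N^+$.
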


\begin{proof}
\emph{Part~(i), \(d\ge3\).}
Let \(h=g_d*\rho\) and
\(E(\rho)=\int h\,\rho\).  The Coulomb near-field/far-field estimate gives
\[
 \|h\|_\infty+|E(\rho)|
 \le C_d(1+\|\rho\|_\infty).
\]
Expanding the modulated energy,
\[
 2H_N(X_N)
 =F_N(X_N,\rho)+\frac2N\sum_{i=1}^Nh(x_i)-E(\rho).
\]
Since \(g_d\ge0\), one has \(H_N\ge0\), and therefore
\[
 0\le2H_N(X_N)
 \le F_N(X_N,\rho)+\eta+2\|h\|_\infty+|E(\rho)|.
\]
The right-hand side is \(\rho_N\)-integrable, so
\(\mathbb EH_N(X_N)<\infty\).  Proposition~\ref{prop:coulomb-particle-flow}
and the discrete energy dissipation identity give pathwise
\[
 \int_0^T\frac1N\sum_i|K_i(X_N(t))|^2\,\dd t
 =H_N(X_N^0)-H_N(X_N(T))
 \le H_N(X_N^0).
\]
Tonelli's theorem yields
\eqref{eq:expected-coulomb-microscopic-dissipation}.

\emph{Part~(i), \(d=2\).}
Let \(h=g_2*\rho\).  Since \(g_2(z)\le0\) for
\(|z|\ge1\) and its positive near-field part is locally integrable,
\[
 \sup_{x\in\R^2}h(x)
 \le \frac{\|\rho\|_\infty}{2\pi}
 \int_{|z|<1}|\log|z||\,\dd z<\infty.
\]
The continuum logarithmic energy is absolutely finite by
\eqref{eq:auto-absolute-log-energy}.  The same expansion gives
\[
 H_N(X_N)
 \le\frac12\bigl(F_N(X_N,\rho)+\eta\bigr)
 +\sup_{x\in\R^2}h(x)+\frac12|E(\rho)|.
\]
Since $F_N+\eta\ge0$, this implies, for instance,
\[
 H_N(X_N)^+
 \le\frac12\bigl(F_N(X_N,\rho)+\eta\bigr)
 +\left|\sup_{x\in\R^2}h(x)\right|+\frac12|E(\rho)|,
\]
and hence \(H_N^+\in L^1(\rho_N)\).

Since \(\rho_N\in\mathcal P_2((\R^2)^N)\),
\(\mathbb E_{\rho_N}m_{2,N}(X_N)<\infty\).  From the definition of
\(D_{N,T}\),
\[
 \log D_{N,T}
 \le C_{N,T}\bigl(1+m_{2,N}(X_N^0)\bigr),
\]
and hence \(\mathbb E_{\rho_N}B_{N,T}<\infty\).  The planar part of
Proposition~\ref{prop:coulomb-particle-flow} gives pathwise
\[
 \int_0^T\frac1N\sum_i|K_i(X_N(t))|^2\,\dd t
 \le H_N(X_N^0)+B_{N,T}
 \le H_N(X_N^0)^++B_{N,T}.
\]
Tonelli's theorem again proves
\eqref{eq:expected-coulomb-microscopic-dissipation}.

\emph{Part~(ii).}
Let \(h_s=g_s*\rho\) and recall \(E_s\) from
\eqref{eq:continuum-riesz-energy}.  Expanding the modulated energy gives
\[
 2H_N(X_N)=F_N(X_N,\rho)
 +\frac2N\sum_{i=1}^Nh_s(x_i)-E_s(\rho).
\]
Since \(\rho\in L^1\cap L^\infty\) and \(0<s<d\), splitting the convolution
at unit distance gives, for every \(x\in\R^d\),
\[
\begin{aligned}
 |h_s(x)|
 &\le \frac1s\|\rho\|_\infty
 \int_{|z|<1}|z|^{-s}\,\dd z
 +\frac1s\int_{|z|\ge1}\rho(x-z)\,\dd z\\
 &\le \frac{|\mathbb S^{d-1}|}{s(d-s)}\|\rho\|_\infty
 +\frac1s\|\rho\|_1.
\end{aligned}
\]
Thus \(h_s\in L^\infty\) and
\(|E_s(\rho)|\le\|h_s\|_\infty\|\rho\|_1<\infty\).  Consequently,
\[
 2H_N(X_N)
 \le F_N(X_N,\rho)+\eta+2\|h_s\|_\infty+|E_s(\rho)|,
\]
whose right-hand side is \(\rho_N\)-integrable.  The moment assertion follows
from Proposition~\ref{prop:riesz-global-particle-flow}.
\end{proof}

\section{Singular chain rule for power-law Riesz kernels}
\label{app:lipschitz-riesz-chain-rule}
We apply the identities of Section~\ref{sec:abstract-framework} to the Riesz
kernel $g_s$.  It remains to prove collision exclusion and the three limits in
\eqref{eq:unified-chain-rule-limits}.

For reference, write
\begin{equation}\label{eq:riesz-particle-system-main}
 \dot x_i=-K_i^{g_s}(X_N),\qquad
 K_i^{g_s}(X_N)=\frac1N\sum_{j\ne i}\nabla g_s(x_i-x_j),
\end{equation}
and
\begin{equation}\label{eq:riesz-mean-field-main}
 \partial_t\rho=\nabla\!\cdot(\rho u),\qquad
 u=\nabla g_s*\rho.
\end{equation}
We also use
\begin{equation}\label{eq:continuum-riesz-energy}
 E_s(\rho):=\iint g_s(x-y)\rho(x)\rho(y)\,\dd x\dd y.
\end{equation}

\begin{proposition}[Quantitative estimates for the Riesz particle flow]
\label{prop:riesz-global-particle-flow}
Let $0<s<d$ and $X_N^0\in\Omega_N$.  Then \eqref{eq:riesz-particle-system-main} has a unique global smooth solution in $\Omega_N$.  With $H_N^{g_s}$ and $m_{2,N}$ from \eqref{eq:unified-force-energy-moment},
\begin{equation}\label{eq:riesz-particle-dissipation}
 \frac{\dd}{\dd t}H_N^{g_s}(X_N(t))
 =-\frac1N\sum_{i=1}^N|K_i^{g_s}(X_N(t))|^2,
\end{equation}
\begin{equation}\label{eq:riesz-particle-moment}
 \frac{\dd}{\dd t}m_{2,N}(X_N(t))=2sH_N^{g_s}(X_N(t)),
\end{equation}
and, for $N\ge2$,
\begin{equation}\label{eq:riesz-explicit-separation-moment}
 \min_{i\ne j}|x_i(t)-x_j(t)|
 \ge \bigl(sN^2H_N^{g_s}(X_N^0)\bigr)^{-1/s},\qquad
 m_{2,N}(X_N(t))\le m_{2,N}(X_N^0)+2stH_N^{g_s}(X_N^0).
\end{equation}
\end{proposition}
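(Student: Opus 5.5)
The argument follows the Coulomb case in Proposition~\ref{prop:coulomb-particle-flow}(i), with $g_s\ge0$ playing the role of $g_d\ge0$.

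\emph{Local existence and identities.} On the open set $\Omega_N$ the vector field $X_N\mapsto(-K_i^{g_s}(X_N))_i$ is smooth, so Cauchy--Lipschitz gives a unique maximal smooth solution on $[0,T^*)$ with values in $\Omega_N$. On this interval, the dissipation identity \eqref{eq:riesz-particle-dissipation} is \eqref{eq:unified-microscopic-dissipation} from Proposition~\ref{prop:unified-particle-identities}. The virial identity \eqref{eq:unified-discrete-virial}, combined with
\[
-z\cdot\nabla g_s(z)=sg_s(z),
\]
gives
\[
\frac{\dd}{\dd t}m_{2,N}=\frac{s}{N^2}\sum_{i\ne j}g_s(x_i-x_j)=2sH_N^{g_s},
\]
which is \eqref{eq:riesz-particle-moment}.

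\emph{A priori bounds.} By \eqref{eq:riesz-particle-dissipation}, $H_N^{g_s}(X_N(t))\le H_N^{g_s}(X_N^0)$. Since $g_s>0$, every pair separately satisfies
\[
\frac{1}{N^2}\,\frac{1}{s}\,|x_i-x_j|^{-s}\le H_N^{g_s}(X_N^0).
\]
Only the ordered pair $(i,j)$ is used here; the symmetric partner would give the extra factor $2$, which is harmless. This yields the separation bound $\min_{i\ne j}|x_i-x_j|\ge(sN^2H_N^{g_s}(X_N^0))^{-1/s}$. Integrating \eqref{eq:riesz-particle-moment} with $H_N^{g_s}\le H_N^{g_s}(X_N^0)$ gives the second-moment bound in \eqref{eq:riesz-explicit-separation-moment}.

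\emph{Globality.} If $T^*<\infty$, the trajectory must leave every compact subset of $\Omega_N$. Collisions are excluded by the uniform separation bound. Escape to infinity is excluded because $|K_i|$ is bounded by $N^{-1}\sum_{j\ne i}|x_i-x_j|^{-s-1}$, which is uniformly bounded by the separation bound, so $|\dot x_i|$ stays bounded on $[0,T^*)$. The trajectory therefore stays in a compact subset of $\Omega_N$ up to $T^*$, a contradiction. Hence $T^*=\infty$, and all identities and bounds hold for every $t\ge0$.

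The only mildly delicate point is this continuation argument, and the energy monotonicity removes the difficulty. I expect no real obstacle.
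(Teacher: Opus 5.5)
Your proof is correct and follows the paper's route for the identities and the quantitative bounds: the virial identity via $-z\cdot\nabla g_s=sg_s$, the pairwise bound from $g_s\ge0$ together with energy monotonicity, and integration of the moment identity. The paper simply cites the literature (Duerinckx; Nguyen--Rosenzweig--Serfaty) for global existence and collision avoidance. Your continuation argument supplies that step directly and is valid: the separation bound gives bounded velocities, so the trajectory stays in a compact subset of $\Omega_N$.

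One small bookkeeping slip: the inequality $\frac{1}{sN^2}|x_i-x_j|^{-s}\le H_N^{g_s}(X_N^0)$ already uses both ordered pairs $(i,j)$ and $(j,i)$. A single ordered pair contributes only $\frac{1}{2N^2}g_s(x_i-x_j)$ to $H_N^{g_s}$, which would give the weaker constant $(2sN^2H_N^{g_s}(X_N^0))^{-1/s}$. So your parenthetical has the factor $2$ backwards, although the displayed inequality and the stated bound are right.
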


\begin{proposition}[Riesz singular chain rule]
\label{prop:riesz-singular-chain-rule}
Let $X_N\in C^1([0,T];\Omega_N)$ solve \eqref{eq:riesz-particle-system-main}, and let
$\rho\in C([0,T];\mathcal P_2(\R^d))\cap L^\infty(0,T;L^1\cap L^\infty)$ be transported by the Lagrangian flow of $-u$, where $u=\nabla g_s*\rho$ distributionally.  Assume
\begin{equation}\label{eq:riesz-abstract-chain-rule-assumptions}
 u\in L^2(0,T;L^\infty),\qquad
 |u_t(x)-u_t(y)|\le L(t)|x-y|\left(1+\log_+\frac1{|x-y|}\right),\qquad L\in L^1(0,T),
\end{equation}
and that the even mollifications satisfy
\begin{equation}\label{eq:riesz-mollified-field-convergence-assumption}
 u^\kappa=u*\psi_\kappa\longrightarrow u
 \quad\text{in }L^2(0,T;L^\infty(\R^d)).
\end{equation}
When $d-1\le s<d$, the singular commutator $C_N^{g_s}[u]$ is understood
through the absolutely convergent symmetrized representation of
Proposition~\ref{prop:critical-riesz-regularization}; equivalently, it is the
common centered principal value and even regularization limit identified in
Proposition~\ref{prop:critical-riesz-pv-statement}.  Then the three limits in
\eqref{eq:unified-chain-rule-limits} hold for $W=g_s$; in particular
\begin{equation}\label{eq:riesz-chain-rule-convergences}
 F_N^\kappa\to F_N^{g_s}\ \text{in }C([0,T]),\qquad
 D_N^\kappa\to r_N\ \text{in }L^1(0,T),\qquad
 C_N^{g_s^\kappa}[u]\to C_N^{g_s}[u]\ \text{in }L^1(0,T),
\end{equation}
so that
\begin{equation}\label{eq:riesz-modulated-identity}
 \frac{\dd}{\dd t}F_N^{g_s}(X_N(t),\rho_t)
 =-2r_N(t)-C_N^{g_s}[u_t]
 \qquad\text{for a.e. }t\in[0,T].
\end{equation}
In the Besov class of Theorem~\ref{thm:critical-riesz-propagation}, these
hypotheses follow from Proposition~\ref{prop:critical-riesz-field} and the
Osgood pushforward representation.  The Coulomb case with bounded density is
treated directly in Proposition~\ref{prop:coulomb-singular-chain-rule}.
\end{proposition}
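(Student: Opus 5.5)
The statement to be proved is Proposition~\ref{prop:riesz-singular-chain-rule}. The plan follows the Coulomb argument of Proposition~\ref{prop:coulomb-singular-chain-rule}: once the three limits in \eqref{eq:riesz-chain-rule-convergences} are established, Proposition~\ref{prop:unified-regularized-chain-rule}(ii) yields \eqref{eq:riesz-modulated-identity}. Fix the collision-free trajectory. Proposition~\ref{prop:riesz-global-particle-flow} supplies a separation $r_{N,T}>0$ on $[0,T]$. Integrating \eqref{eq:riesz-particle-dissipation} and using $H_N^{g_s}\ge0$ gives $\int_0^TN^{-1}\sum_i|K_i|^2\le H_N(X_N^0)$. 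Combined with $u\in L^2(0,T;L^\infty)$, Proposition~\ref{prop:chain-rule-admissibility} gives $r_N\in L^1(0,T)$.

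At fixed $\kappa$, the kernel $K_s^\kappa$ is smooth and bounded by Lemma~\ref{lem:riesz-even-mollification-kernel}, and $\rho_t=(\Psi^t)_\#\rho_0$ holds. All differentiated Lagrangian integrals are therefore integrable, so part (i) of Proposition~\ref{prop:unified-regularized-chain-rule} gives the regularized identity.

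\emph{Force term.} Take $\kappa<r_{N,T}/4$. The second bound of Lemma~\ref{lem:riesz-even-mollification-kernel} then gives $\max_i\sup_t|K_i^\kappa-K_i|\le C\kappa^2r_{N,T}^{-s-3}\to0$. Next, $u^\kappa=\nabla(g_s*\psi_\kappa)*\rho=u*\psi_\kappa$, so \eqref{eq:riesz-mollified-field-convergence-assumption} gives $u^\kappa\to u$ in $L^2_tL^\infty_x$. The Cauchy--Schwarz estimate \eqref{eq:coulomb-D-limit-detailed} then carries over verbatim and gives $D_N^\kappa\to r_N$ in $L^1(0,T)$.

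\emph{Energy term.} The discrete part converges uniformly because of the separation. For the mixed and continuous parts it suffices to show $\sup_t\|(g_s*\psi_\kappa-g_s)*\rho_t\|_\infty\to0$. Split the difference at $|z|=2$. The near part is controlled by $\|\rho\|_{L^\infty_{t,x}}\|g_s*\psi_\kappa-g_s\|_{L^1(B_2)}\to0$, since $g_s\in L^1_{\rm loc}$ for $s<d$. On the far part, evenness and Taylor's formula give the bound $C\kappa^2|z|^{-s-2}\le C\kappa^2$. This proves $F_N^\kappa\to F_N$ in $C([0,T])$.

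\emph{Commutator.} This is the main obstacle, because for $s\ge d-1$ the kernel is not locally integrable. I would expand both $C_N^\kappa[u]$ and $C_N[u]$ through the symmetrized formula \eqref{eq:critical-riesz-symmetric-difference-expansion}, so that every kernel is multiplied by a difference $u(a)-u(a-z)$. By \eqref{eq:riesz-abstract-chain-rule-assumptions} this difference is at most $L(t)|z|\ell(|z|)$. The discrete--discrete part converges uniformly for $\kappa<r_{N,T}/4$. For the mixed and continuous--continuous parts I would follow the three regions of the proof of Proposition~\ref{prop:critical-riesz-regularization}. On $|z|\le4\kappa$, use $|K_s^\kappa|\le C(|z|+\kappa)^{-s-1}$ and $|K_s|\le C|z|^{-s-1}$. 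On $4\kappa<|z|<1$, use $|K_s^\kappa-K_s|\le C\kappa^2|z|^{-s-3}$. On $|z|\ge1$, use boundedness of $u$ and unit mass of $\rho_t$. Only $\|\rho_t\|_\infty$ is needed here, not Besov regularity of $\rho$, since the Zygmund-type cancellation is carried entirely by $u$. Together these give the time-pointwise bound
\[
|C_N^\kappa[u]-C_N[u]|\le C\bigl(L(t)\|\rho\|_{L^\infty_{t,x}}+\|u_t\|_\infty\bigr)\kappa^{\min\{\sigma_s,2\}}(1+|\log\kappa|)+C\|u_t\|_\infty\kappa^2r_{N,T}^{-s-3}.
\]
Since $L\in L^1$ and $u\in L^2_tL^\infty_x$, the right side tends to zero in $L^1(0,T)$.

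For $d-1\le s<d$ the limit is the symmetrized representative, which by Proposition~\ref{prop:critical-riesz-pv-statement}(iii) agrees with the principal-value definition. For $s<d-1$ the symmetrized and direct formulas coincide by absolute convergence. Inserting the three limits into Proposition~\ref{prop:unified-regularized-chain-rule}(ii) completes the proof.
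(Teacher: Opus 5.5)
Your proposal is correct. The integrability of $r_N$, the regularized identity, the force term and the energy term match the paper's proof essentially line by line. The only real difference is in the commutator limit.

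\textbf{The paper's route.} It argues qualitatively. It cuts at a fixed radius $|z|=\delta$. The near-diagonal part of both $C_N^\kappa[u]$ and $C_N[u]$ is bounded, uniformly in $\kappa$, by $C\|\rho\|_\infty L(t)\int_0^\delta r^{d-s-1}(1+\log(1/r))\,\dd r$, using only the first-order bound $|K_s^\kappa(z)|\le C|z|^{-s-1}$. On $|z|\ge\delta$ it uses uniform convergence $K_s^\kappa\to K_s$ together with dominated convergence. It then lets $\kappa\downarrow0$ before $\delta\downarrow0$.

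\textbf{Your route.} You transplant the three-region decomposition of Proposition~\ref{prop:critical-riesz-regularization}. The uniform constants $M_wB_T$ there are replaced by the time-integrable coefficients $L(t)\|\rho_t\|_\infty$ and $\|u_t\|_\infty$. This costs the second-order bound $|K_s^\kappa-K_s|\le C\kappa^2|z|^{-s-3}$ of Lemma~\ref{lem:riesz-even-mollification-kernel} on the intermediate annulus. In return you get an explicit $L^1(0,T)$ rate rather than mere convergence. You also make transparent that only $\|\rho_t\|_\infty$, and no Besov regularity of $\rho$, enters.

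\textbf{Small points.}
\begin{itemize}
\item At $\sigma_s=2$ the annulus integral gives $\kappa^2\ell(\kappa)^2$ rather than $\kappa^2\ell(\kappa)$. This is harmless.
\item The mollifier $\psi_\kappa=\zeta_\kappa*\zeta_\kappa$ is supported in $B_{2\kappa}$, so the thresholds in Lemma~\ref{lem:riesz-even-mollification-kernel} double after rescaling. This is also harmless.
\item For the $C([0,T])$ convergence of $F_N^\kappa$ you need $\|\rho_t\|_\infty\le M$ at every $t$, not only the essential supremum you write as $\|\rho\|_{L^\infty_{t,x}}$. You should add the step the paper places at the start of its proof. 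Narrow continuity of $t\mapsto\rho_t$, tested against nonnegative $\varphi\in C_c$, upgrades the essential bound to every time.
\end{itemize}
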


\begin{proof}[Proof of Proposition~\ref{prop:riesz-global-particle-flow}]
Global smoothness and collision avoidance for repulsive Riesz particle flows are
standard; see \cite{Duerinckx2016,NguyenRosenzweigSerfaty2022}.  We record the
quantitative bounds used below.  Proposition~\ref{prop:unified-particle-identities}
gives \eqref{eq:riesz-particle-dissipation} and
\eqref{eq:riesz-particle-moment}.  Since $H_N^{g_s}$ is nonincreasing and every
pair contribution is nonnegative,
\[
 \frac1{sN^2}|x_i(t)-x_j(t)|^{-s}
 \le H_N^{g_s}(X_N(t))\le H_N^{g_s}(X_N^0),
\]
which gives the separation estimate in
\eqref{eq:riesz-explicit-separation-moment}; integrating
\eqref{eq:riesz-particle-moment} gives the stated moment bound.
\end{proof}

\begin{proof}[Proof of Proposition~\ref{prop:riesz-singular-chain-rule}]
Before the singular limiting argument, we make explicit a consequence of the
time regularity of $\rho$ that is used below.  Put
\[
 M:=\operatorname*{ess\,sup}_{0<t<T}\|\rho_t\|_{L^\infty}<\infty.
\]
Since $t\mapsto\rho_t$ is narrowly continuous, the same density bound holds at
every time after choosing the canonical $L^\infty$ representative.  Indeed,
for fixed $t\in[0,T]$, choose $t_n\to t$ from the set of full measure on which
$\|\rho_{t_n}\|_\infty\le M$.  Then, for every nonnegative
$\varphi\in C_c(\mathbb R^d)$,
\[
 \int \varphi\,\dd\rho_t
 =\lim_{n\to\infty}\int \varphi\rho_{t_n}
 \le M\int\varphi,
\]
so $\rho_t$ has a density bounded by $M$.  In particular, all
uniform-in-time convolution estimates used in Step~4 are justified by the
hypotheses of the proposition rather than by an additional pointwise in time
assumption.

\emph{Step 1: time integrability of $r_N$.}
The discrete energy dissipation identity gives
\[
 \int_0^T\frac1N\sum_i|K_i(X_N(t))|^2\,\dd t
 \le H_N(X_N(0)).
\]
Consequently,
\[r_N(t)\le\frac2N\sum_i|K_i(X_N(t))|^2+2\|u(t)\|_\infty^2
 \in L^1(0,T).
\]

\emph{Step 2: identity at fixed scale.}
Let $\zeta\in C_c^\infty(B_1)$ be nonnegative, radial, and of mass one.  Set
$\psi_\kappa=\zeta_\kappa*\zeta_\kappa$ and $g_s^\kappa=g_s*\psi_\kappa$.
The quantities $F_N^\kappa$, $D_N^\kappa$, and $C_N^{g_s^\kappa}$ are those
defined in Section~\ref{sec:abstract-framework}.  For each fixed $\kappa>0$,
the regularized force $\nabla g_s^\kappa$ is bounded and smooth at the origin
and remains bounded at infinity.  Hence $K_i^\kappa$ and $u^\kappa$ are bounded
for this fixed scale.  The terms produced by differentiating $F_N^\kappa$ are
therefore integrable in time: the factors involving the unregularized particle
velocity are controlled by
$N^{-1}\sum_i|K_i|^2\in L^1(0,T)$, while the continuum factors are controlled
by $u\in L^2(0,T;L^\infty)$.  Thus the fixed scale admissibility hypotheses of
Proposition~\ref{prop:unified-regularized-chain-rule}\textup{(i)} are satisfied.
Applying that proposition to the Lagrangian limiting solution yields
\begin{equation}\label{eq:riesz-regularized-exact-identity}
 \frac\dd{\dd t}F_N^\kappa=-2D_N^\kappa-C_N^\kappa[u].
\end{equation}

\emph{Step 3: force term.}
A continuous collision-free trajectory has positive minimal separation on
$[0,T]$.  Consequently,
\[
 \delta_K(\kappa):=\max_i\sup_{t\le T}|K_i^\kappa(t)-K_i(t)|\to0.
\]
Writing $f_i=K_i-u(x_i)$ and
$e_i^\kappa=(K_i^\kappa-K_i)-(u^\kappa-u)(x_i)$, Cauchy--Schwarz gives
\begin{equation}\label{eq:riesz-cross-limit-bound}
\begin{aligned}
 \|D_N^\kappa-r_N\|_{L^1(0,T)}
 \le \|r_N^{1/2}\|_{L^2(0,T)}
 \left(T^{1/2}\delta_K(\kappa)
 +\|u^\kappa-u\|_{L_t^2L_x^\infty}\right)\longrightarrow0.
\end{aligned}
\end{equation}

\emph{Step 4: energy.}
The discrete--discrete contribution converges uniformly by separation.  For
the mixed contribution, local $L^1$ approximation of the locally integrable
kernel and the uniform density bound control $|z|<2$, while evenness and
Taylor's formula give
$|g_s^\kappa(z)-g_s(z)|\le C_{d,s}\kappa^2|z|^{-s-2}$ for $|z|\ge1$.
It follows that
\[\sup_{0\le t\le T}\|(g_s^\kappa-g_s)*\rho_t\|_\infty\longrightarrow0.
\]
The same supremum controls the continuous--continuous term, so
\begin{equation}\label{eq:riesz-regularized-energy-limit}
 \sup_{0\le t\le T}|F_N^\kappa(t)-F_N(t)|\longrightarrow0.
\end{equation}

\emph{Step 5: commutator.}
The discrete--discrete part is a finite sum away from the diagonal.  For a
mixed term centered at $a$, the contribution of $|z|<\delta$ to either the
regularized expression, as well as the singular formula obtained by symmetrization, is bounded uniformly
in $a$ by
\[
 C\|\rho\|_{L_t^\infty L_x^\infty}L(t)
 \int_0^\delta r^{d-s-1}\left(1+\log\frac1r\right)\,\dd r.
\]
This tends to zero in $L^1(0,T)$ because $s<d$.  On $|z|\ge\delta$ the
regularized kernels converge uniformly and the integrands are dominated by
$C_{\delta,d,s}(L(t)+\|u(t)\|_\infty)$ times a probability density.  Dominated
convergence proves the mixed limit uniformly in $a$ and, after integration in
the outer variable, the continuous--continuous limit.  Consequently,
\begin{equation}\label{eq:riesz-regularized-commutator-limit}
 C_N^\kappa[u]\longrightarrow C_N[u]
 \quad\text{in }L^1(0,T).
\end{equation}
The limit is the absolutely convergent formula for the commutator obtained by symmetrization.

\emph{Step 6: passage to the limit.}
Integrate \eqref{eq:riesz-regularized-exact-identity} between two times and
let $\kappa\downarrow0$.  The convergences
\eqref{eq:riesz-cross-limit-bound}, \eqref{eq:riesz-regularized-energy-limit},
and \eqref{eq:riesz-regularized-commutator-limit} yield
\eqref{eq:riesz-chain-rule-convergences}.  The function $F_N$ is therefore
absolutely continuous and satisfies \eqref{eq:riesz-modulated-identity}.
\end{proof}

\section{Finite-\texorpdfstring{$N$}{N} lower bounds, commutator estimates, and normalization}\label{sec:static-inputs}

We state the finite-$N$ lower bounds and first-order commutator estimates in
the normalization used above and track the conversion of the kernel, energy,
commutator, and density scales explicitly.  The power-law and logarithmic
dilations below extend the cited estimates across the relevant microscopic
scale regimes.

\paragraph{Normalization conventions.}
The conventions in the cited estimates are as follows.  In
\cite{HessChildsRosenzweigSerfaty2025} the power-law kernel is
$g_s(x)=s^{-1}|x|^{-s}$, the empirical charge is
$N^{-1}\sum_i\delta_{x_i}-\mu$, and the modulated energy contains the factor
$1/2$.  The sharp lower bound has additive scale
$\|\mu\|_\infty^{s/d}N^{s/d-1}$.  In the Coulomb and super-Coulomb range,
\cite[Theorem~1.1, equation~(1.7)]{Rosenzweig11222} uses the same Riesz
normalization, the microscopic length $\lambda=(N\|\mu\|_\infty)^{-1/d}$,
and the full first-order commutator; the energy again has the factor $1/2$,
and the logarithmic case includes the correction $-(\log\lambda)/(2N)$.  For the planar logarithmic kernel we keep separate the
unnormalized kernel $g_S=-\log|x|$ and our fundamental solution
$g=(2\pi)^{-1}g_S$.  The factors $2\pi$, $N^2$, and $1/2$ are kept explicit throughout.  The power-law dilation below preserves the commutator
norm and sends $\lambda$ to $L\lambda$; in the logarithmic dilation the
constant shift of the kernel is retained because diagonal exclusion gives a
nonzero off-diagonal mass for the atomic part.  The formulas in the following
subsections implement these conversions explicitly.

\subsection{Normalization of the sharp power-law Riesz estimates}
Let
\[
 \nu_N:=\frac1N\sum_{i=1}^N\delta_{x_i}-\mu,
 \qquad g_s(x)=\frac1s|x|^{-s},\qquad 0<s<d.
\]
Both sharp estimates used below employ this same normalization of the power-law
Riesz kernel.  Their modulated energy, however, contains a factor \(1/2\).
Accordingly, introduce
\[
 \widetilde F_N(X_N,\mu)
 :=\frac12{\iint}_{(\R^d)^2\setminus\Delta_2}
 g_s(x-y)\,\dd\nu_N(x)\dd\nu_N(y),
\]
and the full first-order commutator
\[
 \widetilde I_{N,s}[v]
 :={\iint}_{(\R^d)^2\setminus\Delta_2}
 (v(x)-v(y))\cdot\nabla g_s(x-y)
 \,\dd\nu_N(x)\dd\nu_N(y).
\]
Thus our normalization is
\begin{equation}\label{eq:static-normalization-conversion}
 F_N=2\widetilde F_N,\qquad C_N[v]=\widetilde I_{N,s}[v].
\end{equation}

We use the sharp lower bound of Hess-Childs, Rosenzweig, and Serfaty for
$0<s<d$, and the first-order commutator estimate of Rosenzweig and Serfaty in
the Coulomb and super-Coulomb range $d-2\le s<d$.  The latter depends on the transport field through
$\|\nabla v\|_{L^\infty}$ and is initially stated for
$\lambda_N<1$.  The dilation below extends the estimate to arbitrary
$\lambda_N$.  The planar logarithmic
case is treated separately in the final two subsections of this appendix.

\begin{lemma}
\label{lem:static-riesz-transfer}
Let $d\ge2$, $0<s<d$, let $\mu\in L^1\cap L^\infty$ be a density of unit mass, and let $X_N\in\Omega_N$ be pairwise distinct.  Set
\[
 \lambda_N:=(N\|\mu\|_{L^\infty})^{-1/d},
 \qquad
 e_{N,s}(\mu):=\|\mu\|_{L^\infty}^{s/d}N^{s/d-1}
 =\|\mu\|_{L^\infty}\lambda_N^{d-s}.
\]
Since \(0<s<d\), the Riesz energy of \(\mu\) is finite under
\(\mu\in L^1\cap L^\infty\): splitting into near and far regions uses local integrability
of \(|x|^{-s}\) and boundedness of \(g_s\) on \(\{|x|\ge1\}\).  Hence the
finite background energy hypothesis in the cited finite-$N$ functional inequalities is automatic
in this power-law setting.
After enlarging constants depending only on the displayed parameters, the sharp
estimates from the cited works take the following form in our normalization.
\begin{enumerate}
\item[(i)] The sharp lower bound of Hess-Childs, Rosenzweig, and Serfaty
\cite[Proposition~2.11 and Remark~2.13]{HessChildsRosenzweigSerfaty2025},
taken at $p=\infty$, yields
\[
 F_N(X_N,\mu)+C_{d,s}e_{N,s}(\mu)\ge0.
\]
\item[(ii)] If $d-2\le s<d$, the first-order global estimate of
Rosenzweig and Serfaty
\cite[Theorem~1.1, equation~(1.7)]{Rosenzweig11222}, together with the
dilation argument below, yields
\[
 |C_N(X_N,\mu;v)|
 \le C_{d,s}\|\nabla v\|_{L^\infty}
 \bigl(F_N(X_N,\mu)+C_{d,s}e_{N,s}(\mu)\bigr).
\]
The argument uses this first-order estimate, whose transport dependence is
through $\|\nabla v\|_{L^\infty}$.
The constants are independent of $N$ and of the minimum particle separation.
\end{enumerate}
\end{lemma}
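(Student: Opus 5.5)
The plan is to import the two cited estimates in their native normalization and convert them through \eqref{eq:static-normalization-conversion}. The only genuinely new step is removing the restriction $\lambda_N<1$ in the commutator estimate, which I will do by dilation.

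\emph{Part (i).} The cited sharp lower bound \cite[Proposition~2.11, Remark~2.13]{HessChildsRosenzweigSerfaty2025} at $p=\infty$ states $\widetilde F_N(X_N,\mu)\ge -C_{d,s}\|\mu\|_\infty^{s/d}N^{s/d-1}$. Its hypotheses are unit mass, $\mu\in L^\infty$, finite background energy (automatic here, as noted before the statement) and distinct points. Since $F_N=2\widetilde F_N$, multiplying by $2$ gives (i) with the constant doubled. No scale restriction arises, so I will only have to check that the cited remark yields the $p=\infty$ form with additive term $\|\mu\|_\infty\lambda_N^{d-s}$. This is immediate from $\lambda_N^{d-s}=(N\|\mu\|_\infty)^{s/d-1}$.

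\emph{Part (ii).} For $d-2\le s<d$ and $\lambda_N<1$, \cite[Theorem~1.1, (1.7)]{Rosenzweig11222} gives
\[
|\widetilde I_{N,s}[v]|\le C\|\nabla v\|_\infty\bigl(\widetilde F_N+C\|\mu\|_\infty\lambda_N^{d-s}\bigr).
\]
Combined with $C_N=\widetilde I_{N,s}$ and $F_N=2\widetilde F_N$, this is (ii) in that regime.

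To reach $\lambda_N\ge1$, I dilate by $L>0$. Set $x_i'=Lx_i$, $\mu_L(x)=L^{-d}\mu(x/L)$ and $v_L(x)=Lv(x/L)$. Then $\|\nabla v_L\|_\infty=\|\nabla v\|_\infty$, $\|\mu_L\|_\infty=L^{-d}\|\mu\|_\infty$, and $\lambda_N'=L\lambda_N$. Homogeneity of $g_s$ of degree $-s$ gives
\[
F_N(X_N',\mu_L)=L^{-s}F_N(X_N,\mu),
\]
because diagonal exclusion commutes with dilation and there is no additive constant in the power-law case. For the commutator, $(v_L(x')-v_L(y'))\cdot\nabla g_s(x'-y')=L\cdot L^{-s-1}(v(x)-v(y))\cdot\nabla g_s(x-y)$, so $C_N(X_N',\mu_L;v_L)=L^{-s}C_N(X_N,\mu;v)$. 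Finally, the additive term scales as
\[
\|\mu_L\|_\infty(\lambda_N')^{d-s}=L^{-d}L^{d-s}\|\mu\|_\infty\lambda_N^{d-s}=L^{-s}e_{N,s}(\mu).
\]
All three terms carry the same factor $L^{-s}$. Choosing $L=(2\lambda_N)^{-1}$ makes $\lambda_N'=1/2<1$, so applying the restricted estimate to the dilated data and multiplying through by $L^{s}$ gives (ii) for every $\lambda_N$ with the same constants.

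The main obstacle is bookkeeping rather than analysis. I must verify that the cited estimate (1.7) is genuinely the full first-order commutator with the same sign conventions and diagonal exclusion as $\widetilde I_{N,s}$. I must also check that its additive error is exactly $\|\mu\|_\infty\lambda^{d-s}$ and contains no $\lambda$-dependent logarithm for $s>0$, so that the dilation identity closes. Independence of the minimal separation is inherited directly, because both cited estimates are uniform over distinct configurations.
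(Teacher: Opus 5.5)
Your proposal is correct and follows the paper's proof essentially step by step. You convert the cited estimates via $F_N=2\widetilde F_N$, $C_N[v]=\widetilde I_{N,s}[v]$ and $\|\mu\|_\infty\lambda_N^{d-s}=\|\mu\|_\infty^{s/d}N^{s/d-1}$, and then remove the restriction $\lambda_N<1$ by the same dilation $x_i\mapsto Lx_i$, $\mu_L=L^{-d}\mu(\cdot/L)$, $v_L=Lv(\cdot/L)$; under it the energy, the commutator and the additive term all scale by $L^{-s}$ while $\|\nabla v_L\|_\infty$ is unchanged, and your choice $L=(2\lambda_N)^{-1}$ is just a concrete instance of the paper's ``choose $L$ with $L\lambda_N<1$.''
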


\begin{proof}
\emph{Normalization of the energy and the first-order commutator.}
In the notation of the cited estimates one has
\[
 \widetilde F_N
 =\frac12{\iint}_{(\R^d)^2\setminus\Delta_2}
 g_s(x-y)\,\dd\nu_N(x)\dd\nu_N(y),
 \qquad
 \widetilde I_{N,s}[v]
 ={\iint}_{(\R^d)^2\setminus\Delta_2}
 (v(x)-v(y))\cdot\nabla g_s(x-y)\,\dd\nu_N(x)\dd\nu_N(y).
\]
Moreover,
\begin{equation}\label{eq:static-source-first-variation}
 \left.\frac{\dd}{\dd t}\right|_{t=0}
 \widetilde F_N((\mathrm{Id}+tv)^{\oplus N}X_N,
                (\mathrm{Id}+tv)_\#\mu)
 =\frac12\widetilde I_{N,s}[v].
\end{equation}
Consequently, our modulated energy satisfies
\[
 \left.\frac{\dd}{\dd t}\right|_{t=0}
 F_N((\mathrm{Id}+tv)^{\oplus N}X_N,
     (\mathrm{Id}+tv)_\#\mu)
 =C_N[v],
\]
which is the normalization used in the dynamical chain rule.

To make the conversion of the additive term explicit, suppose first that a cited
lower bound is written as
\[
 \widetilde F_N+B\,e_{N,s}(\mu)\ge0.
\]
Using \eqref{eq:static-normalization-conversion},
\[
 F_N+2B\,e_{N,s}(\mu)
 =2\bigl(\widetilde F_N+B\,e_{N,s}(\mu)\bigr)\ge0.
\]
Likewise, a cited first-order bound of the form
\[
 |\widetilde I_{N,s}[v]|
 \le A\,\mathcal V(v)
 \bigl(\widetilde F_N+B\,e_{N,s}(\mu)\bigr)
\]
becomes
\[
 |C_N[v]|
 \le \frac A2\,\mathcal V(v)
 \bigl(F_N+2B\,e_{N,s}(\mu)\bigr).
\]
The factor \(1/2\) changes the numerical constants in the finite-\(N\)
correction and the outer prefactor, while leaving the power of \(N\) unchanged.  Finally,
\[
 \|\mu\|_{L^\infty}\lambda_N^{d-s}
 =\|\mu\|_{L^\infty}^{s/d}N^{s/d-1},
\]
so the additive error term in the cited estimate has exactly the scale \(e_{N,s}(\mu)\) used here.  This proves
part~\textup{(i)}, and proves part~\textup{(ii)} whenever
\(\lambda_N<1\).

\emph{Removal of the scale restriction in the Coulomb and super-Coulomb estimate.}
The global estimate of Rosenzweig and Serfaty used in part~\textup{(ii)} is
stated under the condition \(\lambda_N<1\).  If this condition is not already
satisfied, choose \(L>0\) so that \(L\lambda_N<1\) and define
\[
 x_i^L=Lx_i,\qquad
 \mu_L(x)=L^{-d}\mu(x/L),\qquad
 v_L(x)=Lv(x/L).
\]
Then \(\mu_L\) has unit mass and
\[
 F_N(X_N^L,\mu_L)=L^{-s}F_N(X_N,\mu),\qquad
 C_N(X_N^L,\mu_L;v_L)=L^{-s}C_N(X_N,\mu;v),
\]
while
\[
 \|\nabla v_L\|_\infty=\|\nabla v\|_\infty,\qquad
 e_{N,s}(\mu_L)=L^{-s}e_{N,s}(\mu),\qquad
 \lambda_N(\mu_L)=L\lambda_N(\mu)<1.
\]
Applying the cited estimate to \((X_N^L,\mu_L,v_L)\) and multiplying by \(L^s\)
therefore gives exactly the estimate in part~\textup{(ii)} for the original
configuration, with constants independent of \(L\).

\end{proof}

The normalized lower bound and first-order commutator estimates used in the
applications are exactly the conclusions of
Lemma~\ref{lem:static-riesz-transfer}, after choosing the constants in $\eta_N$ large enough to dominate the additive errors in that lemma.

\subsection{Planar logarithmic normalization}
Set
\[
 g_S(x):=-\log|x|,
 \qquad
 g(x):=\frac1{2\pi}g_S(x)=-\frac1{2\pi}\log|x|,
 \qquad
 \nu_N:=\frac1N\sum_{i=1}^N\delta_{x_i}-\mu.
\]
There are two normalizations in the cited results to distinguish.  In \cite{Serfaty1}, the
logarithmic modulated energy is written with the unnormalized neutral charge
\[
 \Sigma_N:=\sum_{i=1}^N\delta_{x_i}-N\mu=N\nu_N
\]
and kernel $g_S$.  If $\mathcal F_N^S$ denotes that off-diagonal energy, then
\begin{equation}\label{eq:planar-serfaty-normalization}
 \mathcal F_N^S
 :=\iint_{(\R^2)^2\setminus\Delta_2}
 g_S(x-y)\,\dd\Sigma_N(x)\dd\Sigma_N(y),
 \qquad
 F_N=\frac1{2\pi N^2}\mathcal F_N^S.
\end{equation}
In the normalization of Rosenzweig--Serfaty \cite{Rosenzweig11222},
write
\[
 \widetilde F_{N,0}
 :=\frac12\iint_{(\R^2)^2\setminus\Delta_2}
 g_S(x-y)\,\dd\nu_N(x)\dd\nu_N(y)
\]
and denote by $\widetilde I_{N,0}[v]$ the corresponding full first-order
commutator,
\[
 \widetilde I_{N,0}[v]
 :=\iint_{(\R^2)^2\setminus\Delta_2}
 (v(x)-v(y))\cdot\nabla g_S(x-y)
 \,\dd\nu_N(x)\dd\nu_N(y).
\]
Our normalization is therefore
\begin{equation}\label{eq:planar-rs-normalization}
 F_N=\pi^{-1}\widetilde F_{N,0},
 \qquad
 C_N[v]=(2\pi)^{-1}\widetilde I_{N,0}[v].
\end{equation}
Thus the factor $1/2$ in the Rosenzweig--Serfaty energy is absent from our
$F_N$, while their commutator is already the full first-order commutator.  We use
these identities explicitly in the next two lemmas.

\subsection{Logarithmic Coulomb lower bound and commutator estimate}
We use the following two finite-$N$ estimates in the planar logarithmic
Coulomb argument.  The power-law estimates above include the
higher-dimensional Coulomb case $s=d-2$; the planar logarithmic normalization
is treated separately below.

\begin{lemma}
\label{lem:diagonal-excluded-lower-bound}
Let $\mu\in\mathcal P_2(\R^2)\cap L^\infty(\R^2)$, and let
$X_N\in(\R^2)^N\setminus\Delta_N$.  There exists a constant $C>0$ such that
\begin{equation}\label{eq:planar-normalized-lower-bound}
 F_N(X_N,\mu)
 \ge -C\frac{1+\log(1+N\|\mu\|_{L^\infty})}{N}.
\end{equation}
After choosing the $d=2$ constant in \eqref{eq:coulomb-finite-N-correction} sufficiently large,
\[
 F_N(X_N,\mu)+\eta_{N,2}(\mu)\ge0.
\]
\end{lemma}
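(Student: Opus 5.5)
The plan is to import the sharp planar logarithmic lower bound in the Rosenzweig--Serfaty normalization and convert it with \eqref{eq:planar-rs-normalization}. The one real issue is that the cited estimate is stated only for a restricted range of the microscopic scale $\lambda=(N\|\mu\|_\infty)^{-1/2}$, whereas the lemma has no such restriction. I will remove the restriction by the logarithmic dilation announced in the normalization paragraph.

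\emph{Step 1 (restricted regime).} The cited result gives, for $\lambda<1$,
\[
 \widetilde F_{N,0}(X_N,\mu)\ \ge\ \frac{\log\lambda}{2N}-\frac{C}{N}.
\]
Since $-\log\lambda=\tfrac12\log(N\|\mu\|_\infty)$, and since $F_N=\pi^{-1}\widetilde F_{N,0}$, this becomes
\[
 F_N\ \ge\ -C\,\frac{1+\log(N\|\mu\|_\infty)}{N}.
\]
In this regime $\log(N\|\mu\|_\infty)\le\log(1+N\|\mu\|_\infty)$, so \eqref{eq:planar-normalized-lower-bound} holds there.

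\emph{Step 2 (dilation).} If $\lambda\ge1$, choose $L$ with $L\lambda<1$. Rescale as in Lemma~\ref{lem:static-riesz-transfer}: $x_i^L=Lx_i$ and $\mu_L=L^{-2}\mu(\cdot/L)$. Under this rescaling $g_S(Lz)=g_S(z)-\log L$. The constant shift integrates to zero against the neutral measure $\nu_N\otimes\nu_N$ only if the diagonal is included. With the diagonal excluded, the atomic part $\frac1{N^2}\sum_i\delta_{(x_i,x_i)}$ is missing, and this produces a nonzero term.

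\emph{Step 3 (the key computation).} Compute the off-diagonal mass of $\nu_N\otimes\nu_N$ exactly: the full mass is $0$ and the removed diagonal mass is $N\cdot N^{-2}=1/N$, so the off-diagonal mass is $-1/N$. This gives
\[
 F_N(X_N^L,\mu_L)=F_N(X_N,\mu)+\frac{\log L}{2\pi N}.
\]
Apply Step 1 to the rescaled data. Note that $N\|\mu_L\|_\infty=L^{-2}N\|\mu\|_\infty$, and pick $L\simeq\lambda^{-1}\cdot\tfrac12$, so that $\log L\le C$ with a universal $C$ when $\lambda\ge1$. The resulting bound for the original configuration is then $-C(1+\log(1+N\|\mu\|_\infty))/N$, since the $\log L$ shift is $O(1/N)$. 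One has to track the sign of $\log L$ carefully here: if $L<1$, the shift is negative of size $|\log L|/(2\pi N)$, and this must stay $O(1)/N$. I expect this to be the main bookkeeping obstacle, because choosing $L$ exactly at scale $\lambda^{-1}$ keeps $|\log L|$ comparable to $\log\lambda$. That is bounded only if $\lambda$ is bounded. So I would instead choose $L$ just below $\lambda^{-1}$ and absorb the leftover $\tfrac12\log(N\|\mu\|_\infty)$ into the $\log(1+N\|\mu\|_\infty)$ term. When $N\|\mu\|_\infty\le1$, this term is nonpositive, so it only helps.

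\emph{Step 4 (conclusion).} The final inequality $F_N+\eta_{N,2}(\mu)\ge0$ follows immediately by taking $A_2\ge C$ in \eqref{eq:coulomb-finite-N-correction}.
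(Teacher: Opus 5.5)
Your argument is correct, but it is organized differently from the paper's proof.

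\textbf{How the paper argues.} The paper takes as input Serfaty's unnormalized logarithmic bound (Corollary~3.5 of \cite{Serfaty1}). It always dilates to unit density, $L=\|\mu\|_{L^\infty}^{1/2}$, so that the cited constant is numerical. The diagonal-excluded identity $\mathcal F_N^S(X_N^L,\mu_L)=\mathcal F_N^S(X_N,\mu)+N\log L$ then gives
\[
\mathcal F_N^S(X_N,\mu)\ge-\tfrac N2\log(N\|\mu\|_{L^\infty})-CN
\]
in one stroke. The split into $N\|\mu\|_{L^\infty}<1$ and $N\|\mu\|_{L^\infty}\ge1$ happens only in the final elementary inequality.

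\textbf{How you argue.} You take the sharp Rosenzweig--Serfaty lower bound \cite{Rosenzweig11222} as input, valid for $\lambda<1$. You dilate only in the complementary regime, to $\lambda_L<1$. This is exactly the device the paper uses for the commutator in Lemma~\ref{lem:serfaty-lipschitz-commutator}. Your route draws the lower bound and the commutator estimate from the same source, with the same additive correction $-\log\lambda/(2N)+C\|\mu\|_{L^\infty}\lambda^2$. The paper's route needs a single dilation and no case distinction at the level of the cited estimate. In both routes, the key input is the same diagonal-excluded off-diagonal mass, $-1/N$ for $\nu_N\otimes\nu_N$. Note that the cited bound needs finite logarithmic energy of $\mu_L$, which holds since $\mu_L\in\mathcal P_2\cap L^\infty$ by \eqref{eq:auto-absolute-log-energy}.

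\textbf{The sign in Step~3.} The bookkeeping obstacle you anticipate does not exist. Your sentence that, for $L<1$, the shift ``must stay $O(1)/N$'' has the sign backwards. In the regime $\lambda\ge1$ every admissible $L$ satisfies $L<\lambda^{-1}\le1$. So $F_N(X_N^L,\mu_L)=F_N(X_N,\mu)+\frac{\log L}{2\pi N}$ gives
\[
F_N(X_N,\mu)=F_N(X_N^L,\mu_L)+\frac{|\log L|}{2\pi N}\ge F_N(X_N^L,\mu_L).
\]
Only $\log L\le0$ is used, and $|\log L|$ may be arbitrarily large. Take exactly $L=1/(2\lambda)$. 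Then $N\|\mu_L\|_{L^\infty}=L^{-2}N\|\mu\|_{L^\infty}=4$, and Step~1 gives
\[
F_N(X_N^L,\mu_L)\ge-C(1+\log4)/N.
\]
This is at least $-C'(1+\log(1+N\|\mu\|_{L^\infty}))/N$. Your closing remark (``it only helps'') lands on the right conclusion. The intermediate ``fix'' of taking $L$ just below $\lambda^{-1}$ is unnecessary, and the claim it was meant to repair was never needed.
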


\begin{proof}[Normalization]
Let $M:=\|\mu\|_{L^\infty}$.  Since $\mu$ has unit mass, $M>0$.
The energy used in \cite{Serfaty1} is $\mathcal F_N^S$ from
\eqref{eq:planar-serfaty-normalization}.  Set
\[
 L:=M^{1/2},\qquad x_i^L:=Lx_i,\qquad
 \mu_L(x):=L^{-2}\mu(x/L),
 \qquad \|\mu_L\|_{L^\infty}=1.
\]
Because $g_S(Lz)=g_S(z)-\log L$, the logarithmic scaling must be computed
with the diagonal exclusion kept visible.  The charge $\Sigma_N$ is neutral,
but the $N$ atomic self-interaction pairs are removed, and hence
\[
 \iint_{(\R^2)^2\setminus\Delta_2}
 1\,\dd\Sigma_N(x)\dd\Sigma_N(y)=-N.
\]
Consequently
\begin{equation}\label{eq:planar-log-energy-dilation}
 \mathcal F_N^S(X_N^L,\mu_L)
 =\mathcal F_N^S(X_N,\mu)+N\log L.
\end{equation}
Apply Corollary~3.5 of \cite{Serfaty1} in dimension two and in the logarithmic
case to $\mu_L$.  By construction its $L^\infty$ norm is one, so the corollary gives
\[
 \mathcal F_N^S(X_N^L,\mu_L)
 \ge -\frac N2\log N-CN,
\]
where $C$ is numerical because $\|\mu_L\|_{L^\infty}=1$.
Using \eqref{eq:planar-log-energy-dilation} and
$N\log L=(N/2)\log M$ yields
\begin{equation}\label{eq:planar-log-source-lower-bound-explicit}
 \mathcal F_N^S(X_N,\mu)
 \ge -\frac N2\log(NM)-CN.
\end{equation}
If $NM<1$, the logarithmic contribution is nonnegative and may be discarded.
If $NM\ge1$, it is controlled by $N\log_+(NM)$.  Thus, in both cases,
\[
 \mathcal F_N^S(X_N,\mu)
 \ge -CN\bigl(1+\log_+(NM)\bigr)
 \ge -CN\bigl(1+\log(1+NM)\bigr).
\]
Dividing by $2\pi N^2$ according to
\eqref{eq:planar-serfaty-normalization} proves
\eqref{eq:planar-normalized-lower-bound}.
\end{proof}

\begin{lemma}
\label{lem:serfaty-lipschitz-commutator}
Let $\mu\in\mathcal P_2(\R^2)\cap L^\infty(\R^2)$, let
$X_N\in(\R^2)^N\setminus\Delta_N$, and put
$\nu_N=N^{-1}\sum_i\delta_{x_i}-\mu$.  For every
$v\in W^{1,\infty}(\R^2;\R^2)$,
\begin{equation}\label{eq:planar-normalized-lipschitz-commutator}
\left|\iint_{(\R^2)^2\setminus\Delta_2}
 (v(x)-v(y))\cdot\nabla g(x-y)
 \,\dd\nu_N(x)\dd\nu_N(y)\right|
\le C\|\nabla v\|_{L^\infty}
 \bigl(F_N(X_N,\mu)+\eta_{N,2}(\mu)\bigr).
\end{equation}
The estimate holds for every $N\ge2$.
\end{lemma}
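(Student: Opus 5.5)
The estimate should follow from the Rosenzweig--Serfaty first-order commutator estimate in the logarithmic case, \cite[Theorem~1.1, equation~(1.7)]{Rosenzweig11222}, after two adjustments. The first is the conversion \eqref{eq:planar-rs-normalization}. The second is a logarithmic dilation that removes the restriction $\lambda_N<1$, as was done for power laws in Lemma~\ref{lem:static-riesz-transfer}.

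\emph{Conversion of the cited bound.} With $\lambda=(N\|\mu\|_\infty)^{-1/2}$ and $\lambda<1$, I expect the cited bound to read
\[
 |\widetilde I_{N,0}[v]|\le C\|\nabla v\|_\infty\Bigl(\widetilde F_{N,0}-\frac{\log\lambda}{2N}+C\frac{1}{N}\Bigr).
\]
Here $-\log\lambda=\tfrac12\log(N\|\mu\|_\infty)\ge0$. Applying $F_N=\pi^{-1}\widetilde F_{N,0}$ and $C_N[v]=(2\pi)^{-1}\widetilde I_{N,0}[v]$ turns this into
\[
 |C_N[v]|\le C\|\nabla v\|_\infty\Bigl(F_N+C\,\frac{1+\log(1+N\|\mu\|_\infty)}{N}\Bigr).
\]
That additive term is dominated by $\eta_{N,2}(\mu)$ once $A_2$ is large enough. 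This settles the regime $N\|\mu\|_\infty>1$.

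\emph{Dilation for $N\|\mu\|_\infty\le1$.} Choose $L$ with $L^2 N\|\mu\|_\infty>1$, for instance $L^2=2/(N\|\mu\|_\infty)$. Set
\[
 x_i^L=Lx_i,\qquad \mu_L=L^{-2}\mu(\cdot/L),\qquad v_L=Lv(\cdot/L).
\]
Then $\|\nabla v_L\|_\infty=\|\nabla v\|_\infty$. Since $\nabla g$ is homogeneous of degree $-1$ and $v_L(Lx)-v_L(Ly)=L(v(x)-v(y))$, the commutator is exactly invariant: $C_N(X_N^L,\mu_L;v_L)=C_N(X_N,\mu;v)$.

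The energy is not invariant. By the computation behind \eqref{eq:planar-log-energy-dilation}, where the diagonal exclusion gives off-diagonal mass $-N$ for $\Sigma_N$ and hence $-1/N$ for $\nu_N$, one gets
\[
 F_N(X_N^L,\mu_L)=F_N(X_N,\mu)+\frac{\log L}{2\pi N}.
\]
Apply the already-proved regime to the dilated configuration. The dilated density satisfies $N\|\mu_L\|_\infty=2$, so $\eta_{N,2}(\mu_L)\le C/N$, and therefore
\[
 |C_N[v]|\le C\|\nabla v\|_\infty\Bigl(F_N(X_N,\mu)+\frac{\log L}{2\pi N}+\frac CN\Bigr).
\]

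\emph{The main obstacle.} The difficulty is that $\log L=\tfrac12\log\bigl(2/(N\|\mu\|_\infty)\bigr)$ is unbounded as $N\|\mu\|_\infty\to0$, while $\eta_{N,2}(\mu)$ is only of order $1/N$. So $\log L/N$ cannot simply be absorbed.

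The first thing I would try is to avoid the cited estimate in this regime and prove the bound directly. When $N\|\mu\|_\infty\le 1$ the microscopic length exceeds the spread of the density. In that case I would rely on two facts. The first is monotonicity of $F_N$ under the Serfaty smearing at radius $\eta=\lambda$. The second is that a $W^{1,\infty}$ commutator against the smeared charge is bounded by $\|\nabla v\|_\infty\int|\nabla h_\eta|^2$.

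Failing that, I would use a second route. The lower-bound argument of Lemma~\ref{lem:diagonal-excluded-lower-bound} shows $F_N+\eta_{N,2}\ge c\,\log L/N$ whenever $L$ is large, because the particles must sit at mutual distances of order $\lambda$ to keep the energy small. This would let the extra term be absorbed after enlarging $A_2$.

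I expect this last absorption, checking that the $\log L$ shift is genuinely controlled by $F_N+\eta_{N,2}$, to be the delicate step.
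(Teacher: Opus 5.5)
\textbf{There is a genuine gap: the dilation goes the wrong way.} Your handling of the regime $N\|\mu\|_{L^\infty}>1$ matches the paper's proof, and so do the normalization conversion, the exact invariance $C_N(X_N^L,\mu_L;v_L)=C_N(X_N,\mu;v)$, and the shift $F_N(X_N^L,\mu_L)=F_N(X_N,\mu)+\frac{\log L}{2\pi N}$.

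Since $\mu_L=L^{-2}\mu(\cdot/L)$, one has $\|\mu_L\|_{L^\infty}=L^{-2}\|\mu\|_{L^\infty}$, so $\lambda_L=L\lambda$. With your choice $L^2=2/(N\|\mu\|_{L^\infty})\ge2$ you get $N\|\mu_L\|_{L^\infty}=(N\|\mu\|_{L^\infty})^2/2\le\frac12$, not $2$. The dilated data therefore lie even deeper in the regime $\lambda_L\ge1$, where the cited Rosenzweig--Serfaty estimate does not apply. As a result, the inequality $|C_N[v]|\le C\|\nabla v\|_{L^\infty}\bigl(F_N+\frac{\log L}{2\pi N}+\frac CN\bigr)$ is never actually established.

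Neither of your fallbacks repairs this, since absorbing the $\log L$ term presupposes that inequality. The lower bound behind your second fallback is in fact true: for $N\|\mu\|_{L^\infty}\le1$ one has $F_N+\frac CN\ge\frac1{4\pi N}\log\frac1{N\|\mu\|_{L^\infty}}$, which follows from Lemma~\ref{lem:diagonal-excluded-lower-bound} after a contraction. It is simply beside the point here.

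\textbf{The fix is to contract instead of expand.} For $N\|\mu\|_{L^\infty}\le1$, take $L^2=N\|\mu\|_{L^\infty}/2$; the paper uses $L=\sqrt{N\|\mu\|_{L^\infty}}/2$. Then $L<1$ and $N\|\mu_L\|_{L^\infty}=2$, i.e.\ $\lambda_L<1$, so the cited estimate applies to the scaled data. Now $\log L<0$, so the energy shift $\frac{\log L}{2\pi N}$ is nonpositive and can simply be dropped. This gives
\[
|C_N[v]|\le C\|\nabla v\|_{L^\infty}\bigl(F_N(X_N,\mu)+C/N\bigr).
\]
Since $\eta_{N,2}(\mu)\ge A_2/N$ for every $\mu$, the term $C/N$ is absorbed after enlarging $A_2$. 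Your ``main obstacle'' is thus an artifact of the wrong scaling direction, and no smearing argument or additional lower bound is needed.
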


\begin{proof}[Normalization and removal of the restriction on $\lambda$]
Set $M=\|\mu\|_{L^\infty}>0$ and
$\lambda=(NM)^{-1/2}$.  First suppose $\lambda<1$.  The logarithmic case of
\cite[Theorem~1.1, estimate~(1.7)]{Rosenzweig11222} uses
$g_S=-\log|\cdot|$, the energy $\widetilde F_{N,0}$ defined with the factor $1/2$, and the full
first-order commutator $\widetilde I_{N,0}[v]$.  Its hypotheses include absolute
integrability of the logarithmic background energy, which follows from $\mu\in\mathcal P_2\cap L^\infty$ and
\eqref{eq:auto-absolute-log-energy}.  In the normalization of the cited theorem,
\[
 |\widetilde I_{N,0}[v]|
 \le C\|\nabla v\|_{L^\infty}
 \left(\widetilde F_{N,0}
 -\frac{\log\lambda}{2N}+CM\lambda^2\right).
\]
Using the identities in the planar normalization subsection gives
\[
 |C_N[v]|
 \le C\|\nabla v\|_{L^\infty}
 \left[F_N+C\left(\frac{-\log\lambda}{N}+M\lambda^2\right)\right].
\]
Since $-\log\lambda=\frac12\log(NM)$ and $M\lambda^2=N^{-1}$,
\[
 \frac{-\log\lambda}{N}+M\lambda^2
 \le C\frac{1+\log(1+NM)}{N},
\]
which proves the claim in this case after choosing the $d=2$ constant in
\eqref{eq:coulomb-finite-N-correction} sufficiently large.

It remains to consider $\lambda\ge1$, equivalently $NM\le1$.  Set
\[
 L:=\frac{\sqrt{NM}}2\in(0,1/2],\qquad
 x_i^L=Lx_i,\qquad
 \mu_L(x)=L^{-2}\mu(x/L),\qquad
 v_L(x)=Lv(x/L).
\]
Then $\|\mu_L\|_\infty=4/N$, so the scaled parameter satisfies
$\lambda_L=(N\|\mu_L\|_\infty)^{-1/2}=1/2<1$.  Moreover
\begin{equation}\label{eq:planar-commutator-dilation}
 C_N(X_N^L,\mu_L;v_L)=C_N(X_N,\mu;v),\qquad
 \|\nabla v_L\|_\infty=\|\nabla v\|_\infty,
\end{equation}
and the logarithmic energy with the diagonal removed satisfies
\begin{equation}\label{eq:planar-normalized-energy-dilation}
 F_N(X_N^L,\mu_L)
 =F_N(X_N,\mu)+\frac{\log L}{2\pi N}.
\end{equation}
Applying the already proved $\lambda_L<1$ estimate to the scaled data gives
\[
 |C_N(X_N,\mu;v)|
 \le C\|\nabla v\|_\infty
 \left(F_N(X_N,\mu)+\frac{\log L}{2\pi N}+\frac{C}{N}\right).
\]
Since $L\le1$ the logarithmic shift is nonpositive, and hence the last
right-hand side is bounded by
$C\|\nabla v\|_\infty(F_N+C/N)$.  Because $NM\le1$,
\[
 \eta_{N,2}(\mu)
 =A_2\frac{1+\log(1+NM)}{N}\ge\frac{A_2}{N}.
\]
Choosing the fixed constant $A_2$ sufficiently large proves
\eqref{eq:planar-normalized-lipschitz-commutator} also in this regime.  The lower bound $F_N+\eta_{N,2}\ge0$ ensures that the final right-hand side is nonnegative, and the estimate therefore holds for all $N\ge2$.
\end{proof}

\section*{Acknowledgments}
The authors are grateful to Zhenfu Wang for helpful discussions.

\section*{Statements and Declarations}
\noindent\emph{Funding.}
This work was supported by the National Natural Science Foundation of China
(grant no.~12371180).

\noindent\emph{Competing interests.}
The authors declare that they have no competing interests.

\noindent\emph{Data availability.}
No datasets were generated or analyzed during the current study.

\end{document}